\newcommand{\cal}[1]{\mathcal{#1}}
\theoremstyle{plain}
\newtheorem{theorem}{Theorem}
\newtheorem{lemma}{Lemma}[section]
\newtheorem{theo}[lemma]{Theorem}
\newtheorem{proposition}[lemma]{Proposition}
\newtheorem{corollary}[lemma]{Corollary}
\theoremstyle{definition}
\newtheorem{definition}[lemma]{Definition}
\newtheorem{remark}[lemma]{Remark}
\let\egthree=\phi
\let\phi=\varphi
\let\varphi=\egthree 
\begin{document}
\title[Free factor and free splitting graph 
boundary]{The boundary of the free factor graph 
and the free splitting graph}
\author{Ursula Hamenst\"adt}
\thanks
{AMS subject classification: 20F65,20F34,57M07\\
Partially supported by ERC grant 10160104}
\date{August 25, 2014}

\begin{abstract} 
We show that the Gromov boundary of the 
free factor
graph for the free group $F_n$ with $n\geq 3$ generators
is the space of equivalence classes of 
minimal very small 
indecomposable 
projective $F_n$-trees without point stabilizer containing a free factor
equipped with a quotient topology. 
Here two such trees are equivalent if the union of their metric
completions with their Gromov boundaries are 
$F_n$-equivariantly homeomorphic
with respect to the observer's topology.
The boundary of the 
cyclic splitting graph is the space
of equivalence classes of trees which either are
indecomposable or split as very large graph of actions.
The boundary of the free splitting graph
is the space of equivalence classes of trees which either 
are indecomposable or which split as large graphs of actions.
\end{abstract}

\maketitle

\setcounter{tocdepth}{10}
\tableofcontents

\parskip=\bigskipamount

\section{Introduction}

A  \emph{free splitting} of the free group $F_n$ 
with $n\geq 3$ generators is a one-edge graph of groups
decomposition
of $F_n$ with trivial edge group. Thus it either is a 
decomposition 
as a free product $F_n=A*B$ 
where $A,B$ is a proper 
subgroups of $F_n$, or as an HNN-extension $F_n=A*$.
The subgroups $A,B$ are \emph{free factors}
of $F_n$.

A vertex of the \emph{free splitting graph} 
is a conjugacy class of a free splitting 
of $F_n$. Two such free splittings
are connected by an edge of length one if up to conjugation, they
have a common refinement. The free splitting graph is
hyperbolic in the sense of Gromov \cite{HM13,HH12}.
The \emph{outer automorphism group} ${\rm Out}(F_n)$ of 
$F_n$ acts on the free splitting graph as a group of
simplicial isometries. 

A \emph{cyclic splitting} of $F_n$ is a one-edge graph of 
groups decomposition of $F_n$ with infinite cyclic edge group.
Thus it either is a decomposition as
an amalgamated product $F_n=A*_{\langle w\rangle}B$ 
or as an HNN-extension $F_n=A*_{\langle w\rangle}$ 
where $\langle w\rangle$ is an infinite cyclic subgroup of $F_n$.
A vertex of the \emph{cyclic splitting graph} is a conjugacy class
of a free or of a cyclic splitting of $F_n$. Two such splittings
are connected by an edge of length one if up to conjugaction,
they have a common refinement. The cyclic splitting graph
is hyperbolic in the sense of Gromov \cite{Mn12}. The outer 
automorphism group ${\rm Out}(F_n)$ of $F_n$ acts
on the cyclic splitting graph as a group of simplicial isometries.
The free splitting graph is a coarsely dense 
${\rm Out}(F_n)$-invariant subgraph of the cyclic splitting
graph.

The \emph{free factor graph} is the graph ${\cal F\cal F}$
whose vertices are conjugacy classes of free factors of $F_n$.
Two free factors $A,B$ are connected
by an edge of length one if up to conjugation,
either $A<B$ or $B<A$. 
The free factor
graph is hyperbolic in the sense of Gromov \cite{BF11}.
The outer automorphism group ${\rm Out}(F_n)$ of $F_n$
acts on the free factor graph as a group of simplicial
isometries.

The goal of this article is to determine the Gromov boundaries
of these three graphs.

Let $cv(F_n)$ be the unprojectivized \emph{Outer space} of
minimal free simplicial $F_n$-trees,
with its boundary $\partial cv(F_n)$ of minimal
\emph{very small} $F_n$-trees \cite{CL95,BF92} which 
either are not simplicial or which are not free.
Write $\overline{ cv(F_n)}=cv(F_n)\cup \partial cv(F_n)$.

A tree
$T\in \partial cv(F_n)$ is called \emph{indecomposable}
if for any finite, non-degenerate arcs $I,J\subset T$, 
there are elements $g_1,\dots,g_r\in F_n$ so that 
\[J\subset g_1I\cup \dots \cup g_rI\] and such that
$g_iI\cap g_{i+1}I$ is non-degenerate for $i\leq r-1$
(Definition 1.17 of \cite{G08}). 
%
The $F_n$-orbits on an indecomposable
tree are dense (Lemma 1.18 of \cite{G08}).
Being indecomposable is invariant under scaling and
hence it is defined for projective trees.

A \emph{graph of actions} is a minimal $F_n$-tree 
${\cal G}$ which consists of 
\begin{enumerate}
\item a simplicial tree $S$, called the \emph{skeleton},
equipped with an action of $F_n$
\item for each vertex $v$ of $S$ an $\mathbb{R}$-tree
$Y_v$, called a \emph{vertex tree}, and
\item for each oriented edge $e$ of $S$ with terminal vertex
$v$ a point $p_e\in Y_v$, called an \emph{attaching point}.
\end{enumerate}

It is required that the projection $Y_v\to p_e$ is equivariant, 
that for $g\in F_n$ one has $gp_e=p_{ge}$ and that
each vertex tree is a minimal very small tree for the action of its
stabilizer in $F_n$.

Associated to a graph of actions ${\cal G}$ is a canonical action
of $F_n$ on an $\mathbb{R}$-tree $T_{\cal G}$ which is
called the \emph{dual} of the graph of actions
 \cite{L94}. Define a pseudo-metric $d$ on 
$\coprod_{v\in V(S)}Y_v$ as follows. If $x\in Y_{v_0},y\in Y_{v_k}$
let $e_1\dots e_k$ be the reduced
edge-path from $v_0$ to $v_k$ in $S$ and define
\[d(x,y)=d_{Y_{v_1}}(x,p_{\overline e_1})+\dots
+d_{Y_{v_k}}(p_{e_k},y).\]
Making this pseudo-metric Hausdorff gives an 
$\mathbb{R}$-tree $T_{\cal G}$. The vertex trees
of the graph of actions ${\cal G}$ are isometrically embedded in 
$T_{\cal G}$ and will be called the vertex trees of
$T_{\cal G}$. 

An minimal very small $F_n$-tree 
$T$ \emph{splits as a graph of actions}
if there
is a graph of actions ${\cal G}$ with dual tree $T_{\cal G}$, and 
there is an equivariant isometry $T\to T_{\cal G}$. We call
${\cal G}$ a structure of a graph of action for $T$.
We also say that the projectivization $[T]$ of an $F_n$-tree $T$ 
splits as a graph of actions  if $T$ splits as a graph of actions.


A tree $T\in \partial {cv(F_n)}$ with dense orbits 
\emph{splits as a large graph of actions}
if $T$ splits as a graph of actions with 
the following properties.
\begin{enumerate}
\item There is a single $F_n$-orbit of non-degenerate vertex trees.
\item The stabilizer of a non-degenerate
vertex tree is not contained in any proper
free factor of $F_n$.
\item A non-degenerate vertex tree is indecomposable for its stabilizer.
\end{enumerate}

Note that by the second requirement, the stabilizer of an edge of the
skeleton of a large graph of actions is non-trivial. 

We say that a tree $T\in \partial cv(F_n)$ with dense orbits
\emph{splits as a very large graph of 
actions} if $T$ splits as a large graph of actions and if
the stabilizer of an edge of the skeleton of the 
is not infinite cyclic.

If $T\in \partial{cv(F_n)}$ is a tree with a dense action of
$F_n$ then the union 
$\hat T$ of the metric completion 
$\overline{T}$ of $T$ with the Gromov boundary $\partial T$ can
be equipped with an \emph{observer's topology} \cite{CHL07}.
This topology only depends on the projective class 
of $T$.

Denote by $\partial {\rm CV}(F_n)$ the boundary of projectivized
Outer space, i.e. the space of projective
classes of trees in $\partial cv(F_n)$.
Let ${\cal S\cal T}\subset \partial {\rm CV}(F_n)$ 
be the ${\rm Out}(F_n)$-invariant
subspace of projective trees which either are indecomposable
or split as large graph of actions. 
Let  $\sim$ be the equivalence relation on
${\cal S\cal T}$ which is defined as follows. The equivalence class
of the projective class $[T]$ of a
tree $T$ consists of all projective classes of 
trees $S$ so that  
the trees $\hat S$ and $\hat T$ equipped with
the observer's topology are $F_n$-equivariantly homeomorphic.
We equip $ {\cal S\cal T}/\sim$ 
with the quotient topology.

\begin{theorem}\label{freesplitbd}
The Gromov boundary of the free splitting graph is the 
space ${\cal S\cal T}/\sim$ of equivalence classes of 
projective very small trees 
which either are indecomposable or which split as large
graph of actions.  
\end{theorem}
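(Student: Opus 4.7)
The strategy is to construct an $\mathrm{Out}(F_n)$-equivariant bijection $\Phi\colon {\cal S}{\cal T}/\sim \, \to \, \partial \mathcal{FS}$, where $\mathcal{FS}$ abbreviates the free splitting graph, and verify that it is a homeomorphism. The construction goes through a coarse projection from Outer space, using folding paths to pass convergence information between the two spaces. First I would fix a coarsely Lipschitz, $\mathrm{Out}(F_n)$-equivariant map $\Pi\colon cv(F_n) \to \mathcal{FS}$ sending a simplicial tree $T$ to some one-edge free splitting obtained by collapsing all but one orbit of edges in $T/F_n$. The hyperbolicity results of \cite{HM13,HH12} ensure that folding paths in $cv(F_n)$ project to unparameterized quasi-geodesics in $\mathcal{FS}$, so that convergence to $\partial \mathcal{FS}$ along such paths can be read off from the limit in $\overline{\mathrm{CV}(F_n)}$.

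To define $\Phi$ on a class $[T] \in {\cal S}{\cal T}$, I pick any sequence $T_i \in cv(F_n)$ with $[T_i] \to [T]$ in $\overline{\mathrm{CV}(F_n)}$ and show that $\Pi(T_i)$ converges to a point of $\partial \mathcal{FS}$ depending only on $[T]$. Unboundedness of $\Pi(T_i)$ uses the hypotheses on $T$: if a subsequence stayed close to a fixed splitting $\sigma$, a common refinement would yield either an $F_n$-invariant arc in $T$ with trivial stabilizer (contradicting indecomposability) or a non-degenerate vertex tree with stabilizer contained in a proper free factor (contradicting condition (2) in the definition of a large graph of actions). Independence of the approximating sequence then follows from the coarse Lipschitz property of $\Pi$. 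For surjectivity I go the other way: for $\xi \in \partial \mathcal{FS}$, lift a quasi-geodesic ray to a folding path in $cv(F_n)$, extract a projective accumulation point $[T]$ by compactness of $\overline{\mathrm{CV}(F_n)}$, and use the Guirardel--Levitt structure theory for graphs of actions to conclude that $[T]$ must lie in ${\cal S}{\cal T}$; any two accumulation points of the same folding path are then forced to be $\sim$-equivalent since they produce the same boundary point.

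The main obstacle is to show that the fibers of $\Phi$ coincide with the $\sim$-classes. If $\hat S$ and $\hat T$ are $F_n$-equivariantly homeomorphic for the observer's topology, then a common approximating sequence in $cv(F_n)$ can be built by using the homeomorphism to transport approximating simplicial trees, giving $\Phi([S]) = \Phi([T])$. The converse is much more delicate: starting from the coincidence of boundary points in $\partial \mathcal{FS}$, one has to reconstruct an $F_n$-equivariant homeomorphism $\hat S \to \hat T$. This requires showing that the combinatorial data tracked by $\Pi$ along a folding path---roughly, the orbits of directions and non-degenerate arcs visible to free splittings---is precisely the data encoded by the observer's topology as characterized in \cite{CHL07}. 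I expect this last identification to be the technical core of the argument; continuity of $\Phi$ and of $\Phi^{-1}$ with respect to the quotient topology on ${\cal S}{\cal T}/\sim$ should then follow from semicontinuity properties of the length pairing on $\overline{\mathrm{CV}(F_n)}$ combined with the coarse Lipschitz property of $\Pi$.
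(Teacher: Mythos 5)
Your overall architecture (project Outer space to the free splitting graph, use folding paths as unparametrized quasi-geodesics, identify $\partial{\cal F\cal S}$ with a quotient of an invariant subset of $\partial{\rm CV}(F_n)$) is the same Klarreich-style strategy the paper follows, but two of your central steps have genuine gaps.

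First, well-definedness. You propose to define $\Phi([T])$ as the limit of $\Pi(T_i)$ for an \emph{arbitrary} sequence $T_i$ with $[T_i]\to[T]$, and to get independence of the sequence from the coarse Lipschitz property of $\Pi$. This does not work: $\Pi$ is coarsely Lipschitz for the combinatorial metric $d_{ng}^{n-1}$ on $cv_0(F_n)$, not for the topology of $\overline{{\rm CV}(F_n)}$, and two sequences converging to the same projective tree can be arbitrarily far apart in $d_{ng}^{n-1}$. The paper has to define the map via Skora paths guided by train track maps onto a representative of $[T]$, and independence of choices (of the standard simplex, of the representative, of the train track map) is the content of Proposition \ref{coarseindependence}, which requires an Arzela--Ascoli argument on limits of train track maps together with hyperbolicity; only afterwards (Lemma \ref{sequence3}) does one learn that arbitrary approximating sequences converge to the same boundary point. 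The paper even remarks that for trees with $\psi([T])=\Theta$ it is unclear whether arbitrary approximating sequences have bounded image, which shows the sequence-based definition is delicate.

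Second, unboundedness. Your justification --- a subsequence staying close to a fixed splitting $\sigma$ would produce a common refinement yielding an $F_n$-invariant arc with trivial stabilizer or a vertex tree with stabilizer in a proper free factor --- does not go through. A bounded subset of ${\cal F\cal S}$ is not a single splitting, so no common refinement with anything is produced; and a tree in ${\cal S\cal T}$ has dense orbits, so it has no invariant arcs at all, which makes the proposed contradiction with indecomposability vacuous. The actual argument (Proposition \ref{relativeunbounded}) is the hardest step of the theorem: assuming $d(\Upsilon(x_0),\Upsilon(x_{t_i}))=k$ for all $i$, one realizes the intermediate vertices of geodesics in ${\cal F\cal S}$ by simplicial trees, passes to limits $T_k,T_{k-1},\dots,T_0$ using Guirardel's Proposition 5.5 of \cite{G00} to obtain alignment preserving maps between consecutive limits, and propagates via Lemma \ref{closure2} (which rests on Reynolds' theorem that finitely generated infinite-index subgroups act discretely on indecomposable trees) the property of admitting an alignment preserving map onto a tree equivalent to $T$, reaching the contradiction that $x_0$ itself does. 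The same machinery, not a direct combinatorial identification with the observer's topology as you suggest, is what computes the fibres in Lemma \ref{chain}; and the easy inclusion of the fibre computation uses that equivariant homeomorphism of $\hat S$ and $\hat T$ yields an alignment preserving map $T\to S$ (via \cite{CHL07}) together with Corollary \ref{projectionalign}, rather than any transport of approximating simplicial trees through the homeomorphism.
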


Let ${\cal C\cal T}\subset {\cal S\cal T}$ be the 
${\rm Out}(F_n)$-invariant subset of equivalence
classes of projective trees which either are indecomposable 
or split as very large graph of actions. This subspace is 
saturated for the equivalence relation $\sim$.

\begin{theorem}\label{arational}
The Gromov boundary of the cyclic splitting graph is 
the space ${\cal C\cal T}/\sim$ of equivalence classes of
projective trees which either are indecomposable
or which split
as very large graph of actions.
\end{theorem}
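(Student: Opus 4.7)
My approach is to derive Theorem~\ref{arational} from Theorem~\ref{freesplitbd} by exploiting the natural ${\rm Out}(F_n)$-equivariant inclusion $\iota$ of the free splitting graph into the cyclic splitting graph. This inclusion is $1$-Lipschitz and has coarsely dense image. Using hyperbolicity of both graphs, a sequence $(x_n)$ in the free splitting graph with $x_n\to\xi\in{\cal S\cal T}/\sim$ either has images $\iota(x_n)$ that stay bounded in the cyclic splitting graph --- in which case $\xi$ is \emph{collapsed} --- or the $\iota(x_n)$ converge to a Gromov boundary point of the cyclic splitting graph. Coarse density of $\iota$ ensures that the resulting partial map surjects onto the Gromov boundary of the cyclic splitting graph. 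Thus Theorem~\ref{arational} reduces to two claims: (i) the collapsed classes are exactly those $[T]\in{\cal S\cal T}\setminus{\cal C\cal T}$, and (ii) distinct equivalence classes in ${\cal C\cal T}/\sim$ yield distinct boundary points of the cyclic splitting graph.

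For the collapsing claim, let $[T]\in{\cal S\cal T}\setminus{\cal C\cal T}$. Then $T$ splits as a large graph of actions whose skeleton contains an infinite cyclic edge stabilizer $\langle w\rangle$. Collapsing every vertex tree of the graph of actions produces a one-edge decomposition of $F_n$ with cyclic edge group $\langle w\rangle$; this is a vertex $\sigma$ of the cyclic splitting graph. A ray in the free splitting graph converging to $[T]$, of the sort produced in the proof of Theorem~\ref{freesplitbd}, may be chosen to consist of free splittings refining the graph-of-actions structure of $T$. Each such free splitting has a common refinement with $\sigma$ obtained by combining the two collapses, so it lies within cyclic-splitting-distance at most $2$ of $\sigma$. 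Hence $\iota$ sends the ray into a bounded neighbourhood of $\sigma$, and $[T]$ is collapsed. The same identification checks that the equivalence relation $\sim$ is well-defined on ${\cal C\cal T}$ as a subset, since trees equivalent to one in ${\cal C\cal T}$ cannot acquire new cyclic edge stabilizers by an equivariant observer-topology homeomorphism.

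The main obstacle is the converse direction: showing that for $[T]\in{\cal C\cal T}$, no vertex of the cyclic splitting graph lies within bounded distance of an infinite portion of an approximating ray in the free splitting graph. Suppose, for contradiction, that some cyclic splitting $\sigma$ did. Then $\sigma$ admits common refinements with a cofinal subsequence of free splittings approximating $T$; passing to a limit in ${\cal S\cal T}/\sim$ endows $T$ with an invariant family of arcs compatible with $\sigma$. If $T$ is indecomposable, no proper invariant arc system exists, giving a contradiction. If $T$ splits as a very large graph of actions, the limiting arc system would have to induce a further cyclic collapse of the skeleton, which is impossible since by hypothesis its edge stabilizers are non-cyclic. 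Making this passage-to-the-limit argument precise is the bulk of the work; it requires adapting the folding and unfolding tools from the proof of Theorem~\ref{freesplitbd} to the cyclic setting together with the structure theory of very small $F_n$-trees with cyclic arc stabilizers. Once this is done, continuity and injectivity of the induced map from ${\cal C\cal T}/\sim$ onto the Gromov boundary of the cyclic splitting graph follow from hyperbolicity and the continuity arguments already established in proving Theorem~\ref{freesplitbd}.
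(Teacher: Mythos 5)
Your strategy --- deducing the cyclic case from the free splitting case via the $1$-Lipschitz, coarsely dense inclusion $\Psi:{\cal F\cal S}\to{\cal C\cal S}$ --- is genuinely different from what the paper does (the paper runs the whole machinery of Sections 6 and 11 in parallel for $\Upsilon_{\cal C}$ and $\psi_{\cal C}$, going through $\overline{{\rm CV}(F_n)}$ rather than through $\partial{\cal F\cal S}$), but as written it has gaps that are not merely technical. The most serious one is surjectivity: coarse density of a $1$-Lipschitz map between hyperbolic graphs does \emph{not} imply that every point of $\partial{\cal C\cal S}$ arises from a point of $\partial{\cal F\cal S}$. Given $\eta\in\partial{\cal C\cal S}$ and $y_i\to\eta$, you can find $x_i\in{\cal F\cal S}$ with $\Psi(x_i)$ uniformly close to $y_i$, and the $x_i$ do go to infinity in ${\cal F\cal S}$; but since Gromov products in ${\cal C\cal S}$ only bound Gromov products in ${\cal F\cal S}$ from below by nothing useful (distances can only shrink under $\Psi$), the sequence $(x_i)$ need not converge, even after passing to a subsequence, to any point of $\partial{\cal F\cal S}$ --- and these graphs are not locally finite, so no properness argument rescues you. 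The paper gets surjectivity by lifting to $cv_0(F_n)$ and using compactness of $\overline{{\rm CV}(F_n)}$ together with the analogue of Lemma \ref{sequence} for $\Upsilon_{\cal C}$; some substitute for that step is unavoidable in your scheme.

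Two further points. First, the step you yourself flag as ``the bulk of the work'' --- that for $[T]\in{\cal C\cal T}$ no cyclic splitting stays at bounded distance from the image of an approximating ray --- is exactly Proposition \ref{relativeunbounded}(2), and your sketch (a limiting ``invariant arc system compatible with $\sigma$'') is a plan rather than a proof; the paper's actual argument extracts limit trees from the collapses $\Gamma_i(k-1)$ along geodesics in ${\cal C\cal S}$, applies Proposition 5.5 of \cite{G00} and Lemma \ref{closure2}, and derives a contradiction from the structure of large graphs of actions, and none of that is supplied by Theorem \ref{freesplitbd} itself. Second, injectivity of the induced map on ${\cal C\cal T}/\sim$ (your claim (ii)) also does not follow from the free splitting theorem: distinct points of $\partial{\cal F\cal S}$ can a priori be identified under the extension of $\Psi$ to the boundary, so you need the analogue of Lemma \ref{chain}(2) carried out in ${\cal C\cal S}$, not just ``hyperbolicity and continuity''. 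The direction you do argue in detail --- that a large-but-not-very-large graph of actions is collapsed in ${\cal C\cal S}$ --- is sound and matches the paper's use of Proposition \ref{graphofgroups}(1) together with Proposition \ref{verylarge}.
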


An indecomposable projective tree $[T]\in {\cal S\cal T}$ is 
called \emph{arational} if no nontrivial point stabilizer of $[T]$
contains a free factor. The space of arational trees is 
saturated for the equivalence relation $\sim$.

 \begin{theorem}\label{positive}
The Gromov boundary 
of the free factor graph is the space ${\cal F\cal T}/\sim$ of
equivalence classes of 
arational trees equipped with the quotient topology.
\end{theorem}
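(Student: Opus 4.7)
My approach is to deduce Theorem \ref{positive} from Theorem \ref{freesplitbd} by analyzing the natural coarsely ${\rm Out}(F_n)$-equivariant projection $\pi\colon \mathrm{FS}\to {\cal F\cal F}$ sending a one-edge free splitting to the conjugacy class of one of its vertex free factors. The map $\pi$ is coarsely Lipschitz and coarsely surjective between two Gromov-hyperbolic graphs, so I expect it to induce a surjection on Gromov boundaries whose fibers effect exactly the further identification from $\partial \mathrm{FS}$ to $\partial {\cal F\cal F}$. Concretely I want to show that an equivalence class $[T]\in {\cal S\cal T}/\sim$ defines a boundary point of ${\cal F\cal F}$ precisely when $[T]$ is arational, and that the resulting map ${\cal F\cal T}/\sim\,\to\,\partial{\cal F\cal F}$ is a ${\rm Out}(F_n)$-equivariant homeomorphism.

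The central dichotomy to establish is: for a sequence of free splittings $(S_k)$ converging in $\mathrm{FS}\cup\partial\mathrm{FS}$ to the boundary class of $[T]$, the sequence $(\pi(S_k))$ is bounded in ${\cal F\cal F}$ if and only if $[T]$ is not arational. In the bounded direction, if $[T]$ splits as a large graph of actions then the skeleton provides vertex groups not contained in any proper free factor but whose stabilizers of incident edges force every refining splitting to contain a free factor within bounded ${\cal F\cal F}$-distance; analogously, if $[T]$ is indecomposable but some nontrivial point stabilizer contains a free factor $A$, then $[A]$ stays uniformly close to $\pi(S_k)$. For the unbounded direction, I would argue that when $[T]$ is arational, any free factor appearing as a vertex group in too many $S_k$ must, using convergence in $\overline{cv(F_n)}$ together with density of orbits on indecomposable trees, fix a point of $T$, contradicting arationality. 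Combined with Theorem \ref{freesplitbd}, this produces a well-defined continuous map ${\cal F\cal T}/\sim\,\to\,\partial{\cal F\cal F}$. Surjectivity then follows by lifting: any geodesic ray in ${\cal F\cal F}$ can be traced by sequences in $\mathrm{FS}$ via the coarse surjectivity of $\pi$, and extracting a subsequential limit $[T]\in {\cal S\cal T}/\sim$ using Theorem \ref{freesplitbd} and the dichotomy above forces $[T]$ to be arational.

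The main obstacle is injectivity of the boundary map on ${\cal F\cal T}/\sim$: $\sim$-inequivalent arational trees $[T]\not\sim[T']$ must map to distinct points of $\partial{\cal F\cal F}$. A priori the equivalence $\sim$ was tailored to the observer's topology on $\hat T$ and to the fibers of $\partial\mathrm{FS}\to {\cal S\cal T}/\sim$, and there is no formal reason that passage to ${\cal F\cal F}$ cannot create new identifications. My plan is to show that if $\pi(S_k)$ and $\pi(S_k')$ fellow-travel in ${\cal F\cal F}$ for sequences $S_k,S_k'$ converging to $[T],[T']$ respectively in $\mathrm{FS}$, then after adjustment the actions of $F_n$ on $\hat T$ and $\hat T'$ must be intertwined by an equivariant observer's-topology homeomorphism, yielding $[T]\sim[T']$. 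Promoting fellow-traveling in ${\cal F\cal F}$ to such rigidity is the heart of the argument, and I expect it to require a bounded-geodesic-image-type result for $\pi$ along arational directions together with the rigidity of indecomposable trees having no free-factor point stabilizer.
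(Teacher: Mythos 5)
Your plan — to obtain $\partial{\cal F\cal F}$ from $\partial{\cal F\cal S}$ via the coarse projection $\Omega$ and then sort out which classes survive — is a different logical organization from the paper (which proves Theorem \ref{positive} in Section 10 directly from the Skora-path machinery of Sections 6--9, before Theorems \ref{freesplitbd} and \ref{arational} are completed in Section 11), but the reorganization does not buy you anything: all the hard content sits exactly in the two steps you leave as sketches, and for both of them you are missing the mechanism the paper actually uses. For the ``unbounded direction,'' the paper does not argue with a single free factor recurring as a vertex group of the $S_k$. Boundedness of $\Upsilon_{\cal F}(T_i)$ in ${\cal F\cal F}$ only gives, via the quasi-isometric model $(cv_0(F_n),d_{ng}^1)$ of Lemma \ref{equivariantfs2}, a chain of length $m$ of $1$-tied trees from a fixed basepoint $T_0$ to each $T_i$; one must then pass to the limit along the \emph{entire chain}. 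This is the Luo/Masur--Minsky-type argument of Proposition \ref{fillsone}: the short primitive conjugacy classes tying consecutive trees have dual measured laminations, Lemma \ref{shortloop} controls their limits, Lemma \ref{notfactor} (a Whitehead-graph argument) guarantees that for an arational target the limiting lamination is regular (its mass cannot sit in point stabilizers), and Proposition \ref{fillisfull} then forces each successive limit tree $[U_{m-1}],[U_{m-2}],\dots$ to collapse onto $T$, ending in the contradiction that $T_0$ itself does. Your sketch ``a free factor appearing in too many $S_k$ must fix a point of $T$'' does not engage with this inductive structure and would not close.

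The injectivity step is the same gap in sharper form. You correctly identify it as the heart, but a ``bounded geodesic image''-type statement for $\Omega$ is not what is needed and is not obviously available; what makes the fibers computable is the rigidity statement Proposition \ref{fillisfull}: an indecomposable tree carrying a regular measured lamination $\mu$ (and every arational tree does, by Lemma \ref{indecfree} and Lemma \ref{notfactor}) is determined up to $F_n$-equivariant observer's-topology homeomorphism of $\hat T$ by the single condition $\langle S,\mu\rangle=0$. With that in hand, fellow-traveling of $\Upsilon_{\cal F}(x_t)$ and $\Upsilon_{\cal F}(y_t)$ is converted, again by the finite chain of tied trees and passage to limits, into an equivalence $[T]\sim[T']$. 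Separately, your ``bounded direction'' for large graphs of actions is garbled: the paper's route (Proposition \ref{graphofaction2}) is that a very large graph of actions has a point stabilizer containing a free factor (Reynolds), whence Corollary \ref{pointstab} pins $\Upsilon_{\cal F}$ near that factor along a relative Skora path; nothing about ``stabilizers of incident edges forcing every refining splitting to contain a free factor'' is used or, as stated, meaningful. In short, the proposal names the correct intermediate statements but supplies neither the measured-lamination chain argument nor the dual-lamination rigidity on which both directions rest.
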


The strategy of proof for the above theorems
builds on the strategy of
Klarreich \cite{K99} who determined the Gromov
boundary of the curve graph of a non-exceptional
surface of finite type. 

As in \cite{K99}, we begin with describing in Section 2
two electrifications of Outer space which 
are quasi-isometric to the free factor graph and to the 
free splitting graph, respectively.

In Section 3 we introduce folding paths and collect 
those of their properties which are used later on.

Section 4 establishes a technical result on folding
paths and their relation to the geometry of 
the free splitting graph. Section 5
gives some information on trees in 
$\partial cv(F_n)$ for which the orbits of the action of 
$F_n$ are not dense.
 
In Section 6 we show that the boundary 
of each of the above three ${\rm Out}(F_n)$-graphs is the image of 
a ${\rm Out}(F_n)$-invariant
subspace of the boundary $\partial {\rm CV}(F_n)$ 
of projectivized Outer space under a continuous map. 
The main technical tool to this end is a detailed
analysis on properties of folding paths.

In Section 7 we investigate the structure of indecomposable
trees, and 
Section 8 contains some information
on trees with point stabilizers containing a free factor. In
Section 9 we show that only indecomposable projective
trees can give rise to points in the boundary of the free factor graph.

The proof of Theorem \ref{positive} is contained in Section 10, 
and the proofs of Theorem \ref{freesplitbd} and 
Theorem \ref{arational} are completed in Section 11.

Theorem \ref{positive} was independently and at the same time 
obtained by Mladen Bestvina and Patrick
Reynolds \cite{BR12}. Very recently 
Horbez \cite{H14} extended Theorem \ref{arational} to a more general
class of groups, with a somewhat different proof.

\bigskip

{\bf Acknowledgement:} I am indebted to 
Vincent Guirardel and Gilbert Levitt for inspiring and
insightful discussions.
In particular, I owe an argument 
used in the proof of
Lemma \ref{graphofgroups}  
to Vincent Guirardel. Thanks to Karen Vogtmann
for help with references.
I am grateful to Mladen Bestvina for a valuable
comment and to Lee Mosher 
for some 
helpful explanation related to his yet unpublished work \cite{HM13d}.
I am particularly indebted to Martin Lustig and Camille Horbez 
for pointing out an error in an earlier version of this paper.

\section{Geometric models}\label{geometric}

As in the introduction, we consider a free group $F_n$ of 
rank $n\geq 3$. Our goal is to 
introduce geometric models for the free factor graph and
the free splitting graph. We also construct $n-3$ additional
${\rm Out}(F_n)$-graphs which geometrically
lie between the free factor graph and the free splitting graph; 
they are analogs of the graphs considered in \cite{H11}.
Although these graphs are not used for the proofs of 
the theorems from the introduction, they shed some light 
on the geometry of the free splitting graph and the structure
of its boundary. They will be useful in another context.

The \emph{free splitting graph} 
\cite{KL09} is defined 
to be the graph whose vertices are  
one-edge graph of groups decompositions of $F_n$ with
trivial edge group. Two such vertices are connected
by an edge if up to conjugation,
they have a common refinement.
It is more convenient for our purpose 
to use instead the first barycentric subdivision
${\cal F\cal S}$ of the free splitting graph. 
Its vertices are graph of groups decompositions
of $F_n$ with trivial edge groups. Two vertices 
$\Gamma,\Gamma^\prime$ are
connected by an edge if $\Gamma$ is a collapse or blow-up of
$\Gamma^\prime$. The outer automorphism group 
${\rm Out}(F_n)$ of $F_n$ acts on ${\cal F\cal S}$ as a group
of simplicial isometries.

The \emph{cyclic splitting graph} \cite{Mn12} is the 
graph whose vertices are one-edge graph of groups 
decompositions of $F_n$ with trivial or infinite cyclic 
edge group. Two such vertices are connected by an
edge if up to conjugation, they have a common refinement.
The vertices of the first barycentric subdivision ${\cal C\cal S}$ 
of the cyclic splitting graph 
are graph of groups decompositions of $F_n$ with 
trivial or infinite cyclic edge groups. Two vertices
$\Gamma,\Gamma^\prime$ are connected by an
edge if $\Gamma$ is a collapse or a blow-up of
$\Gamma^\prime$. The graph ${\cal F\cal S}$ is 
an ${\rm Out}(F_n)$-invariant subgraph of 
${\cal C\cal S}$. The inclusion is an ${\rm Out}(F_n)$-equivariant
one-Lipschitz embedding
\begin{equation}\label{psi}
\Psi:{\cal F\cal S}\to {\cal C\cal S}.
\end{equation}

Let $cv(F_n)$ be the unprojectivized Outer space of all 
simplicial trees with minimal free isometric actions of $F_n$,
equipped with the equivariant Gromov-Hausdorff topology.
We refer to \cite{P88} for detailed information on this topology.
The quotient $T/F_n$ of each tree $T\in cv_0(F_n)$ 
defines a graph of groups decomposition of $F_n$ 
with trivial edge groups and hence
a vertex $\Upsilon(T)$ in ${\cal F\cal S}$.
This graph of groups decomposition is invariant
under scaling the metric of $T$.

An $F_n$-tree $T$ is called \emph{very small} 
if $T$ is minimal and if moreover the
following holds.
\begin{enumerate}
\item Stabilizers of non-degenerate segments are at most cyclic.
\item If $g^n$ stabilizes a non-degenerate
segment $e$ for some $n\geq 1$ then
so does $g$.
\item ${\rm Fix}(g)$ contains no tripod for $g\not=1$.
\end{enumerate}
Here a tripod is a compact subset of $T$ which is homeomorphic
to a cone over three points.

The equivariant Gromov Hausdorff topology extends to the 
space $\partial cv(F_n)$ 
of minimal very small $F_n$-trees \cite{BF92,CL95} which 
either are not simplicial or which are not free.
The subspace $cv(F_n)$ is dense in 
$\overline{cv(F_n)}=cv(F_n)\cup \partial cv(F_n)$. 

Let 
\begin{equation}\label{simplicial}
\overline{cv(F_n)}^s\subset \overline{cv(F_n)}
\end{equation}
be the ${\rm Out}(F_n)$-invariant subset of
all minimal very small
simplicial $F_n$-trees.
Then $\overline{cv(F_n)}^s-cv(F_n)$ consists of simplicial
$F_n$-trees so that the action of $F_n$ is not free.

For each $T\in \overline{cv(F_n)}^s$ the quotient graph
$T/F_n$ defines a graph of groups decomposition 
of $F_n$ with at most
cyclic edge groups (we refer to \cite{CL95} for a detailed
discussion). 
Thus there is a natural 
${\rm Out}(F_n)$-equivariant map
\begin{equation}\label{upsilonc}
\Upsilon_{\cal C}:\overline{cv(F_n)}^s\to {\cal C\cal S}.
\end{equation}
Its image is the vertex set of ${\cal C\cal S}$.

Let 
\begin{equation}\label{bd+}
\overline{cv(F_n)}^+\subset \overline{cv_0(F_n)}^s
\end{equation}
be the ${\rm Out}(F_n)$-invariant subspace of 
simplicial trees with 
at least one $F_n$-orbit of edges with
trivial edge stabilizer. To each 
tree $T\in \overline{cv(F_n)}^+$ 
we can associate the graph of groups decomposition 
$\Upsilon(T)$ with trivial edge groups 
obtained by collapsing all edges in $T/F_n$ with
non-trivial edge groups to a point. 
Thus there is an ${\rm Out}(F_n)$-equivariant 
coarsely surjective map
\begin{equation}\label{upsilon}
\Upsilon:\overline{cv(F_n)}^+\to 
{\cal F\cal S}.\end{equation}
Note that for $T\in \overline{cv(F_n)}^+$
the distance in ${\cal C\cal S}$ between 
$\Upsilon(T)\in {\cal F\cal S}\subset {\cal C\cal S}$ and 
$\Upsilon_{\cal C}(T)$ is at most one.

The \emph{free factor graph} ${\cal F\cal F}$ 
is the graph whose vertices
are free factors of $F_n$ and where two such vertices 
$A,B$ are connected by an edge 
of length one if and only if up to conjugation,
either
$A<B$ or $B<A$. 
We next observe that there is a (coarsely) 
${\rm Out}(F_n)$-equivariant (coarsely) consistent
way to associate to a point in $\overline{cv(F_n)}^s$ a 
vertex in the free factor graph. 

To this end 
say that a map $f:X\to Y$ between
metric spaces $X,Y$ equipped with an isometric action of 
a group $\Gamma$ is \emph{coarsely $\Gamma$-equivariant}
if there is a number $C>0$ such that
\[d(g(f(x)),f(gx))\leq C\]
for all $x\in X$, all $g\in \Gamma$.
The map $f$ is called \emph{coarsely $L$-Lipschitz}
if 
\[d(f(x),f(y))\leq Ld(x,y)+L\] for all $x,y\in X$.

\begin{lemma}\label{map}
There is a number $k>1$, and there is a
coarsely $k$-Lipschitz coarsely ${\rm Out}(F_n)$-equivariant
map $\Omega:{\cal C\cal S}\to {\cal F\cal F}$.
\end{lemma}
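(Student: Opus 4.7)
My plan is to factor the map as a composition $\mathcal{CS} \xrightarrow{\Pi} \mathcal{FS} \xrightarrow{\Xi} \mathcal{FF}$ of coarsely Lipschitz, coarsely ${\rm Out}(F_n)$-equivariant maps. For $\Xi$ I would use the standard construction: given $\Gamma' \in \mathcal{FS}$, Grushko's theorem guarantees that every vertex group of $\Gamma'$ is a free factor of $F_n$, so setting $\Xi(\Gamma') = [V]$ for any proper vertex group $V$ is well-defined up to bounded ambiguity. A single blow-up or collapse move in $\mathcal{FS}$ either amalgamates two such free factors into their free product or splits one as a free product, keeping the associated $[V]$ at uniformly bounded $\mathcal{FF}$-distance; coarse ${\rm Out}(F_n)$-equivariance is immediate from naturality of the construction.

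For $\Pi$ I would proceed by cases on $\Gamma \in \mathcal{CS}$. If $\Gamma$ has at least one edge with trivial stabilizer, I would collapse all edges with non-trivial (cyclic) stabilizer, obtaining a free splitting $\Pi(\Gamma) \in \mathcal{FS}$ canonically. If $\Gamma$ is \emph{purely cyclic} (every edge has a non-trivial stabilizer), I would first blow up to introduce a trivial edge. Since $n \geq 3$, Bass--Serre theory forces some vertex group $V$ of $\Gamma$ to have rank at least $2$; using freeness of $V$ and the finiteness of incident edge groups, I would produce a free decomposition $V = V_1 * V_2$ such that each cyclic subgroup attaching $V$ to its neighbors is contained in one of $V_1, V_2$, applying Nielsen transformations to arrange this when necessary. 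The resulting refinement $\Gamma^\sharp$ is a blow-up of $\Gamma$ at $\mathcal{CS}$-distance $1$ and contains a trivial edge, so $\Pi(\Gamma) := \Pi(\Gamma^\sharp)$ is defined via the first case. I then set $\Omega := \Xi \circ \Pi$.

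Verifying the Lipschitz bound amounts to case analysis: an edge in $\mathcal{CS}$ is a single collapse or blow-up, which modifies the underlying tree by one orbit of edges and so changes $\Pi$ by only boundedly many moves in $\mathcal{FS}$. Coarse equivariance is automatic from naturality. The main obstacle I anticipate is the purely cyclic case, where I must show that $\Pi(\Gamma)$ is well-defined up to bounded ambiguity independent of the choice of vertex group $V$ and of the free decomposition $V = V_1 * V_2$. I expect this to follow from Stallings-fold arguments on the Bass--Serre tree: any two valid blow-ups are related by a bounded sequence of folds and collapses, producing $\mathcal{FS}$-vertices at bounded distance, an ambiguity that then survives application of $\Xi$.
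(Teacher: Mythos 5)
Your factorization $\Omega=\Xi\circ\Pi$ breaks down at the first arrow: there is no coarsely Lipschitz, coarsely ${\rm Out}(F_n)$-equivariant map $\Pi:{\cal C\cal S}\to{\cal F\cal S}$ of the kind you describe. Note that your $\Pi$ restricts to (coarsely) the identity on ${\cal F\cal S}\subset{\cal C\cal S}$, so if it were coarsely Lipschitz it would be a coarse Lipschitz retraction and the inclusion ${\cal F\cal S}\hookrightarrow{\cal C\cal S}$ would be a quasi-isometric embedding. It is not: take a non-separating simple closed curve $c$ on a surface $S$ with $\pi_1(S)=F_{2g}$ and a pseudo-Anosov $\phi$ of $S-c$. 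Then $\phi$ fixes the vertex $\Gamma=A*_{\langle c\rangle}$ of ${\cal C\cal S}$ but acts as a hyperbolic isometry on ${\cal F\cal S}$ (this is exactly the example at the end of Section 11). Concretely, this kills the "bounded ambiguity" you anticipate needing in the purely cyclic case: if $\Gamma^{\sharp}$ is any free refinement of $\Gamma$, then all the $\phi^{k}\Gamma^{\sharp}$ are blow-ups of $\Gamma$, hence pairwise at distance at most $2$ in ${\cal C\cal S}$, yet $\Pi(\phi^{k}\Gamma^{\sharp})=\phi^{k}\Pi(\Gamma^{\sharp})$ leaves every bounded subset of ${\cal F\cal S}$. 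So the ambiguity in choosing the blow-up is genuinely unbounded in ${\cal F\cal S}$, and no Stallings-fold argument can repair it; the Lipschitz verification for $\Xi\circ\Pi$ cannot be reduced to a Lipschitz bound for $\Pi$.

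What saves the statement is that the target is ${\cal F\cal F}$, where the unboundedly many blow-ups above do land in a bounded set, and this has to be proved directly. The paper does this by replacing ${\cal C\cal S}$ with Mann's quasi-isometric model $FZ_n$, whose vertices are one-edge free splittings and whose edges are of two types: adjacency in the free splitting graph, or the existence of equivariant edge folds of both splittings onto a common $\mathbb{Z}$-splitting. The map to ${\cal F\cal F}$ sends a one-edge free splitting to a vertex group (this part matches your $\Xi$), and the essential new ingredient, absent from your proposal, is the treatment of the fold edges: if $X,Y$ fold onto a $\mathbb{Z}$-splitting with edge group $\langle w\rangle$, then the smallest free factor $A$ of $F_n$ containing $\langle w\rangle$ is, up to conjugacy, a subgroup of a vertex group of both $X$ and $Y$, which bounds $d_{\cal F}(R(X),R(Y))$. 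You would need this (or an equivalent) argument to control exactly the configurations where your $\Pi$ blows up.
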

\begin{proof} 
There is a second description of the
cyclic splitting graph as follows. Namely, 
let ${FZ}_n$ be the graph whose vertex set is the set
of one edge free splittings of $F_n$.
Two such splittings $X,Y$ are connected
by an edge if either
\begin{enumerate}
\item[(a)] they are connected in the free splitting graph by
an edge or
\item[(b)] there exists a $\mathbb{Z}$-splitting
$T$ and equivariant \emph{edge folds} $X\to T,Y\to T$.
\end{enumerate}
Roughly speaking, an edge fold of a free splitting
$A*B$ is a splitting of the form
$A*_C\langle B,C\rangle$ where $C$ is any maximal cyclic subgroup of $A$.
We refer to \cite{St65,BF92,Mn12} for details of this
construction.

By Proposition 2 of \cite{Mn12}, the vertex inclusion 
of the set of vertices of 
$ZF_n$ into the set of vertices of the cyclic splitting graph extends to
an ${\rm Out}(F_n)$-equivariant quasi-isometry.

Define a map $R$ from the vertex set of 
$FZ_n$ into the vertex set of the free factor graph
by associating to a vertex $X$ of $FZ_n$ a vertex group of the
corresponding free splitting. 

Since for every free splitting of $F_n$ of the form
$F_n=A*B$ the distance in ${\cal F\cal F}$ between the
free factors $A,B$ is at most three, 
the map $R$ coarsely does not depend on choices and hence
it is coarsely ${\rm Out}(F_n)$-equivariant. 
We claim that it
extends to a coarsely Lipschitz map $ZF_n\to {\cal F\cal F}$.

To this end let $d_{\cal F}$ be the distance in 
${\cal F\cal F}$. Since the metrics on the graphs 
${\cal C\cal S},\,{\cal F\cal F}$ are geodesic,
by an iterated application of the triangle inequality 
it suffices to show that there is a number
$L>1$ so that $d_{\cal F}(R(X),R(Y))\leq L$ whenever
$X,Y\in ZF_n$ are vertices connected by an edge.

Consider first the case that $X,Y$ are connected by
an edge in the free splitting graph. 
Assume furthermore that $X,Y$ are one edge two vertex splittings,
i.e. that these splittings are of the form
$X=A*B$ and $Y=C*D$. Then up to conjugation, these splitting have
a common refinement, i.e. up to exchanging $A,B$ and 
$C,D$ there is a free factor $E$ of $F_n$ which is a subgroup of 
both $A,C$. 

Then $d_{\cal F}(E,A)\leq 1,d_{\cal F}(E,C)\leq 1$, moreover 
the distance in 
${\cal F\cal F}$ between $A$ and $B$ and between
$C$ and $D$ is at most three. Thus the distance between
$R(X)$ and $R(Y)$ is at most $8$. 
The case that one or both of the splittings $X,Y$ is a one-loop
splitting follows in the same way and will be omitted
(see \cite{KR12} for details).

Now assume that $X,Y$ are connected by an edge of type (b) above.
Following Lemma 1 of \cite{Mn12} and the discussion
in the proof of Theorem 5 of \cite{Mn12},
let $\langle w\rangle$ be the edge group of the $\mathbb{Z}$-splitting
to which $X,Y$ fold and let $A$ be the smallest free factor of $F_n$
which contains $\langle w\rangle$.
Then $A$ is a subgroup of a vertex group of both $X$ and 
$Y$ and hence
the distance in the free factor graph between
$R(X),R(Y)$ is at most $8$.
\end{proof}

By Lemma \ref{map}, the map
\begin{equation}\label{ff}
\Upsilon_{\cal F}=\Omega\circ \Upsilon_{\cal C}:
\overline{cv(F_n)}^{+}\to {\cal F\cal F}
\end{equation}
is coarsely ${\rm Out}(F_n)$-equivariant and coarsely
surjective.

For a number $\ell>0$ call a free basis 
$e_1,\dots,e_n$ of $F_n$ \emph{$\ell$-short} for a tree
$T\in \overline{cv(F_n)}^{s}$ if there is a vertex $v$ of 
$T$ so that for each $i$ the distance in $T$ between
$v$ and $e_i v$ is at most $\ell$. 
If $T\in cv(F_n)$ then 
this is equivalent
to stating that there is a marked rose $R$ with $n$ petals of equal
length one representing each one of the basis elements $e_i$,
and there is an $\ell$-Lipschitz map 
$u:R\to T/F_n$ which maps the vertex of $R$ to a vertex of $T/F_n$,
and which maps each petal marked with $e_i$ to a path in $T/F_n$
representing $e_i$.

Let 
\[\overline{cv_0(F_n)}^s\subset \overline{cv(F_n)}^s\]
be the subspace of all simplicial trees $T$ with  
quotient graph $T/F_n$ of volume one. The group 
${\rm Out}(F_n)$ acts on $cv_0(F_n)^s$ by precomposition of marking.
Let $\overline{cv_0(F_n)}^+=\overline{cv(F_n)}^+\cap
cv_0(F_n)^s$ and define 
\begin{equation}
\overline{cv_0(F_n)}^{++}\subset \overline{cv_0(F_n)}^+\end{equation}
to be the set of all 
simplicial very small $F_n$-trees 
with volume one quotient and no non-trivial
edge stabilizer.

\begin{lemma}\label{shortexists}
Every tree $T\in \overline{cv_0(F_n)}^{++}$ admits a 
$3$-short basis.
\end{lemma}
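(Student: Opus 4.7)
The plan is to exploit the Bass-Serre structure of the graph of groups $T/F_n$. Since $T \in \overline{cv_0(F_n)}^{++}$, all edge stabilizers are trivial, and the quotient $G = T/F_n$ is a finite graph of groups with trivial edge groups and total edge length one. By Bass-Serre theory (or simply by iterated applications of the normal form theorem for free products with trivial amalgamation), if $A_1,\ldots,A_k$ are the vertex groups of $G$ and $r$ is the first Betti number of the underlying graph, then each $A_i$ is a free factor of $F_n$ and there is a free product decomposition
\[F_n = A_1 * A_2 * \cdots * A_k * F_r,\]
with $\sum_i \operatorname{rank}(A_i) + r = n$. In particular, concatenating a free basis of each $A_i$ with a free basis of the $F_r$ factor produces a free basis $e_1,\ldots,e_n$ of $F_n$.

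Next I would realize this basis geometrically. Choose a maximal subtree $\tau\subset G$, a basepoint $v_0$ of $\tau$, and a lift $\tilde v_0\in T$. For every other vertex $v_j$ of $G$, lift the geodesic in $\tau$ from $v_0$ to $v_j$ to a path in $T$ starting at $\tilde v_0$, and let $\tilde v_j\in T$ denote its endpoint. Under the standard identification of the abstract Bass-Serre presentation with $F_n$, the vertex group $A_j\subset F_n$ is exactly the stabilizer of $\tilde v_j$, and for each edge $e\notin\tau$ from $v_a$ to $v_b$ the corresponding generator $t_e$ of the $F_r$-factor satisfies $t_e\tilde v_b = w$, where $w$ is the endpoint of the lift of $e$ starting at $\tilde v_a$.

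The short-basis estimate is then a direct computation. For a basis element $a$ of $A_j$, since $a$ fixes $\tilde v_j$,
\[d_T(\tilde v_0, a\tilde v_0) \leq d_T(\tilde v_0,\tilde v_j) + d_T(a\tilde v_j, a\tilde v_0) = 2\, d_T(\tilde v_0,\tilde v_j) = 2\, d_\tau(v_0,v_j)\leq 2\operatorname{diam}(\tau)\leq 2.\]
For a generator $t_e$ with $e\notin \tau$ joining $v_a$ and $v_b$, the triangle inequality together with the equivariance of the metric gives
\[d_T(\tilde v_0, t_e\tilde v_0)\leq d_T(\tilde v_0,\tilde v_a) + \operatorname{length}(e) + d_T(\tilde v_b,\tilde v_0)\leq 2\operatorname{diam}(\tau) + \operatorname{length}(e)\leq 3,\]
using $\operatorname{vol}(G)=1$. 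Taking $v=\tilde v_0$ shows $e_1,\ldots,e_n$ is $3$-short.

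The only point that requires care is the Bass-Serre bookkeeping in the second step, namely making sure that our chosen bases of the abstract factors $A_j$ and $F_r$ act on $T$ in the advertised way (fixing $\tilde v_j$, respectively translating $\tilde v_0$ along a loop of length at most the loop-length in $G$). This is a standard, if slightly tedious, consequence of the fact that trivial edge stabilizers make the graph of groups a genuine free product and that the chosen lifts of the maximal subtree provide the compatible identification; once this is in place the distance bounds are immediate.
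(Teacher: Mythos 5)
Your proof is correct, and it handles the non-free case by a genuinely different (and more direct) route than the paper. The treatment of the stable letters $t_e$ is the same in spirit as the paper's first step: collapsing a maximal subtree and noting that each resulting loop crosses every edge at most twice, which together with $\operatorname{vol}(G)=1$ gives the bound. Where you diverge is in the vertex groups: the paper blows up each vertex stabilizer of rank $m$ into a rose with $m$ petals of very small length (invoking the Combination Lemma 8.6 of \cite{CL95}), obtains a free simplicial tree $T^\prime$ of volume less than $3/2$ with a one-Lipschitz equivariant collapse $T^\prime\to T$, and transfers the short basis of $T^\prime$ to $T$; you instead observe directly that a generator $a$ of a vertex group is elliptic with a fixed point $\tilde v_j$ at distance at most $\operatorname{diam}(\tau)\leq 1$ from the basepoint, whence $d_T(\tilde v_0,a\tilde v_0)\leq 2\,d_T(\tilde v_0,\tilde v_j)\leq 2$. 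Your version avoids the auxiliary construction and the attendant volume bookkeeping entirely, at the cost of the Bass--Serre identification you flag (which is indeed standard for trivial edge groups); the paper's blow-up argument is less economical here but is the template reused for Lemma \ref{shortexists3}, where edge groups are no longer trivial and a purely elliptic argument would not suffice.
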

\begin{proof} If $T\in cv_0(F_n)$ then choose any vertex
$v\in G=T/F_n$. Collapse a maximal tree in $G$ to a point.
The resulting graph is a rose $R$ with $n$ petals which determines
a free basis $e_1,\dots,e_n$ of $F_n$. In the graph $G$, each 
basis element $e_i$ is represented by a loop based at $v$ which 
passes through every edge of $G$ at most twice. Since the volume of 
$G$ equals one, the basis $e_1,\dots,e_n$ is 
$2$-short for $T$.

If $T$ has non-trivial vertex stabilizers then each of these
stabilizers is a free factor of $F_n$. This means that there is a
free simplicial $F_n$-tree 
$T^\prime\in cv(F_n)$ with quotient of volume 
smaller than $3/2$,
and there is a one-Lipschitz equivariant map
$T^\prime\to T$ obtained by collapsing 
the minimal subtrees of $T^\prime$ which are invariant
under the vertex stabilizers of $T$ to points.
The tree $T^\prime$ is the universal covering of a graph $G$
obtained from $T/F_n$ by attaching a marked rose with
$m$ 
petals of very small length to the projection of a vertex
in $T$ with stabilizer of rank $m$. This construction is carried
out in detail in the Combination Lemma 8.6 of \cite{CL95}.

As $T^\prime$ has a $3$-short basis by
the discussion in the first paragraph of this proof, the
same holds true for $T$. 
The lemma follows.
\end{proof}

The following lemma shows that 
the statement of Lemma \ref{shortexists}  
can be extended to a class of trees with 
non-trivial edge stabilizers. 
It serves as an illustration for the results
established later on, but
it will not be used directly in the
sequel. We also refer to Lemma 8.6 of \cite{CL95} 
for related and more general constructions.

\begin{lemma}\label{shortexists3}
Let $T\in \overline{cv_0(F_n)}^s$ be a simplicial 
$F_n$-tree such that the graph of groups decomposition
$T/F_n$ has a single edge with non-trivial edge group, 
and this edge is separating.  
Then $T$ admits a $3$-short basis.
\end{lemma}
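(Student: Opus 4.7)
The plan is to combine short bases of $A$ and $B$ into a short basis of $F_n$. Let $G = T/F_n$ and let $G_A, G_B$ be the two components of $G \setminus \{e\}$, with $A := \pi_1(G_A)$, $B := \pi_1(G_B)$, so that $F_n = A *_{\langle w \rangle} B$. Write $\alpha, \beta$ for the volumes of $G_A, G_B$ and $\epsilon$ for the length of $e$; then $\alpha + \beta + \epsilon = 1$. Fix a lift $\tilde e$ of $e$ in $T$ with endpoints $\tilde v_A \in T_A$ and $\tilde v_B \in T_B$, where $T_A, T_B$ are the minimal $A$- and $B$-subtrees.

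Since $F_n$ is free and $F_n = A *_{\langle w \rangle} B$, a classical theorem of Shenitzer gives that $\langle w \rangle$ is a free factor of $A$ or of $B$; one may assume $A = \langle w \rangle * A_0$, and then $F_n = A_0 * B$ as a free product. The first task is to produce a $3\alpha$-short basis of $A$ for $T_A$ containing $w$ as an element. The vertex stabilizer $V_A = \mathrm{Stab}(\tilde v_A)$ is a free factor of $A$, because $G_A$ has only trivial edge groups; and the Kurosh subgroup theorem applied to $V_A \leq A = \langle w \rangle * A_0$ (using $\langle w \rangle \leq V_A$) yields a decomposition $V_A = \langle w \rangle * V_A'$, so that $w$ is primitive in $V_A$.

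I would then mimic the proof of Lemma~\ref{shortexists} for $T_A$ (rescaled so that $T_A/A$ has volume one). When blowing up $V_A$ by an attached marked rose of very small length representing a basis of $V_A$, choose this basis to contain $w$ as a petal. The $w$-petal then appears as a basis element of $A$ after collapsing a spanning tree of the blown-up graph. Unrescaling yields a $3\alpha$-short basis $\{w, a_2, \ldots, a_p\}$ of $A$ at $\tilde v_A$. A direct application of Lemma~\ref{shortexists} to $T_B$ gives a $3\beta$-short basis $\{b_1, \ldots, b_q\}$ of $B$ at $\tilde v_B$.

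It remains to check that $\{a_2, \ldots, a_p, b_1, \ldots, b_q\}$ is a $3$-short basis of $F_n$ at $\tilde v_A$. It is a basis because $F_n$ has the presentation with generators $\{w, a_2, \ldots, a_p, b_1, \ldots, b_q\}$ and the single defining relation $w = \omega(b_1, \ldots, b_q)$ (identifying the two copies of $w$), which eliminates the generator $w$. The displacements at $\tilde v_A$ are controlled by $d_T(\tilde v_A, a_i \tilde v_A) \leq 3\alpha \leq 3$ and, by the triangle inequality and the isometry of $b_j$,
\[ d_T(\tilde v_A, b_j \tilde v_A) \leq 2\, d_T(\tilde v_A, \tilde v_B) + d_T(\tilde v_B, b_j \tilde v_B) \leq 2\epsilon + 3\beta = 3 - 3\alpha - \epsilon \leq 3. \]
The main obstacle I anticipate is the first step, combining Shenitzer's theorem with Kurosh to realize $w$ as a free-factor generator of $V_A$ so that it can be built into the short basis of $A$; the remaining estimates are routine once Lemma~\ref{shortexists} is in place.
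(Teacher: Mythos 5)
Your proof is correct and follows essentially the same route as the paper: split $F_n$ as $A*_{\langle w\rangle}B$ along the separating edge, invoke the Shenitzer/Swarup fact that $\langle w\rangle$ is a free factor of one side so that the amalgam becomes a genuine free product, build short bases on each side based at the two endpoints of a lift of $e$ via Lemma~\ref{shortexists}, and combine with the triangle inequality across the edge. The only difference is bookkeeping: you make $w$ an explicit basis element of the Shenitzer side (via Kurosh applied to the vertex stabilizer) and eliminate it by a Tietze transformation, whereas the paper keeps the full basis of the other side and adjoins the loops and a complement of $\langle w\rangle$ in the vertex group separately — your version is, if anything, more explicit about why the result is a basis.
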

\begin{proof} 
Let $T\in \overline{cv_0(F_n)}^s$ be a tree as in the lemma.
There is a single separating edge $s$ in $T/F_n=G$ with non-trivial 
edge group.
The edge $s$ defines a one-edge two vertex cyclic splitting of $F_n$.
Let $S<F_n$ be the cyclic edge group.

By assumption, the edge $s$ is separating in $G$.
Let $v_1,v_2\in G$ be the two vertices
on which $s$ is incident and let $H(v_1),H(v_2)$ be the vertex groups.
The infinite cyclic edge group
$S$ is a free factor 
in at least one of the vertex groups $H(v_i)$, say in the group $H(v_2)$
\cite{St65}. Then $H(v_1)$ is a free factor of $F_n$
(this is not true in general for the vertex group $H(v_2)$).

Let $\tilde v_1$ be a preimage of $v_1$ in $T$. As $H(v_1)$ is a free factor
of $F_n$ containing $S$ as a subgroup, 
the subset $A$ of $F_n$ of all elements $g$ so that
the segment in $T$ connecting $\tilde v_1$ to $g\tilde v_1$ does not
cross through a preimage of $s$ is a free factor of $F_n$.
The minimal $A$-invariant subtree $T_A$ of $T$ does not have
edges with non-trivial stabilizer. 
Its quotient graph $T_A/A$ is an embedded
subgraph of $G$ and hence its 
volume is smaller than one.

Choose the vertex $\tilde v_1$ 
as a base point. By Lemma \ref{shortexists} and 
the above discussion, there is a 
$3$-short free basis $e_1,\dots,e_k$ for the free
factor $A$ of $F_n$. 
Extend this basis of $A$ 
to a basis of 
$F_n$ as follows. First 
add a free basis $e_{k+1},\dots,e_u$ for the free factor $B$ of $F_n$ 
generated by all based loops at $v_2$ in $G$ 
which do not cross through $s$. We require that
each such based loop 
passes through every edge of $G$ at most twice.
Up to conjugation of $B$, an element $e_i$ 
$(i\geq k+1)$ translates 
the vertex 
$\tilde v_1$ a distance which equals 
the sum of twice the length of $s$ with the 
length of the defining loop in $G$. In particular,
the translation length of each of the elements $e_i$ is at most three.
This partial basis 
can be extended to a $3$-short free basis
of $F_n$ by adding some elements in the point stabilizer
of $v_2$.
\end{proof}

Lemma \ref{shortexists3} 
does not seem to hold for all points in 
$\overline{cv_0(F_n)}^+$.  
However, there is a weaker statement which holds true.

For its formulation, define a \emph{pure cyclic splitting} to 
consist of a  graph of groups 
decomposition for $F_n$ with all edge groups cyclic.
Define a \emph{free refinement} of a pure cyclic splitting $s$
to be a refinement $s^\prime$ of $s$  
so that each edge group of an edge in $s^\prime-s$ 
is trivial.
We have

\begin{lemma}\label{shortexists2}
For every pure cyclic splitting $s$ of $F_n$ there is a number
$\ell(s)>0$ with the following property. Every
simplicial tree $T\in \overline{cv_0(F_n)}^s$ so that 
$\Upsilon_{\cal C}(T)$ is a free refinement of $s$ 
admits an $\ell(s)$-short basis.
\end{lemma}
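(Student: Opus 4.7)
\textbf{The plan} is to generalize the strategy of Lemma~\ref{shortexists3} by applying Lemma~\ref{shortexists} to each vertex group of $s$ separately and then assembling the resulting short bases into a short basis of $F_n$, using the fixed combinatorics of $s$. The key fact is that every vertex group of a pure cyclic splitting of $F_n$ is free, being a subgroup of the free group $F_n$ by Nielsen--Schreier, so Lemma~\ref{shortexists} can be applied (with $G_{v_i}$ in place of $F_n$) to the action of any vertex group on its minimal invariant subtree.

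\textbf{First I would} fix notation: denote the vertex groups of $s$ by $G_{v_1},\dots,G_{v_k}$ of ranks $r_1,\dots,r_k$, and the cyclic edge groups by $\langle w_{e_1}\rangle,\dots,\langle w_{e_m}\rangle$, where the elements $w_{e_j}\in F_n$ are fixed by the data of~$s$. For a tree $T\in\overline{cv_0(F_n)}^s$ with $\Upsilon_{\mathcal{C}}(T)$ a free refinement of $s$, each $G_{v_i}$ acts on its minimal invariant subtree $T_{v_i}\subset T$ with trivial edge stabilizers and quotient volume at most~$1$, since the additional edges of the free refinement have trivial stabilizer. Applying the argument of Lemma~\ref{shortexists} produces a $3$-short free basis $B_i$ of $G_{v_i}$ at some vertex $\tilde v_i\in T_{v_i}$. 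Next, lift a spanning tree of $s$ to a subtree $\widetilde\tau\subset T$; its diameter is at most $1$ by the volume normalization. Fixing a basepoint $v_0\in\widetilde\tau$, choose $\tilde v_i$ to be the closest point of $T_{v_i}$ to $v_0$, so that $d_T(v_0,\tilde v_i)\le 1$; then for every $g\in B_i$ the triangle inequality yields $d_T(v_0,gv_0)\le 2d_T(v_0,\tilde v_i)+d_{T_{v_i}}(\tilde v_i,g\tilde v_i)\le 5$. Adjoining, for each non-tree edge of $s$, a stable letter of bounded translation distance, one obtains a generating set $\mathcal{S}\subset F_n$ of cardinality $|\mathcal{S}|=\sum r_i+(m-k+1)=n+|E_c(s)|$, where $|E_c(s)|$ is the number of cyclic edges, by the rank formula for $\pi_1$ of the graph of groups $s$, with every element of $\mathcal{S}$ translating $v_0$ by an amount bounded in terms of~$s$.

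\textbf{The last step} is to reduce $\mathcal{S}$ to a free basis of $F_n$. The $|E_c(s)|$ excess generators correspond to the edge-group identifications $\iota_1(w_{e_j})=\iota_2(w_{e_j})$ at cyclic edges $e_j$; these can be eliminated by Nielsen moves, and each move at most doubles the maximum translation distance of the generators. \emph{The hard part} will be bounding the number of Nielsen moves required in terms of $s$ alone: the word length of $w_{e_j}$ in a $3$-short basis $B_i$ of $G_{v_i}$ a priori depends on $T$, since the bases $B_i$ vary with $T$. Overcoming this will use the geometric fact that $w_{e_j}$ fixes the attachment point of $e_j$ in $T_{v_i}$, which lies at distance at most $1$ from $\tilde v_i$; this provides enough flexibility in the choice of the $B_i$ (adapted to the edge-group inclusions) to ensure that the Nielsen reduction has bounded combinatorial complexity depending only on the fixed data of $s$, yielding the required constant $\ell(s)$.
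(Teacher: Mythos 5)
Your approach is genuinely different from the paper's, and it has a real gap at exactly the point you flag as ``the hard part.'' The paper does not attempt any explicit construction of a basis. Instead it argues by compactness: the set $B$ of trees $T$ with $\Upsilon_{\cal C}(T)$ a free refinement of $s$ and all edge lengths at least $\epsilon$ is closed, and the stabilizer of $s$ in ${\rm Out}(F_n)$ acts on $B$ properly and cocompactly (there are only finitely many topological types of quotient graphs). Since every individual tree trivially admits an $\ell$-short basis for some $\ell$, and since $k$-shortness of a fixed basis is an open condition in the equivariant Gromov--Hausdorff topology, cocompactness yields a uniform constant $q(s)$ on $B$; an arbitrary $T$ as in the statement is then reached from some $T'\in B$ by an equivariant $b$-Lipschitz map preserving the topological type, giving $\ell(s)=bq(s)$. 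This soft argument deliberately avoids the combinatorics you are trying to control.

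The gap in your argument is the Nielsen reduction. Your generating set $\mathcal{S}$ has $|E_c(s)|$ excess generators, and eliminating them requires using the edge relations $\iota_1(w_{e_j})=t_j\,\iota_2(w_{e_j})\,t_j^{-1}$, in which $\iota_1(w_{e_j})$ and $\iota_2(w_{e_j})$ are words in the bases $B_{i}$ whose lengths depend on $T$, not only on $s$. Your proposed remedy --- that $w_{e_j}$ fixes the attaching point of $e_j$, which lies within distance $1$ of $\tilde v_i$ --- does not bound the word length of $w_{e_j}$ in $B_i$: the stabilizer of a point of $T_{v_i}$ can be a free factor of large rank, all of whose elements fix that point while having arbitrarily large word length in a $3$-short basis. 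Without such a bound, neither the number of Nielsen moves nor the resulting translation lengths is controlled in terms of $s$ alone. There is also a structural obstruction you have not addressed: assembling bases of the vertex groups into a free basis of $F_n$ requires the edge groups to sit as free factors (or at least primitively) in appropriately chosen vertex-group bases; this is precisely the input from \cite{St65} that makes the single separating edge case of Lemma \ref{shortexists3} work, and the paper explicitly remarks after that lemma that the constructive statement ``does not seem to hold'' in general --- which is why it retreats to the weaker, compactness-based Lemma \ref{shortexists2}. As written, your argument is therefore incomplete, and the missing step is not a routine verification.
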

\begin{proof} Let $s$ be a pure cyclic splitting of $F_n$. For 
some $\epsilon <<1/3n-4$ let 
$B\subset \overline{cv_0(F_n)}^s$ be the set of all
trees $T$ with the following properties. 
\begin{enumerate}
\item $\Upsilon_{\cal C}(T)\in {\cal C\cal S}$ is a free refinement of $s$.
\item The length of each edge of $T$ is at least $\epsilon$.
\end{enumerate}
Note that $B$ is a closed subset of $\overline{cv_0(F_n)}^s$
(this standard fact is discussed in detail in \cite{CL95}).

As there are only finitely many topological types of quotient graphs
$T/F_n$ for trees $T\in B$, 
the stabilizer in ${\rm Out}(F_n)$ of the 
pure cyclic splitting $s$ 
acts on $B$ properly and cocompactly.
On the other hand, by the definition of the equivariant Gromov
Hausdorff topology \cite{P89}, 
if $k>0$ and if ${\cal A}$ is a 
$k$-short free basis for a tree $T\in \overline{cv_0(F_n)}^s$ then
there is a neighborhood $U$ of $T$ in $\overline{cv_0(F_n)}^s$ such that
${\cal A}$ is a $k+1$-short free basis for every tree $S\in U$.
Thus by invariance under the action of ${\rm Out}(F_n)$
and cocompactness, there is a number $q(s)>0$ so that  
any tree $T\in B$ admits a $q(s)$-short basis. 
 
Now there is a number $b>0$ depending on $s$ and $\epsilon$, and 
for every tree $T\in \overline{cv_0(F_n)}^s$ 
such that $\Upsilon_{\cal C}(T)$ is a free refinement of $s$ 
there is a tree $T^\prime\in B$ and there is an equivariant 
$b$-Lipschitz map $T^\prime\to T$.  This map 
preserves the topological type of the quotient graph and
expands or decreases the lengths of the edges of $T^\prime$.
As $T^\prime$ admits $q(s)$-short basis, the tree
$T$ admits a $bq(s)$-short basis. This shows the lemma.
\end{proof}

Fix a number $k\geq 3$ and a number $\ell\leq n-1$. 
Let $R_\ell$ be the rose with $\ell$ petals of equal length one
and let $\tilde R_\ell$ be its universal covering.
Let $A$ be a free factor of $F_n$ and identify $A$ with the fundamental
group of $R_\ell$. The free factor $A$ is called  
\emph{$k$-short} for 
$T\in \overline{cv_0(F_n)}^s$ 
if there is an $A$-equivariant $k$-Lipschitz map 
\[F:\tilde R_\ell\to T\]
which maps vertices to vertices.
If $T\in cv_0(F_n)$ then this is equivalent to the requirement
that the quotient map $f:R_\ell\to T/F_n$ of $F$ is a 
$k$-Lipschitz map which 
maps the vertex $v$ of $R_\ell$ to a vertex of $T/F_n$ and 
so that $f_*(\pi_1(R_\ell,v))$
is conjugate to $A$. 
By Lemma \ref{shortexists}, 
each $T\in \overline{cv_0(F_n)}^{++}$ admits $3$-short free factors of any rank
$\ell\leq n-1$.

The following observation is a version of Lemma 3.2 of \cite{BF11}
and Lemma A.3 of \cite{BF12} 
(see also \cite{HM13}). For its formulation, 
note that a corank one free factor $A<F_n$ determines a 
one-loop graph of groups decomposition of $F_n$ with 
vertex group $A$ (see Section 4.1 of \cite{HM13a} for details).
In the sequel we often view a corank one free factor of 
$F_n$ as a point in ${\cal F\cal S}$ without further notice.

\begin{lemma}\label{short}
For every $k\geq 3$ 
there is a number $c=c(k)>0$ with the following property.
For every $T\in \overline{cv_0(F_n)}^{++}$, 
the distance in ${\cal F\cal S}$ between
$\Upsilon(T)$ and any corank one free factor 
which is $k$-short for $T$ 
is at most $c$. 
\end{lemma}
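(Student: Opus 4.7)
My plan is to first reduce to the case of a free simplicial tree $T \in cv_0(F_n)$, and then give an explicit short path in $\mathcal{FS}$ from $\Upsilon(T)$ to the HNN splitting $S_A$ determined by $A$, by passing through a rose obtained by spanning-tree collapse and realigning its marking by a bounded number of Nielsen transformations.

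First I would reduce to $T \in cv_0(F_n)$. If $T \in \overline{cv_0(F_n)}^{++}$ has nontrivial vertex groups, apply the Combination Lemma 8.6 of \cite{CL95} to blow up each such vertex group by a free simplicial representation of arbitrarily small total volume, obtaining $T^\prime \in cv_0(F_n)$. Then $\Upsilon(T^\prime)$ is a refinement of $\Upsilon(T)$ in $\mathcal{FS}$, so $d_{\mathcal{FS}}(\Upsilon(T), \Upsilon(T^\prime)) \leq 1$, and $A$ remains $(k+1)$-short for $T^\prime$ provided the blow-up is chosen sufficiently small. Hence it suffices to prove the lemma for $T \in cv_0(F_n)$, replacing $k$ by $k+1$ and paying an additive constant.

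For $T \in cv_0(F_n)$, the quotient $G = T/F_n$ is a marked metric graph of volume one and rank $n$; by $F_n$-minimality and after (if necessary) eliminating bivalent vertices, $G$ has at most $3n - 3$ edges, each of metric length at least $\epsilon_0 = 1/(3n-3)$. The $k$-short hypothesis produces $n-1$ based loops $\gamma_1,\dots,\gamma_{n-1}$ in $G$ of metric length at most $k$ representing a basis of a conjugate of $A$; combinatorially each $\gamma_i$ crosses at most $k/\epsilon_0 = k(3n-3)$ edges. Choose any spanning tree $\tau \subset G$; then $R_n = G/\tau$ is a rose with free basis $\mathcal{B} = \{b_1,\dots,b_n\}$ of $F_n$ coming from the non-tree edges, and the collapse $G \to R_n$ is a single $\mathcal{FS}$-move, so $d_{\mathcal{FS}}(\Upsilon(T), R_n) \leq 1$. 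Each basis element $a_i$ of $A$ expresses as a word in $\mathcal{B}$ of combinatorial length at most $k(3n-3)$, bounded in terms of $k$ and $n$.

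By standard Nielsen transformation theory, the basis $\mathcal{B}$ of $F_n$ can be carried via a sequence of at most $N = N(k,n)$ elementary Nielsen transformations to a free basis $\{a_1,\dots,a_{n-1},t\}$ of $F_n$ extending the chosen basis of $A$. Each elementary Nielsen move on a rose is realized by a bounded $\mathcal{FS}$-path by passing through explicit blow-ups: for the move $b_i \mapsto b_i b_j$, one blows up the central vertex into an edge segregating the $b_i$- and $b_j$-petals and shows that the resulting graph of groups has two collapses realizing the old and new rose markings (up to a short chain). Thus $R_n$ lies at $\mathcal{FS}$-distance at most $C \cdot N$ from the rose $R_n^\prime$ with basis $\{a_1,\dots,a_{n-1},t\}$. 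Finally, $R_n^\prime$ is a refinement of $S_A$: blow up the $A$-vertex of $S_A$ into the sub-rose on $\{a_1,\dots,a_{n-1}\}$. So $d_{\mathcal{FS}}(R_n^\prime, S_A) \leq 1$, and summing yields $d_{\mathcal{FS}}(\Upsilon(T), S_A) \leq c(k)$.

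The main obstacle lies in rigorously verifying that each elementary Nielsen transformation on a rose realizes a bounded $\mathcal{FS}$-distance; this requires constructing explicit common refinements (or short chains of refinements) of the before-and-after markings, with careful bookkeeping of the marking data across the various types of elementary moves (transvections, inversions, permutations). An alternative route, which circumvents the Nielsen step, is to build a Stallings fold path from an explicit simplicial tree representing $R_n^\prime$ to $T$, using the $k$-short hypothesis to bound the initial combinatorial complexity, and to invoke the folding-path technology of Section 3 to bound the $\mathcal{FS}$-displacement directly, in the spirit of \cite{BF11}, \cite{BF12}, and \cite{HM13}.
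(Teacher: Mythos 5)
There is a genuine gap at the heart of your combinatorial bound. From the normalization that $T/F_n$ has volume one and boundedly many edges you may conclude that \emph{some} edge has length at least $1/(3n-3)$, but your proof asserts that \emph{every} edge does. That is false: the edge lengths of $T/F_n$ are only constrained to sum to one, so the shortest edge can be arbitrarily short. Consequently a based loop of metric length at most $k$ can cross edges of $T/F_n$ an unbounded number of times (e.g.\ it may wind many times around a petal of length $\delta\ll 1$), so the combinatorial length of your loops $\gamma_i$, hence the word length of the $a_i$ in the spanning-tree basis $\mathcal{B}$, is \emph{not} bounded in terms of $k$ and $n$. This destroys the finiteness argument behind your bound $N(k,n)$ on the number of Nielsen transformations, which is the step carrying all the weight. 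The paper's proof is designed exactly around this difficulty: it uses only the \emph{one} guaranteed long edge $e$ (length $\geq 1/(3n-4)$), collapses a maximal forest in its complement, counts how often the image of a petal of $R_{n-1}$ crosses the corresponding petal $e_0$ (at most $k(3n-4)$ times, since each crossing costs at least $1/(3n-4)$ in length), and then invokes Lemma A.3 and Remark A.8 of \cite{BF12} to convert that single-edge crossing bound into a distance bound in ${\cal F\cal S}$. Your closing ``alternative route'' via fold paths is the one that actually works, but as stated it is only a gesture.

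A secondary problem is your reduction to $T\in cv_0(F_n)$: the Combination Lemma gives a one-Lipschitz collapse $T'\to T$, which transfers shortness from $T'$ to $T$ (the direction used in Lemma \ref{shortexists}), whereas you need the opposite direction, namely to lift the $A$-equivariant $k$-Lipschitz map $\tilde R_{n-1}\to T$ to $T'$. A lifted geodesic may spend unbounded length inside the blown-up vertex trees, so ``$A$ remains $(k+1)$-short for $T'$'' requires an argument you have not supplied. Note that the paper avoids this reduction entirely by running the collapse-to-a-rose argument directly on $T\in \overline{cv_0(F_n)}^{++}$.
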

\begin{proof} Since the volume of $T/F_n$ equals one 
and since $T/F_n$ has at most $3n-4$ edges, there is 
an edge $e$ in $T/F_n$ of length at least $1/(3n-4)$.

Assume first 
that $e$ is non-separating.
Collapsing  a maximal forest in $T/F_n$ 
not containing $e$ yields an $F_n$-tree
$V$ whose quotient $V/F_n$ is a rose 
and an equivariant one-Lipschitz map
\[F:T\to V.\] 
The induced quotient map
$f:T/F_n\to V/F_n$ maps the edge $e$ isometrically onto a 
petal $e_0$ of $V/F_n$. We refer again to \cite{CL95} for 
details of this construction.

Let $v$ be a vertex of $V$ and 
let $A<F_n$ be the corank one free factor of all elements 
$g$ with 
the property that the segment connecting $v$ to $gv$ does not cross
through a preimage of $e_0$.
The free factor $A$ defines a one-loop free splitting of $F_n$ 
which is obtained from $V/F_n$ by collapsing the complement of 
$e_0$ to a point. In particular, this splitting 
is a collapse of $\Upsilon(T)$. Therefore 
it suffices to show that the 
distance in ${\cal F\cal S}$ between $A$ 
and any corank one free factor of $F_n$ which is 
$k$-short for $T$ is bounded from above by a number
only depending on $k$.

Let as before $R_{n-1}$ be a rose with $n-1$ petals of equal length one
and universal covering $\tilde R_{n-1}$. 
Let $Q:\tilde R_{n-1}\to T$ be a $k$-Lipschitz map
which maps the vertices of $\tilde R_{n-1}$ to vertices in $T$
and which is equivariant with respect to the action of a corank free
factor $C$ of $F_n$, viewed as the fundamental group of $R_{n-1}$. 
Then $Q_0=F\circ Q:
\tilde R_{n-1}\to V$ is an equivariant $k$-Lipschitz map.
Its quotient
$q_0:R_{n-1}\to V/F_n$ maps the 
vertex of $R_{n-1}$ to the vertex of $V/F_n$. 
The image under $q_0$ of 
a petal of $R_{n-1}$ passes through the loop $e_0$ at most 
$k(3n-4)$ times. The claim of the lemma now follows from 
Lemma A.3 and Remark A.8 of \cite{BF12}.

If 
the edge
$e$ is separating then the same reasoning applies.
Let $v_1,v_2$ be the two vertices in $T/F_n$ on which 
$e$ is incident. Collapsing maximal trees in the two
components of $T/F_n-e$ yields an $F_n$-tree $V$ whose
whose quotient $V/F_n$ consists of two roses connected
at their vertices by the edge $e$. Lemma A.3 and
Remark A.8 of \cite{BF12} are valid in this situation as well and 
yield the lemma.
\end{proof}

\begin{corollary}\label{control}
\begin{enumerate}
\item There is a number $m=m(n)>0$, and for every
tree $T\in \overline{cv_0(F_n)}^{++}$ there is a neighborhood $U$ of 
$T$ in $\overline{cv_0(F_n)}^{++}$ such that
\[{\rm  diam}(\Upsilon(U))\leq m.\]
\item For every simplicial tree 
$T\in \overline{cv_0(F_n)}^s$ there is a neighborhood $U$ of 
$T$ in $\overline{cv_0(F_n)}^s$ such that
the diameter of $\Upsilon(U\cap \overline{cv_0(F_n)}^{++})$ is 
finite.
\end{enumerate}
\end{corollary}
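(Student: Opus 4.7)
The plan is to treat the two parts separately, since part (1) requires a bound $m(n)$ uniform in $T$ while in part (2) the bound may depend on $T$. For part (1) the idea is to extract from a short basis of $T$ a single corank one free factor that is short for all trees in a neighborhood, and then appeal to Lemma \ref{short}. For part (2) the approach is instead structural: a small enough neighborhood of a simplicial tree meets only finitely many simplicial strata of $\overline{cv_0(F_n)}^s$, and $\Upsilon$ is constant on each stratum.

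For part (1), begin with the $3$-short free basis $e_1,\dots,e_n$ of $F_n$ for $T$ provided by Lemma \ref{shortexists}. The openness observation invoked in the proof of Lemma \ref{shortexists2}, namely that a $k$-short basis remains $(k+1)$-short in a neighborhood, produces a neighborhood $U$ of $T$ in $\overline{cv_0(F_n)}^s$ on which $e_1,\dots,e_n$ is $4$-short. Since the existence of an edge orbit with trivial stabilizer is an open condition, after shrinking we may take $U\subset\overline{cv_0(F_n)}^{++}$. The restriction to the subrose on $e_1,\dots,e_{n-1}$ of the witnessing $4$-Lipschitz equivariant map then shows that the corank one free factor $A=\langle e_1,\dots,e_{n-1}\rangle$ is $4$-short for every $S\in U$. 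By Lemma \ref{short}, $d_{\cal F\cal S}(\Upsilon(S),A)\leq c(4)$ for all such $S$, so ${\rm diam}(\Upsilon(U))\leq 2c(4)=:m(n)$, as required.

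For part (2), I use that $\overline{cv_0(F_n)}^s$ is naturally stratified: the trees with a fixed marked graph of groups quotient $\tau$ form an open simplex coordinatized by edge lengths, and on this simplex the map $\Upsilon$ depends only on $\tau$, being obtained by collapsing all edges of $\tau$ with nontrivial edge group. A small enough equivariant Gromov-Hausdorff neighborhood $U$ of $T$ meets only those strata whose types refine the type of $T$ by vertex blow-ups, and since $F_n$ has fixed finite rank there are only finitely many such types. Consequently $\Upsilon(U\cap\overline{cv_0(F_n)}^{++})$ is a finite subset of the vertex set of ${\cal F\cal S}$ and therefore has finite diameter.

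The delicate point is the justification for part (2) that a small neighborhood of $T$ meets only finitely many strata and that $\Upsilon$ is genuinely constant on each. This will require a careful appeal to the explicit description of the equivariant Gromov-Hausdorff topology near boundary points in the spirit of \cite{P88,CL95}, and is the main technical step of the argument; everything else is a direct combination of Lemma \ref{shortexists}, Lemma \ref{short} and the openness of the short-basis condition.
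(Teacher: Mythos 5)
Part (1) of your proposal is correct and is essentially the paper's argument: a $3$-short basis for $T$ from Lemma \ref{shortexists} stays $4$-short on a Gromov--Hausdorff neighborhood, and Lemma \ref{short} then pins $\Upsilon$ near the corank one factor $\langle e_1,\dots,e_{n-1}\rangle$. One caveat: your claim that ``the existence of an edge orbit with trivial stabilizer is an open condition'' is neither justified nor needed --- the statement only asks for a neighborhood \emph{in} $\overline{cv_0(F_n)}^{++}$, so you should simply take $U=V\cap \overline{cv_0(F_n)}^{++}$ rather than assert that $V$ can be shrunk to lie inside $\overline{cv_0(F_n)}^{++}$.

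Part (2) has a genuine gap, and the step you yourself flag as delicate is in fact false. If $T$ has a vertex $v$ with stabilizer $A$ of rank at least $2$ (the typical case for a boundary simplicial tree), then for \emph{every} free splitting of $A$ one obtains a refinement of $T$ by blowing up $v$ along that splitting with a new edge of length $\epsilon$; as $\epsilon\to 0$ these refinements converge to $T$ in the equivariant Gromov--Hausdorff topology. Since $A$ admits infinitely many free splittings, every neighborhood of $T$ meets infinitely many distinct \emph{marked} strata. Moreover these refinements lie in $\overline{cv_0(F_n)}^{++}$ and $\Upsilon$ sends them to pairwise distinct vertices of ${\cal F\cal S}$ (for a tree with all edge stabilizers trivial, $\Upsilon$ returns its own marked graph of groups decomposition), so $\Upsilon(U\cap\overline{cv_0(F_n)}^{++})$ is an \emph{infinite} set. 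Your conclusion that it is a finite subset of the vertex set is therefore wrong, and no appeal to the structure of the Gromov--Hausdorff topology can repair the finiteness claim; only the \emph{diameter} bound survives. (Note also that if by ``type'' you meant unmarked topological type, then the finiteness holds but $\Upsilon$ is not constant on such a type, so the argument fails either way.)

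The paper's proof of part (2) sidesteps all of this and is much softer: choose any free basis $e_1,\dots,e_n$ of $F_n$ and any vertex $v\in T$, set $\ell=\max_i\operatorname{dist}(v,e_iv)$, so the corank one factor $\langle e_1,\dots,e_{n-1}\rangle$ is $\ell$-short for $T$, hence $(\ell+1)$-short for every tree in some neighborhood $U$ of $T$ in $\overline{cv_0(F_n)}^s$; Lemma \ref{short} then bounds the diameter of $\Upsilon(U\cap\overline{cv_0(F_n)}^{++})$ by $2c(\ell+1)$, which is finite (but depends on $T$ through $\ell$, which is all that is claimed). You should replace your stratification argument by this one, i.e.\ run your part (1) argument verbatim but starting from an arbitrary basis with a $T$-dependent shortness constant instead of the uniform constant $3$.
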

\begin{proof} By Lemma \ref{shortexists}, every tree $T\in 
\overline{cv_0(F_n)}^{++}$ admits a $3$-short basis.
Let $e_1,\dots,e_n$ be such a basis. By the definition of 
the equivariant Gromov Hausdorff topology, there is a neighborhood 
$V$ of $T$ in $\overline{cv_0(F_n)}^s$, and for every $S\in V$ there
is a vertex $v\in S$ so that for each $i$ 
the distance in $S$ between $v$ and 
$e_iv$ is at most $4$. Thus the corank one free factor
with basis 
$e_1,\dots,e_{n-1}$ is $4$-short for each $S\in V$. 
The first part of the corollary for $U=V\cap \overline{cv_0(F_n)}^{++}$
is now immediate from Lemma \ref{short}.

To show the second part of the corollary, let 
$T\in \overline{cv_0(F_n)}^s$. 
Choose any free 
basis $e_1,\dots,e_n$ for $F_n$. Let $v\in T$ be any vertex and let 
$\ell =\max\{{\rm dist}(v,e_iv)\}$. 
Then the corank one free factor with basis $e_1,\dots,e_{n-1}$ is 
$\ell$-short for $T$. 
As in the first part of this proof, this factor is 
$\ell+1$-short for every tree in some neighborhood
of $T$ in $\overline{cv_0(F_n)}^s$.
The second part of the corollary now follows from 
Lemma \ref{short}.
%
\end{proof}

\begin{remark}\label{noneigh}
By the main result of
\cite{G00}, any neighborhood in 
$\overline{cv(F_n)}$ of a simplicial tree
$T\in \overline{cv_0(F_n)}^s-\overline{cv_0(F_n)}^{++}$ contains
trees with dense orbits. Moreover, it is unclear whether 
all neighborhoods of $T$ contain points $S\not=T$ in 
$\overline{cv_0(F_n)}^+$. 
\end{remark}


\begin{corollary}\label{simpfinite}
Let $T\in \overline{cv_0(F_n)}^s$ be a simplicial tree
and let $T_i\subset \overline{cv_0(F_n)}^{++}$ be a sequence
converging to $T$. Then ${\rm diam}(\Upsilon\{T_i\mid i\})<\infty$.
\end{corollary}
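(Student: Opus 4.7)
The plan is to derive Corollary \ref{simpfinite} as an essentially immediate consequence of part (2) of Corollary \ref{control}, together with the definition of convergence in $\overline{cv_0(F_n)}^s$.

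First I would apply Corollary \ref{control}(2) to the limit tree $T\in \overline{cv_0(F_n)}^s$. This produces a neighborhood $U$ of $T$ in $\overline{cv_0(F_n)}^s$ such that $\Upsilon(U\cap \overline{cv_0(F_n)}^{++})$ has finite diameter in ${\cal F\cal S}$; call this diameter $D$.

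Next I would use the assumption $T_i\to T$: by definition of convergence, there is an index $i_0$ such that $T_i\in U$ for all $i\geq i_0$. Since by hypothesis $T_i\in \overline{cv_0(F_n)}^{++}$, we have $T_i\in U\cap \overline{cv_0(F_n)}^{++}$ for all $i\geq i_0$, and hence $\Upsilon(T_i)\in \Upsilon(U\cap \overline{cv_0(F_n)}^{++})$ lies within diameter $D$ of a fixed vertex of ${\cal F\cal S}$ for each such $i$.

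Finally, the finitely many remaining terms $\Upsilon(T_1),\dots,\Upsilon(T_{i_0-1})$ contribute only a finite additional diameter, so the full set $\Upsilon\{T_i\mid i\}$ is contained in a ball of finite radius in ${\cal F\cal S}$. This gives the claim. There is no real obstacle here; the content of the corollary is entirely packaged into Corollary \ref{control}(2), and the only thing to check is that convergence in $\overline{cv_0(F_n)}^s$ forces tails of the sequence into any given neighborhood, which is immediate from the definition of the equivariant Gromov--Hausdorff topology.
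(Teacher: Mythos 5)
Your proof is correct and is exactly the argument the paper intends: the corollary is stated without proof precisely because it follows immediately from Corollary \ref{control}(2) by putting the tail of the sequence into the neighborhood $U$ and absorbing the finitely many initial terms. No issues.
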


Fix once and for all a number
$k\geq 4$. Quantitative versions of all statements in the sequel
will depend on this choice of $k$,
but for simplicity we will drop this dependence
in our notations. 

Let $cv_0(F_n)$  be the space of all free simplicial $F_n$-trees
with volume one quotient.  
For a number $\ell\leq n-1$ 
call two trees
$T,T^\prime\in cv_0(F_n)$ \emph{$\ell$-tied} if there is a free factor 
$A$ of $F_n$ of rank $\ell$ which is $k$-short
for both $T,T^\prime$.
For trees $T\not=T^\prime\in cv_0(F_n)$ let $d_{ng}^\ell(T,T^\prime)$
be the minimum of all numbers $s\geq 1$ with the 
following property. There is a sequence $T=T_0,\dots,T_s=T^\prime
\subset cv_0(F_n)$ so that for all $i$ 
the trees $T_i,T_{i+1}$ are $\ell$-tied. 
Define moreover $d_{ng}^\ell(T,T)=0$ for all $T\in cv_0(F_n)$.

\begin{proposition}\label{distance}
For all $1\leq \ell\leq n-1$, 
the function 
\[d_{ng}^\ell:cv_0(F_n)\times cv_0(F_n)\to \mathbb{N}\]
is a distance on $cv_0(F_n)$.
\end{proposition}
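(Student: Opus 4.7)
The properties that $d_{ng}^\ell(T, T) = 0$, that $d_{ng}^\ell(T, T') \geq 1$ for $T \neq T'$, and that $d_{ng}^\ell$ is symmetric are all immediate from the definition. For the triangle inequality I would simply concatenate witnessing chains: given minimizing sequences $T = T_0, \ldots, T_s = T'$ and $T' = T_0', \ldots, T_{s'}' = T''$ with consecutive terms $\ell$-tied, their concatenation witnesses $d_{ng}^\ell(T, T'') \leq s + s'$, with the cases where some of $T, T', T''$ coincide being trivial.

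The only substantive matter is showing $d_{ng}^\ell$ takes finite values. My plan is to prove a local statement and then invoke connectedness of $cv_0(F_n)$. For the local step, given $T \in cv_0(F_n)$, apply Lemma \ref{shortexists} to obtain a $3$-short free basis $e_1, \ldots, e_n$ for $T$. The rank-$\ell$ free factor $A = \langle e_1, \ldots, e_\ell \rangle$ is then $3$-short for $T$: the required equivariant $3$-Lipschitz map $\tilde R_\ell \to T$ is built by sending the basepoint of $R_\ell$ to a vertex $v \in T$ witnessing the $3$-shortness of the basis and straightening each petal to the geodesic from $v$ to $e_i v$. Then, by the definition of the equivariant Gromov--Hausdorff topology, there is a neighborhood $U$ of $T$ in $cv_0(F_n)$ in which the same basis remains $4$-short, and since $k \geq 4$, the factor $A$ is $k$-short for every $S \in U$. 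Hence every $S \in U$ is $\ell$-tied to $T$, so $d_{ng}^\ell(T, S) \leq 1$.

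For the global step I would consider the binary relation on $cv_0(F_n)$ given by ``$T \sim T'$ iff $d_{ng}^\ell(T, T') < \infty$''. This is an equivalence relation by the triangle inequality established above, and the local step shows that every equivalence class is open in $cv_0(F_n)$. Since $cv_0(F_n)$ is connected, being contractible by the main result of \cite{P88}, there is exactly one equivalence class, and $d_{ng}^\ell$ is therefore finite on all of $cv_0(F_n) \times cv_0(F_n)$.

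The main obstacle I anticipate is the local step: verifying that the equivariant Gromov--Hausdorff topology really does allow one to transfer a $k$-short rank-$\ell$ free factor from a tree $T$ to all trees in some neighborhood, while respecting the vertex-to-vertex condition built into the definition of $k$-shortness. This is where the quantitative choice $k \geq 4$ (coupled with the constant $3$ from Lemma \ref{shortexists}) is used, and it is essentially the only nontrivial content of the proposition.
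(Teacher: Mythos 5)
Your proof is correct, but the finiteness argument is genuinely different from the paper's. The paper disposes of finiteness by forward reference: for $\ell=n-1$ it follows from the proof of Lemma \ref{equivariantfs}, and for $\ell\leq n-2$ from Lemma \ref{equivariantfs2}. There the key step is that if two corank one free factors $A,C$ define one-loop splittings with a common refinement, then the corresponding roses satisfy $d_{ng}^{\ell}(\tilde R(A),\tilde R(C))\leq 2$; combined with Lemma \ref{shortexists} (every $T\in cv_0(F_n)$ is tied to some rose) and the connectivity of the free splitting graph, this yields finite chains between arbitrary trees. Your route instead proves a local statement --- every $T$ has a neighborhood $U$ with $d_{ng}^\ell(T,S)\leq 1$ for all $S\in U$, using exactly the Gromov--Hausdorff stability of short bases that the paper itself invokes in Corollary \ref{control} and Lemma \ref{shortexists2} --- and then concludes by openness of the ``finite distance'' equivalence classes and connectedness of $cv_0(F_n)$. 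This is more self-contained (it never mentions the splitting graphs) and arguably cleaner as a proof of the bare metric statement, whereas the paper's version is doing double duty, since the same chain estimates are what make $\Upsilon$ and $\Upsilon_\ell$ quasi-isometries. One small slip: \cite{P88} (Paulin, equivariant Gromov topology) is not a reference for contractibility of Outer space; that is Culler--Vogtmann. The connectedness you need is standard, but you should cite it correctly or observe it directly.
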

\begin{proof}
Symmetry of $d_{ng}^\ell$ is immediate 
from the definition, as well as the fact that
$d_{ng}^\ell(T,T^\prime)=0$
if and only if $T=T^\prime$.
The triangle inequality is built into the construction, so all we have
to show is that $d_{ng}^\ell(T,T^\prime)<\infty$ for all $T,T^\prime\in
cv_0(F_n)$. For $\ell=n-1$, this is a consequence of  
Lemma \ref{equivariantfs} below 
and its proof, and the case $\ell\leq n-2$ is established
in Lemma \ref{equivariantfs2}. 
\end{proof}


\begin{lemma}\label{equivariantfs}
The map $\Upsilon:(cv_0(F_n),d_{ng}^{n-1})\to {\cal F\cal S}$ 
is a coarsely ${\rm Out}(F_n)$-equi\-va\-riant quasi-isometry.
\end{lemma}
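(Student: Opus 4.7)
The plan is to verify in turn coarse ${\rm Out}(F_n)$-equivariance, coarse surjectivity, a coarse Lipschitz upper bound, and a coarse Lipschitz lower bound. Equivariance of $\Upsilon$ is built into its construction, and coarse surjectivity is immediate since every graph of groups decomposition of $F_n$ with trivial edge groups is realized as $T/F_n$ for some $T\in cv_0(F_n)$. The upper bound is a direct application of Lemma \ref{short}: if two trees $T,T'$ are $(n-1)$-tied via a common $k$-short corank-one free factor $A$, then both $\Upsilon(T)$ and $\Upsilon(T')$ lie within ${\cal F\cal S}$-distance $c(k)$ of the one-loop splitting determined by $A$, so $d_{\cal F\cal S}(\Upsilon(T),\Upsilon(T'))\leq 2c(k)$; the general case follows by chaining along a minimizing sequence and applying the triangle inequality in ${\cal F\cal S}$. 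This reduces the remainder of the lemma to a coarse lower bound, which will simultaneously establish the finiteness of $d_{ng}^{n-1}$ required for Proposition \ref{distance}.

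For the lower bound I intend to construct a coarsely Lipschitz section $\sigma:{\cal F\cal S}\to (cv_0(F_n),d_{ng}^{n-1})$ of $\Upsilon$ and to show that fibers of $\Upsilon$ have uniformly bounded $d_{ng}^{n-1}$-diameter. \emph{Fiber control}: inside the open simplex of edge-length assignments on a single marked graph $\Gamma$ of volume one, collapsing a chosen maximal subtree yields a free basis $e_1,\dots,e_n$ of $F_n$ whose elements translate the image vertex by at most a uniform multiple of the total volume; consequently the corank-one factor $A=\langle e_1,\dots,e_{n-1}\rangle$ is $k$-short throughout this simplex, so any two trees in the fiber are $(n-1)$-tied in a single step. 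Since ${\rm Out}(F_n)$ has only finitely many orbits of combinatorial types of $\Gamma$ and acts on $cv_0(F_n)$ preserving $d_{ng}^{n-1}$, this gives a uniform bound on fiber diameters. \emph{Edge lift}: if $\Gamma'$ refines $\Gamma$, I pick $T'\in cv_0(F_n)$ realizing $\Gamma'$ with its extra edges made arbitrarily short, form $T$ by collapsing those edges in $T'/F_n$ and renormalizing volume, and invoke Lemma \ref{shortexists} to produce a $3$-short basis of $T$; because collapse is $1$-Lipschitz and the renormalization factor can be taken close to $1$, the same basis is $4$-short for $T'$, and any of its corank-one sub-factors witnesses $d_{ng}^{n-1}(T,T')\leq 1$. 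Combining these two ingredients, $\sigma$ is coarsely Lipschitz and $\sigma\circ\Upsilon$ sits at bounded $d_{ng}^{n-1}$-distance from the identity, which forces $\Upsilon$ to be a coarse equivariant quasi-isometry.

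The main obstacle I anticipate is the fiber-control step when $\Gamma$ carries several orbits of edges of very different lengths: the basis produced by collapsing a maximal subtree is not automatically $k$-short uniformly on the whole simplex because one edge-length may be pushed toward the boundary of the simplex. I plan to bypass this by covering the simplex by finitely many closed subregions on each of which a distinguished corank-one factor remains $k$-short — using the continuity of translation length in the equivariant Gromov--Hausdorff topology and compactness of the simplex modulo its finite combinatorial symmetry group — and then concatenating through these subregions via the chain definition of $d_{ng}^{n-1}$.
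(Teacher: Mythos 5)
Your coarse Lipschitz upper bound is exactly the paper's argument (tied trees share a $k$-short corank-one factor, apply Lemma \ref{short} twice, chain with the triangle inequality), and your fiber-control step is sound — in fact the obstacle you anticipate there is not one: the basis obtained by collapsing a maximal subtree traverses each edge at most twice, so its shortness depends only on the total volume and not on the individual edge lengths, exactly as in the proof of Lemma \ref{shortexists}; no covering by subregions is needed.

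The genuine gap is in the lower bound, and it starts already with your surjectivity claim: it is \emph{not} true that every vertex of ${\cal F\cal S}$ is realized as $T/F_n$ for some $T\in cv_0(F_n)$. Vertices of ${\cal F\cal S}$ are graph-of-groups decompositions with trivial edge groups but possibly nontrivial vertex groups, and a tree in $cv_0(F_n)$ is a \emph{free} action, so $\Upsilon(cv_0(F_n))$ consists only of decompositions with trivial vertex groups; the others are merely at bounded distance from this image. This matters because every one-edge free splitting — hence essentially every vertex that a geodesic in ${\cal F\cal S}$ passes through — has nontrivial vertex groups, so its $\Upsilon$-fiber in $cv_0(F_n)$ is empty and neither your fiber control nor your edge lift applies to it. Your edge lift only handles an edge of ${\cal F\cal S}$ whose two endpoints both have trivial vertex groups, and you cannot lift a geodesic edge by edge without saying what to do at the intermediate vertices. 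To repair this you must either (a) replace each vertex $\Gamma_i$ of the geodesic by a free blow-up $\Gamma_i^0$ (roses at the vertex groups) and prove that when $\Gamma_{i+1}$ collapses to $\Gamma_i$ one can choose $\Gamma_{i+1}^0$ collapsing directly onto some $\Gamma_i^0$, so that the edge lift and fiber control apply to the modified chain; or (b) do what the paper does: pass to the coarsely dense subgraph $\hat{\cal F\cal S}$ of one-loop free splittings (quasi-isometric to ${\cal F\cal S}$), associate to each corank-one factor $A$ an explicit rose $\tilde R(A)$ at distance one from $A$, and check that two one-loop splittings with a common refinement come from factors $A=U*B$, $C=B*D$ with $F_n=U*B*D$, whence a single rose $\tilde G$ for this splitting makes both $A$ and $C$ $2$-short and gives $d_{ng}^{n-1}(\tilde R(A),\tilde R(C))\le 2$. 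Option (b) is shorter because it reduces the whole lower bound to one explicit combinatorial configuration; your route, once the blow-up issue is fixed, yields the same conclusion but needs the extra bookkeeping of how vertex groups transform under collapses.
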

\begin{proof} Recall that 
the map $\Upsilon$ is coarsely surjective.
We begin with showing that $\Upsilon$
is coarsely $L$-Lipschitz for some $L=L(k)\geq 1$.

Let $d_{\cal F\cal S}$ be the 
distance in ${\cal F\cal S}$.
By definition, 
if $T,T^\prime\in cv_0(F_n)$ 
are $(n-1)$-tied then there is a free factor
$A$ of $F_n$ of rank $n-1$ which is $k$-short for
both $T,T^\prime$. By Lemma \ref{short}, 
\[d_{\cal F\cal S}(A,\Upsilon(T))\leq c(k),\,
d_{\cal F\cal S}(A,\Upsilon(T^\prime))\leq c(k)\]
and hence
$d_{\cal F\cal S}(\Upsilon(T),\Upsilon(T^\prime))\leq 2c(k)$.
Thus $\Upsilon$ is $2c(k)$-Lipschitz by an iterated application of the
triangle inequality (remember that $d_{ng}^{n-1}$ only assumes integral
values).

We are left with showing that the map $\Upsilon$ 
coarsely decreases distances by at most a fixed 
positive multiplicative constant. 
Choose for every 
free factor $A$ of $F_n$ of corank one  
a marked 
rose $R(A)$ with $n$ petals of length $1/n$ each which
represents this factor $A$, i.e. such that $n-1$ petals 
of $R(A)$ generate $A$. We denote 
by $\tilde R(A)\in cv_0(F_n)$ 
the universal covering of $R(A)$.
By construction, $d_{\cal F\cal S}(A,\Upsilon(\tilde R(A)))=1$.

Let $\hat{\cal F\cal S}$ be the the complete subgraph 
of the free splitting graph (not of its first barycentric subdivision)
whose vertex set consists of one-loop free splittings of $F_n$.
It is well known that the vertex inclusion extends to a 
coarsely ${\rm Out}(F_n)$-equivariantly quasi-isometry
$\hat{\cal F\cal S}\to {\cal F\cal S}$ (see e.g. \cite{H14b} for 
a detailed proof in the case of the curve graph which carries
over word by word).

Since $\hat {\cal F\cal S}$ is a metric graph and, in particular,
a geodesic
metric space,
and since for all corank one free factors 
$A$ the distance beween $A$ and
$\Upsilon(\tilde R(A))$ equals one,  
it now suffices to show the following.
Whenever two one-loop graph of groups decompositions
$A,C$ of $F_n$ are of distance one in $\hat{\cal F\cal S}$,
i.e. if they 
have a common refinement, then 
 $d_{ng}^{n-1}(\tilde R(A),\tilde R(C))\leq 2$.

Thus 
let $A<F_n,C<F_n$ be corank one free factors defining
one-loop graph of groups decompositions with a common refinement.
Then up to replacing 
one of these free factors  by a conjugate,
the intersection $B=A\cap C$ is a free factor
of $F_n$ of rank $n-2$, and 
there is a free splitting 
$F_n=U*B*D$ with $U*B=A$ and $B*D=C$.

Let $G$ be a metric rose with $n$ petals of length $1/n$ 
which represents this splitting, with 
univeral covering $\tilde G\in cv_0(F_n)$. Then both corank one 
free factors $A,C$ of $F_n$ are $2$-short for $\tilde G$.
This implies that the distance with respect to 
the metric $d_{ng}^{n-1}$ 
between $\tilde R(A)$ and $\tilde G$ is at most one, and 
the same holds true for the 
distance between $\tilde G$ and $\tilde R(C)$. 
Therefore we have $d_{ng}^{n-1}(\tilde R(A),\tilde R(C))\leq 2$
which is what we wanted to show.
The lemma is proven.
\end{proof}

The final goal of this section is to 
give a geometric
interpretation of the free factor graph.
We will take a slightly more
general viewpoint and introduce $n-3$ 
additional intermediate ${\rm Out}(F_n)$-graphs.

Fix a number $\ell\leq n-2$. Define a graph
${\cal F\cal F}_\ell$ as follows. Vertices of 
${\cal F\cal F}_\ell$ are free factors of rank $n-1$.
Two such vertices $A,B$ are connected by an edge of length one
if and only if up to conjugation, the intersection $A\cap B$ 
contains a free factor of rank $\ell$. Note that
the graphs ${\cal F\cal F}_\ell$ all have the same set of vertices, and
for each $\ell\leq n-2$ the graph ${\cal F\cal F}_{\ell}$
can be obtained from the graph ${\cal F\cal F}_{\ell-1}$ 
by deleting some edges. In particular,
the vertex inclusion extends to a one-Lipschitz
map ${\cal F\cal F}_\ell\to {\cal F\cal F}_{\ell-1}$.
The group ${\rm Out}(F_n)$ acts as a group of 
simplicial automorphisms on each of the graphs
${\cal F\cal F}_\ell$.
We have

\begin{lemma}\label{freefactor}
The graph ${\cal F\cal F}_1$ is coarsely ${\rm Out}(F_n)$-equivariantly
quasi-isometric to the free factor graph.
\end{lemma}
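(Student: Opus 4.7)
The strategy is to prove the lemma by showing that the identity on vertex sets extends to a coarsely ${\rm Out}(F_n)$-equivariant quasi-isometry $\iota\colon{\cal F\cal F}_1\to{\cal F\cal F}$, with coarse inverse $\rho\colon{\cal F\cal F}\to{\cal F\cal F}_1$ that assigns to each conjugacy class of free factor a corank one extension. The map $\iota$ is exactly ${\rm Out}(F_n)$-equivariant, and it is $2$-Lipschitz on edges: if $[A],[B]$ are adjacent in ${\cal F\cal F}_1$, then after conjugation $A\cap B$ contains a rank one free factor $\langle c\rangle$ of $F_n$, which is a vertex of ${\cal F\cal F}$ at distance one from each of $[A],[B]$, giving $d_{\cal F\cal F}([A],[B])\leq 2$.

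To define $\rho$, for each conjugacy class $[A]$ of free factor I fix a representative $A<F_n$, extend a basis of $A$ to a basis $a_1,\ldots,a_n$ of $F_n$, and let $\rho([A])$ be the conjugacy class of a corank one free factor $\hat A$ generated by $n-1$ basis elements including a basis of $A$ (if $A$ already has rank $n-1$, take $\rho([A])=[A]$). The construction depends on choices, but any two such corank one extensions $\hat A,\hat A'$ of the same subgroup $A$ both contain $A$, hence contain any chosen basis element of $A$, which is primitive in $F_n$ since a basis of $A$ extends to a basis of $F_n$. Therefore $\hat A\cap\hat A'$ contains a rank one free factor of $F_n$, so $[\hat A]$ and $[\hat A']$ are adjacent in ${\cal F\cal F}_1$. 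This shows $\rho$ is well-defined up to bounded error in ${\cal F\cal F}_1$, and the same argument with an outer automorphism applied to every choice shows $\rho$ is coarsely ${\rm Out}(F_n)$-equivariant. Also $d_{\cal F\cal F}([A],\iota\rho([A]))\leq 1$ since $A<\hat A$, and $\rho\circ\iota$ agrees with the identity on ${\cal F\cal F}_1$ up to bounded error.

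The main check is that $\rho$ is coarsely $1$-Lipschitz on edges of ${\cal F\cal F}$. Suppose $[A],[B]$ are adjacent in ${\cal F\cal F}$, so that after conjugation $A<B$ as subgroups of $F_n$. Pick corank one extensions $\hat A\supset A$ and $\hat B\supset B$ representing $\rho([A]),\rho([B])$. Then $\hat B\supset A$ as well, and any basis element $a$ of $A$ lies in $\hat A\cap\hat B$ and generates a rank one free factor of $F_n$. Hence $\rho([A])$ and $\rho([B])$ are adjacent in ${\cal F\cal F}_1$, and combined with the previous estimates this shows $\iota$ and $\rho$ are coarse inverses, yielding the quasi-isometry. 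I do not anticipate a serious obstacle: the only delicate point is tracking the conjugation between representatives of conjugacy classes, and the key group-theoretic input is the standard fact that a basis of a free factor $A<F_n$ extends to a basis of $F_n$, so basis elements of $A$ are primitive in $F_n$ and generate rank one free factors.
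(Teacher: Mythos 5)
Your proposal is correct and follows essentially the same route as the paper: the paper's map $h$ assigning to a free factor a corank one extension is your $\rho$, and the paper's converse estimate is exactly your observation that two corank one factors adjacent in ${\cal F\cal F}_1$ share a subfactor and hence lie at distance at most $2$ in ${\cal F\cal F}$. Your extra checks (well-definedness of the corank one extension up to bounded error, and the explicit coarse-inverse bookkeeping) are left implicit in the paper but are the right things to verify.
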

\begin{proof} Associate to a free factor $A$ in $F_n$ a free factor
$h(A)>A$ of corank one. If the free factors $A,B$ are connected by an
edge in the free factor graph, then up to conjugation
and exchanging $A$ and $B$ we have $A<B$. But this just
means that up to conjugation, $h(A)$ and $h(B)$ intersect
in the free factor $A$ and hence $h(A)$ and $h(B)$ are connected
by an edge in ${\cal F\cal F}_1$. In other words,
the map $h$ which maps 
the vertex set of the free factor graph into the vertex set of the 
graph ${\cal F\cal F}_1$ 
is one-Lipschitz with respect to the metric of ${\cal F\cal F}$ 
and the metric of ${\cal F\cal F}_1$.
Moreover, $h$ is coarsely ${\rm Out}(F_n)$-equivariant.

Now if $A,B$ are free factors of rank $n-1$ which are
connected by an edge in ${\cal F\cal F}_1$ then up to conjugation,
they intersect in some free factor $C$. Then the distance between
$A,C$ and between $B,C$ in ${\cal F\cal F}$ equals one and hence
the distance between $A,B$  
in ${\cal F\cal F}$ is at most two. The lemma follows.
\end{proof}

\begin{remark}
The dual of a graph $G$ is a graph $G^\prime$ 
whose vertex set is the set of edges of $G$ and where two 
distinct vertices of $G^\prime$ 
are connected by an edge if the corresponding edges of $G$ are
incident on the same vertex. For $\ell\leq n-2$, 
the dual ${\cal F\cal F}_\ell^\prime$ 
of ${\cal F\cal F}_\ell$ has a simple description. 
Its vertex set consists of free factors of rank $\ell$, and two such 
free factors are connected by an edge if up to conjugation, they
are subgroups of the same free factor of rank $n-1$.
Although we do not use these dual graphs directly, they
are more closely related to the geometric description of the
graphs ${\cal F\cal F}_\ell$ given in Lemma \ref{equivariantfs2}
below.
\end{remark}

For each $\ell\leq n-2$ let 
\[\Upsilon_\ell:cv_0(F_n)\to {\cal F\cal F}_\ell\]
be a map which associates to a tree $T$ a $k$-short
corank one free factor- in fact we can take the same
map for every $\ell$ and view its images as vertices 
in any one of the graphs ${\cal F\cal F}_\ell$.

The statement of the following lemma is only needed in the
case $\ell=1$. 

\begin{lemma}\label{equivariantfs2}
For $\ell\leq n-2$ the map 
$\Upsilon_\ell:(cv_0(F_n),d_{ng}^\ell)\to {\cal F\cal F}_\ell$ 
is a coarsely  ${\rm Out}(F_n)$-equivariant quasi-isometry.
\end{lemma}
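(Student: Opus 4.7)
The proof will mirror that of Lemma \ref{equivariantfs}. Coarse surjectivity is immediate: for every corank one free factor $A$, the marked rose $R(A)$ with $n$ petals of length $1/n$ and $n-1$ designated petals spanning $A$ has universal cover $\tilde R(A)\in cv_0(F_n)$ for which $A$ is $1$-short, so $A$ lies in the image of $\Upsilon_\ell$ up to bounded error.

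For the coarsely Lipschitz direction I first establish the following key claim: any two $k$-short corank one free factors $A_1,A_2$ for the same tree $T$ satisfy $d_{{\cal F\cal F}_\ell}(A_1,A_2)\leq D(k,n)$. Viewing $A_i$ as vertices of the one-loop subgraph $\hat{\cal F\cal S}\subset {\cal F\cal S}$, Lemma \ref{short} yields $d_{{\cal F\cal S}}(A_1,A_2)\leq 2c(k)$, so $d_{\hat{\cal F\cal S}}(A_1,A_2)$ is bounded via the quasi-isometry ${\cal F\cal S}\simeq\hat{\cal F\cal S}$. As noted in the proof of Lemma \ref{equivariantfs}, two corank one free factors adjacent in $\hat{\cal F\cal S}$ share (up to conjugation) a rank $n-2$ free factor in their intersection; such factors are adjacent in ${\cal F\cal F}_{n-2}$, and hence in ${\cal F\cal F}_\ell$ via the one-Lipschitz vertex inclusion ${\cal F\cal F}_{n-2}\to {\cal F\cal F}_\ell$ (a rank $n-2$ free factor contains free factors of all smaller ranks). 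Concatenating along a $\hat{\cal F\cal S}$-geodesic proves the claim. To finish the upper bound, for $T,T'$ $\ell$-tied by $B$ I carry out a basis-extension step: starting from a $3$-short basis of $F_n$ for $T$ supplied by Lemma \ref{shortexists}, I select $n-1-\ell$ of its elements that, together with a basis of $B$, form a basis of a corank one free factor $A_B^T\supseteq B$, and verify that $A_B^T$ is $k'$-short for $T$ by adjusting the base vertex using that $T/F_n$ has diameter at most $1$. Proceeding analogously for $T'$ yields $A_B^{T'}\supseteq B$; since $A_B^T\cap A_B^{T'}$ contains a conjugate of $B$ of rank $\ell$, these corank one factors are adjacent in ${\cal F\cal F}_\ell$, and combining with the key claim gives $d_{{\cal F\cal F}_\ell}(\Upsilon_\ell(T),\Upsilon_\ell(T'))\leq 2D+1$.

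For the lower bound it suffices, as in the proof of Lemma \ref{equivariantfs}, to bound $d_{ng}^\ell(\tilde R(A),\tilde R(A'))$ whenever $A,A'$ are adjacent in ${\cal F\cal F}_\ell$; up to conjugation $A\cap A'\supseteq B$ of rank $\ell$. Extending a basis $b_1,\dots,b_\ell$ of $B$ to bases of $A$ and $A'$ and then to bases of $F_n$ yields marked roses $\tilde R(A),\tilde R(A')$, and an intermediate rose whose first $\ell$ petals realize $b_1,\dots,b_\ell$ is $\ell$-tied to each of them via $B$, so $d_{ng}^\ell(\tilde R(A),\tilde R(A'))\leq 2$. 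The principal obstacle is the basis-extension step in the upper bound: given a short rank-$\ell$ free factor $B$, producing a short corank one free factor containing $B$ requires selecting a subset of a $3$-short basis of $T$ that together with a basis of $B$ generates a corank one free factor, a combinatorial fact about free groups that is more delicate than the analogous step for corank one factors in Lemma \ref{equivariantfs}.
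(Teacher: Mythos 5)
Your overall architecture is the right one and matches the paper's: coarse surjectivity via roses, a "key claim" reducing everything to comparing short corank one free factors, and the lower bound via $\ell$-tied roses. Your key claim (two $k$-short corank one factors for the same tree are uniformly close in ${\cal F\cal F}_\ell$, proved by passing through $\hat{\cal F\cal S}$ and using that adjacent one-loop splittings share a rank $n-2$ free factor) is correct and in fact makes explicit a step the paper leaves implicit. The lower bound is also fine.

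However, there is a genuine gap exactly where you flag "the principal obstacle": the step that, given a rank $\ell$ free factor $B$ which is $k$-short for $T$, produces a $k'$-short corank one free factor $A_B^T\supseteq B$. Your proposed construction --- choosing $n-1-\ell$ elements of a $3$-short basis of $F_n$ which together with a basis of $B$ generate a corank one free factor --- is an unproven combinatorial assertion: for a given basis $e_1,\dots,e_n$ and a given free factor $B$, a subgroup of the form $\langle B,e_{i_1},\dots,e_{i_{n-1-\ell}}\rangle$ need not be a free product of its visible generators, need not have rank $n-1$, and need not be a free factor of $F_n$ even when it does; you give no argument that a good choice of subset exists. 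Since the entire coarse Lipschitz bound rests on the existence of $A_B^T$, the proof is incomplete as written. This is precisely the point where the paper switches models: it embeds the collapsed rose for $T$ into $M=\sharp_n(S^2\times S^1)$, uses that a rose representing $B$ has rank $\ell\leq n-2$ to find an embedded sphere $\Sigma$ disjoint from it, and then surgers the sphere dual to a long edge of $T/F_n$ along $\Sigma$; after at most $\log_2(k(3n-4)\ell)$ surgeries one obtains a sphere disjoint from the rose for $B$, whose complementary vertex group is a corank one free factor containing $B$ at uniformly bounded ${\cal F\cal S}$-distance from $\Upsilon(T)$. Some such argument (or an algebraic substitute in the spirit of Lemma A.3 of the Bestvina--Feighn appendix) is needed to close your gap; the hypothesis $\ell\leq n-2$ enters essentially at this point, so any correct argument must use it.
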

\begin{proof} The proof of the lemma is similar to the
proof of Lemma \ref{equivariantfs}. Note first that the
map $\Upsilon_\ell$ is coarsely surjective.

Let $\ell\leq n-2$ and let
$A<F_n$ be a free factor of rank $\ell$. 
Let $T\in cv_0(F_n)$ be such that $A$ is $k$-short for $T$.
Recall from (\ref{upsilon}) the definition of the
map $\Upsilon:cv_0(F_n)\to {\cal F\cal S}$.
We claim that there is a 
free factor $B>A$ of rank $n-1$
whose distance to $\Upsilon(T)$ in the free splitting graph
is bounded from above by a number only depending on $k$.

Let as before $R_\ell$ be a rose with $\ell$ petals of equal length one and 
let $f:R_\ell\to T/F_n$ be a $k$-Lipschitz map such that
$f_*(\pi_1(R_\ell))$ is conjugate to $A$.
Let $e\subset T/F_n$ be an edge of length at least
$1/(3n-4)$. Then $e$ is 
covered
by $f(R_\ell)$ at most $k(3n-4)\ell$ times.

As in the proof of
Lemma \ref{short}, assume first that the edge $e$ 
of $T/F_n$ is non-separating.
Collapse a maximal forest in $T/F_n$ not containing $e$ 
to a point and let $V$ be the resulting rose. Let $e_0\subset V$
be the image of $e$ under the collapsing map.

Embed $V$ 
into the manifold $M=\sharp_n(S^2\times S^1)$ to define a marked isomorphism of
fundamental groups. 
Let $S_0\subset M$ be an embedded 
sphere which intersects the interior of 
the petal $e_0$ of $V\subset M$ in a single
point and has no other intersection with $V$.
Then $S_0$ defines a one-loop free splitting
of $F_n$ obtained by collapsing the complement
of $e_0$ in $V$ to a point. Its 
vertex group is up to conjugation the 
group of all homotopy classes of 
loops in $M$ based at a point $x\in M-S_0$ which do not
intersect $S_0$. The 
distance in the graph ${\cal F\cal S}$ 
between $\Upsilon(T)$ and the 
one-loop free splitting defined by $S_0$ equals one.

Let $G\subset M$ be an embedded rose 
with $\ell$ petals, with vertex $x\in M-S_0$ 
and with fundamental group the free factor $A$.
We may assume that the rose $G$ intersects 
the sphere $S_0$ in at most 
$k(3n-4)\ell$ points.  
Let $\Sigma\subset M$ be a sphere which is disjoint from 
the rose $G$. Such a sphere exists since the rank of $A$ 
equals $\ell\leq n-2$. 
The free factor $A$ is a subgroup of the corank one 
free factor of $F_n$ which is the vertex group $B$ 
of the splitting defined by $\Sigma$.

If 
$\Sigma$ is disjoint from $S_0$ then $S_0,\Sigma$ define free
splittings of $F_n$ of distance two in ${\cal F\cal S}$. 
The distance in 
the free splitting graph 
between $B$ and $\Upsilon(T)$ is at most four, so the
above claim follows in this case.

If $\Sigma$ intersects $S_0$ then 
use $\Sigma$
to surger $S_0$ so that the intersection number with the rose $G$ is
decreased as follows. 
Choose a disc component $D$ of $\Sigma-S_0$.
Its boundary divides $S_0$ into two discs $D_1,D_2$.
Assume that $D_1$ is the disc with fewer intersections with $G$.
Then $D\cup D_1=S_1$ is a sphere disjoint from $S_0$ with 
at most $k(3n-4)\ell/2$ intersections with $G$. The
sphere $S_1$ defines a free splitting of $F_n$ whose 
distance in ${\cal F\cal S}$ to the
splitting defined by $S_0$ equals two 
(we refer to \cite{HH12} for
details of this well known construction).

Repeat this construction with $S_1$ and $\Sigma$. 
After at most $\log_2 k(3n-4)\ell$ such surgeries the 
resulting sphere $Q$  is disjoint from $G$. 
The sphere $Q$ determines a corank one free factor
of $F_n$ containing $A$ whose distance to $\Upsilon(T)$ in 
${\cal F\cal S}$ is at most $1+\log_2k(3n-4)\ell$.

As a consequence, if $e$ is non-separating then 
in the free splitting graph, $\Upsilon(T)$ is at uniformly
bounded distance from a one-loop graph of groups 
decomposition $F_n=C*$ with trivial edge group and 
vertex group $C>A$. 
The same reasoning also applies if the edge $e$ is separating- in
this case the sphere $S_0$ is chosen to be separating as well.

To summarize, if the free factor 
$A<F_n$ is $k$-short for $T\in cv_0(F_n)$ then
the distance in ${\cal F\cal S}$ between $\Upsilon(T)$ and 
some corank one free factor $B>A$ is uniformly bounded.
By the definition of the metric $d_{ng}^\ell$ and of
the graph ${\cal F\cal F}_\ell$, 
this implies that
whenever $T,T^\prime\in cv_0(F_n)$ are such that
$d_{ng}^\ell(T,T^\prime)=1$ then the distance in ${\cal F\cal F}_\ell$
between $\Upsilon(T)$ and $\Upsilon(T^\prime)$ is uniformly bounded.
By an iterative application of the triangle inequality, 
this shows that the map 
$\Upsilon_\ell:(cv_0(F_n),d_{ng}^\ell)\to {\cal F\cal F}_\ell$ 
is coarsely Lipschitz.

To show that the map $\Upsilon_\ell$ coarsely decreases distances
at most by a fixed positive multiplicative constant, note that 
if two corank one free factors $A,A^\prime$ of $F_n$ are connected
by an edge in ${\cal F\cal F}_\ell$ then there are free bases
$e_1,\dots,e_n$ and $e^\prime_1,\dots,e_n^\prime$ of $F_n$ such that
$e_i=e_i^\prime$ for $1\leq i\leq \ell$ and that
$A=<e_1,\dots,e_{n-1}>,A^\prime=<e_1^\prime,\dots,e_{n-1}^\prime>$
up to conjugation.
The universal coverings $R,R^\prime\in cv_0(T)$ of the marked roses 
with petals of length $1/n$ determined by these bases are 
$\ell$-tied. As in the proof of Lemma \ref{equivariantfs}, 
this yields the lemma.
\end{proof}

\begin{remark} 
Purists may find the simultaneous use 
of the sphere graph 
and of Outer space 
as a model for the free splitting
graph  unsatisfactory. However 
we felt that using spheres makes the proof of 
Lemma \ref{equivariantfs2}  more transparent.
We invite the reader to find another proof along the 
lines of the proofs of Lemma A.3 and Lemma 4.1 of \cite{BF12}
\end{remark}

\section{Skora paths}\label{skora}

The goal of this section is to introduce folding paths 
and establish some of their properties needed later on.

A \emph{morphism} between $F_n$-trees 
$S,T$ is an equivariant map $\phi:S\to T$ such that every segment
of $S$ can be subdivided into 
finitely many subintervals on which $\phi$ is 
an isometric embedding. If the tree $S$ is simplicial then
we require that the restriction of $\phi$ to any edge of $S$
is an isometric embedding.

The following (well known) construction is taken 
from Section 2 of \cite{BF11}. 
Let for the moment $U$ be an arbitrary $F_n$-tree.
A \emph{direction}
at a point $x\in U$ is a germ of non-degenerate
segments $[x,y]$ with $y\not=x$. At each interior point of 
an edge of $U$ there are exactly two directions.
A collection of directions at $x$ is called a \emph{gate}
at $x$. A \emph{turn} at $x$ is an unordered pair of distinct directions 
at $x$. It is called \emph{illegal} if the directions belong to the
same gate, and it is called \emph{legal} otherwise.
A \emph{train track structure} 
on $U$ is an $F_n$-invariant family of gates at the points of $U$
so that at each $x\in U$ there are at least two gates.

A morphism $\phi:S\to T$ determines a collection of gates
as follows. Define a turn in $S$ to be 
illegal if it is given by 
two directions which are identified by the morphism $\phi$.
Otherwise the turn is defined to be legal.
Two directions $d,d^\prime$ at the same point
belong to the same gate if either $d=d^\prime$ or if the turn
$d,d^\prime$ is illegal.
If these gates determine a train track structure on $S$ and if 
moreover there is a train track structure on $T$ so that 
legal turns are sent to legal turns, then
$\phi$ is called a \emph{train track map}
(see p.110 of \cite{BF11}).

Let as before $cv(F_n)$ be unprojectivized
Outer space, with its boundary $\partial cv(F_n)$ of
minimal very small actions of $F_n$ on $\mathbb{R}$-trees
which are either not simplicial or which are not free.
As in Section \ref{geometric}, denote by $cv_0(F_n)\subset cv(F_n)$ the 
subspace of trees with quotient of volume one.
Let  ${\rm CV}(F_n)$ be the projectivization of 
$cv(F_n)$, with its boundary 
$\partial {\rm CV}(F_n)$.
The union 
\[\overline{{\rm CV}(F_n)}={\rm CV}(F_n)\cup
\partial{\rm CV}(F_n)\] is a compact space. 
The outer automorphism group ${\rm Out}(F_n)$ of 
$F_n$ acts on $\overline{{\rm CV}(F_n)}$ as a group of 
homeomorphisms.
The projection map restricts to an ${\rm Out}(F_n)$-equi\-va\-riant
homeomorphism $cv_0(F_n)\to {\rm CV}(F_n)$.

There is a natural bijection between
conjugacy classes of 
free bases of $F_n$ and roses 
(=marked metric roses with $n$ petals of length $1/n$ each).
Define the
\emph{standard simplex} of a free basis of $F_n$ to consist
of all simplicial trees 
\[U\in \overline{cv_0(F_n)}^{++}\subset 
\overline{cv_0(F_n)}^+\subset \overline{cv(F_n)}
=cv(F_n)\cup \partial cv(F_n)\] 
with quotient of volume one
which are universal coverings of 
graphs obtained from the rose $R$
corresponding to the basis 
by changing the lengths of the edges.
We allow that $U$ is contained in the 
boundary of unprojectivized Outer space, i.e. that 
the rank of the  
fundamental group of the graph $U/F_n$ is
strictly smaller than $n$.
A standard simplex is a compact subset of the space 
$\overline{cv_0(F_n)}^{++}$
of simplicial $F_n$-trees with trivial 
edge stabilizers and quotient of volume one.

In the proof of Lemma \ref{morphismsex} below
and several times later on we will use
the main result of \cite{P88} which we report here for 
easy reference. We do not define the equivariant 
Gromov Hausdorff topology, but we note that this is 
the topology used for $\overline{cv(F_n)}$ (see \cite{P89} for 
a precise statement).

\begin{theo}\label{paulin}
Let $(X_i)$ be a sequence of complete $\mathbb{R}$-trees.
Let $\Gamma$ be a countable group acting
by isometries on all $X_i$. Suppose 
that for each $i$ there exists a point $x_i\in X_i$ such that
the following holds true.
For every finite subset $P$ of $\Gamma$, the closed convex hulls
of the images of $x_i$ under $P$ admit for all $\epsilon >0$ a covering
by balls of radius $\epsilon$ of uniformly bounded cardinality.
Then there exists a subsequence converging in the equivariant
Gromov Hausdorff topology to an $\mathbb{R}$-tree.
\end{theo}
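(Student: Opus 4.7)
\textbf{Proof plan for Theorem \ref{paulin}.}

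The plan is to build the limit tree by extracting a pointwise limit of the orbit metrics via a diagonal argument, and then to upgrade this to equivariant Gromov--Hausdorff convergence by using the uniform covering hypothesis to transport $\epsilon$--nets to the limit. Enumerate $\Gamma=\{g_1,g_2,\dots\}$ and set $P_k=\{g_1,\dots,g_k\}$; write $C_k^i=\operatorname{Conv}(P_k\cdot x_i)\subset X_i$. The covering hypothesis says that for every $k$ and every $\epsilon>0$ there is an integer $N(k,\epsilon)$, independent of $i$, such that $C_k^i$ is covered by $N(k,\epsilon)$ closed balls of radius $\epsilon$. In particular the diameters $\operatorname{diam}(C_k^i)$ are bounded in $i$, so the numbers $d_i(g\cdot x_i,h\cdot x_i)$ for $g,h\in P_k$ lie in a compact interval.

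First I would extract a subsequence along which, for every pair $(g,h)\in\Gamma\times\Gamma$, the distance $d_i(g\cdot x_i,h\cdot x_i)$ converges to a limit $d_\infty(g,h)$. This is a standard Cantor diagonal argument in $\mathbb{R}^{\Gamma\times\Gamma}$. The resulting function $d_\infty$ is a $\Gamma$--invariant pseudo-metric on $\Gamma$; passing to the quotient by $\{(g,h):d_\infty(g,h)=0\}$ and taking the metric completion yields a complete metric space $X_0$ on which $\Gamma$ acts isometrically, with a canonical orbit point $x_\infty$ whose $\Gamma$--orbit limits the orbits of the $x_i$. Because each $X_i$ is an $\mathbb{R}$--tree it satisfies the four-point (0-hyperbolic) condition, and since this is a closed condition on quadruples of distances it passes to the pointwise limit $d_\infty$. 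Hence $X_0$ is $0$--hyperbolic and embeds isometrically into a complete $\mathbb{R}$--tree $X$; I would take $X$ to be the convex hull (inside the universal $\mathbb{R}$--tree embedding) of the $\Gamma$--orbit of $x_\infty$, which carries a natural $\Gamma$--action by isometries extending the one on $X_0$.

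The remaining task is to verify genuine equivariant Gromov--Hausdorff convergence, i.e.\ to produce for every finite $P\subset\Gamma$ and every $\epsilon>0$ an almost-isometric, almost-equivariant $\epsilon$--approximation between $\operatorname{Conv}(P\cdot x_i)\subset X_i$ and $\operatorname{Conv}(P\cdot x_\infty)\subset X$ once $i$ is large enough. This is where the covering hypothesis is essential: for each $k$ and each $\epsilon$ I can choose an $\epsilon$--net $N_k^i\subset C_k^i$ of cardinality at most $N(k,\epsilon)$ containing $P_k\cdot x_i$. After a further diagonal extraction (in $k$ and a decreasing sequence $\epsilon_j\to 0$) I may assume $|N_k^i|$ is eventually constant in $i$ and that all pairwise distances between the labelled points of $N_k^i$ converge. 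The limit labelled finite metric spaces form a compatible system; their union, after quotienting by vanishing distances, is a countable $\epsilon$--dense subset of $\operatorname{Conv}(P_k\cdot x_\infty)$ inside $X$, because the four-point condition guarantees that the limiting finite metric spaces embed in $X$ and fill out the convex hull of $P_k\cdot x_\infty$ up to error $\epsilon$. The obvious partial bijection between $N_k^i$ and its limit is then, for $i$ large, an $\epsilon$--approximation which is equivariant on $P_k\cdot x_i$ by construction. This gives convergence in the equivariant Gromov--Hausdorff topology.

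I expect the delicate step to be the last one: showing that the limit $\epsilon$--net really is $\epsilon$--dense in $\operatorname{Conv}(P_k\cdot x_\infty)$ and not just in the orbit $P_k\cdot x_\infty$ itself. This requires unwinding how convex hulls in an $\mathbb{R}$--tree are determined by the pairwise distances of their generating points (via Gromov products) and matching the discrete combinatorial structure of $C_k^i$ (a finite subtree with at most $|P_k|$ extremal vertices) with that of its limit; once this is in hand, the control on $N(k,\epsilon)$ gives the required uniform $\epsilon$--approximations and closes the argument.
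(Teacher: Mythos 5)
The paper does not prove this statement at all: it is quoted verbatim from Paulin's work \cite{P88} ``for easy reference'', and the author explicitly declines even to define the equivariant Gromov topology. So there is no in-paper argument to compare yours against; what you have written is essentially a reconstruction of Paulin's own precompactness argument, specialized to $\mathbb{R}$-trees. Your plan is sound. The two places where the tree hypothesis does real work, and which you should make explicit, are: (i) the uniform bound on $\operatorname{diam}(C_k^i)$ follows from the covering hypothesis only because $C_k^i$ is \emph{connected} (a chain of $N$ balls of radius $\epsilon$ bounds the diameter of a connected set by $2N\epsilon$); and (ii) the ``delicate step'' you flag is resolved by the fact that in a $0$-hyperbolic space every quantity you need — the distance from a point $y$ to $\operatorname{Conv}(F)$, namely $\min_{a,b\in F}\tfrac12\bigl(d(y,a)+d(y,b)-d(a,b)\bigr)$, and the mutual distance of two points specified by their distances to $F$ — is a continuous function of the distance matrix of $F\cup\{y\}$, so the limit net points land inside $\operatorname{Conv}(P_k x_\infty)$ with the correct metric and inherit $\epsilon$-density by pigeonholing over the (uniformly bounded) labels. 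One bookkeeping point to add: for the equivariance condition you must control $d_i(g\,n, h\,n')$ for net points $n,n'$ and $g,h\in P_k$, and $g\,n$ lies in $C_{k'}^i$ for a larger index $k'$ with $P_{k'}\supset P_kP_k$, so the diagonal extraction must be performed over all $k$ before the approximations for a fixed $P_k$ are assembled. Worth noting: for $\mathbb{R}$-trees the orbit distance matrix alone already determines the convex hulls, so your net argument is only genuinely needed if one insists on the two-sided approximation; in Paulin's general ($\delta$-hyperbolic) setting the nets are indispensable, which is why the cardinality bound, and not merely a diameter bound, appears in the hypothesis.
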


The construction in Lemma \ref{morphismsex} below 
is discussed in detail in Section 2 of \cite{BF11}.
We also refer to this paper for references
to earlier works where this construction is introduced. 
In the rest of the paper, we always denote by 
$[T]\in \overline{{\rm CV}(F_n)}$ the projectivization of a 
tree $T\in \overline{cv(F_n)}$.

\begin{lemma}\label{morphismsex}
For every $[T]\in \overline{{\rm CV}(F_n)}$
and every standard simplex 
$\Delta\subset \overline{cv_0(F_n)}^{++}$ there is a
tree $U\in \Delta$ and 
a train track map $f:U\to T$
where $T$ is some representative of $[T]$.
\end{lemma}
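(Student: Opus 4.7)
The plan is to construct the morphism explicitly from a choice of basepoint in $T$, then verify the train track condition. Let $e_1, \dots, e_n$ be the free basis of $F_n$ determining $\Delta$, and represent $[T]$ by a concrete tree $T \in \overline{cv(F_n)}$.

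First, I would choose a point $y \in T$ and set $\ell_i = d_T(y, e_i y)$. Since $T$ is a minimal nontrivial $F_n$-tree, no point is fixed by all of $F_n$, so $\sum_i \ell_i > 0$ for any such $y$. I would moreover choose $y$ so that at least two of the initial directions at $y$ of the segments $[y, e_i^{\pm 1} y]$ are distinct; if all $2n$ initial directions coincided, every translate $e_i^{\pm 1} y$ would lie on a single ray from $y$, a condition avoidable by perturbing $y$ in any direction transverse to that ray. Define $U$ to be the universal cover of the metric rose with petals of lengths $\ell_1, \ldots, \ell_n$, with the $i$-th petal collapsed whenever $\ell_i = 0$ (this is permitted since $\Delta$ includes such degenerate simplicial trees). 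After rescaling $T$ by $1/\sum_j \ell_j$ the volume of $U/F_n$ becomes one, so $U \in \Delta$. The morphism $f \colon U \to T$ is then defined equivariantly: pick a lift $\tilde v$ of the rose's vertex, set $f(\tilde v) = y$, and map each non-collapsed edge $[\tilde v, e_i \tilde v]$ of $U$ isometrically onto the geodesic segment $[y, e_i y]$ in $T$. The extension is well-defined because the vertex stabilizer of $\tilde v$ in $U$ (generated by the collapsed $e_j$'s) fixes $y$ in $T$ by construction.

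To verify the train track condition, note that the induced gate structure on $U$ declares two directions equivalent iff $f$ identifies them. At interior points of edges of $U$ the morphism is an isometric embedding, sending the two directions to opposite directions in $T$, so they lie in distinct gates. At the vertex $\tilde v$ the generic choice of $y$ ensures at least two distinct images among the initial directions of $[y, e_i^{\pm 1} y]$, giving the required $\geq 2$ gates. A compatible train track structure on $T$ is produced by the finest possible choice: declare every direction at every point of $T$ its own gate. Every turn in $T$ is then legal, so the condition that legal turns in $U$ map to legal turns in $T$ holds vacuously, while the $\geq 2$ gates condition on $T$ holds because each point of the minimal $F_n$-tree $T$ has at least two directions.

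The main obstacle is making the generic choice of $y$ precise when $[T] \in \partial {\rm CV}(F_n)$ has complicated structure, for instance dense orbits, point stabilizers that are free factors, or branching accumulating densely. A cleaner alternative avoiding case analysis is to approximate $T$ by free simplicial trees $T_m \in cv(F_n)$ (using density of $cv(F_n)$ in $\overline{cv(F_n)}$), run the above construction in each $T_m$ to produce train track maps $f_m \colon U_m \to T_m$ with $U_m \in \Delta$, and extract a subsequential limit via Paulin's theorem (Theorem \ref{paulin}). Compactness of $\Delta$ selects a limit $U \in \Delta$; uniform control of the equivariant Lipschitz constants of the $f_m$ (bounded by the reciprocals of the normalizing factors, which stay bounded away from zero) yields a limiting equivariant morphism $f \colon U \to T$, whose train track property is inherited from the approximating sequence.
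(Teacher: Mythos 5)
Your second, ``cleaner'' route is essentially the paper's proof (approximate $[T]$ by free simplicial trees, take train track maps from $\Delta$, pass to a limit via Theorem \ref{paulin}), but both halves of your argument have the same genuine gap: nowhere do you actually secure the condition that there are at least two gates at the vertex. Your explicit construction tries to get this by ``perturbing $y$ in any direction transverse to that ray,'' and this does not work. If all $2n$ germs of the segments $[y,e_i^{\pm1}y]$ agree with a single direction $d$ at $y$ and you move to a nearby point $y'$ with $[y,y']$ pointing in a direction $d'\neq d$, then each geodesic $[y',e_iy']$ is the tightening of $[y',y]\cup[y,e_iy]\cup[e_iy,e_iy']$, and since $d\neq d'$ there is no cancellation at $y$; hence every $[y',e_i^{\pm1}y']$ begins by travelling from $y'$ toward $y$, so all $2n$ directions at $y'$ again lie in one gate. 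One-gated basepoints genuinely occur (e.g.\ a point deep inside a long edge of a simplicial tree, with all translates on one side), so no soft genericity argument suffices. The correct mechanism, which is what Proposition 2.5 of \cite{BF11} and \cite{FM11} supply and what the paper invokes, is to \emph{minimize the Lipschitz constant} over the standard simplex: at an optimal map a single gate at the vertex would let one move the vertex image along the common direction and strictly decrease all edge stretches, contradicting optimality.

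The same omission undermines your limiting step. You assert that the train track property of the limit map is ``inherited from the approximating sequence,'' but this is exactly the point that can fail: two directions lying in different gates for every $f_m$ may be identified by the limit map, so the gate structure can degenerate in the limit. The paper avoids this by never arguing that gates persist; instead it normalizes so that each $f_i\colon S_i\to T_i$ has optimal Lipschitz constant one with $S_i$ in the compact set $\Delta$ and $T_i$ in the compact set $\Sigma$, checks that the limit $f\colon S\to T$ is again a one-Lipschitz edge isometry with $T\in\Sigma$, and then deduces the two-gate condition from optimality of the \emph{limit} map itself. Without building optimality into your construction (or some substitute for it), both your base case and your limit step are incomplete. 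The remaining ingredients of your limit argument --- compactness of $\Delta$, convergence of graphs of maps, equivariance and the edge-isometry property passing to the limit, and using the finest train track structure on $T$ --- are fine and agree with the paper.
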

\begin{proof} In the case that $[T]\in {\rm CV}(F_n)$ 
is free simplicial 
a detailed argument is given in the proof of 
Proposition 2.5 of \cite{BF11} (see also \cite{FM11}).
A limiting argument then yields the result for 
trees
$[T]\in \partial {\rm CV}(F_n)$.

Let $\Sigma\subset \overline{cv(F_n)}$ be the set
of all trees $U\in \overline{cv(F_n)}$ 
such that the minimum over all trees $S\in \Delta$ 
of the smallest Lipschitz constant for
equivariant maps $S\to U$ equals one.
Let $[T_i]\subset {\rm CV}(F_n)$ be a sequence of 
free simplicial projective $F_n$-trees which converge
in $\overline{{\rm CV}(F_n)}$ 
to a projective $F_n$-tree $[T]$.
By Proposition 2.5 of \cite{BF11}, for each $i$ 
there is a point $S_i\in \Delta$, a 
representative $T_i\in \Sigma$ of $[T_i]$ and
a train track map  
\[f_i:S_i\to T_i\] of Lipschitz
constant one.

Since $\Delta\subset \overline{cv_0(F_n)}^{++}$ is compact,
the set $\Sigma$ 
is compact with respect to the equivariant Gromov 
Hausdorff topology \cite{P88,P89} and hence 
\[{\cal P}=\{U\times V\mid U\in \Delta,V\in \Sigma\}\]
is compact in the equivariant Gromov Hausdorff topology
for the diagonal action of $F_n$.
Then $S_i\times T_i\in {\cal P}$ for each $i$
and therefore up to passing to a subsequence, we may
assume that $S_i\times T_i$ converges in the equivariant
Gromov Hausdorff topology to $S\times T\in {\cal P}$ where
$S\in \Delta$ and where 
$T\in \Sigma$ is a representative of $[T]$
(see Theorem \ref{paulin} and \cite{P88}).

For each $i$ the graph $A_i$ of $f_i$ is a closed 
$F_n$-invariant subset
of $S_i\times T_i$.
By passing to another subsequence we may assume
that the sequence $A_i$ converges in the equivariant 
Gromov Hausdorff topology
to a closed 
$F_n$-invariant subset $A$ of $S\times T$
(see once more Theorem \ref{paulin} and \cite{P88}).

Let $d_S,d_T$ be the distance on $S,T$.  
For each $i$ the set $A_i\subset S_i\times T_i$ 
is the graph of a one-Lipschitz map
$S_i\to T_i$. Thus by continuity, for 
$(x_1,y_1),(x_2,y_2)\in A\subset S\times T$ we have
$d_T(y_1,y_2)\leq d_S(x_1,x_2)$. In particular,
for each $x\in S$ there is a unique point $f(x)\in T$ so that
$(x,f(x))\in A$, and the assignment $x\to f(x)$ is 
an equivariant map $S\to T$ with Lipschitz constant one.
Now $T\in \Sigma$ and therefore  the Lipschitz constant one is optimal.
The map $f$ is an isometry on edges since this
is the case for each of the maps $f_i$.
Thus $f$ is a morphism which induces a collection of gates on $S$.
Since the Lipschitz constant one for equivariant maps
$S\to T$ is optimal, there are 
at least two gates
at each preimage of the unique vertex of $S/F_n$
(compare the end of the proof of Proposition 2.5 of 
\cite{BF11} for details). In other words, 
$f$ has the required properties.
\end{proof}

\begin{remark} \label{optimal}
The train track map $f:U\to T$ constructed in the
proof of Lemma \ref{morphismsex} minimizes the Lipschitz
constant among all equivariant maps from points in 
the standard simplex $\Delta$ to $T$. We call such a map
\emph{optimal}.
\end{remark}

\begin{remark}\label{short2} 
The proof of Lemma \ref{short} shows the following
special case of  Lemma A.3 of \cite{BF12}. Let $x,y\in cv(F_n)$ and assume that
there is a train track map $f:x\to y$ which maps a non-degenerate
segment $c\subset x$ isometrically to a non-degenerate segment
$f(c)$ so that $f^{-1}(f(c))=c$; then the distance in ${\cal F\cal S}$ 
between $\Upsilon(x)$ and $\Upsilon(y)$ is uniformly bounded.
\end{remark}

Let $S\in \overline{cv(F_n)}^s$ 
be a simplicial tree, let 
$T\in \overline{cv(F_n)}$ and let 
$f:S\to T$ be a train track map.  By the definition of a 
train track map, 
$f$ isometrically embeds every edge. 

Let $\epsilon >0$ be half of the smallest length of 
an edge of $S$. Let  
$e,e^\prime$ be edges with the same initial
vertex $v$ which define an illegal turn. 
Assume that $e,e^\prime$ are parametrized by
arc length on compact intervals $[0,a],[0,a^\prime]$
where $e(0)=e^\prime(0)=v$.
Then 
there is some
$t\in (0,\epsilon]$ so that $f e[0,t]=
f e^\prime[0,t]$. 
For $s\in [0,t]$ 
let $S_s$ be the quotient of $S$ by the equivalence
relation $\sim_s$ which is defined by 
$u\sim_s v$ if and only if 
$u=ge(r)$ and $v=ge^\prime(r)$ for some $r\leq s$
and some $g\in F_n$.
The tree $S_s$ is called a \emph{fold} of $S$ 
obtained by folding the illegal turn
defined by $e,e^\prime$ (once again, compare the
discussion in Section 2 of \cite{BF11}).
Note that for $s>0$ the volume of the graph 
$S_s/F_n$ is strictly smaller than the volume of $S/F_n$.
There also is an obvious notion of a maximal
fold at the illegal turn defined by $e,e^\prime$.

Using the terminology of the previous paragraph, 
the assignment $s\to S_s$  
$(s\in [0,t])$ 
is a path in $\overline{cv(F_n)}^s$ through $S_0=S$
which is called a \emph{folding path}. We call the train 
track map $f:S\to T$ used to construct the path a \emph{guide}
for the path.
The semigroup property holds for folding paths.
For each $s\in [0,t]$ the train track map $f:S\to T$ decomposes as 
\[f=f_s\circ \phi_s\] where 
$f_s:S_s\to T$ is a train track
map and $\phi_s:S\to S_s$ is a train track map for the train track
structures on $S,S_s$ defined by $f$ and $f_s$. 
We refer to \cite{HM11} and to 
Section 2 of \cite{BF11} for details of this construction.
We insist that
we view the initial train track map $f:S\to T$ as part of the data
defining a folding path.

Repeat this construction with $S_t$ and a 
perhaps different pair of edges. 
The path constructed in this way by successive foldings
terminates if 
$T$ is free simplicial
(Proposition 2.2 of \cite{BF11}). 

We next observe that with perhaps the exception of 
the endpoint, 
such folding paths are in fact contained
in the subspace 
$\overline{cv(F_n)}^+$ 
of all simplicial trees
with at least one orbit of edges with trivial stabilizer.

\begin{lemma}\label{someedgetriv}
Let $U\in \overline{cv(F_n)}^s$ be a simplicial
tree and assume that there is a non-trivial folding path
$(x_t)\subset \overline{cv(F_n)}$ 
connecting $U=x_0$ to a tree $U\not=T\in \overline{cv(F_n)}$.
Then $U\in \overline{cv(F_n)}^+$. If all stabilizers of 
non-degenerate segments in $T$ are trivial 
then $U\in \overline{cv(F_n)}^{++}$.
\end{lemma}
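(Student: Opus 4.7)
The plan is to exploit the train track guide $f\colon U\to T$ of the folding path, which by definition isometrically embeds every edge of $U$ into $T$. The second, stronger assertion is immediate: if an edge $e$ of $U$ had non-trivial stabilizer $\langle a\rangle$, then $a$ would fix $e$ pointwise, and by equivariance $a$ would pointwise fix the non-degenerate arc $f(e)\subset T$. The hypothesis that every non-degenerate segment stabilizer in $T$ is trivial then forces $a=1$, so every edge of $U$ has trivial stabilizer and $U\in\overline{cv(F_n)}^{++}$.

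For the first assertion I argue by contradiction using the very first non-trivial fold of the path. That fold is performed at an illegal turn defined by two distinct edges $e,e'$ sharing an initial vertex $v$ of $U$, with fold parameter $s\in(0,\epsilon]$ where $\epsilon$ is half the smallest edge length of $U$. Assume both edges have non-trivial stabilizers $\langle a\rangle$ and $\langle b\rangle$. In the folded tree $x_s$ the common image of $e[0,s]$ and $e'[0,s]$ is a non-degenerate arc pointwise fixed by both $a$ (via the action on $e$) and $b$ (via the action on $e'$). Since $x_s\in\overline{cv(F_n)}$ is very small, the subgroup $\langle a,b\rangle$ stabilizing this arc must be cyclic.

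The crux of the argument is upgrading this cyclicity to the equality $\langle a\rangle=\langle b\rangle$. For this I invoke condition (2) of the very small property in $U$: if $c^k=a$ for some $c\in F_n$ and $k\geq 1$, then $c^k$ stabilizes $e$, so $c$ stabilizes $e$ as well and hence $c\in\langle a\rangle$, forcing $k=\pm 1$. Thus neither $a$ nor $b$ is a proper power in $F_n$, and in any cyclic supergroup they must both generate the whole group. Therefore $\langle a\rangle=\langle b\rangle$, so $a=b^{\pm 1}$.

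Finally I produce a tripod contradiction in $x_s$. Writing $\phi_s\colon U\to x_s$ for the folding quotient and $q=\phi_s(e(s))=\phi_s(e'(s))$, the element $a=b^{\pm 1}$ pointwise fixes the folded segment together with the two continuations $\phi_s(e[s,|e|])$ and $\phi_s(e'[s,|e'|])$, because $a$ fixes $e$ and $b$ fixes $e'$ pointwise in $U$. All three subarcs meet at $q$, and because $s$ is strictly smaller than $|e|$ and $|e'|$ and the equivalence relation $\sim_s$ identifies no points of $e,e'$ beyond parameter $s$, they determine three pairwise distinct directions at $q$. Hence $a$ fixes a tripod in $x_s$, violating condition (3) of the very small property. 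Consequently at least one of $e,e'$ has trivial stabilizer, and $U\in\overline{cv(F_n)}^+$.
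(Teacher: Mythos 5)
Your proof is correct and follows essentially the same route as the paper: show the two edges at the folded illegal turn would have equal (maximal cyclic) stabilizers, then exhibit a tripod fixed by that common generator in the folded tree, contradicting very smallness. The only cosmetic difference is that you derive $\langle a\rangle=\langle b\rangle$ from the very smallness of the folded tree $x_s$ together with root-closure in $U$, whereas the paper reads it off from the identified segments $f(e_1')=f(e_2')$ in the target tree $T$; both are valid.
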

\begin{proof}
Let $f:U\to T$ be a train track map which 
guides the folding path $(x_t)$. Let $e_1,e_2$ be two edges
in $U$ incident on the same vertex $v$ 
with the property that the map $f$ identifies
two non-degenerate initial proper
subsegments $e_1^\prime,e_2^\prime$ 
of $e_1,e_2$. Assume that the stabilizers
$A_1,A_2<F_n$ of both edges $e_1,e_2$ are non-trivial. 
Since $U$ is very small, the groups $A_1,A_2$ are maximal cyclic.
By equivariance, the non-degenerate segments $f(e_i)\subset T$
are stabilized by $A_i$ $(i=1,2)$ 
and hence $f(e_1^\prime)=f(e_2^\prime)
\subset f(e_1)\cap f(e_2)$
is stabilized by the subgroup of $F_n$ generated by $A_1$ and $A_2$.
Since $T$ is very small and hence 
stabilizers of non-degenerate 
segments in $T$ are maximal cyclic,
we conclude that $A_1=A_2=A$.

Fold the tree $U$ and equivariantly identify 
$ge_1^{\prime}$ and
$ge_2^{\prime}$ for all $g\in F_n$. 
The resulting tree $U^\prime$ 
contains a tripod which consists of the images of 
the identified 
segments $e_1^{\prime},e_2^{\prime}$ and
the images of the segments $e_i-e_i^{\prime}$
$(i=1,2)$. As both $e_1,e_2$ are stabilized by $A$, by 
equivariance 
this tripod is stabilized by $A$ as well. But
$U^\prime$ is contained in a folding path connecting
$U$ to $T$ and hence $U^\prime$ is very small. In particular,
stabilizers of tripods are trivial. This is a contradiction which 
implies that the stabilizer of at least one of the edges
$e_1,e_2$ is trivial. The 
first part of the lemma follows.

To show the second part, note that
if there is some $t\geq 0$ so that $x_t$ has a non-trivial
edge stabilizer then by equivariance and the fact that 
the train track map $x_t\to T$ maps edges isometrically,
there is a non-degenerate segment in $T$ with non-trivial 
edge stabilizer. Thus  indeed
$(x_t)\subset \overline{cv(F_n)}^{++}$
if all stabilizers of non-degenerate
segments in $T$ are trivial.
\end{proof}

\begin{remark}\label{upsilonex}
Lemma \ref{someedgetriv} shows in particular that
the map $\Upsilon$ as defined in (\ref{upsilon}) of Section 2
is defined on every folding path, perhaps 
with the exception of its endpoint. 
\end{remark}

We can also fold all illegal turns with unit speed at once
\cite{BF11}. The resulting path is unique (Proposition 2.2 of \cite{BF11}).
In the same vein 
we can rescale all trees along the path 
to have volume one quotient and rescale the endpoint tree
$T$ accordingly while folding with unit speed all 
illegal turns at once. Using Proposition 2.2 and 
Proposition 2.5 of \cite{BF11} (compare
\cite{BF11} for references), a path constructed in this
way from a train track map $f:S\to T$ 
is unique and will be called a \emph{Skora path}
in the sequel (however, the path
depends on the guiding train track map $f:S\to T$).
If $T\in cv(F_n)$ then 
this path has finite length, otherwise its length
may be infinite. By convention, even if the path has finite
length
we do not consider its endpoint to be a point on the path.


We say that a (normalized or unnormalized) folding path 
or Skora path $(x_t)_{0\leq t <\xi}$ $(\xi\in [0,\infty])$ 
\emph{converges} to a projective 
tree $[T]\in \overline{{\rm CV}(F_n)}$ if as $t\to \xi$
the projective trees $[x_t]$ converge in $\overline{{\rm CV}(F_n)}$
to $[T]$. If $(x_t)$ is a Skora path then this is equivalent to stating
that $(x_t)$ is guided by a train track map $x_0\to T$.  
We summarize the discussion as follows (see also Proposition 2.5 of 
\cite{BF11}).

\begin{lemma}\label{pathsexist}
For every standard simplex $\Delta$ and every 
tree $[T]\in\overline{{\rm CV}(F_n)}$ 
there is a Skora path $(x_t)\subset
\overline{cv_0(F_n)}^+$ with 
$x_0\in \Delta$ which
converges to $[T]$.
\end{lemma}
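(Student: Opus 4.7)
The plan is to assemble the statement directly from the tools already developed in this section: Lemma \ref{morphismsex} supplies the initial data, the Skora path construction turns that data into a path, Lemma \ref{someedgetriv} places the path in the correct subspace, and the convergence criterion recorded immediately before the lemma statement gives the limiting behavior for free.

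First, apply Lemma \ref{morphismsex} to the standard simplex $\Delta$ and the projective tree $[T]$ to obtain a point $U\in \Delta$, a representative $T$ of $[T]$, and an optimal train track map $f:U\to T$ (see Remark \ref{optimal}). Since $\Delta\subset \overline{cv_0(F_n)}^{++}$, the tree $U$ is simplicial with trivial edge stabilizers and quotient of volume one. Using $f$ as a guide, construct the Skora path $(x_t)_{0\le t<\xi}$ by simultaneously folding all illegal turns at unit speed and renormalizing each $x_t$ so that $x_t/F_n$ has volume one; uniqueness of this path is guaranteed by Propositions 2.2 and 2.5 of \cite{BF11}. The domain $[0,\xi)$ may be a finite or infinite interval depending on whether $T$ lies in $cv(F_n)$ or not, and by convention the (possible) endpoint is not included.

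Second, verify that the path lies in $\overline{cv_0(F_n)}^+$. For every $0\le t<\xi$, the point $x_t$ is an intermediate tree on a non-trivial folding path starting at $U$ and guided by the residual train track map $f_t:x_t\to T$ from the decomposition $f=f_t\circ \phi_t$. By Lemma \ref{someedgetriv} applied to the folding path from $x_t$ towards $T$, the tree $x_t$ belongs to $\overline{cv(F_n)}^+$; the volume-one normalization then gives $x_t\in \overline{cv_0(F_n)}^+$. Note $x_0=U\in \Delta$ as required.

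Finally, convergence of $[x_t]$ to $[T]$ in $\overline{\mathrm{CV}(F_n)}$ is immediate from the criterion stated just before the lemma: a Skora path converges to $[T]$ if and only if it is guided by a train track map from its initial tree to a representative of $[T]$, which is exactly how $(x_t)$ was built. No step here is really an obstacle; the only subtle point is that when $[T]\in \partial\mathrm{CV}(F_n)$ the folding may continue for all time, but this is already accommodated by allowing $\xi=\infty$ and is addressed by the cited results from \cite{BF11}.
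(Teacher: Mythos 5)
Your proposal is correct and follows essentially the same route as the paper: obtain the train track map $f:U\to T$ from Lemma \ref{morphismsex}, let it guide the (unique) Skora path, and quote the convergence of $[x_t]$ to $[T]$ from \cite{FM11,BF11}. The only addition is your explicit appeal to Lemma \ref{someedgetriv} to place the path in $\overline{cv_0(F_n)}^+$, which the paper leaves implicit in the surrounding discussion.
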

\begin{proof} By Lemma \ref{morphismsex}, there is a
representative $T$ of $[T]$, a tree $U\in \Delta$ and
a train track map $f:U\to T$.
This train track map then determines a unique
Skora path $(x_t)$ issuing from $U$. 
The projectivizations
$[x_t]$ of the trees $x_t$ converge as $t\to \infty$
in $\overline{{\rm CV}(F_n)}$  to 
the projectivization $[T]$ of $T$ (see \cite{FM11,BF11}).
\end{proof}

In the sequel we will often use volume renormalization
to define a Skora path $(x_t)\subset cv_0(F_n)$.
However, most of the time we consider 
\emph{unnormalized} Skora paths, i.e. we scale 
the trees along the path $(x_t)$ 
in such a way that the train track 
maps along the path are edge isometries onto
a fixed endpoint tree $T$. Note that our parametrization
of such unnormalized Skora path does not coincide with the 
parametrization used in \cite{BF11} although the paths coincide
as sets.

\section{Alignment preserving maps}\label{alignment}

For a number $L>1$, an \emph{$L$-quasi-geodesic} 
in a metric space $(X,d)$ is a map $\rho:J\subset \mathbb{R}\to X$
such that 
\[\vert s-t\vert/L-L\leq d(\rho(s),\rho(t))\leq L\vert s-t\vert +L\]
for all $s,t\in J$. The path $\rho$ is called a \emph{reparametrized
$L$-quasi-geodesic} if there is a homeomorphism
$\psi:I\to J$ such that $\rho\circ \psi:I\to X$ is an $L$-quasi-geodesic. 

Recall from (\ref{upsilon}), (\ref{upsilonc}), (\ref{ff})
of Section 2 the definition of the maps $\Upsilon,\Upsilon_{\cal C},
\Upsilon_{\cal F}$. 
A \emph{liberal folding path} is a folding path where folding
occurs with any speed, and there may also be rest intervals.
Building on the work \cite{HM13}, it was observed in  
\cite{BF12} that there is a number $L>1$ such that
the image under $\Upsilon$
of any liberal folding path in $\overline{cv(F_n)}^{++}$ 
is an $L$-unparametrized quasi-geodesic,
and the same holds true
for the maps $\Upsilon_{\cal C}$ \cite{Mn12}
and $\Upsilon_{\cal F}$ \cite{BF11}.
Moreover, 
following the construction in the proof of 
Proposition 2.2 of \cite{BF11},
if $(x_t)$ is a liberal folding path converging to a 
projective tree $[T]$, i.e. such that 
$[x_t]\to [T]$ in $\overline{\rm CV}(F_n)$, 
then there is a Skora path
$(y_t)$ converging to $[T]$ such that  
the Hausdorff distance between $\Upsilon(x_t)$ and 
$\Upsilon(y_t)$ is uniformly bounded.

By Lemma \ref{someedgetriv}, \emph{all} 
folding paths (with perhaps the exception of their endpoints) 
are contained in 
$\overline{cv(F_n)}^+$ and therefore 
the map $\Upsilon$ is defined on any
folding path. Out next goal is to show that the image
under $\Upsilon$ of \emph{any} folding path is a reparametrized 
$L$-quasi-geodesic and that the same holds true for
the maps 
$\Upsilon_{\cal C},\Upsilon_{\cal F}$.
The main result of this section (Proposition \ref{onelipproj} below)
will be used several times again in later sections.

An \emph{alignment preserving map} 
between two $F_n$-trees 
$T,T^\prime\in \overline{cv(F_n)}$ is an equivariant   
map  $\rho:T\to T^\prime$ with the property
that $x\in [y,z]$ implies 
$\rho(x)\in [\rho(y),\rho(z)]$. 
An equivariant map $\rho:T\to T^\prime$ is alignment preserving
if and only if the preimage of every point in $T^\prime$ is convex
(\cite{G00} and Def. 10.7 of \cite{R10}).
The map $\rho$ is then continuous on segments.

An example of an alignment preserving map can be obtained as follows.
Let $G$ be a finite metric graph with fundamental group $F_n$ 
and without univalent vertices and let 
$G^\prime$ be obtained from $G$ by collapsing a forest. The collapsing map
$G\to G^\prime$ lifts to a one-Lipschitz alignment preserving map 
$T\to T^\prime$ where $T,T^\prime$ is the universal covering of $G,G^\prime$.
An alignment preserving morphism is an equivariant isometry.

In the next lemma, $d$ denotes the distance in ${\cal F\cal S}$.
Versions of the lemma can be found in \cite{HM13} and
\cite{BF12}.

\begin{lemma}\label{nearby}
Let $S,S^\prime\in \overline{cv_0(F_n)}^+$ 
and assume that
there is an alignment preserving map $\rho:S\to S^\prime$;
then
$d(\Upsilon(S),\Upsilon(S^\prime))\leq 2$.
\end{lemma}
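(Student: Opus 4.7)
The plan is to establish the stronger statement that $\Upsilon(S)$ is a refinement of $\Upsilon(S')$, from which $d(\Upsilon(S),\Upsilon(S'))\le 1\le 2$ follows immediately in the barycentric subdivision $\cal{F\cal S}$. The strategy is to decompose the alignment preserving map $\rho$ combinatorially as a forest collapse followed by an $F_n$-equivariant homeomorphism, and then to track how this decomposition interacts with the further collapse of edges with non-trivial stabilizer which defines $\Upsilon$.

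The first step is to analyse $\rho$ combinatorially. After equivariantly subdividing $S$ at the endpoints of the maximal sub-intervals of its edges on which $\rho$ is constant---a subdivision that does not change $\Upsilon(S)$ as a vertex of $\cal{F\cal S}$---one may assume that every edge of $S$ is either collapsed to a point by $\rho$ or mapped injectively by $\rho$ onto a segment of $S'$. Let $F\subset S$ be the $F_n$-invariant subforest of edges of the first type; then $\rho$ factors as $S\to S/F\xrightarrow{\bar\rho} S'$ with $\bar\rho$ equivariant, alignment preserving, and collapsing no edge of $S/F$. The map $\bar\rho$ is injective: if $\bar\rho(x)=\bar\rho(y)$ with $x\ne y$, then any $z$ in the interior of $[x,y]\subset S/F$ satisfies $\bar\rho(z)\in[\bar\rho(x),\bar\rho(y)]=\{\bar\rho(x)\}$ by alignment, so $\bar\rho$ would collapse $[x,y]$ and hence some edge of $S/F$, contradicting the construction of $F$. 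Since $\bar\rho(S/F)$ is an $F_n$-invariant subtree of the minimal tree $S'$, the map $\bar\rho$ is surjective and hence bijective; an analogous alignment argument shows that $\bar\rho^{-1}$ is alignment preserving as well, so $\bar\rho$ is an $F_n$-equivariant homeomorphism. Up to subdivision it identifies $S/F$ with $S'$ as simplicial $F_n$-trees, and therefore $\Upsilon(S/F)=\Upsilon(S')$ in $\cal{F\cal S}$.

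The second step is to compute both $\Upsilon$-values directly from $S$. Let $N\subset S$ be the $F_n$-invariant subforest of edges with non-trivial (maximal cyclic) stabilizer, so that $\Upsilon(S)=S/N$. Since collapsing a subforest does not alter the pointwise stabilizer of any surviving edge, the subforest of non-trivially stabilized edges of $S/F$ corresponds to $N\setminus F$, and hence
\[\Upsilon(S')=\Upsilon(S/F)=(S/F)/(N\setminus F)=S/(F\cup N).\]
Consequently the canonical map $\Upsilon(S)=S/N\to S/(F\cup N)=\Upsilon(S')$ is the collapse of the image in $\Upsilon(S)$ of the subforest $F\setminus N$, every edge of which has trivial stabilizer in $\Upsilon(S)$; the collapse is proper because $\Upsilon(S')\in\cal{F\cal S}$. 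Thus $\Upsilon(S)$ is a refinement of $\Upsilon(S')$, which by the definition of the edges of $\cal{F\cal S}$ means the two vertices are either equal or joined by a single edge. This yields $d(\Upsilon(S),\Upsilon(S'))\le 1\le 2$.

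The hard part will be the subdivision step: verifying that after an $F_n$-equivariant subdivision of $S$ the edges collapsed by $\rho$ really form a subforest $F$ and that the induced map $\bar\rho\colon S/F\to S'$ behaves as described. Alignment preservation forces $\rho$ to be monotone onto a segment on every edge, so the collapsed part is a (possibly empty) closed sub-interval whose complement is mapped injectively, and the required breakpoints do exist; organising them equivariantly and checking that the subdivision leaves $\Upsilon(S)$ unchanged is routine but slightly delicate. Beyond this technical point the argument uses only alignment preservation, minimality of $S'$, and the invariance of edge stabilizers under forest collapse.
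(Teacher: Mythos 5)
Your strategy---factor $\rho$ as a forest collapse followed by an equivariant homeomorphism and then commute this factorization with the collapse defining $\Upsilon$---is genuinely different from the paper's. The paper never tries to decompose $\rho$; it observes that non-trivially stabilized edges have non-trivially stabilized images, so that $\rho$ descends to $\rho_0:S_0\to S_0'$, and then, for an edge $e'$ of $S_0'/F_n$, composes with the collapse onto the one-edge splitting $V$ determined by $e'$ to exhibit $V$ as a common collapse of $\Upsilon(S)$ and $\Upsilon(S')$, giving distance at most $2$ through the ``wedge'' $V$. Your route would buy the stronger bound $d\leq 1$, but the step you defer as ``routine but slightly delicate'' is exactly where it breaks. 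An alignment preserving map is only required to be equivariant and betweenness preserving; on a single edge $[a,b]$ it is a continuous monotone surjection onto $[\rho(a),\rho(b)]$, and the locus where it is locally constant need \emph{not} be a single closed sub-interval: it can be a countable union of intervals accumulating on a Cantor set. (An equivariant devil's-staircase reparametrization of each edge of $S$, fixing all vertices, is alignment preserving from $S$ to itself; here $F=\emptyset$, yet $\bar\rho=\rho$ is far from injective.) Hence there is in general no finite equivariant subdivision after which every edge is either collapsed or embedded, and your injectivity argument for $\bar\rho$ fails with it: $\bar\rho$ may collapse a non-degenerate segment that is a proper sub-segment of a single edge of $S/F$ without collapsing any edge of $S/F$.

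The conclusion you aim for is essentially correct and the gap is repairable without the homeomorphism claim. Let $K$ be the $F_n$-invariant set of edges of $S$ on which $\rho$ is constant. After collapsing $K$, no fibre of the induced map to $S'$ contains a whole edge, hence (being convex) each fibre contains at most one vertex; collapsing such fibres changes neither the combinatorial type of the quotient graph nor any vertex or edge stabilizer, so $S'/F_n$ is obtained from $S/F_n$ by collapsing the image of $K$, and your computation $\Upsilon(S')=S/(K\cup N)$ then goes through. But that argument has to be supplied; as written, the crux of your factorization rests on a false description of where $\rho$ can be constant. A further minor inaccuracy: subdividing an edge \emph{does} change the vertex of ${\cal F\cal S}$ (by distance one), which is harmless for the stated bound of $2$ but not for the sharpened bound $d\leq 1$ you claim.
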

\begin{proof} 
By equivariance, an edge in $S$ with non-trivial stabilizer
is mapped by $\rho$ to a (perhaps degenerate) edge in 
$S^\prime$ with non-trivial stabilizer.
Thus $\rho$ induces an alignment preserving map 
$\rho_0:S_0\to S_0^\prime$ where 
$S_0,S_0^\prime$ 
is obtained from $S,S^\prime$ by collapsing all edges with 
non-trivial stabilizers to points. 
The map $\rho_0$ projects to a quotient map
$\hat \rho_0:S_0/F_n\to S_0^\prime/F_n$.

Let $e^\prime$ be an edge in $S_0^\prime/F_n$
and let $V$ be the graph obtained from 
$S_0^\prime/F_n$
by collapsing the complement of $e^\prime$ to a point. 
Let $\zeta:S_0^\prime/F_0\to V$ be 
the collapsing map. Then 
$\zeta\circ \hat\rho_0:S_0/F_n\to V$ collapses $S_0/F_n$ to $V$. 
As $S_0/F_n$ is the graph of groups decomposition 
defining $\Upsilon(S)$ and $S_0/F_n^\prime$ is the
graph of groups decomposition defining 
$\Upsilon(S^\prime)$, this shows that
$\Upsilon(S)$ and $\Upsilon(S^\prime)$ collapse to the same
vertex in ${\cal F\cal S}$. Thus  
the distance in ${\cal F\cal S}$ between 
$\Upsilon(S)$ and $\Upsilon(S^\prime)$ is at most two.
\end{proof}

\begin{proposition}\label{onelipproj}
There is a number $\chi>0$ with the following property.
Let $T,T^\prime\in \partial cv(F_n)$ 
and assume that there is a one-Lipschitz alignment
preserving map $\rho:T\to T^\prime$.
Let $\Delta\subset \overline{cv_0(F_n)}^{++}$ be a standard simplex
and let $(x_t)$ be any folding path connecting 
a point in $\Delta$ to $T$.
Then there is a liberal folding path $(y_t)$ connecting a point
in $\Delta$ to $T$ such that for all $t$ we have
\[d(\Upsilon(x_t),\Upsilon(y_t))\leq \chi.\]
\end{proposition}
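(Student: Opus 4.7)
The plan is to use the composition $g := \rho\circ f$ to construct the desired liberal folding path, where $f:U\to T$ is a train track map guiding $(x_t)$ supplied by Lemma~\ref{morphismsex} with $U\in\Delta$. Since $f$ is an edge isometry and $\rho$ is one-Lipschitz and alignment preserving, $g:U\to T'$ is itself equivariant, one-Lipschitz, and alignment preserving. (I read the conclusion of the proposition as producing $(y_t)$ terminating at $T'$, the target of $\rho$; the repeated $T$ in the statement appears to be a typo.)

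First, I would factor $g = \hat g\circ\pi$, where $\pi:U\to\hat U$ is an equivariant alignment preserving collapse contracting each maximal sub-segment of an edge of $U$ on which $g$ is constant, and $\hat g:\hat U\to T'$ is a morphism. Then $\hat U$ is simplicial and $\hat g$ is a train track map for the induced gate structure, so by Lemma~\ref{pathsexist} it guides a Skora path $(\hat y_s)$ from $\hat U$ to $T'$. Although $\hat U$ need not lie in $\Delta$, Lemma~\ref{nearby} applied to the alignment preserving collapse $\pi:U\to\hat U$ shows that $\Upsilon(\hat U)$ is within distance $2$ of $\Upsilon(U)$; prepending a trivial initial segment at $U\in\Delta$ therefore yields a liberal folding path $(y_t)$ with $y_0\in\Delta$ and the correct asymptotic behaviour.

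The core step is to parametrise so that $y_t$ shadows $x_t$. At each time $t$ the Skora factorisation $f = f_t\circ\phi_t$ provides a train track map $f_t:x_t\to T$, and composition yields a one-Lipschitz equivariant alignment preserving map $\rho\circ f_t:x_t\to T'$. Applying the same collapse-then-fold factorisation to $\rho\circ f_t$ produces a tree $y_t$ equivariantly isometric to $\hat y_{s(t)}$ for an appropriate $s(t)$, together with an equivariant alignment preserving map $x_t\to y_t$. By Lemma~\ref{nearby} applied to this map, $d(\Upsilon(x_t),\Upsilon(y_t))\le 2$, so one may take $\chi=2$ plus a small additive constant absorbing the initial segment.

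The principal obstacle is verifying that the family $(y_t)$ so constructed is a genuine liberal folding path rather than just a sequence of trees: for $t<t'$ one must exhibit a morphism $y_t\to y_{t'}$ fitting the definition (a composition of elementary folds with possible rest intervals). This relies on the observation that the illegal turns of $\rho\circ f_t$ contain those of $f_t$, so the fold taking $x_t$ to $x_{t'}$ extends consistently to a fold taking $y_t$ to $y_{t'}$, with rest intervals inserted exactly when $f$ consumes an illegal turn that does not correspond to any new $\rho$-identification. A secondary technical point is that the alignment preserving property of $x_t\to y_t$ must descend through every elementary fold, which follows because alignment preserving maps are closed under composition and each fold-quotient of a simplicial tree is itself an alignment preserving collapse in the sense of Section~\ref{alignment}.
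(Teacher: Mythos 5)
Your overall strategy is the same as the paper's: transport the folding of $(x_t)$ through the composite $\rho\circ f$ to produce a shadowing liberal folding path toward $T'$, observe that segments identified by $f_t$ are a fortiori identified by $\rho\circ f_t$ so that each fold on the $x$-side induces a (possibly trivial) fold on the $y$-side with rest intervals, and conclude via Lemma~\ref{nearby} from the alignment preserving maps $x_t\to y_t$. You are also right that the target of $(y_t)$ should be $T'$.

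There is, however, one genuine flaw in your construction of $\hat U$. You factor $g=\rho\circ f$ as a collapse of the constancy loci of $g$ followed by a map $\hat g:\hat U\to T'$ which you assert is a morphism. That assertion fails in general: $g$ restricted to an edge $e$ is a monotone one-Lipschitz map of the segment $f(e)$, and after deleting the subsegments on which it is constant it is injective but need not be an isometric embedding on any subinterval. This is not a corner case — the main applications of the proposition (Corollaries~\ref{lengthmeasurepro} and \ref{projectionalign}) take $\rho$ to be the map $T\to T_\mu$ induced by a non-atomic length measure, which typically contracts every subsegment strictly without being locally constant anywhere; with your construction $\pi$ would be the identity and $\hat g=g$ would not be a morphism, so Lemma~\ref{pathsexist} could not be invoked and no folding path would be produced. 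The correct move, which is what the paper does, is not to collapse but to \emph{re-metrize}: give each edge $e$ of $U$ the pushforward pseudo-metric $d(x,y)=d_{T'}(g(x),g(y))$ (equivalently, reduce its length to the length of $\rho(f(e))$ with the induced parametrization). Then the natural map $\alpha:U\to S'$ is one-Lipschitz, equivariant and alignment preserving, the induced map $S'\to T'$ is an edge isometry and hence a morphism, and $S'$ still lies in (the cone over) the standard simplex, so your prepended initial segment is unnecessary. With this repair the rest of your argument goes through as in the paper; note also that the paper never claims the resulting guide is a train track map, only a morphism, which is all that a liberal folding path requires.
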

\begin{proof}  Let $T,T^\prime\in \partial cv(F_n)$ and 
assume that there is a one-Lipschitz alignment preserving
map \[\rho:T\to T^\prime.\]

As this property is invariant under scaling the metric
on both trees by the same constant,
given a standard simple 
$\Delta\subset \overline{cv_0(F_n)}^{++}$ 
we may assume that
there is a point 
$S\in \Delta$ and there is a train
track map \[\phi:S\to T.\] 
Then the map 
$\rho\circ \phi:S\to T^\prime$ is equivariant and one-Lipschitz.

We claim that  there is a tree 
$S^\prime\in \overline{cv(F_n)}^{++}$ which can be obtained from $S$ by
decreasing the lengths of some
edges of $S$, there is a one-Lipschitz alignment
preserving map $\alpha:S\to S^\prime$ 
and there is a morphism $\phi^\prime:S^\prime\to T^\prime$ such that
\[\phi^\prime\circ\alpha=\rho\circ \phi.\]
The map $\alpha$ may collapse
some edges of $S$ to points. 

The tree $S^\prime$ and the map $\alpha$ are constructed as follows.
Let $e$ be an edge in $S$. By definition of a train track
map, the restriction of $\phi$ to $e$ is an
isometric embedding and hence
$\phi(e)$ is a segment in $T$  whose length
equals the length $\ell(e)$ of $e$. 
The alignment preserving map $\rho$ maps $\phi(e)$ to 
a segment $\rho(\phi(e))$ of length $\ell^\prime(e)\leq \ell(e)$.
To construct $S^\prime$ we reduce the length of 
the edge $e$ to $\ell^\prime(e)\geq 0$. The 
natural map $\alpha:S\to S^\prime$ 
is one-Lipschitz, equivariant and simplicial, 
and it associates to $e$ the (possibly degenerate) 
edge in $S^\prime$ 
obtained in this way (compare with the discussion
in \cite{G00,HM13,BF12}).



Let $a,b\subset S$ be subsegments of edges incident on 
the
same vertex $v$ which are identified by the map $\phi$. 
Then the segments $a,b$ are also identified by
$\rho\circ \phi$. 
This means the following. Let $U$ be the simplicial
tree obtained from $S$ by equivariantly 
folding the $F_n$-translates of the segments $a,b$. 
Then there is a morphism $\chi:U\to T$ which maps the identified
segments $a,b$ to a segment in $T$, and this segment is 
mapped by $\rho$ to a segment in $T^\prime$.
By equivariance of the map $\rho$, 
there is a tree $U^\prime$ which 
is be obtained from $S^\prime$ by equivariantly 
identifying the $F_n$-translates
of $\alpha(a)$ and $\alpha(b)$, i.e. by  
a (perhaps trivial) fold,
and there is a one-Lipschitz alignment preserving map 
$\beta:U\to U^\prime$ and a morphism
$\chi^\prime:U^\prime\to T^\prime$
such that
\[\chi^\prime\circ \beta= \rho\circ \chi.\]

Now let 
$(\hat x_t)$ be an (unnormalized)  Skora path
connecting $S$ to $T$. Following 
the discussion in Proposition 2.2 of \cite{BF11} and its proof,
there is an (unnormalized) folding path $(x_t)$ so that  
only a single fold is
performed at the time, and there are sequences 
$t_i\to \infty,s_i\to \infty$ such that $\hat x_{s_i}=x_{t_i}$. 
In particular, we have 
$(x_{t_i})\to T$ in the 
equivariant Gromov Hausdorff topology.
For each $t$ there is a 
train track map $h_{t}:x_{t}\to T$. The
Hausdorff distance between the path $\Upsilon(\hat x_t)$ and
the path $\Upsilon(x_t)$ is at most $\chi$ for a 
number $\chi>0$ not depending on the paths. 

The discussion in the beginning of this proof shows that 
there is a (suitably parametrized) liberal 
folding path
$(y_t)$ connecting $S^\prime$ to $T^\prime$ 
with the following property.  
For each 
$t$, there is a one-Lipschitz alignment preserving map
$\zeta_t:x_t\to y_t$, and there is a 
morphism $h_{t}^\prime:y_{t}\to T^\prime$ such that
the diagram
\[ 
\begin{CD}
x_{t}           @>h_{t}>>      T  \\  
@V\zeta_{t}VV                       @V\rho VV  \\ 
y_t           @>h_t^\prime>>  T^\prime   
\end{CD}
\]
commutes. As $t\to \infty$, 
$y_t$ converges in the equivariant Gromov Hausdorff topology
to $T^\prime$.
We refer to \cite{HM13,BF12} for
a detailed proof of this fact in the 
case that $T\in cv(F_n)$.

By Lemma \ref{nearby}, for each $t$ the distance
in ${\cal F\cal S}$ between $\Upsilon(x_t)$ and 
$\Upsilon(y_t)$ is at most two. Consequently for each
$t$ there is some $s(t)$ so that the distance
between $\Upsilon(\hat x_t)$ and
$\Upsilon(y_{s(t)}$ is at most $\chi+2$.
This shows the proposition.
 \end{proof}

Recall 
the definitions of 
the maps 
$\Upsilon:\overline{cv(F_n)}^+\to {\cal F\cal S}$
and $\Upsilon_{\cal C}:\overline{cv (F_n)}^s\to {\cal C\cal S}$ and 
$\Upsilon_{\cal F}=\Omega\circ 
\Upsilon_{\cal C}:\overline{cv(F_n)}^s\to 
{\cal F\cal F}.$

\begin{corollary}\label{arbitrary}
There is a number $L>1$ such that the image under $\Upsilon$
(or under $\Upsilon_{\cal C},\Upsilon_{\cal F}$) 
of every folding path is a reparametrized $L$-quasi-geodesic in 
${\cal F\cal S}$ (or ${\cal C\cal S},{\cal F\cal F}$).
\end{corollary}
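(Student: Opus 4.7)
The plan is to bootstrap the cited quasi-geodesicity statement for liberal folding paths in $\overline{cv(F_n)}^{++}$ to arbitrary folding paths, using Proposition \ref{onelipproj} to absorb the discrepancy caused by non-trivial edge stabilizers. Let $(x_t)$ be a folding path issuing from a standard simplex $\Delta\subset \overline{cv_0(F_n)}^{++}$ and ending at a tree $T\in \overline{cv(F_n)}$; by Lemma \ref{someedgetriv} the path lies in $\overline{cv(F_n)}^+$ with perhaps the exception of its endpoint, so $\Upsilon(x_t)$, $\Upsilon_{\cal C}(x_t)$ and $\Upsilon_{\cal F}(x_t)$ are all defined.

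First I would dispose of the case where $T$ has only trivial stabilizers of non-degenerate segments — in particular the case $T\in cv(F_n)$. Applying the second part of Lemma \ref{someedgetriv} to each truncated folding subpath from $x_s$ to $T$ shows $x_s\in \overline{cv(F_n)}^{++}$ for every $s$, and the conclusion is then immediate from the cited fact that the image under $\Upsilon$ (resp.\ $\Upsilon_{\cal C}$, $\Upsilon_{\cal F}$) of any liberal folding path in $\overline{cv(F_n)}^{++}$ is a reparametrized $L_0$-quasi-geodesic for some universal $L_0>1$.

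In the general case, where $T$ admits non-degenerate segments with non-trivial (cyclic) stabilizer, I would produce an auxiliary very small tree $T^\prime\in \partial cv(F_n)$ by equivariantly collapsing the maximal stabilized arcs of $T$ to points. Since $T$ is very small (cyclic segment stabilizers, trivial tripod stabilizers), the quotient $T^\prime$ inherits a minimal very small $F_n$-action all of whose segment stabilizers are trivial, and the collapsing projection $\rho\colon T\to T^\prime$ is one-Lipschitz and alignment preserving. Proposition \ref{onelipproj} then produces a liberal folding path $(y_t)$ from some point of $\Delta$ to $T^\prime$ with $d_{\cal F\cal S}(\Upsilon(x_t),\Upsilon(y_t))\leq \chi$ for all $t$; the analogous estimates for $\Upsilon_{\cal C}$ and $\Upsilon_{\cal F}$ follow because $\Upsilon_{\cal C}$ differs from $\Upsilon$ by at most one in ${\cal C\cal S}$ by the observation following (\ref{upsilon}), and because $\Upsilon_{\cal F}=\Omega\circ \Upsilon_{\cal C}$ with $\Omega$ coarsely Lipschitz by Lemma \ref{map}.

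Because $T^\prime$ has trivial segment stabilizers, the second part of Lemma \ref{someedgetriv} places $(y_t)$ inside $\overline{cv(F_n)}^{++}$, so the cited result gives that each of $\Upsilon(y_t)$, $\Upsilon_{\cal C}(y_t)$, $\Upsilon_{\cal F}(y_t)$ is a reparametrized $L_0$-quasi-geodesic. A standard fellow-traveler argument — any map at uniformly bounded Hausdorff distance from a reparametrized quasi-geodesic in a geodesic graph is itself a reparametrized quasi-geodesic with controlled constants — then yields the desired $L$ depending only on $L_0$, $\chi$ and the Lipschitz constants from Lemma \ref{map}. The main technical obstacle is the construction of the collapse $T\to T^\prime$ together with the verification that $T^\prime$ remains a minimal very small tree in $\partial cv(F_n)$; this is where the very-small hypothesis on $T$ is essential, but once granted, the reduction to the liberal folding path case is routine.
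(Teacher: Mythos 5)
Your proposal is correct and follows essentially the same route as the paper: reduce to a liberal folding path in $\overline{cv(F_n)}^{++}$ by collapsing the non-trivially stabilized arcs of the endpoint tree, invoke Proposition \ref{onelipproj} to get a $\chi$-fellow-traveling liberal folding path, place it in $\overline{cv(F_n)}^{++}$ via Lemma \ref{someedgetriv}, and conclude from the cited quasi-geodesicity of such paths together with a fellow-traveler argument. The only cosmetic difference is that you make explicit the construction of the collapse $T\to T^\prime$ and the separate treatment of the three maps, which the paper leaves implicit.
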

\begin{proof} It suffices to show the corollary for bounded subsegments
of folding paths. Now if $S,T\in \overline{cv(F_n)}^+$ and if there is 
a folding path $(x_t)$ connecting $S$ to $T$, then 
Proposition \ref{onelipproj} shows that there are 
trees $S^\prime,T^\prime\in \overline{cv(F_n)}^{++}$ 
with $\Upsilon(S)=\Upsilon(S^\prime)$, 
$\Upsilon(T)=\Upsilon(T^\prime)$ 
and 
there is a liberal folding path $(y_t)$ connecting 
$S^\prime$ to $T^\prime$ 
such that $d(\Upsilon(x_t),\Upsilon(y_t))\leq \chi$
for all $t$.
Then $(y_t)\subset \overline{cv(F_n)}^{++}$ by Lemma \ref{someedgetriv}.
The corollary now
follows from the fact that the image under $\Upsilon$ of a liberal
folding path in $\overline{cv(F_n)}^{++}$ 
is an reparametrized $m$-quasi-geodesic for a universal
number $m>1$ \cite{HM13}.
\end{proof}

\section{Trees in $\partial {\rm CV}(F_n)$ 
without dense orbits}\label{nodense}

A tree $T\in \partial cv(F_n)$ 
decomposes 
canonically into two disjoint $F_n$-invariant subsets
$T_d$ and $T_c$. Here $T_d$ is the set of all points $p$ 
such that the orbit $F_np$ is discrete, and 
$T_c=T-T_d$. The set $T_c\subset T$ is closed.
Each of its connected components is a subtree $T^\prime$
of $T$. The stabilizer of $T^\prime$ 
acts on $T^\prime$ with
dense orbits. We have $T_d=\emptyset$ if and only 
if the group $F_n$ acts on $T$ with dense orbits.
This property is invariant under scaling and hence
it is defined for projective trees.

Let $T\in \partial cv(F_n)$ be a very small $F_n$-tree 
with $T_d\not=\emptyset$.
The quotient 
$T/F_n$ admits a natural pseudo-metric.
Let $\widehat{T/F_n}$ be the associated
metric space. 

Since $T$ is very small, 
by Theorem 1 of \cite{L94} the
space $\widehat{T/F_n}$ is a finite graph. Edges
correspond to orbits of the action of $F_n$ on 
$\pi_0(T-\overline{B})$ where $B\subset T$ is the set of branch
points of $T$. The graph $\widehat{T/F_n}$ defines
a graph of groups decomposition for $F_n$, with
at most cyclic edge groups.

Denote as before by ${\cal F\cal S}$ and 
${\cal C\cal S}$
the first barycentric subdivision of the free splitting graph and the
cyclic splitting graph. 
The proof of the
following result uses an argument which 
was shown to me by Vincent Guirardel.

\begin{proposition}\label{graphofgroups}
Let $[T]\in \partial {\rm CV}(F_n)$ be such that
$T_d\not=\emptyset$. Let 
$(x_t)\subset \overline{cv(F_n)}^+$ be a Skora path converging to
$[T]$.
\begin{enumerate}
\item 
${\rm diam}(\Upsilon_{\cal C}(x_t))<\infty$.
\item If $T$ contains an edge with trivial edge stabilizer
then 
${\rm diam}(\Upsilon(x_t))<\infty$.
\end{enumerate}
\end{proposition}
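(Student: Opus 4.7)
The plan is to compare $(x_t)$ with an auxiliary liberal folding path whose endpoint is the simplicial collapse of $T$. Concretely, I would collapse each connected component of $T_c$ to a point to obtain a very small simplicial $F_n$-tree $T^s \in \overline{cv(F_n)}^s$ whose quotient is the finite graph of groups $\widehat{T/F_n}$ described just before the proposition. The collapse map $\rho : T \to T^s$ is equivariant, one-Lipschitz and alignment preserving, since each of its fibers (a single point of $T_d$ or a full component of $T_c$) is a convex subtree of $T$. When $T_c \neq \emptyset$, the vertex stabilizers of $T^s$ corresponding to collapsed $T_c$-components are non-trivial, so $T^s \in \partial cv(F_n)$; when $T_c = \emptyset$, one has $T = T^s$ and no collapse is needed.

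Applying Proposition \ref{onelipproj} to $\rho$ and the given Skora path $(x_t)$ produces a liberal folding path $(y_t)$ starting in the same standard simplex $\Delta$ and converging in the equivariant Gromov--Hausdorff topology to $T^s$, together with a uniform bound $d(\Upsilon(x_t),\Upsilon(y_t)) \leq \chi$ in ${\cal F\cal S}$. Since the inclusion $\Psi : {\cal F\cal S} \to {\cal C\cal S}$ of (\ref{psi}) is one-Lipschitz and $d_{{\cal C\cal S}}(\Upsilon(u),\Upsilon_{\cal C}(u)) \leq 1$ for $u \in \overline{cv(F_n)}^+$ (as observed after (\ref{upsilon})), it suffices to bound the diameter of $\Upsilon_{\cal C}(y_t)$ in ${\cal C\cal S}$ for part (1), and of $\Upsilon(y_t)$ in ${\cal F\cal S}$ for part (2).

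For the boundedness of $\Upsilon_{\cal C}(y_t)$ I would combine two ingredients. By Corollary \ref{arbitrary}, this image is a reparametrized $L$-quasi-geodesic in the hyperbolic graph ${\cal C\cal S}$. Next, I would invoke the local description of the equivariant Gromov--Hausdorff topology on $\overline{cv(F_n)}^s$: any very small simplicial tree sufficiently close to $T^s$ has a graph of groups obtained from $\widehat{T^s/F_n}$ by blowing up short edges, and hence its image in ${\cal C\cal S}$ lies at distance at most one from the fixed vertex $\Upsilon_{\cal C}(T^s)$. Thus $\Upsilon_{\cal C}(y_t)$ accumulates at a single vertex of ${\cal C\cal S}$ as $t \to \infty$; combined with the control of $\Upsilon_{\cal C}(y_0)$ coming from compactness of $\Delta$, the reparametrized-quasi-geodesic property forces the whole image to be bounded, since the lower Lipschitz bound of the underlying quasi-geodesic would otherwise be violated by the eventual accumulation at a single vertex. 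This establishes part (1). For part (2), the additional assumption that $T$ has an edge with trivial stabilizer passes to $T^s$, giving $T^s \in \overline{cv_0(F_n)}^+$ so that $\Upsilon(T^s) \in {\cal F\cal S}$ is defined; the same neighborhood-stability reasoning combined with Corollary \ref{control}(2) (and the refinement description above to handle the passage between $\overline{cv(F_n)}^+$- and $\overline{cv_0(F_n)}^{++}$-neighborhoods) bounds $\Upsilon(y_t)$ for large $t$, and the quasi-geodesic argument then bounds the entire image. Transferring via the $\chi$-bound of Proposition \ref{onelipproj} gives ${\rm diam}(\Upsilon(x_t)) < \infty$.

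The main obstacle I expect is the local description of $\overline{cv(F_n)}^s$ near a simplicial tree $T^s$ with no trivial-stabilizer edges (the only case of part (1) not covered by Corollary \ref{control}(2) or Corollary \ref{simpfinite} as stated). One must verify from first principles, using the equivariant Gromov--Hausdorff convergence and the very small condition, that simplicial approximations of $T^s$ come with graphs of groups that refine $\widehat{T^s/F_n}$ by blowing up short edges — so that $\Upsilon_{\cal C}$ is essentially locally constant at $T^s$. This is a standard but technical fact about the simplicial part of $\overline{cv(F_n)}$, and I expect the cleanest implementation to be the argument attributed to Guirardel in the paper's remark preceding the statement.
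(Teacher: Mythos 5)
Your reduction inverts the paper's logical order and leaves the essential difficulty unproved. The step you yourself flag as "the main obstacle" --- that $\Upsilon_{\cal C}$ is coarsely locally constant at the simplicial tree $T^s$, so that equivariant Gromov--Hausdorff convergence $y_t\to T^s$ forces $\Upsilon_{\cal C}(y_t)$ to accumulate at a single vertex of ${\cal C\cal S}$ --- is essentially the content of the proposition, and nothing quoted from the paper supplies it. Corollary \ref{control}(2) only bounds $\Upsilon$ on the intersection of a neighborhood of $T^s$ with $\overline{cv_0(F_n)}^{++}$, whereas by Lemma \ref{someedgetriv} your path $(y_t)$ is only guaranteed to lie in $\overline{cv(F_n)}^{+}$ and will in general carry edges with nontrivial stabilizers exactly when $T^s$ does, which is the case part (1) is really about; Remark \ref{noneigh} warns that neighborhoods of such trees in $\overline{cv(F_n)}$ are delicate (they even contain trees with dense orbits); and Lemma \ref{dead} --- boundedness of $\Upsilon$ along Skora paths converging to simplicial trees --- is deduced in the paper \emph{from} Proposition \ref{graphofgroups}, so it is not available to you here. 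After your reduction you are therefore facing a statement at least as hard as the one you started with, and the "standard but technical fact" you defer to is not established anywhere in the paper.

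The paper's actual proof avoids any local analysis of the topology of $\overline{cv(F_n)}^s$ near $T^s$. It works with the unnormalized path and the edge-isometric train track maps $f_t:x_t\to T$ onto a fixed representative $T$ of $[T]$: since the quotient volumes ${\rm vol}(x_t/F_n)$ decrease to a limit $b\geq {\rm vol}(\widehat{T/F_n})$, for $t\geq t_0$ no vertex of $x_t$ can be mapped by $f_t$ into the central subarc $\rho$ (of length $a/2$) of an edge $e_0$ of $T_d$ --- otherwise two definite subsegments of $x_t$ would later be identified, dropping the volume below $b$. Hence $f_t^{-1}(\rho)$ is a union of $k$ arcs in the interiors of edges, with $k$ independent of $t$, and these arcs are mapped isometrically and bijectively onto one another by the folding morphisms $g_{t_0u}$. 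The splitting-control Lemma \ref{estimate} (Lemma A.3 of \cite{BF12}) then bounds $d_{{\cal C\cal S}}(\Upsilon_{\cal C}(x_{t_0}),\Upsilon_{\cal C}(x_u))$ uniformly in $u$, and, when the stabilizer of $e_0$ is trivial, gives the same bound in ${\cal F\cal S}$. To salvage your approach you would have to prove the local-constancy statement from scratch; the volume argument above is what actually does the work, and it is missing from your write-up.
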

\begin{proof} Let $[T]\in \partial {\rm CV}(F_n)$ 
be such that $T_d\not=\emptyset$.
Let $(x_t)$ be an unnormalized Skora path connecting 
a point $x_0$ in a standard simplex $\Delta$ to 
a representative $T$ of $[T]$.
By this we mean that there is a folding path
$(y_t)\subset cv_0(F_n)$ 
guided by a train track map $f:x_0=y_0\to T$
such that for each $t$ we have $x_t=u_ty_t$. Here $u_t>0$ is such that
for $s<t$ there is a morphism 
\[g_{st}:x_s\to x_t\] and a 
train track map $f_t:x_t\to T$ with 
$f_s=f_t\circ g_{st}$.

Let $a>0$ be the smallest length
of an edge in $\widehat{T/F_n}$ and let $c\geq  a$ be
the volume of $\widehat{T/F_n}$. 
The volume of $x_t/F_n$ is a decreasing function in $t$ which
converges as $t\to \infty$ 
to some $b\geq c$. Thus there is a number $t_0>0$
such that for each $t\geq t_0$ 
the volume of $x_{t}/F_n$ is smaller than
$b+a/8$. 

Let $e_0$ be an edge of $T$. The length of $e_0$ is at least $a$.  
By equivariance and the
fact that for each $t$ the map $f_{t}$ is an edge isometry, 
there is a non-degenerate subarc $\rho$ of $e_0$ 
of length $a/2$ (the central subarc of length $a/2$)
such that for each $t\geq t_0$ the 
preimage of $\rho$ under $f_{t}$ consists of $k(t)\geq 1$ pairwise
disjoint subarcs of edges of $x_{t}$ (i.e. the
preimage does not contain any vertex). 
Namely, since $f_{t}$ is an edge isometry, otherwise
there are two segments in $x_t$ of length at least 
$a/4$ which are identified by the 
unnormalized Skora path connecting
$x_t$ to $T$. By equivariance, this implies that there is some
$u>t$ such that 
the volume of $x_u/F_n$ is strictly smaller than 
the volume of $x_t/F_n$ minus $a/8$. This violates the 
assumption on the volume of $x_{t}/F_n$.

We claim that the number $k(t)$ of preimages of $\rho$ 
in $x_t$ does not
depend on $t\geq t_0$. Namely, by equivariance and the 
definition of a Skora path, otherwise there
is some $t>t_0$ and a vertex of $x_t$ which is mapped
by $f_t$ into $\rho$. By the discussion in the
previous paragraph, this is impossible.

Write $k=k(t_0)$. 
(It is not hard to see that $k=1$, however
this information is not needed in the sequel).
For each $u\geq t_0$ the preimage of $\rho$ in $x_{u}$ consists
of exactly $k$ subsegments of edges of $x_u$. 
The morphism 
$g_{t_0u}$ maps the $k$ components of the preimage
of $\rho$ in $x_{t_0}$ 
isometrically onto the $k$ components of the
preimage of $\rho$ in $x_u$.
Thus by equivariance,  
there is an edge $h_{t_0}$ of $x_{t_0}/F_n$, 
and there is a subsegment $\rho_{t_0}$ of $h_{t_0}$ 
of length $a/2$ 
which is mapped by the quotient map 
$\hat g_{t_0u}:x_{t_0}/F_n\to x_u/F_n$ 
of the train track map $g_{t_0u}$ 
isometrically
onto a subsegment $\rho_u$ of an edge $h_u$ in 
$x_u/F_n$, and $\hat g_{t_0u}^{-1}(\rho_u)=\rho_{t_0}$.

By 
Lemma \ref{estimate} (see also 
Lemma A.3 of \cite{BF12}), 
this implies that
the distance in ${\cal C\cal S}$ 
between $\Upsilon_{\cal C}(x_{t_0})$ and 
$\Upsilon_{\cal C}(x_u)$ is uniformly bounded, independent of $u\geq t_0$.
As a consequence, we have
${\rm diam}(\Upsilon_{\cal C}(x_t))<\infty$ as claimed.

Now if the stabilizer of the edge $e_0$ of $T$ is trivial
then by equivariance, the same holds true for the
stabilizer of $h_u$ for all $u\geq t_0$. 
Then Lemma A.3 of \cite{BF12} shows that 
${\rm diam}(\Upsilon(x_t))<\infty$.
\end{proof}

From Corollary \ref{hierarchy} and Proposition \ref{graphofgroups}
we obtain as an immediate consequence

\begin{corollary}\label{forallok}
If $[T]\in \partial {\rm CV}(F_n)$ is such that
$T_d\not=\emptyset$ and if $(x_t)$ is a Skora path
converging to $[T]$ then 
${\rm diam}(\Upsilon_{\cal F}(x_t))<\infty$.
\end{corollary}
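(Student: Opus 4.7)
The plan is to observe that this corollary is an immediate consequence of Proposition \ref{graphofgroups} combined with Lemma \ref{map}, by unpacking the definition of $\Upsilon_{\cal F}$.

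First I would recall from (\ref{ff}) that $\Upsilon_{\cal F} = \Omega \circ \Upsilon_{\cal C}$, where $\Omega:{\cal C\cal S}\to{\cal F\cal F}$ is the coarsely ${\rm Out}(F_n)$-equivariant coarsely $k$-Lipschitz map provided by Lemma \ref{map}. In particular, $\Omega$ sends sets of finite diameter in ${\cal C\cal S}$ to sets of finite diameter in ${\cal F\cal F}$ (the coarse Lipschitz bound $d_{\cal F}(\Omega(u),\Omega(v))\leq k\, d_{\cal C\cal S}(u,v)+k$ suffices).

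Next I would invoke Proposition \ref{graphofgroups}(1), whose hypothesis $T_d\neq\emptyset$ matches the hypothesis of the corollary, to conclude that ${\rm diam}(\Upsilon_{\cal C}(x_t))<\infty$ along any Skora path $(x_t)$ converging to $[T]$. Here I am implicitly using that $(x_t)\subset \overline{cv(F_n)}^+\subset \overline{cv(F_n)}^s$, so that $\Upsilon_{\cal C}$ is defined along the path (see Remark \ref{upsilonex} and the definition of $\Upsilon_{\cal C}$ in (\ref{upsilonc})).

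Combining these two facts gives
\[
{\rm diam}(\Upsilon_{\cal F}(x_t)) = {\rm diam}(\Omega(\Upsilon_{\cal C}(x_t))) \leq k\cdot{\rm diam}(\Upsilon_{\cal C}(x_t)) + k < \infty,
\]
which is the assertion. There is no real obstacle here; the content of the corollary lies entirely in Proposition \ref{graphofgroups}, and the role of Lemma \ref{map} (which the reference to ``Corollary \ref{hierarchy}'' presumably packages together with the Lipschitz estimate) is just to transfer finite-diameter information from the cyclic splitting graph to the free factor graph.
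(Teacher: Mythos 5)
Your proposal is correct and matches the paper's argument: the corollary is stated as an immediate consequence of Proposition \ref{graphofgroups}(1) together with the fact that $\Upsilon_{\cal F}=\Omega\circ\Upsilon_{\cal C}$ for the coarsely Lipschitz map $\Omega$ of Lemma \ref{map}, which is exactly the content you unpack. The paper's citation of Corollary \ref{hierarchy} is just a packaging of that same Lipschitz transfer, so there is no substantive difference.
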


For the map $\Upsilon$, the analog of the first
part of Proposition \ref{graphofgroups}
may not hold. 
To obtain information in this case as well 
we formulate first a technical observation which will be used
again in Section 6.

\begin{lemma}\label{arcstabilizer}
Let $(x_t)$ be an unnormalized Skora path connecting
$x_0\in \overline{cv_0(F_n)}^{++}$ to a tree
$[T]\in \partial{\rm CV}(F_n)$. For each $t\geq 0$ let 
$f_t:x_t\to T$ be the corresponding train track map.
If there is some 
$t>0$ such that $x_t$ has  an edge $e$ with non-trivial
stabilizer then $f_t(e)$ is
contained 
in the closure of the discrete set $T_d\subset T$. 
\end{lemma}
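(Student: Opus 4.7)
The plan is a contradiction argument exploiting the density of $G'$-orbits on components of $T_c$ against the arc-stabilizer axioms of a very small action. First, axiom (2) of very small applied to $x_t$ shows that the stabilizer $A$ of $e$ is a non-trivial maximal cyclic subgroup of $F_n$, generated by some $a\neq 1$. Since $f_t$ is a morphism, it embeds $e$ isometrically and $f_t(e)\subset T$ is a non-degenerate segment fixed pointwise by $A$. Suppose toward a contradiction that $f_t(e)$ meets the interior of some component $T'$ of $T_c$ in a non-degenerate sub-arc $J$, and let $G'\leq F_n$ be the stabilizer of $T'$. Then $A\leq G'$, and $G'$ is non-cyclic because a cyclic group cannot act on a non-degenerate $\mathbb{R}$-tree with dense orbits; hence $A\subsetneq G'$.

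By axiom (3) of very small applied to $T$, the fixed-point set $\mathrm{Fix}(a)\cap T'$ is an arc $J_a\supset J$. I would then exploit density of the $G'$-orbit of $p$ in $T'$, for $p$ chosen interior to $J_a$ and a non-branch point of $T'$ (possible since the branch points of a finitely generated very small tree form a countable set while $J_a$ is non-degenerate). Density of $G'\cdot p$ in $T'$, combined with $A\subsetneq G'$, yields $g\in G'\smallsetminus A$ such that $gp$ lies in an arbitrarily small $J_a$-neighborhood of $p$ and is itself a non-branch point. At the non-branch point $gp$ the tree $T'$ has exactly two directions, and both $J_a$ and $gJ_a$ pass through $gp$ as an interior point, hence occupy both directions. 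Consequently $J_a$ and $gJ_a$ coincide on a neighborhood of $gp$, so $J_a\cap gJ_a$ contains a non-degenerate sub-arc $K$.

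The arc $K$ is fixed pointwise by both $a$ and $gag^{-1}$, so $\langle a,gag^{-1}\rangle$ fixes $K$ pointwise. By axiom (1) of very small, this subgroup is at most cyclic, and by maximality of $A=\langle a\rangle$ in $F_n$ it must coincide with $A$; thus $gag^{-1}\in A$ and $g\in N_{F_n}(A)$. Since maximal cyclic subgroups of a free group are self-normalizing (a standard consequence of the length estimate $|gag^{-1}|=|a|$ in any free basis, which forces $gag^{-1}=a^{\pm 1}$ and hence $g\in\langle a\rangle$), we get $g\in A$, contradicting $g\notin A$. This establishes the lemma.

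The main obstacle I expect is the middle paragraph: combining density of $G'$-orbits with countability of branch points to produce a $g\in G'\smallsetminus A$ whose geometric position relative to $J_a$ is good enough that the two arcs $J_a$ and $gJ_a$ must locally coincide. Once this geometric configuration is in place, the passage to an algebraic contradiction is essentially formal manipulation with the very small axioms and the self-normalization of maximal cyclic subgroups of $F_n$.
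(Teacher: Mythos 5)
Your overall strategy is different from the paper's and, as written, has a genuine gap at its central step. The endgame is fine: once you have a non-degenerate arc $K\subset J_a\cap gJ_a$ with $g\notin A$, the very small axioms together with malnormality of maximal cyclic subgroups of $F_n$ do give the contradiction, and this is the same algebraic mechanism the paper uses. The problem is how you produce such a $g$. Density of the orbit $G'\cdot p$ in $T'$ only gives elements $g$ with $gp$ metrically close to $p$; it does not give $gp\in J_a$. In a minimal tree with dense orbits the branch points are themselves dense (the orbit of any branch point is dense, and $T'$ must have branch points, since a very small action of a non-cyclic group cannot live on a line), so every arc of $T'$ is nowhere dense, and a dense orbit can perfectly well meet $J_a$ in $\{p\}$ only, or avoid it altogether away from $p$. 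Without $gp\in\mathrm{int}(J_a)$ the assertion that ``both $J_a$ and $gJ_a$ pass through $gp$ as an interior point'' fails --- $J_a$ need not pass through $gp$ at all --- and the local-coincidence argument collapses. Worse, the soft conclusion one actually gets from your setup is the opposite configuration: by the very malnormality argument you invoke, the translates $\{gJ_a\}_{g\in G'/A}$ pairwise meet in at most a point, i.e.\ they form a transverse family of arcs, and ruling that out requires a finiteness or volume input, not density alone.

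The statement you are implicitly proving (non-degenerate arcs in a component of $T_c$ have trivial stabilizer) is true, but its known proofs go through index theory or geometric approximations rather than a naive density argument. The paper sidesteps the issue by working on the simplicial side of the folding path: it shows that the $A$-fixed segment $h_s\subset x_s$ persists for all $s>t$ and projects to an immersed arc in $x_s/F_n$ with finitely many double points (again by malnormality), so the volume of $x_s/F_n$ stays bounded below by the length of $e$; this forces $T_d\neq\emptyset$, and a collapsing argument via Proposition \ref{onelipproj} then localizes $f_t(e)$ inside $\overline{T_d}$. To repair your proof you would need either to supply the missing finiteness input for the intrinsic argument in $T'$ (e.g.\ cite the triviality of arc stabilizers for very small actions with dense orbits) or to switch to a folding-path/volume argument of the paper's kind.
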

\begin{proof} For some 
$t>0$ let $e$ be an edge in $x_t$ with 
non-trivial stabilizer $A<F_n$. By equivariance, 
a train track map $f_t:x_t\to T$ maps $e$ isometrically onto a
segment $f(e)$ 
in $T$ stabilized by $A$.

For $s>t$ let $g_{ts}:x_t\to x_s$ be the train track map
induced by the Skora path, i.e. such that
$f_s=f_t\circ g_{ts}$ for all $s>t$.
The image $g_{ts}(e)$ of $e$ 
is isometrically embedded in the fixed point set of $A$ 
in $x_s$.
Since the tree $x_s$ is very small, this fixed point set is a 
line segment $h_s$ in $x_s$ whose length is not smaller than
the length of $e$.

We claim that the projection of $h_s$ into
$x_s/F_n$ is an immersed arc with only finitely many
double points. Namely, 
otherwise there is an element $g\in F_n-A$ such that
$gh_s\cap h_s$ contains a non-degenerate segment $a$.
Now the group $A$ fixes $h_s$, and by equivariance, the 
group $gAg^{-1}$ fixes $gh_s$. Then $a$ is fixed by
$\langle A,gAg^{-1}\rangle$. Since $a$ is non-degenerate
and $x_s$ is very small, this implies that $g$ normalizes $A$ and hence
$g\in A$. This contradiction shows the claim.

As a consequence, for 
all $s>t$ the volume of $x_s/F_n$ is not smaller than
the length of $e$. In particular, we have
$T_d\not=\emptyset$. 

We next observe that 
the segment $f(e)$ is contained in 
the closure $\overline{T_d}$ of $T_d$. 
Namely, otherwise $f(e)\cap \overline{T_d}$ is a non-trivial subsegment
of $f(e)$. Let $T^\prime$ be the tree obtained from $T$ by
equivariantly collapsing the segments in the $F_n$-orbit
of $f(e)\cap \overline{T_d}$ to points and let  
\[\rho:T\to \hat T\]
be the collapsing map.
By the construction in the
proof of Proposition \ref{onelipproj}, there is a tree
$\hat x_t$, there is an alignment preserving map 
$\alpha:x_t\to \hat x_t$ and there is a train track map
$\phi:\hat x_t\to T^\prime$ such that
\[\phi\circ \alpha=\rho\circ f.\] 
Since $f(e)\not\subset \overline{T_d}$, the segment 
$\alpha(e)\subset \hat x_t$ is non-degenerate, and 
by equivariance, it is
stabilized by $A$. The above discussion then shows that
$\phi(\xi(e))$ intersects $T^\prime_d$ in a non-degenerate segment
which is impossible. This contradiction yields the lemma.
\end{proof}

\begin{corollary}\label{plusplus}
\begin{enumerate}
\item
Let $[T]\in \partial {\rm CV}(F_n)$ be such that $T_d=\emptyset$. 
Then any Skora path $(x_t)$ converging to $[T]$ is contained in 
$\overline{cv(F_n)}^{++}$.
\item For every tree $T\in \partial {\rm CV}(F_n)$
there is a number $k([T])>0$ with the following property.
Let $(x_t)\subset cv_0(F_n)^+$ be a folding path converging to $[T]$.
Then for all sufficiently large $t$ there is a neighborhood $W$ of 
$x_t$ in $\overline{cv_0(F_n)}^+$ such that
${\rm diam}
(\Upsilon((W\cap \overline{cv_0{F_n}}^{++})\cup x_t))\leq k([T])$.
\end{enumerate}
\end{corollary}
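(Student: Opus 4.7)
The plan is to derive part (1) directly from Lemma~\ref{arcstabilizer}, and to split part (2) according to whether the action of $F_n$ on $T$ has dense orbits. For part (1), if $T_d=\emptyset$ then $\overline{T_d}=\emptyset$, so Lemma~\ref{arcstabilizer} forbids any edge of any $x_t$ from carrying a non-trivial stabilizer; hence $(x_t)\subset\overline{cv(F_n)}^{++}$.

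For part (2) in the case $T_d=\emptyset$, part (1) places the folding path inside $\overline{cv_0(F_n)}^{++}$, so Corollary~\ref{control}(1) supplies a neighborhood $U$ of $x_t$ in $\overline{cv_0(F_n)}^s$ with ${\rm diam}(\Upsilon(U\cap\overline{cv_0(F_n)}^{++}))\le m(n)$; taking $W=U\cap\overline{cv_0(F_n)}^+$ delivers the bound with $k([T])=m(n)$, since $x_t$ itself lies in $W\cap\overline{cv_0(F_n)}^{++}$.

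The main work lies in the case $T_d\neq\emptyset$. The plan is to use the structural stability of $x_t$ for large $t$, established in the proof of Proposition~\ref{graphofgroups}, to pin down a uniformly short basis. For $t\ge t_0$ the edges of $x_t/F_n$ mapping by $f_t$ onto edges of $\widehat{T/F_n}$ are stable in length and combinatorics, while by Lemma~\ref{arcstabilizer} the remaining ``fluctuating'' edges (those whose images avoid $\overline{T_d}$) carry trivial stabilizers. Consequently $\Upsilon_{\cal C}(x_t)$ is a free refinement of the pure cyclic splitting $s([T])$ obtained from $\widehat{T/F_n}$ by collapsing its trivial edges. Lemma~\ref{shortexists2} then yields a uniform $\ell([T])$-short basis ${\cal A}$ for every such $x_t$, and by the definition of the equivariant Gromov-Hausdorff topology this ${\cal A}$ remains $(\ell([T])+1)$-short on a neighborhood $W$ of $x_t$ in $\overline{cv_0(F_n)}^+$. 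Applying Lemma~\ref{short} to the corank-one free factor spanned by $n-1$ of the basis elements in ${\cal A}$ bounds the ${\cal F\cal S}$-distance from this free factor to $\Upsilon(y)$ by $c(\ell([T])+1)$, uniformly for $y\in (W\cap\overline{cv_0(F_n)}^{++})\cup\{x_t\}$, yielding the desired constant $k([T])$.

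The principal obstacle is achieving uniformity in $t$: Corollary~\ref{control}(2) alone only supplies a per-$t$ bound whose dependence on $x_t$ is not controlled by $[T]$. The decisive input is the stability of the ``big edge'' structure from the proof of Proposition~\ref{graphofgroups}, which forces the bounded ${\cal C\cal S}$-set $\{\Upsilon_{\cal C}(x_t)\}$ to consist of free refinements of one fixed pure cyclic splitting $s([T])$, so that Lemma~\ref{shortexists2} produces a basis that is short for every sufficiently large $t$ and remains almost-short on any nearby tree by continuity. This stability argument is the crucial technical step; once in hand, Lemma~\ref{short} converts the short-basis control into the required $\Upsilon$-control on the neighborhood, and the bound passes through the single extra point $x_t$ because ${\cal A}$ is also short for $x_t$ itself.
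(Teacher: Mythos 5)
Your argument is correct and follows the paper's own proof essentially step for step: part (1) is read off from Lemma \ref{arcstabilizer}, and part (2) combines Lemma \ref{arcstabilizer} (forcing the non-trivially stabilized edges of $x_t$ to define a collapse of $\widehat{T/F_n}$) with Lemma \ref{shortexists2} and Lemma \ref{short}. The only cosmetic difference is that the paper speaks of \emph{finitely many} pure cyclic splittings (the collapses of $\widehat{T/F_n}$) rather than the single splitting $s([T])$, since $x_t$ need not realize every non-trivially stabilized edge of $\widehat{T/F_n}$; taking the maximum of $\ell(s)$ over these finitely many collapses repairs this and does not affect the conclusion.
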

\begin{proof} The first part of the corollary is 
immediate from Lemma \ref{arcstabilizer}.
To show the second part, 
let $T$ be such that $T_d\not=\emptyset$ and let 
$(x_t)\subset \overline{cv_0(F_n)}^+$  be any 
normalized folding path which converges to $[T]$.
For all $t$ let $f_t:x_t\to T$ be the corresponding 
train track map.

As in the beginning of this Section,
let $\widehat{T/F_n}$ be the graph of groups decomposition of $F_n$
defined by $T$. 
Lemma \ref{arcstabilizer} shows that for every $t$ and every
edge $e$ of $x_t$ with non-trivial stabilizer $A$, the segment
$f_t(e)\subset T$ is contained in 
an edge $h$ of $T_d$ with stabilizer $A$.  
The edge $h$ defines
a cyclic splitting $c$ of $F_n$ which is 
a collapse of $\widehat{T/F_n}$, and this splitting
coincides with the
splitting defined by the edge $e$ of $x_t$, i.e. which is 
determined by the tree obtained 
by equivariantly collapsing all edges in $x_t/F_n$
which are not contained in the $F_n$-orbit of $e$ to points.

As $\widehat{T/F_n}$ only has finitely many edges, 
we deduce that there 
are only finitely many possibilities for the
pure cyclic splitting defined by the union 
of all edges of $x_s$ with
non-trivial stabilizer. 
Lemma \ref{shortexists2} shows that for every $t$ 
there is a neighborhood
$W$ of $x_t$ in $\overline{cv_0(F_n)}^+$ so that
the diameter of $\Upsilon(W\cap \overline{cv_0(F_n)}^{++})$ is 
uniformly bounded, independent of $t$.
The construction in the proof of Lemma \ref{shortexists2} also
yields that 
$\Upsilon((W\cap \overline{cv_0(F_n)}^{++})\cup x_t)$ is 
uniformly bounded in diameter as well.
\end{proof}

We use Lemma \ref{arcstabilizer} to show

\begin{lemma}\label{dead}
Let $(x_t)$ be a Skora path connecting a point $x_0\in
\overline{cv_0(F_n)}^{++}$ to a simplicial 
tree $[T]$. 
Then ${\rm diam}(\Upsilon(x_t))<\infty$.
\end{lemma}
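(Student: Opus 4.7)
The plan is a case analysis on the endpoint tree $T$. Since $[T]$ is simplicial, every $F_n$-orbit in $T$ is discrete, so $T_d = T$ and in particular $T_d \neq \emptyset$; this places us in the setting of Proposition \ref{graphofgroups}. Moreover, by Lemma \ref{someedgetriv} each interior point $x_t$ of the Skora path lies in $\overline{cv(F_n)}^+$, so the map $\Upsilon(x_t)$ is well-defined throughout.

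If $T \in \overline{cv(F_n)}^+$, i.e.\ $T$ itself has an edge with trivial edge stabilizer, then Proposition \ref{graphofgroups}(2) applies verbatim, giving ${\rm diam}(\Upsilon(x_t))<\infty$ without further work. The remaining (harder) case is $T \in \overline{cv(F_n)}^s \setminus \overline{cv(F_n)}^+$, i.e.\ every edge of $T$ has nontrivial cyclic stabilizer. Here Proposition \ref{graphofgroups}(1) still guarantees ${\rm diam}(\Upsilon_{\cal C}(x_t))<\infty$ in ${\cal C\cal S}$, but the conclusion in ${\cal F\cal S}$ requires more. My plan is to combine the local control from Corollary \ref{plusplus}(2), which furnishes, for each sufficiently advanced $t$, a neighborhood $W_t$ of $x_t$ in $\overline{cv_0(F_n)}^+$ on which $\Upsilon$ has diameter at most $k([T])$ on $(W_t\cap \overline{cv_0(F_n)}^{++})\cup\{x_t\}$, with the reparametrized quasi-geodesic property of $\Upsilon$ along folding paths from Corollary \ref{arbitrary}, which controls the $\Upsilon$-image on any compact initial segment of the path.

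The main obstacle will be chaining the local bounds from Corollary \ref{plusplus}(2) into a global one. The neighborhoods $W_t$ depend on $t$, and the Skora path may move in and out of $\overline{cv_0(F_n)}^{++}$ as folds alternately produce and merge edges with nontrivial stabilizer, so one cannot naively pass from bounds on $W_t \cap \overline{cv_0(F_n)}^{++}$ to bounds on neighboring points of the path. The crucial fact making the chaining work is, as in the proof of Corollary \ref{plusplus}(2) via Lemma \ref{shortexists2}, that for large $t$ the pure cyclic splitting determined by the nontrivial-stabilizer edges of $x_t$ takes only finitely many values (because $\widehat{T/F_n}$ has only finitely many edges). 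Thus the neighborhoods $W_t$ may be chosen from a finite family indexed by these splittings, and a covering argument combined with Corollary \ref{arbitrary} on each bounded subinterval of parameters yields the uniform diameter bound for $\Upsilon(x_t)$.
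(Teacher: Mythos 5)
Your first reduction is correct and matches the paper: if $T$ has an edge with trivial stabilizer, Proposition \ref{graphofgroups}(2) finishes the argument, so the real content is the case where every edge of $T$ has nontrivial cyclic stabilizer. But the strategy you propose for that case has a genuine gap. Corollary \ref{plusplus}(2), even after using the finiteness of the possible pure cyclic splittings to make the constant $k([T])$ and the family of neighborhoods uniform, only yields a \emph{uniform local} bound: each $\Upsilon(x_t)$ has a neighborhood of $\Upsilon$-diameter at most $k([T])$. No covering or chaining argument can upgrade a uniform local bound along a path parametrized by $[0,\infty)$ to a global diameter bound --- an unbounded quasi-geodesic in ${\cal F\cal S}$ satisfies exactly the same uniform local bound. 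Likewise Corollary \ref{arbitrary} only controls compact initial segments and says nothing as $t\to\infty$. What is missing is a \emph{global anchor}: a single vertex of ${\cal F\cal S}$ that all the $\Upsilon(x_t)$ stay near.

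The paper supplies this anchor via a volume dichotomy that your proposal does not contain. Let $y_t$ be obtained from $x_t$ by collapsing all edges with nontrivial stabilizer, so $\Upsilon(x_t)=\Upsilon(y_t)$. If the volumes $\beta_t$ of $y_t/F_n$ stay bounded below by $b>0$, one fixes a free basis $e_1,\dots,e_n$ that is $(k+1)$-short for all $x_t$ with $t$ large (using $x_t\to T$); after normalizing, this basis is $(k+1)/b$-short for every $y_t/\beta_t\in\overline{cv_0(F_n)}^{++}$, so by Lemma \ref{short} all the $\Upsilon(x_t)$ lie in a bounded neighborhood of the single corank one free factor $\langle e_1,\dots,e_{n-1}\rangle$. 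If instead the volumes of $y_t/F_n$ tend to zero, a counting-of-preimages argument shows that for large $t$ the middle segment of each edge of $T$ has exactly one preimage in $x_t$, so the cyclic splitting $\Upsilon_{\cal C}(T)$ is already carried by the nontrivial-stabilizer edges of $x_t$; one then concludes via Proposition \ref{onelipproj} and Proposition \ref{graphofgroups}. You would need to incorporate this dichotomy (or some equivalent global mechanism) for your argument to close.
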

\begin{proof} Proposition \ref{graphofgroups} yields the
lemma for projective 
trees $[T]$ with at least one 
edge with trivial stabilizer. Thus  
assume that $[T]\in \partial{\rm CV}(F_n)$ is 
simplicial, and that $T/F_n$ is a graph of groups
with each edge group maximal cyclic.

Let $(x_t)$ be an unnormalized Skora path
connecting $x_0\in \overline{cv_0(F_n)}^{++}$ to 
a representative $T$ of $[T]$. Since
$(x_t)\subset \overline{cv(F_n)}^+$, 
for each $t$ we can consider the
tree $y_t$ obtained from 
$x_t$ by equivariantly collapsing all edges with
non-trivial stabilizers to points. 
By definition, we have $\Upsilon(x_t)=\Upsilon(y_t)$ for all $t$.

As $T$ is simplicial, the quotient $T/F_n$ is a finite graph of 
positive volume. The volumes of the 
quotient graphs $x_t/F_n$ are decreasing.
The volumes of the graphs $y_t/F_n$ are decreasing as well.
We distinguish two cases.

{\sl Case 1:} The volumes $\beta_t$ of the graphs $y_t/F_n$ are 
bounded from below by a number $b>0$.

Let $e_1,\dots,e_n$ be any free basis of $F_n$.
Choose a vertex $v\in T$
and let $k=\sup\{d(v,e_iv)\mid i\}$. 
As $x_t\to T$, for sufficiently large $t$ 
the basis $e_1,\dots,e_n$ is $(k+1)$-short for $x_t$
and hence it is $(k+1)$-short for $y_t$.
Then $e_1,\dots,e_n$ is $(k+1)/a$-short for
$y_t/\beta_t\in \overline{cv_0(F_n)}^{++}$. 
By Lemma \ref{short},
this means that 
$\Upsilon(x_t)=\Upsilon(y_t/\beta_t)$ 
is contained in a uniformly 
bounded neighborhood of the corank one free factor
of $F_n$ with basis $e_1,\dots,e_{n-1}$. 
The lemma follows in this case.

{\sl Case 2:} The volumes of $y_t/F_n$ converge to zero.

Let $c>0$ be the smallest length of an edge in $T$. 
Choose $t>0$ sufficiently small that the volume 
of $y_t/F_n$ is smaller than $c/10$. 
Let $f_t:x_t\to T$ be the guiding train track map. 
Then for 
each edge $e$ of $T$, there is a subsegment $\rho$ of $e$
(the middle subsegment of length at least $c/2$)
so that $f_t^{-1}(\rho)$ consists of a 
finite number $k\geq 1$ of subarcs of edges
(compare the proof of Proposition \ref{graphofgroups}).

In fact, we have $k=1$. Namely, otherwise there is an
embedded edge path $\alpha\subset x_t$ with endpoints in the center of 
distinct edges with non-trivial stabilizer which is mapped
to a loop in $T$. As $T$ is simplicial, this means that 
$f(\alpha)$ is a compact subtree of $T$. 
If $v\in x_t$ is such that $f_t(v)$ is 
a leaf of this subtree then there are two 
proper subsegments of edges incident on $v$
which are indentified by the map $f_t$.
Then $x_t$ can be
folded in such a way that these segments are identified.
In finitely many such folding steps, we obtain a new 
tree $\hat x_t$ and train track maps
$g:x_t\to \hat x_t$, $h:\hat x_t\to T$ so that $f=h\circ g$ and that
$g(\alpha)$ consists of a single point. Then the volume of 
$\hat x_t$ is smaller than the volume of $x_t$ minus the length of an
edge with non-trivial stabilizer.
This violates the assumption on the volume of $x_t$.

Let $z_t$ the tree obtained
from $x_t$ by collapsing all edges with trivial 
stabilizers to points.  By equivariance and the above discussion, 
the graph of groups decomposition 
$\Upsilon_{\cal C}(z_t/F_n)$ coincides with 
$\Upsilon_{\cal C}(T)$. This means that there is a 
simplicial tree $S$ with an edge with non-trivial stabilizer,
and there is a one-Lipschitz alignment preserving
map $S\to T$.
The claim of the lemma now follows  
from Proposition \ref{onelipproj} and
Proposition \ref{graphofgroups}.
\end{proof}

\section{Boundaries and the boundary of Outer space}
\label{boundaries}

This section contains the main geometric results of this work.
We establish some properties 
which are valid for all three graphs ${\cal F\cal S},
{\cal C\cal S}, {\cal F\cal F}$. 
To ease notation we will prove the results we need 
only for the graph ${\cal F\cal S}$. It will be
clear that the argument is also valid for 
any graph ${\cal G}$ which can be obtained from ${\cal F\cal S}$ 
by a surjective coarsely ${\rm Out}(F_n)$-equivariant
Lipschitz map $\Psi$ so that the image under
$\Psi\circ \Upsilon$ of any folding path is a uniform
reparametrized quasi-geodesic.
Recall that 
by Corollary \ref{arbitrary}, 
the image under $\Upsilon$ of any folding path
is a reparametrized $L$-quasi-geodesic for a universal
number $L>1$.

Let $\partial {\cal F\cal S}$ be the Gromov 
boundary of ${\cal F\cal S}$ and let $\Theta$ be an
isolated point. 
For every standard simplex
$\Delta$ define a map
$\phi_\Delta:\partial {\rm CV}(F_n)\to 
\partial{\cal F\cal S}\cup \Theta$
as follows.
For a projective tree 
$[T]\in \partial{\rm CV}(F_n)$ 
choose a Skora path $(x_t)$ connecting
a point in $\Delta$ to $[T]$.  
If the reparametrized quasi-geodesic 
$t\to \Upsilon(x_t)\in {\cal F\cal S}$ 
has finite diameter then define
$\phi_\Delta([T])=\Theta$.
If the diameter of the reparametrized 
quasi-geodesic $\Upsilon(x_t)$ 
is infinite then define $\phi_\Delta([T])$ to be  
the unique
endpoint in $\partial {\cal F\cal S}$ of the path
$\Upsilon(x_t)$.

Note that a priori, the map $\phi_\Delta$ depends on choices
since a Skora path connecting 
a tree $S$ in $\Delta$ to a representative $T$ of $[T]$ depends
on the choice of a train track map $S\to T$.
The next lemma shows that the map $\phi_\Delta$
does not depend on $\Delta$ nor on any choices made.
It is related to a construction 
of Handel and Mosher \cite{HM13}.

For the purpose of the proof, 
for a number $c>0$ we say that  
a path $\alpha:[0,\infty)\to {\cal F\cal S}$
is a \emph{$c$-fellow traveler} 
of a path $\beta:[0,\infty)\to {\cal F\cal S} $ 
if  there is a nondecreasing
function $\tau:[0,\infty)\to [0,\infty)$ 
such that for all $t\geq 0$ we have
$d(\alpha(t),\beta(\tau(t)))\leq c$.
We allow that
the function $\tau$ has bounded image.

\begin{proposition}\label{coarseindependence}
The 
map $\phi_\Delta:\partial {\rm CV}(F_n)\to \partial {\cal F\cal S}\cup
\Theta$ 
does not depend on the choice of
Skora paths, nor does it depend on the 
standard simplex $\Delta$. Moreover, it is 
equivariant with respect to the action of
${\rm Out}(F_n)$ on $\partial {\rm CV}(F_n)$ and on 
$\partial {\cal F\cal S}$.
\end{proposition}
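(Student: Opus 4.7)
The three claims reduce to a single geometric statement: if $(x_t)$ and $(y_s)$ are Skora paths, possibly from different standard simplices, both converging to $[T]\in \partial{\rm CV}(F_n)$, then $\Upsilon(x_t)$ and $\Upsilon(y_s)$ share the same behaviour at infinity in ${\cal F\cal S}$. By Corollary \ref{arbitrary} each of these images is a reparametrised $L$-quasi-geodesic in the Gromov hyperbolic graph ${\cal F\cal S}$, so it is enough to exhibit a constant $C>0$ and a nondecreasing function $\tau:[0,\infty)\to[0,\infty)$ such that
\[
d_{{\cal F\cal S}}(\Upsilon(x_t),\Upsilon(y_{\tau(t)}))\le C
\]
for all $t$; by stability of quasi-geodesic rays in hyperbolic space, this forces $\Upsilon(x_t)$ and $\Upsilon(y_s)$ either both to have bounded diameter or both to converge to a common point of $\partial{\cal F\cal S}$, which gives both path- and simplex-independence at once.

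First I would reduce to a common starting simplex. Every standard simplex $\Delta$ admits a $3$-short basis by Lemma \ref{shortexists}, so its $\Upsilon$-image has diameter bounded by a constant independent of $\Delta$ via Lemma \ref{short}. Moreover, by Lemma \ref{equivariantfs} any two standard simplices are connected by a short chain of $(n-1)$-tied pairs, so $\Upsilon(\Delta_1)$ and $\Upsilon(\Delta_2)$ are at uniformly bounded distance in ${\cal F\cal S}$. Thus replacing $\Delta_1$ by $\Delta_2$ shifts the starting point of the reparametrised $L$-quasi-geodesic $\Upsilon(x_t)$ by a universally bounded amount, affecting neither its asymptotic endpoint nor the finiteness of its diameter.

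Next, with both paths starting in a common simplex $\Delta$ and rescaled so that their guiding train track maps terminate at a fixed representative $T$ of $[T]$, I would invoke Proposition \ref{onelipproj} with $T'=T$ and $\rho={\rm id}_T$: given the folding path $(x_t)$, the proposition produces a liberal folding path $(x'_t)$ from a point of $\Delta$ to $T$ with $d_{\cal F\cal S}(\Upsilon(x_t),\Upsilon(x'_t))\le \chi$, and applying it to $(y_t)$ yields $(y'_t)$ with $d_{\cal F\cal S}(\Upsilon(y_t),\Upsilon(y'_t))\le \chi$. Any two liberal folding paths from the compact simplex $\Delta$ to the same tree $T$ are then compared via the essential uniqueness of optimal morphisms (Remark \ref{optimal}) on $\Delta$ together with the uniqueness statement following Proposition 2.2 of \cite{BF11}; their $\Upsilon$-images coincide up to bounded reparametrisation. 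Combining the three bounded estimates produces the desired fellow-traveling of $\Upsilon(x_t)$ and $\Upsilon(y_t)$.

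Equivariance is then immediate: for $\psi\in{\rm Out}(F_n)$, a Skora path $(x_t)$ from $\Delta$ to $[T]$ guided by a train track map $f:x_0\to T$ is sent to a Skora path $(\psi x_t)$ from $\psi\Delta$ to $\psi[T]$ guided by $\psi\circ f\circ\psi^{-1}$, and both the map $\Upsilon$ and the action of ${\rm Out}(F_n)$ on ${\cal F\cal S}$ are equivariant, so passing to the boundary endpoint commutes with the action. Combined with the independence of $\Delta$ from the first step, this gives ${\rm Out}(F_n)$-equivariance of $\phi$. The main obstacle is the second step: extracting bounded fellow-traveling (not merely a common asymptotic endpoint) from two Skora paths which may a priori be guided by substantially different train track maps. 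The essential mechanism is that Proposition \ref{onelipproj} applied to the identity reduces the problem to comparing a compact family of optimal morphisms $\Delta\to T$, and continuous dependence of these morphisms on the source absorbs any remaining ambiguity.
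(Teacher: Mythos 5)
Your reduction of all three claims to fellow-traveling of the two reparametrized quasi-geodesics $\Upsilon(x_t)$ and $\Upsilon(y_s)$ is the right frame, and the equivariance argument at the end is fine. But the central step is not proved. Applying Proposition \ref{onelipproj} with $\rho=\mathrm{id}_T$ is vacuous: it returns a liberal folding path uniformly close to the one you started with and does nothing to compare $(x_t)$ with $(y_s)$. The uniqueness statement from Proposition 2.2 of \cite{BF11} gives uniqueness of the Skora path \emph{once the guiding train track map is fixed}; it says nothing about two Skora paths guided by different optimal train track maps, which may issue from different points of $\Delta$ (or of two different simplices) and terminate at different representatives of $[T]$. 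There is no "essential uniqueness of optimal morphisms" to appeal to -- distinct optimal train track maps $\Delta\to T$ genuinely exist -- and the closing sentence that "continuous dependence of these morphisms on the source absorbs any remaining ambiguity" is precisely the assertion that needs proof, not an argument for it. (A smaller inaccuracy: $\Upsilon(\Delta_1)$ and $\Upsilon(\Delta_2)$ are at finite but not uniformly bounded distance -- apply a high power of a fully irreducible automorphism to one simplex -- though for the qualitative statement a non-uniform bound would suffice.)

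The paper fills exactly this gap with a limiting argument that has no counterpart in your proposal. After first reducing to trees with $T_d=\emptyset$ (via Lemma \ref{dead} and Proposition \ref{onelipproj} applied to the collapse $T\to T'$, not to the identity), it approximates the guiding map $f:y_0\to T$ of the second path by train track maps $f_i:S_i\to a_{t_i}x_{t_i}$ onto suitably rescaled points of the \emph{first} path, using compactness in the equivariant Gromov--Hausdorff topology (Theorem \ref{paulin}). The Skora paths $(y_s^i)$ guided by the $f_i$ connect the second simplex to points far along the first path, so by hyperbolicity their $\Upsilon$-images pass uniformly close to any fixed $\Upsilon(x_u)$ beyond the projection of $\Upsilon(\Lambda)$. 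One then passes to the limit path $(y_s)$ via Arzela--Ascoli, using Corollary \ref{plusplus} to keep everything in $\overline{cv_0(F_n)}^{++}$ and Corollary \ref{control} to control $\Upsilon$ under this convergence; a final contradiction argument shows the two images have infinite diameter simultaneously. Without some version of this approximation-and-limit mechanism, your proof does not go through.
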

\begin{proof} If $[T]\in \partial{\rm CV}(F_n)$ is 
simplicial then Lemma \ref{dead} shows that
the diameter of the image under $\Upsilon$ of any Skora path
converging to $[T]$ is finite. Thus it suffices to show the
proposition for trees $[T]\in \partial {\rm CV}(F_n)$ with
$T_c\not=\emptyset$.

Let $T$ be such a tree. As $T$ is not simplicial, the
tree $T^\prime$ obtained from $T$ by equivariantly 
collapsing all edges of $T$ to points is not trivial, and
there is a one-Lipschitz alignment preserving map
$T\to T^\prime$. By Proposition \ref{onelipproj}, 
for every Skora path $(x_t)$ connecting a point
$x_0$ in a standard simplex $\Delta$ to $T$ there is a
Skora path $(y_t)$ connecting a point $y_0\in \Delta$ to 
$T^\prime$ so that the Hausdorff distance between
$\Upsilon(x_t)$ and $\Upsilon(y_t)$ is uniformly bounded.
Thus it suffices to consider trees $[T]$ with $T_d=\emptyset$.

Let $[T]\in \partial {\rm CV}(F_n)$ with $T_d=\emptyset$
and let 
$(x_t)_{t\geq 0}\subset \overline{cv_0(F_n)}$ 
be a Skora path guided by an optimal train track map 
$x_0\to \hat T$ where 
$x_0\in \Delta$ and where 
$\hat T$ is a representative of $[T]$ 
(see Remark \ref{optimal} for the existence of such a map). 
By Lemma \ref{arcstabilizer}, with perhaps the exception of 
its endpoint, the path $(x_t)$ is contained
in $\overline{cv_0(F_n)}^{++}$.

Choose another (not necessarily distinct)
standard simplex $\Lambda$ 
and 
a Skora path 
$(y_s)$ guided by an optimal train track map 
$y_0\to T$ where $y_0\in \Lambda$ and where 
$T$ is a representative of $[T]$
(in general we expect that $T\not=\hat T$).
If the diameters of both $\Upsilon(x_t)$ and 
$\Upsilon(y_s)$ are finite then there is nothing to show, so by 
perhaps exchanging $(x_t)$ and $(y_s)$ we may assume that
the diameter of $\Upsilon(y_s)$ is infinite. 


Let $f:y_0\to T$ be an optimal train track map which
guides the folding path $(y_t)$.
We claim that 
there is a sequence $t_i\to \infty$, a sequence 
$S_i\subset \Lambda$ of points in the standard simplex $\Lambda$,
a sequence $a_{t_i}>0$ such that
$a_{t_i}x_{t_i}\to T$ in $\overline{cv(F_n)}$ and 
a sequence of train track maps 
$f_i:S_i\to a_{t_i} x_{t_i}$ 
with the following properties.
\begin{enumerate}
\item For every $U\in \Lambda$ 
the Lipschitz constant of any equivariant map 
$U\to a_{t_i}x_{t_i}$ is at least $1/2$.
\item The maps $f_i$ 
converge as $i\to \infty$ to the
map $f$ in the following sense: The graphs of 
$f_i$ in $S_i\times a_{t_i} x_{t_i}$ converge in the
equivariant Gromov Hausdorff topology to the graph of $f$.
\end{enumerate}

Let $B\subset \overline{cv(F_n)}$ be the set of all 
trees $V$ such that
the minimum of 
the smallest Lipschitz constant for equivariant maps
$U\to V$ where $U$ runs through the points in the
standard simplex $\Lambda$ equals one.
Then $B$ is a compact subset of $\overline{cv(F_n)}$, 
and $T\in B$.
For $t>0$ let 
$b_t>0$ be such that $b_tx_t\in B$. Then 
we have  
$b_tx_t\to T$
in $\overline{cv(F_n)}$ $(t\to\infty)$
with respect to  
the equivariant Gromov Hausdorff 
topology.

Let $\tilde v\in S$ be a preimage of the unique vertex 
$v$ of the graph 
$S/F_n$. Let $\{g_1,\dots,g_n\}$ 
be a free basis of $F_n$ which determines 
the simplex $\Lambda$ and such that for each $i$ the
axis of $g_i$ passes through $\tilde v$ (in particular, 
if $g_i$ has a fixed point in $S$ then this fixed point equals $\tilde v$).
Let  $K$ be the compact subtree of $T$ which is
the convex hull of the points $f(\tilde v),g_j f(\tilde v),
g_j^{-1} f(\tilde v)$ $(j=1,\dots,n)$.
Then for each $t$ there is 
a point $v_t\in b_tx_t$, and there is 
a compact subtree
$K_t$ of $b_tx_t$ which is the convex hull of the points 
$v_t,g_j v_t,g_j^{-1}v_t$  $(j=1,\dots,n)$ and such 
that the trees $K_t$ converge 
to $K$ in the usual Gromov Hausdorff topology
(see \cite{P89} for details).

Since for each $U\in \Lambda$ the 
quotient graph $U/F_n$ 
has a single vertex, for each $t$ 
there is a tree $U_t\in \Lambda$, 
there is a number $a_t\leq b_t$ and a
morphism $U_t\to a_tx_t$ 
which maps a preimage of a vertex of $U_t/F_n$ 
to $v_t$. Note that in general we have $a_t\not=b_t$, 
however by construction, $a_t/b_t\to 1$ $(t\to \infty)$.
In particular, for sufficiently large $t$ 
the trees $a_tx_t$
have property (1) above. 

Using again the fact that $U_t$ has a single vertex, 
we may in fact assume that the morphism
$U_t\to a_tx_t$ is a train track map, however it may
not be optimal. Namely, the elements $g_i$ $(1\leq i\leq n)$
generate $F_n$ and therefore if there was only
one gate for the morphism at a vertex of $U_t$ then 
the tree $a_tx_t$ can not be minimal. 
By Theorem \ref{paulin},
applied to the products 
$U_t\times a_tx_t$ (see \cite{P88} for more details), 
there is a sequence $t_i\to \infty$ so that the 
sequence 
\[f_i:S_i=U_{t_i}\to a_{t_i}x_{t_i}\] 
of  train track maps 
has property (2) above as well. 

The subspace $\Sigma\subset \overline{cv(F_n)}$ of all
trees $x$ for which the minimum of the 
smallest Lipschitz constant
for equivariant maps from trees $U\in \Lambda$ to $x$ is 
contained in the interval $[1/2,1]$ is compact, and it contains $B$.
For each $i$ 
connect $S_i$ to $a_{t_i}x_{t_i}$  
by a Skora path 
$(y_s^i)$ guided by the train track map $f_i$. For sufficiently large $i$ 
the corresponding unnormalized Skora paths are entirely contained in 
$\Sigma$. 

Normalized 
Skora paths are geodesics for the 
one-sided Lipschitz metric on the locally compact
space $\overline{cv_0(F_n)}^s$ \cite{FM11} (note however that
the one-sided Lipschitz distance between an ordered pair of points  
$(x,y)\in \overline{cv_0(F_n)}^2$ may be infinite).
Thus we can apply
to these paths the Arzela Ascoli theorem.
As a consequence, up to passing to a subsequence, the paths
$(y_s^i)$ converge as $i\to \infty$ to a Skora path $(y_s)$. 
By the discussion in the previous paragraph and construction, this path
is guided by the train track map $f$, and it connects $S$ to $[T]$. 
The image under the map $\Upsilon$ of the family of paths 
$(y_s^i),(y_s)$ is 
a family of reparametrized $L$- quasi-geodesics 
in the hyperbolic graph ${\cal F\cal S}$. 
Moreover, since $T_d=\emptyset$, 
Corollary \ref{plusplus} shows that 
the path 
$(y_s)$ is contained in 
$\overline{cv_0(F_n)}^{++}$, and the same holds true
for each of the paths $(y_s^i)$.

The diameter 
of $\Upsilon(\Lambda)\subset {\cal F\cal S}$ 
is bounded independent of the standard simplex $\Lambda$. 
Let $b\geq 0$ be such that $\Upsilon(x_b)$ is a coarsely well 
defined shortest distance projection of 
$\Upsilon(\Lambda)$
into the reparametrized $L$-quasi-geodesic 
$\Upsilon(x_t)$.

By hyperbolicity of ${\cal F\cal S}$,
for every $u>b$ and every $v>u$, the image under $\Upsilon$ of 
any Skora path connecting a point in $\Lambda$ to $x_v$ 
passes through a uniformly bounded
neighborhood of $\Upsilon(x_u)$.  
Thus for all $u>b$ and all 
$i$ such that
$t_i>u$, the reparametrized $L$-quasi-geodesic 
$\Upsilon(y_s^i)$ in ${\cal F\cal S}$ 
passes through 
a uniformly bounded neighborhood of $\Upsilon(x_u)$. 

We claim that the same holds true for 
the reparametrized 
$L$-quasi-geodesic $\Upsilon(y_s)$.
To this end recall that we assumed that the diameter of 
$\Upsilon(y_s)$ is infinite.
Let $\tau>0$ be arbitrary. 
Since $(y_s)\subset \overline{cv_0(F_n)}^{++}$, 
Corollary \ref{control} shows that 
for all sufficiently large $i$ the reparametrized
quasi-geodesic $\Upsilon(y_s^i)$ connecting 
a point in $\Upsilon(\Lambda)$ to $\Upsilon(x_{t_i})$ 
passes through the $L$-neighborhood 
of $\Upsilon(y_\tau)$.

On the other hand, it also passes through a uniformly bounded
neighborhood of $\Upsilon(x_u)$. 
Since the diameter of $\Upsilon(y_s)$ is infinite, for
sufficiently large $\tau$ 
the point $\Upsilon(x_u)$ coarsely lies between 
$\Upsilon(y_0)$ and $\Upsilon(y_\tau)$. 
This shows that 
the reparametrized quasi-geodesic 
$\Upsilon(y_s)$ passes through a uniformly bounded
neighborhood of $\Upsilon(x_u)$ as claimed above.

We use this fact to show that the diameter of $\Upsilon(x_t)$ is 
infinite. Namely, assume to the contrary
that the diameter of $\Upsilon(x_t)$ is finite. 
Then by construction, for each
$i$ the diameter of $\Upsilon(y_s^i)$ is bounded from above by
a number $C>0$ not depending on $i$. 
Choose a number $t>0$ so that the distance
between $\Upsilon(\Lambda)$ and 
$\Upsilon(y_t)$ is at least $C+10m$ where 
$m>0$ is as in Corollary \ref{control}.
For sufficiently
large $i$ the path $\Upsilon(y_s^i)$ passes through the 
$m$-neighborhood of $\Upsilon(y_t)$ which is a contradiction.

By symmetry, we conclude that the diameter of 
$\Upsilon(x_t)$ is 
infinite if and only if the diameter of
$\Upsilon(y_t)$ is infinite. Moreover, if this holds true then 
there is a number $p>0$ only depending on 
$\Lambda,\Delta$ such that $\Upsilon(y_s)$ is a $p$-fellow 
traveller of $\Upsilon(x_t)$. As a consequence, 
the map $\phi_\Delta$  indeed 
does not depend on the choice of $\Delta$ or 
on the choice of Skora paths.
This then implies that  the map $\phi_\Delta$ 
is moreover ${\rm Out}(F_n)$-equivariant.
\end{proof}

\begin{remark}\label{hierarchy2}
The proof of Proposition \ref{coarseindependence} 
shows more generally the following.
Let $\Delta$ be a standard simplex and let 
$(x_t)\subset \overline{cv_0(F_n)}^{+}$ be any normalized folding path.
If $[x_t]\to [T]\in \partial {\rm CV}(F_n)$ in the
equivariant Gromov Hausdorff topology then the reparametrized
quasi-geodesic 
$\Upsilon(x_t)$ in ${\cal F\cal S}$ is of infinite diameter if
and only if 
$\phi_\Delta([T])\in \partial {\cal F\cal S}$, and  in this case we
have $\Upsilon(x_t)\to \phi_\Delta([T])$ in ${\cal F\cal S}\cup
\partial {\cal F\cal S}$. 
\end{remark}

\begin{remark}\label{theta}
The proof of Proposition \ref{coarseindependence} also 
gives some information on Skora paths whose
images under $\Upsilon$ have finite diameter. Namely, 
if $\phi_\Delta([T])=\Theta$ then there is a 
point $\xi\in {\cal F\cal S}$ and
there is a number $r>0$ so that for every Skora path $(x_t)\subset 
\overline{cv_0(F_n)}^{+}$
with $[x_t]\to [T]$ in 
${\rm CV}(F_n)\cup \partial {\rm CV}(F_n)$ and for all large enough $t$ the
point $\Upsilon(x_t)$ is contained in the $r$-neighborhood of $\xi$ in 
${\cal F\cal S}$.
\end{remark}

By Proposition \ref{coarseindependence}, we can define
\[\psi=\phi_\Delta:\partial {\rm CV}(F_n)\to \partial {\cal F\cal S}\cup \Theta\]
for some (and hence every) standard simplex $\Delta$. 
Similarly we define 
\[\psi_{\cal C}:\partial{\rm CV}(F_n)\to 
\partial{\cal C\cal S}\cup \Theta,\,
\psi_{\cal F}:\partial{\rm CV}(F_n)\to 
\partial {\cal F\cal F}\cup \Theta.\] The maps
$\psi,\psi_{\cal C },\psi_{\cal F}$ do not depend on choices, 
and they are ${\rm Out}(F_n)$-equivariant.

Our next goal is to show 
that the map $\psi$ is onto $\partial {\cal F\cal S}$.
As a main preparation we establish the following lemma which 
is motivated by the work of Klarreich
(Proposition 6.4 of \cite{K99}). For convenience of notation we extend the 
map $\psi$ to $\overline{{\rm CV}(F_n)}$ by defining 
$\psi([T])=\Theta$ for every simplicial free 
projective $F_n$-tree 
$[T]\in {\rm CV}(F_n)$.


\begin{lemma}\label{sequence}
Let  $(T_i)\subset 
cv_0(F_n)$ 
be a sequence whose projectivization
converges in ${\rm CV}(F_n)\cup \partial {\rm CV}(F_n)$
to a point $[T]\in \overline{ {\rm CV}(F_n)}$.
If $\psi([T])=\Theta$
then $(\Upsilon(T_i))$ does not converge to a point
in $\partial {\cal F\cal S}$. 
\end{lemma}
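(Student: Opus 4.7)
The plan is to argue by contradiction: assume that $\Upsilon(T_i) \to \xi \in \partial {\cal F\cal S}$, and derive a contradiction with the hypothesis $\psi([T]) = \Theta$.

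First reduce to the case where $[T] \in \partial {\rm CV}(F_n)$ is non-simplicial. If $[T] \in {\rm CV}(F_n)$ is free simplicial, then a representative of $[T]$ lies in $\overline{cv_0(F_n)}^{++}$ and Corollary \ref{control}(1) provides a neighborhood on which $\Upsilon$ has bounded diameter, so the convergence $T_i \to [T]$ forces $\{\Upsilon(T_i)\}$ to be bounded and contradicts $\Upsilon(T_i) \to \xi$. If $[T] \in \partial {\rm CV}(F_n)$ is simplicial, Corollary \ref{simpfinite} yields the same conclusion directly.

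For non-simplicial $[T]$, fix a standard simplex $\Lambda$. Following the setup from the proof of Proposition \ref{coarseindependence}, choose representatives $\tilde T_i$ of $[T_i]$, points $S_i \in \Lambda$, and optimal train track maps $f_i \colon S_i \to \tilde T_i$ of Lipschitz constant one, and let $(y_s^i)$ be the unnormalized Skora paths guided by $f_i$. By Theorem \ref{paulin} and the Arzela-Ascoli argument for one-sided Lipschitz geodesics, up to a subsequence the maps $f_i$ converge to an optimal train track map $f \colon S \to T'$ with $T'$ a representative of $[T]$, and the paths $(y_s^i)$ converge to the Skora path $(y_s)$ to $[T]$ guided by $f$, uniformly on compact $s$-intervals. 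By hypothesis $\psi([T]) = \Theta$, so Remark \ref{theta} furnishes $\xi_0 \in {\cal F\cal S}$ and $r > 0$ with $\Upsilon(y_s) \in B(\xi_0, r)$ for all sufficiently large $s$.

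By Corollary \ref{arbitrary}, each $\Upsilon(y_s^i)$ is a reparametrized $L$-quasi-geodesic from $\Upsilon(S_i) \in \Upsilon(\Lambda)$ (a uniformly bounded set) to $\Upsilon(T_i) \to \xi$. Pick a large $R > 0$ and a point $q$ at distance $R$ on a geodesic from $\Upsilon(\Lambda)$ to $\xi$; by the Morse lemma in the hyperbolic graph ${\cal F\cal S}$, for all large $i$ there exists a parameter time $\sigma_i$ with $d(\Upsilon(y_{\sigma_i}^i), q) \leq M$ for a universal Morse constant $M$. Passing to a subsequence, either $\sigma_i \to \sigma_\infty < \infty$, or $\sigma_i \to \infty$. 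In the first case, path convergence together with Corollary \ref{control}(1), valid because $(y_s) \subset \overline{cv_0(F_n)}^{++}$ (with possible exception of the endpoint, by Corollary \ref{plusplus}(1) or Lemma \ref{arcstabilizer}), places $\Upsilon(y_{\sigma_\infty})$ within distance $M + m$ of $q$; since $R$ was arbitrary, $\Upsilon(y_s)$ has infinite diameter, contradicting Remark \ref{theta}.

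The essential obstacle is the case $\sigma_i \to \infty$. Here $y_{\sigma_i}^i \to [T]$ projectively, but the neighborhoods $W$ provided by Corollary \ref{plusplus}(2) are neighborhoods of individual points $y_{t^*}$ on the limit path rather than of $[T]$ itself, so one cannot bound $\Upsilon(y_{\sigma_i}^i)$ by a single application. The resolution is to analyze the terminal Skora sub-path $(y_s^i)_{s \in [\sigma_i, s_i^*]}$, whose source $y_{\sigma_i}^i$ and target $T_i$ both converge to $[T]$, and to compare its $\Upsilon$-image with that of a corresponding tail of the limit path $(y_s)$ via a diagonal fellow-traveler argument. Invoking Corollary \ref{plusplus}(2) along the limit path plays the role that the infinite-diameter hypothesis on $\Upsilon(y_s)$ played at the end of the proof of Proposition \ref{coarseindependence}, and yields $\Upsilon(y_{\sigma_i}^i) \in B(\xi_0, r + k([T]))$ for large $i$. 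This contradicts $d(\Upsilon(y_{\sigma_i}^i), q) \leq M$ once $R$ is chosen sufficiently large, completing the proof.
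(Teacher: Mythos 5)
Your reduction to the non-simplicial case and your Case 1 are fine, but Case 2 ($\sigma_i\to\infty$) is exactly the crux of the lemma, and what you offer there is not an argument. You correctly observe that Corollary \ref{plusplus}(2) only gives control of $\Upsilon$ near individual points $y_{t^*}$ of the limit path, so it cannot reach the points $y^i_{\sigma_i}$ once $\sigma_i$ escapes every compact parameter interval; but the proposed ``diagonal fellow-traveler argument'' comparing the terminal sub-paths $(y^i_s)_{s\ge \sigma_i}$ with a tail of $(y_s)$ is circular: those sub-paths start at the very points $y^i_{\sigma_i}$ whose location in ${\cal F\cal S}$ you are trying to determine, and nothing forces them to fellow-travel the limit path. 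In the analogous step of Proposition \ref{coarseindependence} the paper gets past this point only by \emph{using} the hypothesis that $\Upsilon(y_s)$ has infinite diameter (so that the target point ``coarsely lies between'' $\Upsilon(y_0)$ and $\Upsilon(y_\tau)$ for finite $\tau$); here that hypothesis is exactly what is false, so the move is unavailable, and Corollary \ref{plusplus}(2) does not substitute for it. As written, the conclusion $\Upsilon(y^i_{\sigma_i})\in B(\xi_0,r+k([T]))$ is asserted, not proved.

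The paper's proof sidesteps this by letting the base simplex move with $i$: it chooses a standard simplex $\Delta_i$ with $d(\Upsilon(\Delta_i),\Upsilon(T_i))\le k$, and for $j>i$ runs Skora paths $(r^{ij}_t)$ from $\Delta_i$ to (rescaled) $T_j$. As $j\to\infty$ these converge locally uniformly to a Skora path $(r^i_t)$ from $\Delta_i$ to $[T]$, and Remark \ref{theta} — whose point $\eta$ and radius $b$ depend only on $[T]$, not on $\Delta_i$ — puts $\Upsilon(r^i_{t_0})$ in $B(\eta,b)$ for some \emph{finite} $t_0$. There local uniform convergence plus Corollary \ref{plusplus}(2) applies and shows that for large $j$ the quasi-geodesic $\Upsilon(r^{ij}_t)$, which runs from a uniformly bounded neighborhood of $\Upsilon(T_i)$ to $\Upsilon(T_j)$, passes within $b+k([T])$ of $\eta$. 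This bounds the Gromov products $(\Upsilon(T_i)\mid\Upsilon(T_j))_\eta$ uniformly and contradicts convergence to a boundary point. If you want to complete your proof, you should either adopt this moving-basepoint device or supply a genuine argument for Case 2; the current text does not contain one.
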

\begin{proof} 
We follow the reasoning in the proof of Proposition 6.4 of 
\cite{K99}. The case that $[T]\in {\rm CV}(F_n)$ is immediate from
Corollary \ref{control}, so 
let $([T_i])\subset {\rm CV}(F_n)$ 
be a sequence 
which converges to a point $[T]\in 
\partial{\rm CV}(F_n)$ with $\psi([T])=\Theta$.


%

For each $i$ let $T_i\in cv_0(F_n)$ be a representative of 
$[T_i]$. 
We argue by contradiction and we 
assume that the sequence $(\Upsilon(T_i))$ converges
to a point in the Gromov boundary of ${\cal F\cal S}$.

For each $i$ 
there is a standard simplex
$\Delta_i\subset \overline{cv_0(F_n)}^{++}$  
so that the distance in ${\cal F\cal S}$ between
$\Upsilon(T_i)$ and $\Upsilon(\Delta_i)$ is at most $k$
where $k>0$ is a universal constant. Such a simplex
can be constructed as follows. Collapse 
all edges of $T_i$ with non-trivial stabilizer to a point.
Let $S$ be the resulting tree. 
By definition, we have 
$\Upsilon(T_i)=\Upsilon(S)$. 

Collapse all but one edge in $S/F_n$ to a point. 
If the resulting one-edge graph of groups 
decomposition $G$ of $F_n$ is 
a two-vertex decomposition then the vertex groups are
free factors of $F_n$. Replace each such vertex by a
marked rose representing a basis of the free factor.
The resulting graph of groups decomposition collapses
to both $G$ and a decomposition defined by a rose of rank $n$.
This rose determines a standard simplex as required.
The case that $G$ is a one-loop decomposition 
follows in the same way.

Let $\Sigma_i\subset \overline{cv(F_n)}$ be the set 
of all trees for which the minimum of 
the optimal Lipschitz constants for equivariant maps from 
points in $\Delta_i$ equals one.

For $j>i$ let $T_j^i=b_jT_j\in \Sigma_i$ be a representative
of $[T_j]$. Choose an optimal 
train track map $f_j:r_0^{ij}\to T_j^i$ 
where 
$r_0^{ij}\in \Delta_i$ and let 
$(r^{ij}_t)$ be a Skora path connecting $r_0^{ij}$ to $T_j^i$ which is
guided by $f_j$. As the endpoints of the paths are contained
in $\overline{cv(F_n)}^{++}$, the same holds true for the
entire paths. 
 
The initial
points of the paths $(r_t^{ij})$  $(j>i)$ 
are contained in the compact subset $\Delta_i$ 
of $\overline{cv_0(F_n)}^{++}$. Hence by the Arzela Ascoli theorem, 
applied to the geodesics $r_t^{ij}$ for the one-sided Lipschitz metric, 
up to passing to a subsequence
we may assume that the paths $(r_t^{ij})$ converge
as $j\to \infty$  
locally uniformly 
to a Skora path $t\to r_t^i$ issuing from 
a point $r_0^i \in \Delta_i$. By the reasoning 
in the proof of Lemma \ref{morphismsex}, since $[T_j]\to [T]$ 
we may assume that 
the path $(r_t^i)$ is defined by a train track map 
$r_0^i\to T$ where $T$ is a representative of $[T]$ and hence
it connects $r_0^i$ to $[T]$.

By 
Remark \ref{theta}, 
there is a point $\eta\in {\cal F\cal S}$, 
and there is a number $b>0$ only depending on $T$ 
with the following property.
For any Skora path 
$(x_t)\subset \overline{cv_0(F_n)}^+$ guided 
by some train track map $x_0\to T$ where $T$ is 
a representative of 
$[T]$ and for all large enough $t$,
the point $\Upsilon(x_t)$ is contained in the 
$b$-neighborhood of $\eta$ in ${\cal F\cal S}$.
In particular, for large enough $t$ the point 
$\Upsilon(r_t^i)$ 
is contained in the $b$-neighborhood 
of $\eta$.
Fix a number $t_0$ with this property.

By the second part of Corollary \ref{plusplus}, 
for large enough $j$
the distance between $\Upsilon(r^{ij}_{t_0})$ and
$\Upsilon(r_{t_0}^i)$ is bounded by $k([T])$.
As a consequence, 
if $(\mid )_\xi$ is the Gromov product
based at $\eta\in {\cal H\cal G}$ then 
we have 
\[(\Upsilon(T_i)\mid \Upsilon(T_j))_\eta\leq q\] 
for infinitely many $i,j$
where $q>0$ is a constant depending on $T$ but not on $i,j$.
This is a contradiction to the assumption that
the sequence
$(\Upsilon(T_i))$ converges to a point in the Gromov boundary of 
${\cal F\cal S}.$
%
%
\end{proof}

\begin{remark} Lemma \ref{sequence} does \emph{not} state
that if $(x_t)\subset cv_0(F_n)$ is any sequence
converging to a point $[T]\in \partial {\rm CV}(F_n)$ with
$\psi([T])=\Theta$ then $\Upsilon(x_t)$ is bounded.
In fact, we believe that there should be sequences for which
this is not true.
\end{remark}

Next we have

\begin{lemma}\label{sequence3}
If $[T_i]\subset {\rm CV}(F_n)$ is 
any sequence which converges to $[T]\in \partial {\rm CV}(F_n)$ 
with $\psi([T])\in \partial {\cal F\cal S}$ 
then $\Upsilon(T_i)$ converges
to $\psi([T])$ in ${\cal F\cal S}\cup \partial {\cal F\cal S}$.
\end{lemma}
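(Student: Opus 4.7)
The plan is to fix a standard simplex $\Delta$ and a basepoint $\eta \in \Upsilon(\Delta)$, and to show that the Gromov product $(\Upsilon(T_i) \mid \psi([T]))_\eta$ tends to infinity; by hyperbolicity of ${\cal F\cal S}$ this is precisely the statement that $\Upsilon(T_i) \to \psi([T])$ in ${\cal F\cal S} \cup \partial {\cal F\cal S}$. Since convergence is a subsequential property, it suffices to verify this after passing to an arbitrary subsequence.

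Following the setup used in the proofs of Proposition \ref{coarseindependence} and Lemma \ref{sequence}, I would rescale representatives $T_i$ of $[T_i]$ appropriately and, for each $i$, choose an optimal train track map $r_0^i \to T_i$ from a point $r_0^i \in \Delta$, guiding a Skora path $(r_t^i)$. Applying Arzela--Ascoli to these geodesics for the one-sided Lipschitz metric on $\overline{cv_0(F_n)}^s$ (as in the proof of Proposition \ref{coarseindependence}), I can pass to a subsequence along which $(r_t^i)$ converges locally uniformly to a Skora path $(r_t)$ starting in $\Delta$ and guided by a train track map $r_0 \to T$ into a representative $T$ of $[T]$. By Proposition \ref{coarseindependence} (together with Remark \ref{hierarchy2}) and the hypothesis $\psi([T]) \in \partial {\cal F\cal S}$, the reparametrized $L$-quasi-geodesic $\Upsilon(r_t)$ converges to $\psi([T])$ in ${\cal F\cal S} \cup \partial {\cal F\cal S}$.

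Given $R>0$, I pick $t_0$ large enough that $d_{\cal F\cal S}(\eta,\Upsilon(r_{t_0})) \geq 2R$ with $\Upsilon(r_{t_0})$ coarsely on the quasi-geodesic ray from $\eta$ to $\psi([T])$, so that $(\Upsilon(r_{t_0}) \mid \psi([T]))_\eta \geq R$. By Corollary \ref{plusplus}(2), there is a neighborhood $W$ of $r_{t_0}$ in $\overline{cv_0(F_n)}^+$ such that $\Upsilon\bigl((W \cap \overline{cv_0(F_n)}^{++}) \cup \{r_{t_0}\}\bigr)$ has diameter at most $k([T])$. By local uniform convergence, $r_{t_0}^i \in W$ for large $i$, and Lemma \ref{someedgetriv} (or Corollary \ref{plusplus}(1) in the case $T_d = \emptyset$) places $r_{t_0}^i$ in $\overline{cv_0(F_n)}^{++}$, so $d_{\cal F\cal S}(\Upsilon(r_{t_0}^i),\Upsilon(r_{t_0})) \leq k([T])$. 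Now $\Upsilon(r_t^i)$ is a reparametrized $L$-quasi-geodesic from $\eta$ to $\Upsilon(T_i)$ passing through $\Upsilon(r_{t_0}^i)$ (Corollary \ref{arbitrary}), so $\Upsilon(r_{t_0}^i)$ lies coarsely between $\eta$ and $\Upsilon(T_i)$ on this quasi-geodesic, yielding $(\Upsilon(T_i) \mid \Upsilon(r_{t_0}^i))_\eta \geq d_{\cal F\cal S}(\eta,\Upsilon(r_{t_0}^i)) - C \geq 2R - C$. Hyperbolicity then gives
\[
(\Upsilon(T_i) \mid \psi([T]))_\eta \geq \min\bigl\{(\Upsilon(T_i) \mid \Upsilon(r_{t_0}^i))_\eta,\ (\Upsilon(r_{t_0}^i) \mid \psi([T]))_\eta\bigr\} - \delta \geq R - C',
\]
for constants $C,C',\delta$ independent of $i$ and $R$. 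As $R$ is arbitrary, the convergence $\Upsilon(T_i) \to \psi([T])$ follows.

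The main obstacle is the twofold continuity statement: first, that the Skora paths $(r_t^i)$ converge locally uniformly to a Skora path guided by a train track map to $T$ (not just to some other representative of $[T]$), which requires choosing the rescalings of $T_i$ so that the Lipschitz constants from $\Delta$ are controlled as in the proof of Proposition \ref{coarseindependence}; and second, that $\Upsilon$ is close to continuous at $r_{t_0}$ despite possibly having non-trivial edge stabilizers along the path, which is exactly what Corollary \ref{plusplus} is designed to provide.
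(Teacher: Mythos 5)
Your proof is correct and follows essentially the same route as the paper: Skora paths from a standard simplex to representatives of $[T_i]$, Arzela--Ascoli to extract a limiting Skora path converging to $[T]$, Corollary \ref{plusplus}(2) to handle the possible discontinuity of $\Upsilon$ at the limit point, and hyperbolicity of ${\cal F\cal S}$ to conclude. The paper states the final step very tersely; your explicit Gromov-product computation is exactly the argument it leaves implicit.
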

\begin{proof} Let $\Delta$ be a standard simplex. For each
$i$ let $(r_t^i)$ be a 
Skora path connecting a point
in $\Delta$ to a tree $T_i\in cv_0(F_n)$ representing the class
$[T_i]$. By the Arzela Ascoli theorem, up to passing
to a subsequence we may assume that the paths
$(r_t^i)$ converge as $i\to \infty$ to a Skora path $(x_t)$ 
connecting a point in $\Delta$ to a representative of $[T]$.

Since the paths $\Upsilon(r_t^i)$ 
are uniform reparametrized quasi-geodesics in ${\cal F\cal S}$ and 
since the path $(x_t)$ connects a point in 
$\Delta$ to $[T]$, the path $\Upsilon(x_t)$ 
is a (reparametrized) 
quasi-geodesic ray connecting $\Upsilon(x_0)$ to 
$\psi([T])$. The lemma now follows from the second part
of Corollary \ref{plusplus} 
and hyperbolicity
of ${\cal F\cal S}$.
\end{proof}

As a corollary we obtain

\begin{corollary}\label{surjective}
$\psi(\partial {\rm CV}(F_n))\supset \partial {\cal F\cal S}$.
\end{corollary}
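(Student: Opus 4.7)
The plan is to obtain surjectivity by combining Lemma~\ref{sequence} and Lemma~\ref{sequence3} with a compactness argument in $\overline{{\rm CV}(F_n)}$. Fix a point $\xi\in\partial{\cal F\cal S}$. Since $\Upsilon:\overline{cv(F_n)}^+\to{\cal F\cal S}$ is coarsely surjective and ${\cal F\cal S}$ is Gromov hyperbolic, one can choose a sequence $(T_i)\subset cv_0(F_n)$ such that $\Upsilon(T_i)\to\xi$ in ${\cal F\cal S}\cup\partial{\cal F\cal S}$.

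By compactness of $\overline{{\rm CV}(F_n)}$, after passing to a subsequence we may assume $[T_i]\to[T]$ for some $[T]\in\overline{{\rm CV}(F_n)}$. First, $[T]$ cannot lie in ${\rm CV}(F_n)$: any such $[T]$ has a representative in $\overline{cv_0(F_n)}^{++}$, and by Corollary~\ref{control}(1) there is a neighborhood of this representative on which $\Upsilon$ has bounded diameter; this would force the sequence $\Upsilon(T_i)$ to stay bounded in ${\cal F\cal S}$, contradicting $\Upsilon(T_i)\to\xi\in\partial{\cal F\cal S}$. Hence $[T]\in\partial{\rm CV}(F_n)$.

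Now apply Lemma~\ref{sequence} in its contrapositive form: since $\Upsilon(T_i)$ does converge to a point of $\partial{\cal F\cal S}$, we must have $\psi([T])\neq\Theta$, i.e.\ $\psi([T])\in\partial{\cal F\cal S}$. Then Lemma~\ref{sequence3} applies directly to the sequence $[T_i]\to[T]$ and yields $\Upsilon(T_i)\to\psi([T])$ in ${\cal F\cal S}\cup\partial{\cal F\cal S}$. Since the same sequence converges to $\xi$ and the Gromov bordification is Hausdorff on limits in $\partial{\cal F\cal S}$, we conclude $\psi([T])=\xi$, establishing surjectivity.

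The argument is a short assembly of the two previous lemmas; the only mild subtlety is ruling out the case $[T]\in{\rm CV}(F_n)$, which is handled by the local boundedness statement of Corollary~\ref{control}. The real work has already been done in proving Lemmas~\ref{sequence} and~\ref{sequence3}, so this final step is essentially a diagram chase through the compactness of $\overline{{\rm CV}(F_n)}$.
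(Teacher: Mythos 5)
Your proof is correct and follows essentially the same route as the paper: coarse surjectivity of $\Upsilon$ to produce the sequence, compactness of $\overline{{\rm CV}(F_n)}$ to extract a limit $[T]$, Lemma~\ref{sequence} to exclude $\psi([T])=\Theta$, and Lemma~\ref{sequence3} to identify $\psi([T])$ with the given boundary point. The only cosmetic difference is that you rule out $[T]\in{\rm CV}(F_n)$ by hand via Corollary~\ref{control}, whereas the paper's convention $\psi([T])=\Theta$ on ${\rm CV}(F_n)$ lets Lemma~\ref{sequence} absorb that case automatically.
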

\begin{proof} Since 
the map $\Upsilon:cv_0(F_n)\to {\cal F\cal S}$ is coarsely surjective,
for every $\eta\in \partial {\cal F\cal S}$ there is a
sequence $(T_i)\subset cv_0(F_n)$ so that 
$\Upsilon(T_i)\to \eta$.

By compactness of $\overline{{\rm CV}(F_n)}$, by passing to a
subsequence we may assume that 
$[T_i]\to [T]$ for some $[T]\in \overline{{\rm CV}(F_n)}$.
By Lemma \ref{sequence}, if
$\psi([T])=\Theta$ then
the sequence $\Upsilon(T_i)$ does not converge to $\eta$. 
Thus $\psi([T])=\chi\in \partial {\cal F\cal S}$, 
and it follows from Lemma \ref{sequence3}
that $\chi=\xi$.
\end{proof}

Let now 
\[{\cal F\cal T}=\psi^{-1}(\partial {\cal F\cal S})\subset
\partial {\rm CV}(F_n).\] 
By ${\rm Out}(F_n)$-equivariance of the map $\psi$, the
set ${\cal F\cal T}$ is an ${\rm Out}(F_n)$-invariant subset of 
$\partial {\rm CV}(F_n)$.
We showed above that the restriction of 
$\psi$ maps ${\cal F\cal T}$ onto $\partial {\cal F\cal S}$.
We also have

\begin{lemma}\label{cont}
The restriction of the map $\psi$ 
to ${\cal F\cal T}$ is continuous and closed.
\end{lemma}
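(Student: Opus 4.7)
The plan is to reduce both continuity and closedness of $\psi\vert_{{\cal F\cal T}}$ to Lemma \ref{sequence} and Lemma \ref{sequence3} by means of a diagonal approximation argument. Fix throughout a basepoint $\zeta\in {\cal F\cal S}$ and a metric on the compact metrizable space $\overline{{\rm CV}(F_n)}$.

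For continuity, suppose $[T_i]\to [T]$ in ${\cal F\cal T}$. Using density of ${\rm CV}(F_n)$ in $\overline{{\rm CV}(F_n)}$, for each $i$ I would pick a sequence $([S_i^k])_k\subset cv_0(F_n)$ with $[S_i^k]\to [T_i]$ as $k\to\infty$. Since $\psi([T_i])\in\partial {\cal F\cal S}$, Lemma \ref{sequence3} yields $\Upsilon(S_i^k)\to \psi([T_i])$ in ${\cal F\cal S}\cup \partial {\cal F\cal S}$. A diagonal extraction then produces $k_i$ such that $d([S_i^{k_i}],[T_i])<1/i$ and $(\Upsilon(S_i^{k_i})\mid \psi([T_i]))_\zeta\geq i$. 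By the triangle inequality, $[S_i^{k_i}]\to [T]$, and a second application of Lemma \ref{sequence3} to the diagonal sequence delivers $\Upsilon(S_i^{k_i})\to \psi([T])$. The four-point inequality in the hyperbolic graph ${\cal F\cal S}$ then forces $(\psi([T_i])\mid \psi([T]))_\zeta\to\infty$, i.e.\ $\psi([T_i])\to \psi([T])$.

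For closedness, let $A=B\cap {\cal F\cal T}\subset {\cal F\cal T}$ with $B\subset \partial {\rm CV}(F_n)$ closed, let $[T_i]\in A$, and suppose $\eta_i=\psi([T_i])\to \eta\in \partial {\cal F\cal S}$. By compactness of $\overline{{\rm CV}(F_n)}$ I pass to a subsequence with $[T_i]\to [T]\in B$; it suffices to establish $[T]\in {\cal F\cal T}$, since then $[T]\in A$ and the continuity just proven yields $\eta=\psi([T])\in \psi(A)$. Assuming for contradiction that $\psi([T])=\Theta$, I repeat the same diagonal construction, producing $([S_i^{k_i}])\subset cv_0(F_n)$ with $[S_i^{k_i}]\to [T]$ and $(\Upsilon(S_i^{k_i})\mid \psi([T_i]))_\zeta\geq i$. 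Combining this with $(\psi([T_i])\mid \eta)_\zeta\to\infty$ via the four-point inequality yields $\Upsilon(S_i^{k_i})\to \eta\in\partial {\cal F\cal S}$; this is precisely what Lemma \ref{sequence} forbids for a sequence in $cv_0(F_n)$ converging to a tree $[T]$ with $\psi([T])=\Theta$, contradiction.

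The only genuinely technical point is the diagonal extraction, which depends on the metrizability of $\overline{{\rm CV}(F_n)}$ (automatic, as this space is compact Hausdorff and second countable) and on the fact that Lemma \ref{sequence3} provides, for each fixed $i$, a sequence simultaneously approximating $[T_i]$ in $\overline{{\rm CV}(F_n)}$ and $\psi([T_i])$ in $\partial {\cal F\cal S}$. Apart from this, the proof uses only hyperbolicity of ${\cal F\cal S}$ and the two limit lemmas, and so transfers verbatim to $\psi_{\cal C}:\partial {\rm CV}(F_n)\to \partial {\cal C\cal S}\cup\Theta$ and $\psi_{\cal F}:\partial {\rm CV}(F_n)\to \partial {\cal F\cal F}\cup\Theta$ on their respective preimages of the boundary.
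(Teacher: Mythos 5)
Your proof is correct and follows essentially the same route as the paper: both arguments reduce continuity and closedness to Lemma \ref{sequence}, Lemma \ref{sequence3}, hyperbolicity of ${\cal F\cal S}$ and compactness of $\overline{{\rm CV}(F_n)}$, using approximating sequences in $cv_0(F_n)$. Your diagonal extraction merely makes explicit the choice of approximating points that the paper takes on the Skora paths $r_t^i$ (``so that $\Upsilon(S_i)$ is sufficiently close to $\Upsilon(T_i)$'').
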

\begin{proof}
To show that the restriction to ${\cal F\cal S}$ of the map $\psi$  
is continuous, note that
if $[T_i]\to [T]\in {\cal F\cal T}$ then there
is a sequence $(r_t^i)$ 
of Skora paths starting at a point in a standard
simplex $\Delta$ so that $r^i_t\to [T_i]$ $(t\to \infty)$
and $r^i \to r$ locally uniformly where $r$ is a Skora 
path connecting $\Delta$ to $[T]$. 
The images under $\Upsilon$ of these paths
are reparametrized quasi-geodesics in 
${\cal F\cal S}$.  
Continuity now follows from 
hyperbolicity of ${\cal F\cal S}$.

Since the Gromov topology on $\partial {\cal F\cal S}$ is 
metrizable, 
to show that the restriction of $\psi$ is closed
it suffices to show the following. 
If $[T_i]\subset {\cal F\cal T}$ is any sequence
and if $\psi([T_i])\to \eta\in \partial{\cal F\cal S}$ then
up to passing to a subsequence, we have
$[T_i]\to [U]$ where $\psi([U])=\eta$.

Assume to the contrary that this is not the case. 
By compactness of $\partial{\rm CV}(F_n)$ 
there is then 
a sequence $[T_i]\subset {\cal F\cal T}$ 
so that $\psi ([T_i])\to \eta$ and such that
$[T_i]\to [S]\in \partial{\rm CV}(F_n)$ where
either $[S]\not\in {\cal F\cal T}$ or 
$\psi([S])\not=\eta$.

Now if $[S]\in {\cal F\cal T}$ then 
by continuity of the map $\psi$, we have 
\[\psi([S])=\lim_{i\to \infty}\psi([T_i])=\eta.\] 
Since we assumed that 
$\psi([S])\not=\eta$ this is 
impossible. 

Thus $[S]\not\in {\cal F\cal T}$. 
However, it follows from the hypotheses that
there is a sequence
$(S_i)\subset cv_0(F_n)$ with $[S_i]\to [S]$ and 
such that $\Upsilon(S_i)\to \eta$
in ${\cal F\cal S}\cup \partial {\cal F\cal S}$.
Here $S_i$ can be chosen to be a point on the Skora
path $r_t^i$ which is such that $\Upsilon(S_i)$ is
sufficiently close to $\Upsilon(T_i)$ in 
${\cal F\cal S}\cup \partial {\cal F\cal S}$.
This violates
Lemma \ref{sequence}.
The corollary follows.
\end{proof}

\begin{remark}\label{notspecific}
The results in this section do not use any specific property of 
Outer space and can easily be formulated in a more abstract
setting which is valid for example for Teichm\"uller space and
the curve graph as in \cite{K99} and for various disc graphs in a 
handlebody \cite{H11}.
\end{remark}

Recall the definition of the 
hyperbolic ${\rm Out}(F_n)$-graphs 
$({\cal C\cal S},\Upsilon_{\cal C})$
and $({\cal F\cal F},\Upsilon_{\cal F})$ and of the maps
\begin{equation}
\psi_{\cal C}:
\partial {\rm CV}(F_n)\to \partial{\cal C\cal S}\cup
\Theta
\text{ and } \psi_{\cal F}:\partial{\rm CV}(F_n)
\to \partial {\cal F\cal F}\cup \Theta.\notag\end{equation}
Since for any tree $T\in \overline{cv_0(F_n)}^{+}$ 
the distance in ${\cal C\cal S}$ between
$\Upsilon(T)\in {\cal F\cal S}\subset {\cal C\cal S}$ 
and $\Upsilon_{\cal C}(T)$ is at most one and since 
$\Upsilon_{\cal F}=\Omega\circ \Upsilon$ for 
a coarsely Lipschitz map $\Omega:{\cal C\cal S}\to
{\cal F\cal F}$, 
we obtain 
as an immediate consequence

\begin{corollary}\label{hierarchy}
\begin{enumerate}
\item If $\psi([T])=\Theta$ then $\psi_{\cal C}([T])=\Theta$.
\item 
If
$\psi_{\cal C}([T])=\Theta$ then
$\psi_{\cal F}([T])=\Theta$.
\end{enumerate}
\end{corollary}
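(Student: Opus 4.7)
The plan is to reduce both implications to a simple boundedness chain using the close coarse relationship between the three projection maps along a Skora path. Fix a Skora path $(x_t)\subset \overline{cv_0(F_n)}^+$ converging to $[T]$. By Proposition~\ref{coarseindependence} and its analogues for ${\cal C\cal S}$ and ${\cal F\cal F}$ (which are valid because $\Upsilon_{\cal C}$ and $\Upsilon_{\cal F}$ send folding paths to uniform reparametrized quasi-geodesics in the respective hyperbolic graphs), we have $\psi([T])=\Theta$ precisely when $\Upsilon(x_t)$ has finite diameter in ${\cal F\cal S}$, and similarly $\psi_{\cal C}([T])=\Theta$ (resp.\ $\psi_{\cal F}([T])=\Theta$) precisely when $\Upsilon_{\cal C}(x_t)$ (resp.\ $\Upsilon_{\cal F}(x_t)$) has finite diameter.

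For part (1), I would use the one-Lipschitz embedding $\Psi:{\cal F\cal S}\to{\cal C\cal S}$ from (\ref{psi}) together with the observation recorded right after the definition of $\Upsilon_{\cal C}$: for every $t$, since $x_t\in\overline{cv(F_n)}^+$, one has $d_{\cal C\cal S}(\Psi(\Upsilon(x_t)),\Upsilon_{\cal C}(x_t))\leq 1$. Thus if $\Upsilon(x_t)$ has finite diameter in ${\cal F\cal S}$, then $\Psi(\Upsilon(x_t))$ has finite diameter in ${\cal C\cal S}$, and $\Upsilon_{\cal C}(x_t)$ lies in a uniform neighbourhood of this bounded set; hence it is bounded in ${\cal C\cal S}$, giving $\psi_{\cal C}([T])=\Theta$.

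For part (2), I would invoke Lemma~\ref{map}, which produces a coarsely $k$-Lipschitz coarsely ${\rm Out}(F_n)$-equivariant map $\Omega:{\cal C\cal S}\to{\cal F\cal F}$, together with the definition $\Upsilon_{\cal F}=\Omega\circ\Upsilon_{\cal C}$ from (\ref{ff}). If $\Upsilon_{\cal C}(x_t)$ is bounded in ${\cal C\cal S}$, then because $\Omega$ sends bounded sets to bounded sets its image $\Upsilon_{\cal F}(x_t)=\Omega(\Upsilon_{\cal C}(x_t))$ is bounded in ${\cal F\cal F}$, and therefore $\psi_{\cal F}([T])=\Theta$.

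There is essentially no serious obstacle here; the corollary is a bookkeeping consequence of the hierarchy $\Upsilon\leadsto\Upsilon_{\cal C}\leadsto\Upsilon_{\cal F}$ together with the fact that the coarse maps relating these projections are (coarsely) one-Lipschitz or coarsely Lipschitz. The only point that deserves a line of care is to confirm that the domain issues cause no problem: the Skora path lies in $\overline{cv_0(F_n)}^+\subset\overline{cv_0(F_n)}^s$ by Lemma~\ref{someedgetriv} and Remark~\ref{upsilonex}, so both $\Upsilon(x_t)$ and $\Upsilon_{\cal C}(x_t)$ are defined for all $t$ along the path, and the comparison $d_{\cal C\cal S}(\Upsilon(x_t),\Upsilon_{\cal C}(x_t))\le 1$ from Section~\ref{geometric} applies on the entire path.
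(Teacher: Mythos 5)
Your proof is correct and is essentially identical to the paper's argument: the corollary is stated there as an "immediate consequence" of the facts that $d_{\cal C\cal S}(\Upsilon(T),\Upsilon_{\cal C}(T))\leq 1$ for $T\in\overline{cv_0(F_n)}^{+}$ and that $\Upsilon_{\cal F}=\Omega\circ\Upsilon_{\cal C}$ with $\Omega$ coarsely Lipschitz, exactly the two comparisons you use. Your extra remarks on the definedness of the maps along the Skora path only make explicit what the paper leaves implicit.
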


As a consequence of the results in this section, 
the boundaries of ${\cal F\cal S},$  
${\cal C\cal S}$ and ${\cal F\cal F}$ 
can be identified as a set with
the quotient of an
${\rm Out}(F_n)$-invariant subspace of $\partial {\rm CV}(F_n)$ by an 
equivalence relation defined by the map $\psi$ and 
the map $\psi_{\cal C}$ and 
the map $\psi_{\cal F}$.



We complete the section with some first easy information
on the fibres of $\psi$.
From now on 
we only consider trees 
$T\in \partial cv(F_n)$ (or projective trees
$[T]\in \partial {\rm CV}(F_n)$)
with dense orbits. 
For simplicity we call such a tree 
\emph{dense}.

The following definition is due to Paulin (see \cite{G00}).

\begin{definition}\label{lengthmeasure}
A \emph{length measure} $\mu$ on $T$ is an
$F_n$-invariant collection
\[\mu=\{\mu_I\}_{I\subset T}\] of locally finite Borel measures
on the finite arcs $I\subset T$; it is required that
for $J\subset I$ we have $\mu_J=(\mu_I)\vert J$.
\end{definition}
The Lebesgue measure $\lambda$ defining the metric on 
$T$ is an example of a length measure on $T$ with full
support.

Denote by $M_0(T)$ the set of 
all non-atomic length measures on $T$.
By Corollary 5.4 of \cite{G00}, 
$M_0(T)$ is 
a finite dimensional convex set which is projectively
compact. Up to homothety, there are at most
$3n-4$ non-atomic ergodic length measures. 
In particular, $M_0(T)$ is a cone over a compact convex 
polyhedron with finitely many vertices. 
Each non-atomic length measure
$\mu\in M_0(T)$ defines 
an $F_n$-tree $T_\mu\in \partial cv(F_n)$ as follows
\cite{G00}. Define a pseudo-metric $d_\mu$ on $T$ by
$d_\mu(x,y)=\mu([x,y])$. Making this pseudo-metric
Hausdorff gives an $\mathbb{R}$-tree $T_\mu$.  

\begin{corollary}\label{lengthmeasurepro}
Let $T\in \partial cv(F_n)$ be a tree with dense orbits; then 
$\psi([T_\mu])=\psi([T])$ for all 
$\mu\in M_0(T)$.
\end{corollary}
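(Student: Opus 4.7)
My plan is to introduce an auxiliary tree $T_\nu$ that dominates both $T$ and $T_\mu$ via one-Lipschitz alignment preserving maps, and then invoke Proposition \ref{onelipproj} twice. Let $\lambda$ denote the Lebesgue length measure defining the metric on $T$; it lies in $M_0(T)$ and has full support. Set $\nu=\mu+\lambda$. As the sum of two non-atomic length measures, $\nu$ is again a non-atomic length measure on $T$, and the construction recalled just before the statement yields a tree $T_\nu\in\partial cv(F_n)$ with $d_{T_\nu}(\bar x,\bar y)=\mu([x,y])+d_T(x,y)$. Since $d_\nu(x,y)=0$ forces both $\mu([x,y])=0$ and $d_T(x,y)=0$, the identity on the underlying set of $T$ descends to $F_n$-equivariant maps $\pi_\lambda:T_\nu\to T$ and $\pi_\mu:T_\nu\to T_\mu$. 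Both are alignment preserving since they are induced from the identity on the set $T$, and the inequalities $\nu\geq\lambda$ and $\nu\geq\mu$ force them to be one-Lipschitz.

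Fix a standard simplex $\Delta\subset\overline{cv_0(F_n)}^{++}$ and a Skora path $(x_t)$ from $\Delta$ converging to $T_\nu$. Applying Proposition \ref{onelipproj} to $\pi_\lambda$ produces a liberal folding path $(y_t)$ from $\Delta$ converging to $T$ with $d(\Upsilon(x_t),\Upsilon(y_t))\leq\chi$; applying it to $\pi_\mu$ produces a liberal folding path $(z_t)$ from $\Delta$ converging to $T_\mu$ with $d(\Upsilon(x_t),\Upsilon(z_t))\leq\chi$. By the observations at the beginning of Section \ref{alignment}, each liberal folding path can be exchanged for a Skora path with the same endpoint whose $\Upsilon$-image stays at bounded Hausdorff distance. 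From the definition of $\phi_\Delta$ and Proposition \ref{coarseindependence}, one concludes: if $\psi([T_\nu])=\Theta$ then both $\Upsilon(y_t)$ and $\Upsilon(z_t)$ have bounded diameter, giving $\psi([T])=\psi([T_\mu])=\Theta$; otherwise $\Upsilon(x_t)$ is an unbounded reparametrized quasi-geodesic converging to $\psi([T_\nu])\in\partial{\cal F\cal S}$, and hyperbolicity of ${\cal F\cal S}$ together with the uniform bound $\chi$ forces $\Upsilon(y_t)$ and $\Upsilon(z_t)$ to converge to the same boundary point. In either case $\psi([T])=\psi([T_\mu])$.

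The only point that requires any verification beyond bookkeeping is that $T_\nu$ is genuinely an element of $\partial cv(F_n)$ and that the descended maps $\pi_\lambda,\pi_\mu$ are well-defined: this is automatic since $\nu$ is $F_n$-invariant, locally finite on finite arcs, additive under restriction to subarcs, and non-atomic, so Guirardel's construction recalled before the statement applies. The main conceptual step is the realization that we do not need to compare $T$ and $T_\mu$ directly (which would require bounding $\mu$ by a constant multiple of $\lambda$, something not guaranteed when $\mu$ and $\lambda$ are mutually singular); inserting the intermediate tree $T_\nu$ sidesteps this issue entirely because $\nu$ dominates both measures by construction.
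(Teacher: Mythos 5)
Your argument is correct, and it reaches the conclusion by the same basic mechanism as the paper --- domination of one length measure by another yields a one-Lipschitz alignment preserving map between the associated trees, to which Proposition \ref{onelipproj} applies --- but via a different and more elementary choice of auxiliary object. The paper routes the comparison through the structure of $M_0(T)$ as a finite-dimensional cone over a compact polyhedron: it picks $\nu$ in the interior of $M_0(T)$, notes that after rescaling $\nu$ dominates every ergodic (vertex) measure $\zeta$ so that $\psi([T_\nu])=\psi([T_\zeta])$, and then observes that an arbitrary $\xi\in M_0(T)$ dominates some ergodic $\beta$, giving $\psi([T_\xi])=\psi([T_\beta])=\psi([T_\nu])$; applying this to $\xi=\lambda$ and $\xi=\mu$ finishes the proof. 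You instead take $\nu=\mu+\lambda$, which dominates both $\mu$ and $\lambda$ outright, and apply Proposition \ref{onelipproj} twice from the single source $T_\nu$. This sidesteps the ergodic decomposition and the finite-dimensionality of $M_0(T)$ entirely (you only need that $M_0(T)$ is closed under sums, which is immediate), at the cost of proving nothing about ergodic measures along the way --- information the paper reuses implicitly elsewhere. Your verification that $\pi_\lambda$ and $\pi_\mu$ are well defined, one-Lipschitz and alignment preserving (preimages of points are convex, being images of convex sets under the quotient $T\to T_\nu$), and your handling of the two cases $\psi([T_\nu])=\Theta$ versus $\psi([T_\nu])\in\partial{\cal F\cal S}$ via Proposition \ref{coarseindependence}, are both sound and match how the paper itself deploys Proposition \ref{onelipproj} in Corollary \ref{projectionalign}.
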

\begin{proof} Let $\nu$ be a length measure on $T$  
which is contained in the interior of the
convex set $M_0(T)$.
Let $\zeta$ be a point in 
$M_0(T)$ which projects to a vertex in the
projectivization of $M_0(T)$; this is an ergodic measure in 
$M_0(T)$.  
Up to rescaling, there is a
one-Lipschitz alignment preserving 
map $T_\nu\to T_\zeta$. 
Proposition \ref{onelipproj} shows that 
$\psi([T_\nu])=\psi([T_\zeta])$.

Now if $\xi\in M_0(T)$ is arbitrary 
then there is an ergodic
measure $\beta\in M_0(T)$, and there is   
a one-Lipschitz alignment preserving
map $T_\xi\to T_\beta$. 
Using once more Proposition \ref{onelipproj},  we deduce that
$\psi([T_\xi])=\psi([T_\beta])=\psi([T_\nu])$.
Since the Lebesgue measure on $T$ defines a point 
in $M_0(T)$ this shows the corollary.
\end{proof}


We use Proposition \ref{onelipproj} 
and Corollary \ref{lengthmeasurepro}ï¿œ to show

\begin{corollary}\label{projectionalign}
Let $T,T^\prime\in \partial cv(F_n)$ 
be trees with dense orbits 
and let 
$\rho:T\to T^\prime$ be alignment preserving. 
Then $\psi([T])= \psi([T^\prime])$.
\end{corollary}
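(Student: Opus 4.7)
The plan is to deduce the corollary from Corollary \ref{lengthmeasurepro} by realizing $T'$ (up to equivariant isometry) as $T_\mu$ for a suitably chosen non-atomic length measure $\mu$ on $T$. Once that is done, the chain of equalities $\psi([T'])=\psi([T_\mu])=\psi([T])$ closes the argument.

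First, I would construct the length measure. Given an alignment-preserving equivariant map $\rho:T\to T'$, the paper records that $\rho$ is continuous on segments. Combined with the alignment condition $x\in[y,z]\Rightarrow\rho(x)\in[\rho(y),\rho(z)]$, this forces $\rho([y,z])$ to be a continuous connected subset of $[\rho(y),\rho(z)]$ containing the two endpoints, hence equal to the arc $[\rho(y),\rho(z)]$. Define $\mu$ on arcs of $T$ by declaring that for every arc $I=[x,y]\subset T$, the measure $\mu_I$ is the pullback under $\rho|_I$ of the Lebesgue measure on $[\rho(x),\rho(y)]\subset T'$; equivalently, $\mu([x,y])=d_{T'}(\rho(x),\rho(y))$. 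Equivariance of $\rho$ gives $F_n$-invariance of $\mu$; continuity of $\rho$ on segments gives the absence of atoms; and restricting along sub-arcs is automatic from the definition. Thus $\mu\in M_0(T)$.

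Second, I would identify $T_\mu$ with $T'$. The Hausdorff quotient of $(T,d_\mu)$ identifies two points $x,y$ exactly when $d_{T'}(\rho(x),\rho(y))=0$, i.e.\ $\rho(x)=\rho(y)$; therefore $\rho$ descends to an equivariant isometric embedding $\bar\rho:T_\mu\hookrightarrow T'$. The image $\rho(T)$ is $F_n$-invariant, non-empty, and, by the segment-to-segment property established above, closed under taking segments between its points, hence is a subtree of $T'$. Because $T'$ is a minimal $F_n$-tree with dense orbits, any such non-empty invariant subtree equals $T'$. Thus $\bar\rho:T_\mu\to T'$ is an equivariant isometry, so $[T_\mu]=[T']$ in $\partial{\rm CV}(F_n)$.

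Third, Corollary \ref{lengthmeasurepro} applied to the dense-orbit tree $T$ and the measure $\mu\in M_0(T)$ gives $\psi([T_\mu])=\psi([T])$, and combining with $[T_\mu]=[T']$ yields $\psi([T'])=\psi([T])$. The only place that requires care is the identification $\rho(T)=T'$: it is essential here that $T'$ has dense orbits and is minimal, which is precisely the hypothesis in force, and that $\rho$ sends segments onto segments rather than merely into them. No appeal to Proposition \ref{onelipproj} is needed in this reduction, since the behavior of $\psi$ along the length-measure deformation is already controlled by Corollary \ref{lengthmeasurepro}.
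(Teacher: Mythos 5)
Your proof is correct and follows essentially the same route as the paper: both arguments pull a length measure on $T^\prime$ back along $\rho$ and reduce the statement to Corollary \ref{lengthmeasurepro}. The only difference is cosmetic — the paper cites \cite{G00} for the pulled-back measure and passes through a one-Lipschitz alignment-preserving map $T_\mu\to T_{\mu^\prime}$ together with Proposition \ref{onelipproj}, whereas you specialize $\mu^\prime$ to the Lebesgue measure on $T^\prime$ and observe (using minimality of $T^\prime$ and the fact that $\rho$ carries segments onto segments) that the induced map $T_\mu\to T^\prime$ is already an equivariant isometry, which lets you bypass Proposition \ref{onelipproj}.
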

\begin{proof} Let $T,T^\prime\in \partial cv(F_n)$
be trees with dense orbits and assume  
that there is an alignment preserving map $\rho:T\to T^\prime$.

As is shown in \cite{G00}, if 
$\mu^\prime$ is a non-atomic length measure on 
$T^\prime$ then there is a length measure $\mu$ on 
$T$ such that $\rho_*\mu=\mu^\prime$.
This means that for every segment $I\subset T$ we have 
$\mu(I)=\mu^\prime(\rho I)$. 
As a consequence, 
there is a one-Lipschitz alignment preserving map 
$\hat \rho:T_\mu\to T_{\mu^\prime}$.
Proposition \ref{onelipproj} shows that
$\psi([T_\mu])= \psi([T_{\mu^\prime}])$.
The 
corollary now follows from Corollary \ref{lengthmeasurepro}.
\end{proof}

\section{Indecomposable trees}\label{indecom}

%
The goal of this section is to obtain some information on the
structure of indecomposable trees 
which is needed for the proof of the theorems from the introduction.

Let $\Delta\subset\partial F_n\times \partial F_n$ be the diagonal.
The \emph{zero lamination} $L^2(T)$ of 
an $\mathbb{R}$-tree $T$ with an isometric
action of $F_n$ is the 
closed $F_n$-invariant subset of 
$\partial F_n\times \partial F_n-\Delta$ which
is the set of all 
accumulation points of pairs of fixed points
of any family of conjugacy
classes with translation length on $T$ that 
tends to $0$. The zero lamination
of a tree $T\in cv(F_n)$ is empty. For 
$T\in \partial cv(F_n)$,  it only depends on the 
projective
class $[T]\in \partial {\rm CV}(F_n)$ of $T$.

In \cite{CHL07} the following topological interpretation of 
the zero lamination of an $\mathbb{R}$-tree $T\in \partial cv(F_n)$ is
given. 

The union 
$\hat T=\overline{T}\cup \partial T$ of the metric completion
$\overline{T}$ of $T$ with the Gromov boundary 
$\partial T$ of $T$ can be equipped with
an \emph{observer's   topology}. With respect to this
topology, $\hat T$ is a compact $F_n$-space, and the
inclusion $T\to \hat T$ 
is continuous \cite{CHL07}. Isometries of $T$ induce
homeomorphisms of $\hat T$ (see p.903 of \cite{CHL09}).
 
There is an explicit description of $\hat T$ 
as follows. Namely, 
let again $L^2(T)$ be the zero lamination of $T$.
There is an $F_n$-equivariant \cite{LL03}
continuous (Proposition 2.3 of \cite{CHL07}) map
\[Q:\partial F_n\to \hat T\]
such that 
$L^2(T)=\{(\xi,\zeta)\mid Q(\xi)=Q(\zeta)\}.$
This map determines
an equivariant homeomorphism 
\[\partial F_n/L^2(T)\to \hat T\]
(Corollary 2.6 of \cite{CHL07}), i.e. the tree
$\hat T$ is the quotient of 
$\partial F_n$ by the equivalence relation
obtained by 
identifying all points $\xi,\xi^\prime\in \partial F_n$ 
with $Q(\xi)=Q(\xi^\prime)$, and a pair
of points $(\xi,\xi^\prime)$ is 
identified if and only if it is
contained in $L^2(T)$.

The \emph{limit set} of $\overline{T}$ is defined to be
the set
\[\Omega= Q^2(L^2(T))\subset \overline{T}\subset \hat T\]
where the notation $Q^2$ refers to applying the map $Q$ to 
a pair of points with the same image.

A finitely generated subgroup $H<F_n$ 
is free, and its 
boundary $\partial H$ is naturally
a closed subset of the boundary $\partial F_n$ of 
$F_n$. If $H$ fixes a point in 
an $\mathbb{R}$-tree $T$ then the  
set $\partial H\times \partial H-\Delta$ 
of pairs of distinct points in $\partial H$, 
viewed as a subset of $\partial F_n\times \partial F_n-\Delta$, 
is contained in $L^2(T)$.
We say that a leaf $\ell\in L^2(T)$
is \emph{carried} by a finitely generated subgroup 
$H$ of $F_n$ if it is a point in $\partial H\times \partial H-\Delta$.

Define a leaf
$\ell\in L^2(T)$ to be \emph{regular} if 
$\ell$ is not carried by the stabilizer of a point in 
$\overline{T}$ (this makes sense since point stabilizers
of $\overline{T}$ are finitely generated) 
and if moreover there exists
a sequence $\ell_n\subset L^2(T)$ of leaves 
converging to $\ell$ such that the $x_n=Q^2(\ell_n)$ are
distinct. The set of regular leaves 
of $L^2(T)$ is the \emph{regular sublamination}
$L_r(T)$. It is $F_n$-invariant but in general not closed.

The space ${\cal M\cal L}$ of \emph{measured laminations}
for $F_n$
is a closed subspace of the
space of all locally finite $F_n$-invariant Borel measures on 
$\partial F_n\times \partial F_n-\Delta$,
equipped with the weak$^*$-topology.
Dirac measures on pairs of fixed points of all elements
in some primitive conjugacy class of $F_n$ are dense in 
${\cal M\cal L}$ \cite{Ma95}.
There is a continuous length pairing \cite{KL09}
\[\langle ,\rangle:\overline{cv(F_n)}\times {\cal M\cal L}\to
[0,\infty).\]
For $T\in \overline{cv(F_n)}$ and $\mu\in {\cal M\cal L}$ we have
$\langle T,\mu\rangle =0$ if and only if $\mu$ is supported
in $L^2(T)$ \cite{KL09}. 

The following is the main result of this section.

\begin{proposition}\label{fillisfull}
Let $[T]\in \partial {\rm CV}(F_n)$ be indecomposable
and let $\mu$ be a 
measured lamination with 
$\langle T,\mu\rangle =0$ and $\mu(L^2(T)-L_r(T))=0$.
If $[S]\in \partial {\rm CV}(F_n)$ is indecomposable and if 
$\langle S,\mu\rangle=0$ then 
$\hat S$ and $\hat T$ equipped with
the observer's topology are
$F_n$-equivariantly homeomorphic.
\end{proposition}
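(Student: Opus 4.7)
The plan is to identify $\hat T$ and $\hat S$ via the CHL description $\hat T \cong \partial F_n/L^2(T)$ and $\hat S\cong\partial F_n/L^2(S)$ by showing that the two equivalence relations on $\partial F_n$ induced by $L^2(T)$ and $L^2(S)$ coincide, at least on the regular parts, and then extending to identifications coming from point stabilizers.

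First I would observe that the two hypotheses $\langle T,\mu\rangle=0$ and $\langle S,\mu\rangle=0$ translate, via the $\mathrm{KL}$--pairing characterization, to the statement that the support of $\mu$ is contained in $L^2(T)\cap L^2(S)$; combining with $\mu(L^2(T)-L_r(T))=0$ gives $\mathrm{supp}(\mu)\subset \overline{L_r(T)}\cap L^2(S)$. The next step is to exploit indecomposability of $T$ in the form of a minimality statement for the regular sublamination: for an indecomposable tree, any non-empty $F_n$-invariant closed subset of $\overline{L_r(T)}$ coincides with $\overline{L_r(T)}$. The proof of this minimality is the standard translate-covering argument — given any two non-degenerate arcs $I,J\subset T$, a finite union of translates of $I$ covers $J$ with overlaps, and by pushing this through the map $Q\colon\partial F_n\to \hat T$ one shows that every regular leaf is an accumulation point of translates of any other regular leaf. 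Combined with the previous step, this yields $\overline{L_r(T)}\subset L^2(S)$, and in particular $Q_S$ identifies all pairs of endpoints identified by $Q_T$ on the regular part.

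The symmetric inclusion comes from indecomposability of $S$. The measure $\mu$ gives a non-trivial closed $F_n$-invariant subset of $L^2(S)$, and by the same covering argument applied now to $S$ its closure is all of $L^2(S)$ modulo the sublamination carried by point stabilizers of $\overline{S}$. Hence every regular leaf of $L^2(S)$ lies in $\overline{L_r(T)}\subset L^2(T)$ as well. The remaining non-regular leaves of $L^2(T)$ and $L^2(S)$ are carried by point stabilizers of $\overline T$ and $\overline S$, and because point stabilizers are finitely generated and act with dense orbit on their minimal invariant subtrees, each non-regular leaf is determined as a limit of regular leaves through $Q$; consequently the two equivalence relations on $\partial F_n$ agree.

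With the equality of equivalence relations established, the induced bijection between $\hat T=\partial F_n/L^2(T)$ and $\hat S=\partial F_n/L^2(S)$ is automatically $F_n$-equivariant, and is a homeomorphism for the quotient (= observer's) topology because the maps $Q_T,Q_S$ are continuous and the quotients are compact Hausdorff. The main obstacle I anticipate is the minimality/covering argument needed to pass from a measurable statement (support of $\mu$ lies in $L_r(T)\cap L^2(S)$) to a topological one (closure of $L_r(T)$ lies in $L^2(S)$), and in particular verifying that the non-regular leaves — those carried by point stabilizers — are handled correctly, since the hypothesis on $\mu$ is asymmetric between $T$ and $S$. This is where one must use indecomposability most carefully, together with the fact that the set of non-regular pairs in $L^2(T)$ is a countable union of sets of the form $\partial H\times\partial H-\Delta$ for point stabilizers $H$, and similarly for $S$.
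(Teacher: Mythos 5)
Your overall strategy --- identifying $\hat T$ and $\hat S$ with $\partial F_n/L^2(T)$ and $\partial F_n/L^2(S)$, showing the two equivalence relations coincide, and splitting $L^2$ into the regular part, the point-stabilizer part and diagonal leaves --- is the same as the paper's. But there are two genuine gaps. The first is the minimality of $L_r(T)$, which is the main technical content of this section and does not follow from ``the standard translate-covering argument.'' Indecomposability gives covering of arcs of $T$ by overlapping translates, but a leaf of $L^2(T)$ is a pair of boundary points collapsed to a single point of $\overline{T}$ under $Q$, and passing from the arc-covering property to density of every regular leaf orbit requires the dual-lamination machinery: the paper's Lemma \ref{minimal} runs the Rips machine on the compact heart, treats Levitt type and surface type trees separately, and must control the finitely many short loops coming from point stabilizers (which is exactly where the extension beyond the free case of \cite{CHR11} is delicate). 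Asserting minimality by a covering argument skips the hardest step.

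The second gap is your treatment of the non-regular leaves. Point stabilizers fix a point, so they do not ``act with dense orbit on their minimal invariant subtrees,'' and a leaf carried by a point stabilizer is in general \emph{not} a limit of regular leaves --- that is precisely why it is excluded from $L_r(T)$. What you actually need, and what the paper proves, is that a subgroup $H<F_n$ is elliptic in $T$ if and only if it is elliptic in $S$. The paper obtains this by projecting $\partial H$ to a compact $H$-invariant subset of $\partial F_n/\sim$ whose translates form a transverse family, and observing that if $H$ were not elliptic in $S$ then its minimal invariant subtree in $S$ would be nondegenerate with dense $H$-orbits, contradicting Reynolds' theorem that for an indecomposable tree every finitely generated infinite-index subgroup acts discretely on its minimal subtree. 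Without this identification of elliptic subgroups you cannot conclude that the point-stabilizer parts of $L^2(T)$ and $L^2(S)$ agree, and the asymmetry you flag at the end (regularity of $\mu$ is assumed only relative to $T$) is resolved exactly through this step, not through a covering argument applied to $S$.
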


Denote by ${\cal I\cal T}\subset \partial {\rm CV}(F_n)$ 
the ${\rm Out}(F_n)$-invariant subset of 
indecomposable projective trees. 
For an indecomposable projective tree $[T]\in 
{\cal I\cal T}\subset \partial{\rm CV}(F_n)$ define
${\cal H}([T])\subset {\cal I\cal T}$ 
to be the set of all indecomposable 
projective trees $[S]$ so that
$\hat S$ is $F_n$-equivariantly homeomorphic to $\hat T$.

\begin{corollary}\label{sometimesclosed}
Let $[T]\in {\cal I\cal T}$ be such that there is a measured
lamination $\mu$ with $\langle T,\mu\rangle =0$ and 
$\mu(L^2(T)-L_r(T))=0$. Then ${\cal H}([T])$ is a closed
subset of ${\cal I\cal T}$.
\end{corollary}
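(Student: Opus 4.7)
The plan is to show that if $([S_i])\subset {\cal H}([T])$ converges in ${\cal I\cal T}$ to $[S]$, then the measured lamination $\mu$ provided by the hypothesis still satisfies $\langle S,\mu\rangle=0$, so that Proposition \ref{fillisfull} applied to the pair $[T],[S]$ produces an $F_n$-equivariant homeomorphism $\hat S\cong \hat T$ in the observer's topology, placing $[S]$ in ${\cal H}([T])$.

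The first step is to transfer the vanishing $\langle T,\mu\rangle=0$ to $\langle S_i,\mu\rangle=0$ for all $i$. By definition of ${\cal H}([T])$, for each $i$ there is an $F_n$-equivariant homeomorphism $\phi_i:\hat T\to \hat S_i$. Using the description $\hat T\cong \partial F_n/L^2(T)$ from Corollary 2.6 of \cite{CHL07} (and the analogous one for $S_i$), together with the fact that the canonical quotient $Q_T:\partial F_n\to \hat T$ is determined intrinsically by the $F_n$-action (send $\xi\in\partial F_n$ to the limit of $g_n\cdot x$ for any basepoint $x$ and any sequence $g_n\to\xi$), one checks $\phi_i\circ Q_T=Q_{S_i}$, so the fibre equivalences coincide and $L^2(S_i)=L^2(T)$. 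Since $\langle T,\mu\rangle=0$ is equivalent to $\mu$ being supported in $L^2(T)$ \cite{KL09}, the measure $\mu$ is then supported in $L^2(S_i)$ and hence $\langle S_i,\mu\rangle=0$.

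To pass to the limit, I would choose representatives $S_i\in \partial cv(F_n)$ of $[S_i]$ converging in $\overline{cv(F_n)}$ to a representative $S$ of $[S]$ (always possible after a suitable normalization, since $[S_i]\to [S]$ projectively). Continuity of the length pairing on $\overline{cv(F_n)}\times {\cal M\cal L}$ then yields
\[\langle S,\mu\rangle=\lim_i\langle S_i,\mu\rangle=0.\]
Because $[T]$ and $[S]$ are both indecomposable and $\mu(L^2(T)-L_r(T))=0$ by hypothesis, Proposition \ref{fillisfull} applies and delivers the desired $F_n$-equivariant homeomorphism $\hat S\cong \hat T$, so $[S]\in {\cal H}([T])$ and ${\cal H}([T])$ is closed in ${\cal I\cal T}$.

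The main obstacle is the equality $L^2(S_i)=L^2(T)$: one must know that the canonical map $Q_T$ is determined by the $F_n$-space structure on $\hat T$, rather than merely by the action on the tree itself. This is implicit in the dynamical characterization of $Q$ and in the identification $\hat T\cong \partial F_n/L^2(T)$, but it is the only non-formal ingredient; once it is granted, continuity of the length pairing and Proposition \ref{fillisfull} reduce the rest of the argument to a routine limiting argument.
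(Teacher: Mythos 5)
Your proof is correct and follows essentially the same route as the paper: both arguments rest on the fact that an $F_n$-equivariant homeomorphism $\hat S_i\cong\hat T$ forces $\langle S_i,\mu\rangle=0$ (the paper cites \cite{CHL07,KL09} for this, you unwind it via $\hat T\cong\partial F_n/L^2(T)$), and then combine continuity of the length pairing with Proposition \ref{fillisfull} to identify ${\cal H}([T])$ with the closed set $\{[S]\in{\cal I\cal T}\mid\langle S,\mu\rangle=0\}$. The paper states this as a set identity while you phrase it as a sequential limiting argument, but the ingredients are identical.
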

\begin{proof} If $\hat S,\hat T$ are
$F_n$-equivariantly homeomorphic then for every
 measured lamination $\mu$ we have
$\langle T,\mu\rangle =0$ if and only if 
$\langle S,\mu\rangle =0$ \cite{CHL07,KL09}.  
Thus by Proposition \ref{fillisfull}, 
if $[T]\in {\cal I\cal T}$ and if there is a measured lamination
$\mu$ with $\mu(L^2(T)-L_r(T))=0$ then 
we have 
\[{\cal H}([T])=\{[S]\in {\cal I\cal T}\mid \langle S,\mu\rangle =0\}.\]
The corollary now follows from continuity of the length
pairing $\langle ,\rangle$ \cite{KL09}. 
\end{proof}

\begin{remark}\label{donotknow}
We do not know whether every indecomposable tree
$[T]\in {\cal I\cal T}$ admits a measured lamination 
$\mu$ with $\langle T,\mu\rangle =0$ and 
$\mu(L^2(T)-L_r(T))=0$.
\end{remark}

The remainder of this section is devoted to the proof of 
Proposition \ref{fillisfull}.

Recall that a minimal subset of 
an $F_n$-space is an invariant set with each orbit dense.
In the case that $T$ is an indecomposable tree with a 
free action of $F_n$, Proposition 5.14 of \cite{CH10} together
with Proposition 3.7 and Lemma 4.10 of \cite{CHR11} show
that the regular sublamination $L_r(T)$ is minimal.   
The main technical tool for the proof of Proposition \ref{fillisfull}
is the following extension of this 
result to indecomposable trees which are not necessarily free.

\begin{lemma}\label{minimal}
Let $[T]\in \partial {\rm CV}(F_n)$ be indecomposable;
then the regular sublamination $L_r(T)$ of $L^2(T)$ is minimal.
\end{lemma}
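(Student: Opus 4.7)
The plan is to establish minimality of $L_r(T)$ by a translation-covering argument that reduces the statement to the defining mixing property of indecomposable trees. Given two regular leaves $\ell,\ell'\in L_r(T)$ with $p=Q^2(\ell),p'=Q^2(\ell')\in\overline{T}$, the goal is to produce $g_n\in F_n$ with $g_n\ell\to \ell'$ in the product topology on $\partial F_n\times\partial F_n$.

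First I would translate leaf-proximity into arc-proximity via the map $Q\colon\partial F_n\to\hat T$. Using continuity of $Q$ and the fact that $L^2(T)$ is the preimage of the diagonal under $Q\times Q$, closeness of leaves $g_n\ell$ to $\ell'$ amounts to closeness in $\hat T$ (observer's topology) of the two pairs of directions defining the leaves. For a \emph{regular} leaf I will record the geometric content of both parts of the regularity hypothesis: the condition that $\ell$ is not carried by a point stabilizer forces the pair of directions at $p$ to be non-trivially transverse, and the condition that $p$ is approached by distinct $Q^2(\ell_n)$ guarantees that arbitrarily small non-degenerate arcs through $p$ can be chosen to realize each of the two germs associated with $\ell$ (so that the leaf is \emph{determined} by an arbitrarily small arc-germ pair at a point near $p$).

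Second I would invoke indecomposability. Fixing $\epsilon>0$, I choose non-degenerate arcs $I\ni p$ and $J\ni p'$ of diameter at most $\epsilon$ whose germs at $p,p'$ represent one direction of $\ell$ and $\ell'$ respectively. By Definition 1.17 of \cite{G08}, there exist $g_1,\dots,g_r\in F_n$ with $J\subset g_1I\cup\dots\cup g_rI$ and consecutive overlaps non-degenerate, so some $g_kI$ contains $p'$ and in particular $g_k^{-1}p'$ is $\epsilon$-close to $p$ inside $I$. Letting $\epsilon\to 0$ produces a sequence moving $p$ towards $p'$ while carrying one prescribed direction germ. To match the \emph{second} direction germ simultaneously, I would iterate: apply the covering property to a further refined pair of arcs encoding \emph{both} germs of $\ell'$ at once, using the regularity of $\ell'$ to ensure such refining arcs are genuinely available in $\overline{T}$.

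The main obstacle is this last matching-of-both-germs step; raw covering gives only a single translated arc and one needs the full two-directional leaf data to survive. The cleanest way to carry this out is to reinterpret $L_r(T)$ as the orbit space of a system of partial isometries on a transversal in $\overline{T}$, along the lines of \cite{CH10} and \cite{CHR11}. Indecomposability of $T$ forces this system to be minimal (the argument for free actions goes through verbatim, since the non-free directions contribute only leaves carried by point stabilizers, which are excluded from $L_r(T)$ by definition), and minimality of the system of isometries is exactly minimality of $L_r(T)$. The verification that point stabilizers do not contribute regular leaves, and that the systems-of-isometries description is still available in the presence of non-trivial vertex stabilizers, is where the extension of the free case requires the most care and is the only step that is truly new compared to \cite{CHR11}.
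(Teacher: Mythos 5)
Your second paragraph correctly identifies the framework the paper actually uses (the system of partial isometries on the compact heart and the Rips machine of \cite{CH10}, following the scheme of \cite{CHR11}), and you are right that your first, direct translation-covering attempt founders on the problem of matching both germs of a leaf simultaneously -- the paper does not attempt that route. So the approach is the same; the problem is that you stop exactly where the lemma begins.

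The genuine gap is the assertion that ``the argument for free actions goes through verbatim, since the non-free directions contribute only leaves carried by point stabilizers, which are excluded from $L_r(T)$ by definition.'' That is precisely the statement the lemma exists to prove, and it is not automatic, for two reasons. First, in the Levitt-type case the Rips machine output $\Gamma_i$ now contains, for all large $i$, a fixed finite collection of \emph{persistently short} loops $\alpha_1^i,\dots,\alpha_u^i$ of bounded size $p$ corresponding to conjugacy classes in the (finitely many, finitely generated) point stabilizers; the free-case argument of \cite{CHR11} relies on every loop in $\Gamma_i$ becoming long as $i\to\infty$, which now fails. The paper's fix is to choose the test leaf $\ell\in L_r(T)-L_0$ so that $Q^{-1}(Q^2(\ell))$ has exactly two points (possible because only countably many points of $\Omega$ have larger preimage) and to choose the witnessing arc $\rho=\ell[-k,k]$ of size at least $4p$ with no prefix equal to any of the stabilizer words, so that for large $i$ the arc avoids all the short loops and the counting argument resumes. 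Second, the conclusion that every leaf of $L_0$ is then carried by a proper free factor does not contradict $L_0\subset L_r(T)$ ``by definition'': one still needs Theorem 4.5 of \cite{R11} (indecomposability forces the minimal subtree of any proper free factor to be discrete) to upgrade ``carried by a free factor'' to ``carried by a point stabilizer.'' Finally, you omit the surface-type case entirely, where the new content is showing that the splitting process does not terminate, using triviality of segment stabilizers of an indecomposable tree to rule out a partial isometry acting as a global isometry of a component of the compact forest. Without these three pieces the proof is a restatement of the goal rather than an argument.
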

\begin{proof} Choose a basis ${\cal A}$ for $F_n$. Elements of $F_n$ are
reduced finite words in the alphabet ${\cal A}^{\pm}$. 
A point $X$ in the boundary $\partial F_n$ of $F_n$ is an infinite reduced word
in ${\cal A}^{\pm}$ whose first letter will be denoted by $X_1$.
The cylinder 
\[C_{\cal A}(1)=\{(X,Y)\in (\partial F_n)^2\mid X_1\not=Y_1\}\]
is compact, and the same holds true for the \emph{relative limit
set} 
\[\Omega_{\cal A}=Q^2(L^2(T)\cap C_{\cal A}(1))\subset \overline{T}.\]
The \emph{compact heart} $K_{\cal A}$ of $\overline{T}$ is the 
convex hull of $\Omega_{\cal A}$. It is a compact subtree of 
$\overline{T}$ (see Section 5.3 of \cite{CH10}).

The tree $T$ is called of \emph{Levitt type} if 
the limit set $\Omega$ is a totally disconnected subset of $\overline{T}$.
That this definition is equivalent to earlier definitions in the literature
is Theorem 5.11 of \cite{CH10}. 
We first show the lemma for indecomposable trees $T$ of Levitt type.
For such trees the relative limit set $\Omega_{\cal A}$ is a Cantor set
\cite{CH10}.

For any element $a\in {\cal A}^\pm $ consider the partial isometry
defined by the restriction of the action of $a^{-1}$.
 \[K_{\cal A}\cap aK_{\cal A}\to K_{\cal A}\cap a^{-1}K_{\cal A},
 \,x\to a^{-1}x.\]
 The thus defined 
system of partial isometries determines a directed labeled
graph $\Gamma$ whose
 vertex set is the set of components of $K_{\cal A}$ 
(the compact heart is connected, but we will use this construction
below also for compact subsets of $K_{\cal A}$ which may be disconnected)
and where 
an edge labeled with $a\in {\cal A}$ 
is a partial isometry connecting a component of $K_{\cal A}$ which 
intersects the domain of definition of $a$ 
to the component containing its image. As $K_{\cal A}$ is 
connected, the graph $\Gamma$ is a rose.
 
Apply the \emph{Rips machine} (see Section 3 of \cite{CH10}) to 
 the system of isometries $(K_{\cal A},{\cal A}^{\pm})$. 
 Its first step consists in choosing $K_{\cal A}(1)$ to be the subset of 
 $K_{\cal A}$ of all points which are in the domain of at least two 
 distinct partial isometries from ${\cal A}^{\pm}$ and restricting the 
 partial isometries from ${\cal A}^\pm$ to $K_{\cal A}(1)$.
This determines a new directed labeled graph $\Gamma_1$.
 There is a natural morphism $\tau_1$ from the directed 
 graph $\Gamma_1$ into 
 the directed
 graph $\Gamma$ (see Section 3 of \cite{CH10} for details).
 
 Since $T$ is of Levitt type, the Rips machine never stops  \cite{CH10}.
The output is a sequence $\Gamma_i$ $(i>0)$ 
of graphs 
without vertices of valence zero or one (Proposition 5.6 of \cite{CH10})
so that every leaf of 
$L^2(T)$ can be represented by a path in each of the $\Gamma_i$. 
For each $i$ there is a homotopy equivalence
$\tau_{i+1}:\Gamma_{i+1}\to \Gamma_i$ \cite{CH10}.

Define the \emph{size} of an element of $F_n$ to be its length with
respect to the generating set ${\cal A}^{\pm}$.
There are only finitely many $F_n$-orbits of points in $T$ with nontrivial
point stabilizers \cite{CH10}, and  
each point stabilizer for $T$ is finitely generated \cite{GL95}. 
This implies that 
there are numbers $i_0>0,p>0,u\geq 0$ with the following properties.
For each $i\geq i_0$ the graph $\Gamma_i$ contains 
precisely $u\geq 0$ labeled loops $\alpha_1^i,\dots,\alpha_u^i$ 
which define elements of $F_n$ 
of size at most $p$. Each of these loops corresponds to
a conjugacy class
in a point stabilizer of $T$. 
The point stabilizers of $T$ are
generated by elements in the conjugacy classes
of these loops, and the 
homotopy equivalence
$\tau_{i+1}:\Gamma_{i+1}\to \Gamma_i$ \cite{CH10} maps
the loop $\alpha_j^{i+1}$ to the loop $\alpha_j^i$.

By Theorem 5.3 of \cite{CH10}, there are only finitely
many $F_n$-orbits of 
points in $Q^2(L^2(T))$
whose preimage under the map $Q$ have
cardinality at least three. These points contain the points
with non-trivial stabilizer.
Thus  the union of 
all $F_n$-orbits of points in the limit set $\Omega=Q^2(L^2(T))$ 
whose preimage under $Q$ have cardinality at least three is a
countable subset of $\Omega$.

We follow the proof of Lemma 3.6 of \cite{CHR11} and we assume 
to the contrary that
there is a closed proper subset $L_0\subset L_r(T)$.
Since $\Omega_{\cal A}$ is uncountable and since $L_r(T)$ does not have
isolated points, by the previous paragraph 
there is a leaf $\ell\in L_r(T)-L_0$
such that $X=Q^2(\ell)\in \Omega_{\cal A}$ and that moreover
the preimage of $X$ under $Q$ consists precisely of the two endpoints of $\ell$.

The leaf
$\ell$ is not contained in the closure of $L_0$ and therefore 
there is a compact subarc $\rho=\ell[-k,k]$ of $\ell$ 
which is not shared by any leaf in $L_0$. We may assume that the size
$2k$ of the arc is at least $4p$ and that none of its prefixes coincides with
any of the words defining a loop $\alpha_j^i$ or its inverse. 
Note that this makes sense since the loops $\alpha_j^i$ define only
finitely many conjugacy classes in $F_n$.

For sufficiently large $i$, every (non-oriented) loop of size at most $p$ in 
the graph $\Gamma_i$ is one of the
loops  $\alpha_j^i$ (see the proof of Lemma 3.6 of \cite{CHR11}).
As a consequence, the initial segment of 
the subarc  $\rho$ of $\ell$ is distinct from any of these loops.

Choose now a 
sequence $\ell_j\subset L_r(T)$ of pairwise
distinct leaves which converge to $\ell$. 
We may assume that $\ell_j[-k,k]=\rho$ for all $j$.
Since $\Omega$
is a Cantor set, for every $u$ and for 
sufficiently large $m$ the leaves
$\ell_j$ $(0\leq j\leq u)$ 
are contained in distinct components of $K_{\cal A}(m)$.
On the other hand, 
there exists a number $i(k)>i_0$ such that
for $i>i(k)$ the size of any loop in $\Gamma_i$ which is 
distinct from one of the loops $\alpha_j^i$ 
is at least $2k$.
This implies that there are $n>0,m>0$ such that
$\ell_n[-k,k]$ does not cross through any vertex of $\Gamma_m$ of 
valence strictly bigger than two
(compare the proof of Lemma 3.6 of \cite{CHR11} for details).

As in the proof of Lemma 3.6 of \cite{CHR11} we conclude that 
there is 
an edge of $\Gamma_m$ which contains a subsegment of the
arc $\ell_n[-k,k]$ and 
which is
missed by any leaf of $L_0$. Hence every leaf of $L_0$ is contained 
in some free factor of $F_n$. 
Since $T$ is indecomposable,
by Theorem 4.5 of 
\cite{R11}, for every free factor $H$ of $F_n$ the minimal
$H$-invariant subtree of $T$ is discrete and therefore
a leaf of $L^2(T)$ contained in $H$ is contained in a point
stabilizer of $T$. Then $L_0\not\subset L_r(T)$ which is 
a contradiction. This shows 
the lemma in the case that the tree $T$ is of Levitt type.

By definition, a tree $T$ is of \emph{surface type} if the 
Rips machine stops. 
By Proposition 5.14 of \cite{CH10}, an indecomposable tree
$T$ either is of Levitt type or of  surface type.
Thus we
are left with showing the lemma for indecomposable trees of surface type.

Let $({\cal K},{\cal A})$ be a system of isometries 
which is not modified further. 
The set  ${\cal K}$ is a compact forest, in particular
it has finitely many connected components. For each
$a\in {\cal A}$ let ${\cal K}(a)\subset {\cal K}$ be the 
union of those components which intersect the
domain of definition for $a$.
We claim that there is some $a\in {\cal A}$ and some
component $K\in {\cal K}(a)$ such that
the domain of definition for $a$ intersects $K$ 
in a proper subtree.

Namely, 
otherwise each $a\in {\cal A}$ maps each component $K$ of 
${\cal K}(a)$ isometrically onto a component 
$a(K)$ of ${\cal K}$. Since ${\cal K}$ has only finitely many components,
this implies that there is a component $K$ of ${\cal K}$, and there
is a finite cyclically reduced nontrivial word $w=a_1\dots a_s$ in 
${\cal A}^{\pm}$ such that $w(K)=K$.  
Then $w$ is an isometry of the compact tree $K$.
Since $K$ is compact, $w$ has a fixed point $p\in K$ and 
permutes the components of $K-\{p\}$.
Using again compactness of $K$ we conclude that 
there is some $k\geq 1$ such that $w^k$ stabilizes a non-degenerate
segment in $K$. Since
$K\subset \overline{T}$ the tree 
$\overline{T}$ has a segment with non-trivial stabilizer.
This contradicts the assumption that 
$T$ is indecomposable.

As a consequence, there is some $a\in {\cal A}^{\pm}$, and there is a component
$K\in {\cal K}(a)$ containing  an extreme point 
$x$ of the domain of definition of $a$.
By definition, such an extreme point
$x$ is contained in the domain of definition of $a$. 
Moreover, this domain of definition 
contains a segment abutting at $x$, the complement 
$K-x$ of $x$ has 
at least two connected components, 
and there is at least one segment
abutting at $x$ whose interior is \emph{not} contained in the domain of $a$.

Following the proof of Lemma 4.10 of \cite{CHR11}, 
we can now \emph{split} the suspension surface by 
cutting $K_x$ at $x$ and cutting the suspension intervals containing $x$
accordingly. We obtain a new system of isometries. Repeat this construction.
Since by the above discussion the splitting process does not terminate,
we obtain 
the conclusion of the lemma as in the proof of Lemma 4.10 of \cite{CHR11}.
\end{proof}

Let again $[T]\in {\cal I\cal T}\subset
\partial {\rm CV}(F_n)$ be indecomposable and 
let $\mu$ be an ergodic measured lamination
with support ${\rm Supp}(\mu)\subset L^2(T)$
and $\mu(L^2(T)-L_r(T))=0$. We call such a measured lamination
\emph{regular}.  
By \cite{KL09}, a measured lamination $\mu$ is 
regular if and only if  
$\langle T,\mu\rangle =0$ and $\mu(L^2(T)-L_r(T))=0$.

Define an
equivalence relation $\sim_\mu$ on 
$\partial F_n$ as the smallest 
equivalence relation with the following
property. 
The equivalence class of a point $\xi$ 
contains all points $\xi^\prime$
with $(\xi,\xi^\prime)\in {\rm Supp}(\mu)$.  
Let  $\sim$ be the closure of $\sim_\mu$.
By invariance of ${\rm Supp}(\mu)$ under the action of 
$F_n$, the equivalence relation $\sim$ is $F_n$-invariant,
and its quotient $\partial F_n/\sim$ is a 
compact $F_n$-space. 
Note that it
is unclear whether $\partial F_n/\sim$ is a tree.
The action of $F_n$ on $\partial F_n/\sim$ is minimal
and dense since this holds true for the action of 
$F_n$ on $\partial F_n$. 

By Corollary 2.6 of \cite{CHL07}, the 
tree $\hat T$ with the observer's topology
is $F_n$-equivariantly homeomorphic to 
$\partial F_n/L^2(T)$. Now the zero lamination
$L^2(T)$ contains the support 
 ${\rm Supp}(\mu)$ of $\mu$ 
and hence the compact $F_n$-space
$\partial F_n/\sim$ admits an $F_n$-equivariant
continuous surjection onto the tree $\hat T$. 

The following observation is also used in Section 11
and completes the proof of Proposition \ref{fillisfull}
under the assumption that $\partial F_n/\sim$ is a tree.
In its statement, we do not assume that the
tree $S$ is very small.

\begin{lemma}\label{closure}
Let $S$ 
be a minimal dense $F_n$-tree which admits an alignment
preserving map onto indecomposable trees
$T,T^\prime$. Then $\hat T$ is $F_n$-equivariantly 
homeomorphic to $\hat T^\prime$.
%
\end{lemma}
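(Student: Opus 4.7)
The strategy is to identify $\hat T$ and $\hat T'$ as quotients of $\partial F_n$ via the canonical maps $Q_T, Q_{T'}$ of Corollary 2.6 of \cite{CHL07}, and to show that the equivalence relations defined by $L^2(T)$ and $L^2(T')$ on $\partial F_n$ coincide. Once this is established, the identity on $\partial F_n$ descends to the desired $F_n$-equivariant homeomorphism $\hat T\to \hat T'$.

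The first step is to verify that any $F_n$-equivariant alignment preserving surjection $\rho:S\to T$ between minimal dense $F_n$-trees extends to a continuous $F_n$-equivariant surjection $\hat\rho:\hat S\to \hat T$ satisfying $\hat\rho\circ Q_S=Q_T$. For this I track the image of a ray $[x,\xi)\subset S$ with $\xi\in\partial S$: alignment preserving maps are continuous and monotone on segments, so $\rho$ sends $[x,\xi)$ either to a bounded monotone subset of $T$ (converging to a point of $\overline T$) or to an unbounded monotone ray (converging to a point of $\partial T$). Compatibility with the $Q$-maps is then forced by continuity together with the characterization of $Q$ via limits of orbit sequences $g_n x_0$ for $g_n\to\xi$. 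As an immediate consequence, $L^2(S)\subseteq L^2(T)$ and $L^2(S)\subseteq L^2(T')$; applying the construction to both $\rho$ and $\rho'$ gives continuous $F_n$-equivariant surjections $\hat\rho:\hat S\to\hat T$ and $\hat{\rho}':\hat S\to \hat T'$ commuting with the $Q$-maps.

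The heart of the argument is to show the fibres of $\hat\rho$ and $\hat{\rho}'$ coincide. Since both are $F_n$-equivariant quotients of $\hat S$, it suffices to prove that on $S$ itself, the two fibre partitions $\{\rho^{-1}(x):x\in T\}$ and $\{(\rho')^{-1}(y):y\in T'\}$ agree. Suppose not; then by symmetry we may assume that for some $x\in T$, the image $A:=\rho'(\rho^{-1}(x))$ is a non-degenerate convex subtree of $T'$. By equivariance, the stabilizer $H=\operatorname{Stab}_{F_n}(x)$ fixes $A$ pointwise; because $T'$ is very small and $A$ contains a non-degenerate arc, $H$ is at most infinite cyclic, and $\{gA:g\in F_n\}$ is an $F_n$-equivariant family of non-degenerate subtrees of $T'$. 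Dually, $\rho(\rho'^{-1}(y))$ is a (possibly degenerate) convex subtree of $T$ for each $y\in T'$, and by equivariance these subtrees assemble into a second $F_n$-invariant family.

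To extract a contradiction I invoke indecomposability of $T$ and $T'$. Pick a non-degenerate arc $I\subset T$ that meets the orbit of $\rho^{-1}(x)$ transversely (i.e.\ intersects it in a proper non-degenerate sub-arc); such an $I$ exists because $\rho^{-1}(x)$ is a proper convex subtree of $S$ and $T$ has dense orbits. Then $\rho'(\rho^{-1}(I))$ is a non-degenerate subtree of $T'$ which, by construction, meets $A$ in a proper sub-arc and is hence distinct from $A$. Applying indecomposability of $T'$ to cover the arc $A$ by chain-overlapping translates of $\rho'(\rho^{-1}(I))$ forces $A$ to be contained, up to an $F_n$-equivariant identification, in the orbit of $\rho^{-1}(x)$, contradicting the assumption that $A$ is non-degenerate while $\rho^{-1}(x)$ maps to the single point $x$ under $\rho$. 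The main obstacle lies precisely in making this last chain-argument rigorous: one must carefully control how the indecomposability of $T$ and $T'$ interacts with the two incompatible fibre partitions of $S$, and rule out degenerate configurations (for instance, when some translates of $A$ coincide or when the chains of non-degenerate overlaps accumulate). Once the fibre coincidence is established, the induced bijection $L^2(T)=L^2(T')$ yields the $F_n$-equivariant homeomorphism $\hat T\to\hat T'$.
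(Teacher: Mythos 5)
Your overall strategy (reduce to comparing the two fibre partitions of $S$ induced by $\rho$ and $\rho'$, and derive a contradiction from indecomposability if they differ) is the right shape, and it is essentially the shape of the paper's proof. But the execution has two genuine gaps, one local and one at the crux.

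The local error: from the assumption that $A=\rho'(\rho^{-1}(x))$ is non-degenerate you claim that $H=\operatorname{Stab}_{F_n}(x)$ \emph{fixes $A$ pointwise} and is therefore at most cyclic. Equivariance only gives that $H$ preserves $\rho^{-1}(x)$, hence preserves $A$ \emph{setwise}; it need not fix any arc of $A$, so the "very small $\Rightarrow$ cyclic" conclusion does not follow. Worse, for the argument you actually need the opposite kind of information: the relevant subtrees should have \emph{large} (finitely generated, infinite-index) stabilizers acting on them with dense orbits. A generic fibre $\rho^{-1}(x)$ may well have trivial stabilizer, in which case your configuration carries no group-theoretic content to contradict.

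The crux: the concluding "chain argument" — covering $A$ by overlapping translates of $\rho'(\rho^{-1}(I))$ and forcing $A$ to be degenerate — is exactly the step you admit you cannot make rigorous, and I do not see how to complete it from indecomposability alone. Indecomposability lets you cover arcs by translates of other arcs; it does not by itself rule out a non-trivial convex fibre partition. The paper closes precisely this hole with two external structural results of Reynolds. First, Theorem 12.12 of \cite{R10} shows that $T$ (resp.\ $T'$) is obtained from $S$ by collapsing the components of a transverse family, and that each component is a subtree whose stabilizer is a vertex group of a very small splitting of $F_n$ (hence finitely generated of infinite index) acting on it with dense orbits. If $\hat T\not\cong\hat T'$ the two families differ, so some component $Y'$ of one family is not collapsed by the map to, say, $T$; by equivariance its image is the minimal subtree of $T$ for its stabilizer $H$, which is non-trivial and carries a dense $H$-action. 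Second, the main result of \cite{R11} says that for an indecomposable $T$ every finitely generated infinite-index subgroup has \emph{simplicial} (discrete) minimal subtree — contradiction. Without importing these inputs (or proving substitutes), your argument does not close.
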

\begin{proof} Let $S$ be a minimal dense $F_n$-tree,
let $T\in \partial cv(F_n)$ be indecomposable and
assume that  there is 
an alignment preserving map 
\[f:S\to T.\]

A \emph{transverse family} for an $F_n$-tree
$U$ with dense orbits is an $F_n$-invariant 
family $\{Y_v\}$ of non-degenerate subtrees
$Y_v\subset U$ with the property
that if $Y_v\not=Y_{v^\prime}$ then 
$Y_v\cap Y_{v^\prime}$ contains at most one point.

Since $S$ is dense we can apply
Theorem 12.12 of \cite{R10}. It shows 
that there is an ergodic length measure 
$\nu$ for $S$ so that
with the notations from Section 5 we have
$T=S_\nu$.
Moreover, there is a transverse family
${\cal F}$ for $S$ with the following
properties.

Each component of ${\cal F}$ is a subtree $Y$ of $S$.
Its stabilizer is a vertex group of a very small
splitting of $F_n$, and it acts on $Y$ with
dense orbits (see the explicit construction in Section 10 of 
\cite{R10} for this fact which is attributed to 
Guirardel and Levitt).
The tree $T$ is obtained by collapsing each 
component of the transverse family ${\cal F}$ to 
a point. In particular, the family ${\cal F}$ is 
trivial (i.e. it is empty or its components are 
points) 
if and only if $\hat S$ is equivariantly
homeomorphic to $\hat T$.

Let now $T^\prime$ be a second indecomposable
tree for which there is an
alignment preserving map $f^\prime:S\to T^\prime$. 
Then 
the tree $T^\prime$ is obtained from 
$S$ by collapsing each tree from 
a second transverse family ${\cal F}^\prime$ to a point.
If $\hat T$ is not $F_n$-equivariantly 
homeomorphic to $\hat T^\prime$
then the transverse families
${\cal F},{\cal F}^\prime$ do not coincide.
In particular, up to exchanging $T$ and $T^\prime$ we
may assume that 
there is a component $Y^\prime$ of ${\cal F}^\prime$
which is not mapped to a point by the 
alignment preserving map 
$f:S\to T$.

As 
the stabilizer $H$ of $Y^\prime$ is 
a vertex group of a very small splitting of $F_n$, it is a finitely
generated subgroup of $F_n$ \cite{GL95} 
of infinite index, and it acts on $Y^\prime$ with 
dense orbits. 
By equivariance and the
assumption that $f$ is alignment preserving, 
the image under $f$ of the minimal
$H$-invariant subtree $Y^\prime$ of $S$ equals the minimal
$H$-invariant subtree $Z$ of $T$. Since $Y^\prime$ is not mapped
to a point, 
the tree $Z$ is non-trivial, and by equivariance, 
$H$ acts on $Z$ with dense orbits.
However, by the main result of \cite{R11}, since
$T$ is indecomposable and $H$ is a finitely
generated subgroup of $F_n$ of infinite index, the minimal
$H$-invariant subtree of $T$ is simplicial.
This is a contradiction which shows the lemma.
\end{proof}

{\textit{Proof of Proposition \ref{fillisfull}:}}
Let $T\in \partial cv(F_n)$ be indecomposable
and let $\mu\in 
{\cal M\cal L}$ be a regular
measured lamination for $T$, i.e. such that
$\langle T,\mu\rangle =0$ and $\mu(L^2(T)-L_r(T))=0$.

Recall that the zero  lamination $L^2(T)$ for $T$ 
defines a closed equivalence relation 
on $\partial F_n$, and  
$\hat T=\partial F_n/L^2(T)$ (Corollary 2.6 of \cite{CHL07}). 
Let $\sim$ be the closure of the equivalence
relation on $\partial F_n$ 
defined by ${\rm Supp}(\mu)\subset L^2(T)$.
There is a natural $F_n$-equivariant continuous surjection  
\[G:\partial F_n/\sim\to \hat T.\]

A leaf $\ell$ of $L^2(T)$ is \emph{diagonal}
over a sublamination $L_0$ 
if $\ell=(x_1,x_n)$ and if 
there are points $x_2,\dots,x_{n-1}\in \partial F_n$ so that
$(x_1,x_2),\dots,(x_{n-1},x_n)\in L_0$.
A leaf of $L^2(T)$ which is diagonal
over ${\rm Supp}(\mu)$ is contained in the equivalence
relation $\sim$. 
If the tree $T$ is free then 
$L^2(T)-{\rm Supp}(\mu)$ consists of finitely
many $F_n$-orbits of diagonal leaves 
\cite{CHR11} and therefore $G$ is an 
equivariant homeomorphism. This implies the
proposition in the case that the tree $T$ is free.

Now let $T$ be an arbitrary indecomposable tree,
let $\mu$ be a regular measured lamination for $T$ and 
let $S\in \partial cv(F_n)$ be another indecomposable
tree with $\langle S,\mu\rangle =0$. 
We claim that a subgroup $H$ of $F_n$ stabilizes
a point in $T$ if and only if $H$ stabilizes a point in $S$.

To this end recall that the stabilizer 
$H$ of a point $x\in T$ is finitely generated
\cite{GL95} and therefore 
the Gromov boundary $\partial H$ of $H$ 
is a closed $H$-invariant subset of $\partial F_n$.
By equivariance and continuity, 
the preimage in $\partial F_n/\sim$ 
of the fixed point $x$ of $H$ 
under the surjection $G$ 
is a compact
$H$-invariant subset $A$ of $\partial F_n/\sim$.
This set is just the projection of $\partial H\subset
\partial F_n$ to $\partial F_n/\sim$.
The union $\cup_{gH\in F_n/H}gA$ is disjoint and defines 
a transverse family 
${\cal F}$ of compact
spaces. Each component of ${\cal F}$ is invariant under a conjugate
of $H$ (these spaces may be reduced to points or may not be 
trees).

There also is an equivariant
projection $Q:\partial F_n/\sim \to \hat S$. If $H$ 
does not fix a point in $S$
then the transverse family 
${\cal F}$ of $\partial F_n/\sim$ is not collapsed in $\hat S$ 
to a countable
set of points. However, as in the proof of Lemma \ref{closure},
since the interior of $\hat S$ is $F_n$-equivariantly homeomorphic
to the tree $S$ \cite{CHL07},  
this implies that the minimal $H$-invariant subtree of 
$S$ is not reduced to a point, and $H$ acts on it with 
dense orbits. As in the proof of 
Lemma \ref{closure}, we conclude that this is impossible.

By the definition of the regular lamination $L_r(T)$, the 
zero lamination $L^2(T)$ is the union of $L_r(T)$, the
sets $\partial H\times \partial H-\Delta$ where $H$ runs through
the point stabilizers of $T$ and some isolated diagonal leaves.

As diagonal leaves are contained in the closure of the
equivalence relation defined by ${\rm Supp}(\mu)$ and 
the point stabilizers of $T$, the zero lamination
$L^2(T)$ is determined uniquely by ${\rm Supp}(\mu)$ and 
the point stabilizers of $T$. The above discussion
then shows that  
indeed $L^2(T)=L^2(S)$ as claimed. 
This completes the proof of Proposition \ref{fillisfull}.
\qed

\section{Trees with point stabilizers containing a free factor}

The goal of this section is to analyze trees in 
$\partial {\rm CV}(F_n)$ with dense orbits which
have some point stabilizers containing a free factor.

Let $A<F_n$ be a free factor of rank $r\geq 1$.
A \emph{free basis extension} of $A$ is a free basis 
$e_1,\dots,e_{n-r},e_{n-r+1},\dots,e_n$ of $F_n$ such that
$e_{n-r+1},\dots,e_n$ is a free basis of $A$. 
Define the \emph{standard simplex relative to $A$} 
of a free basis extension of $A$ to be the set 
\[\Delta(A)\subset \overline{cv_0(F_n)}^{++}\]
of all trees $T$ with volume
one quotient
with the following property. Either $T$ 
is contained in the boundary of 
the standard simplex $\Delta$ for the basis and admits
$A$ as a point stabilizer, 
or
$T/F_n$ can be obtained as follows.

Let  
$R_0,R_{n-r+1},\dots,R_n$ be roses 
of rank $n-r$ and $1$, respectively, with petals of length
$1/(n+r)$ which are
marked by the subgroup of $F_n$ generated by $e_1,\dots,e_{n-r}$ 
and by $e_j$ $(n-r\leq j\leq n)$. 
Connect the roses $R_j$ in an arbitrary way by $r$ edges
of length $1/(n+r)$ 
so that these edges form a forest in the resulting 
connected graph $G_1$.
For $j\geq n-r+1$ collapse each rose 
$R_j$ in $G_1$ to a point and let
$G_2$ be the resulting graph. 
The graph $T/F_n$ is obtained from $G_2$  
by changing the
lengths of the remaining edges, allowing some of the edges to shrink to 
zero length. 

If $A<F_n$ is a free factor which fixes a point in a tree
$[T]\in \partial{\rm CV}(F_n)$ then we say that the action of $A$ on $[T]$
is elliptic.

The next observation is a relative version of Lemma 
\ref{morphismsex}.

\begin{lemma}\label{relativemorphism}
Let $A<F_n$ be a free factor and let 
$[T]\in \partial{\rm CV}(F_n)$ be 
such that the action of $A$ on 
$[T]$ is elliptic. Then for every
standard simplex $\Delta(A)$ relative to $A$ 
there is a tree $U\in \Delta(A)$ and a train track
map $f:U\to T$ where $T$ is some representative of $[T]$.
\end{lemma}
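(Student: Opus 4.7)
The plan is to mimic the proof of Lemma \ref{morphismsex} verbatim, with the standard simplex $\Delta$ replaced throughout by the relative standard simplex $\Delta(A)$, the two novel ingredients being (i) a construction of an initial equivariant map exploiting the ellipticity hypothesis on $A$, and (ii) a verification that the compactness/limiting machinery still applies when $\Delta$ is replaced by $\Delta(A)$.

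First I would treat the case that $T$ is simplicial with $A$ elliptic. Fix a type 2 tree $U\in \Delta(A)$ whose quotient $U/F_n$ is built from a rose $R_0$ of rank $n-r$, together with $r$ edges from $R_0$'s vertex to $r$ additional vertices carrying cyclic vertex groups $\langle e_{n-r+1}\rangle,\dots,\langle e_n\rangle$. Let $\tilde v$ be a preimage of the vertex of $R_0$, and for each $j\geq n-r+1$ let $\tilde v_j$ be the preimage of the $j$-th additional vertex that is adjacent to $\tilde v$ (so $e_j$ fixes $\tilde v_j$). Let $p\in T$ be a point fixed by $A$. An equivariant map $f_0\colon U\to T$ is then defined by sending $\tilde v$ to any chosen vertex of $T$, sending $\tilde v_j$ to $p$ for each $j\geq n-r+1$, mapping the edges $[\tilde v,\tilde v_j]$ and the petals of $R_0$ at $\tilde v$ to the corresponding geodesic segments in $T$, and extending equivariantly. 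Consistency under the $F_n$-action is immediate because each generator $e_j$ of $A$ fixes both $\tilde v_j\in U$ and $p\in T$, while $\tilde v$ has trivial stabilizer.

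Second, with the existence of \emph{some} equivariant Lipschitz map in hand, I would run the optimization of Proposition 2.5 of \cite{BF11} inside $\Delta(A)$. Scale $T$ to lie in the compact set $\Sigma\subset \overline{cv(F_n)}$ of trees for which the minimum over $S\in\Delta(A)$ of the smallest Lipschitz constant of equivariant maps $S\to T$ equals one. Since $\Delta(A)$ is compact and the optimal Lipschitz constant depends continuously on the source, a minimizer $S\in\Delta(A)$ is achieved together with an optimal equivariant map $f\colon S\to T$ of Lipschitz constant one. The train track property is verified exactly as at the end of the proof of Lemma \ref{morphismsex}: $f$ is an isometry on each edge (otherwise one could shrink the edge and reduce the Lipschitz constant), and at any preimage of a vertex of $S/F_n$ there must be at least two gates (otherwise a small equivariant push in a common direction would again strictly decrease the Lipschitz constant, contradicting optimality).

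Third, for a general $[T]\in\partial{\rm CV}(F_n)$ with $A$ elliptic, I approximate $[T]$ by simplicial trees $[T_i]$ with $A$ elliptic and $[T_i]\to [T]$ in $\overline{{\rm CV}(F_n)}$; such approximations exist because the locus in $\overline{cv(F_n)}$ where $A$ acts elliptically is closed and contains the dense subset of simplicial trees built from free simplicial trees by collapsing the minimal $A$-invariant subtree (equivalently, the simplicial trees in the relative Outer space with $A$ elliptic). The previous two paragraphs furnish train track maps $f_i\colon S_i\to T_i^0$ with $S_i\in \Delta(A)$ and $T_i^0$ a rescaling of a representative of $[T_i]$ that lies in $\Sigma$. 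Compactness of $\Delta(A)\times \Sigma$ in the equivariant Gromov--Hausdorff topology (via Theorem \ref{paulin}) lets me pass to a subsequence with $S_i\to S\in \Delta(A)$ and $T_i^0\to T$ a representative of $[T]$; passing to a further subsequence, the graphs of $f_i$ inside $S_i\times T_i^0$ converge to a closed $F_n$-invariant subset of $S\times T$, which is the graph of an equivariant one-Lipschitz map $f\colon S\to T$. The edge-isometry and the two-gate condition pass to this limit exactly as in the proof of Lemma \ref{morphismsex}, so $f$ is the desired train track map.

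The main obstacle is the third step: one needs to know that the approximations $[T_i]$ can be arranged so that both the ellipticity of $A$ and convergence of the optimal Lipschitz constants behave well, i.e.\ so that the scaled trees $T_i^0$ really lie in a fixed compact set of $\overline{cv(F_n)}$ converging to a representative of $[T]$. Once one knows that the $A$-elliptic locus is closed and that $\Delta(A)$ is compact (both of which are clear from the definitions), the remainder of the argument is a routine transcription of the proof of Lemma \ref{morphismsex}.
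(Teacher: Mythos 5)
Your strategy is genuinely different from the paper's, and it contains a real gap. The paper does not redo the optimization/limiting machinery relative to $A$ at all: it invokes Lemma \ref{morphismsex} once to obtain a train track map $f\colon S\to T$ with $S$ in the \emph{absolute} standard simplex $\Delta$, and then uses ellipticity of $A$ to show that for each basis element $a=e_j$ of $A$ the turn at $\tilde v$ spanned by the two directions of the axis of $a$ is illegal (both $[f(\tilde v),af(\tilde v)]$ and $[f(\tilde v),a^{-1}f(\tilde v)]$ pass through the point of ${\rm Fix}(a)$ nearest to $f(\tilde v)$). Folding these $r$ illegal turns collapses the $A$-petals to edges, the semigroup property of folding keeps the induced map a train track map onto a rescaling of $T$, and the resulting source tree lies in $\Delta(A)$. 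This sidesteps exactly the two places where your argument must do new work.

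The genuine gap is in your third step. You need every very small tree with $A$ elliptic to be a limit of \emph{simplicial} very small trees in which $A$ (or at least each $e_j$) is elliptic; this is unavoidable in your setup, since a tree of the second type in $\Delta(A)$ has vertices with stabilizer $\langle e_j\rangle$ and hence admits \emph{no} equivariant map at all to a tree in which $e_j$ is hyperbolic, so free simplicial approximations of $[T]$ are useless to you. Your justification — that the $A$-elliptic locus is closed and contains the simplicial $A$-elliptic trees — is circular: closedness of a locus plus membership of a subset says nothing about density of that subset in the locus. What you actually need is a density statement for (the closure of) relative Outer space, a substantive result that does not follow "from the definitions." A secondary soft spot is the claim that optimality over $\Delta(A)$ gives the two-gate condition "exactly as" in Lemma \ref{morphismsex}: trees in $\Delta(A)$ have vertices with non-trivial cyclic stabilizers, so at such a vertex $\tilde v_j$ the image is constrained to ${\rm Fix}(e_j)\subset T$ and the "push in the common direction" perturbation is available only along that fixed-point set (and not at all when ${\rm Fix}(e_j)$ is a single point, where one must instead argue that a single gate would force $e_j$ to stabilize a non-degenerate segment). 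Both issues are repairable, but not by transcription of the absolute case.
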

\begin{proof}
Let $A<F_n$ be a free factor of rank $r\leq n-1$ and let
$[T]\in \partial {\rm CV}(F_n)$ be a 
tree containing a point $x$ which 
is stabilized by $A$. Let $e_1,\dots,e_n$ be a free basis 
extension of $A$
and let $\Delta\subset \overline{cv_0(F_n)}^{++}$ 
be the standard simplex for this basis.

By Lemma \ref{morphismsex} there is some 
$S\in \Delta$, and there is a train track map  $f:S\to T$ where 
$T$ is a representative of $[T]$. The map $f$ is determined by the
image of a point $\tilde v\in S$ in the preimage of the unique vertex $v$ of 
$S/F_n$. 

There are now two possibilities. In the first case, $f(\tilde v)$ is 
stabilized by a conjugate of $A$. 
Now up to conjugation, 
for $i\geq n-r+1$ a train track map $f:S\to T$ maps 
the edge of $S$ with endpoints $\tilde v,e_i\tilde v$ 
isometrically onto 
the segment in $T$ connecting $f(\tilde v)$ to 
$e_if(\tilde v)$. 
Since $f(\tilde v)=e_if(\tilde v)$ and since $f$ is an edge isometry, 
the tree
$S$ is contained in $\overline{cv_0(F_n)}-cv_0(F_n)$, and the vertex
$\tilde v$ is fixed by a conjugate of $A$. 
As a consequence, we have $S\in \Delta(A)$ and we are done.

If $f(\tilde v)$ is not stabilized by a conjugate of $A$ then 
by equivariance, the vertex $\tilde v$ of $S$ is not 
stabilized by a conjugate of $A$. Thus there is a 
petal of $S/F_n$ which represents an element $a$ of $A$.
This petal lifts to a line in $S$ through $\tilde v$ which is stabilized 
by a conjugate of $a$ (but not pointwise). 
Let us suppose that this conjugate is just $a$.
The edge in $S$ with endpoints  $\tilde v$ and  $a\tilde v$ is 
mapped by $f$ isometrically to a segment in $T$.

By our assumption on  $T$,  
there is a fixed point for $a$ on $T$.
Since $T$ is very small, the fixed point set 
${\rm Fix}(a)$ of $a$ is 
a point or a closed segment in $T$.
Let $\tilde x$ be the point of smallest distance
to $f(\tilde v)$ in ${\rm Fix}(a)$.

Let $s$ be the unique segment in $T$ connecting $f(\tilde v)$ to $\tilde x$. 
The segment meets ${\rm Fix}(a)$ only in $\tilde x$.
Then $a(s)$ is the segment in $T$ connecting $af(\tilde v)$ to 
$a(\tilde x)=\tilde x$, and the segment $s\cap a(s)$ is fixed by $a$.
Since $s$ meets the fixed point set of $a$ only in $\tilde x$,
we conclude that 
$s\cap a(s)=\tilde x$. 
Thus the geodesic segment in $T$ connecting $f(\tilde v)$ to
$af(\tilde v)$ passes through $\tilde x$, and the geodesic segment
connecting $f(\tilde v)$ to $a^{-1}f(\tilde v)$ 
passes through $\tilde x$ as well. 

As a consequence, 
for the train track structure on $S$ defined by $f$, the turn at $\tilde v$
containing the two directions of the axis of $a$ is illegal.
Folding this illegal turn collapses the petal in $S/F_n$ 
defining $a$ to a single
segment without changing the rest of $S/F_n$. 
After volume
renormalization, the resulting tree $S_1$  
is contained in $\overline{cv_0(F_n)}$.
Its quotient $S_1/F_n$
contains a segment which is a collapse of the petal 
of $S/F_n$ defining $a$. There 
is a train track map $f_1:S_1\to T_1$ where 
$T_1$ is a rescaling of $T$.

The graph $S_1/F_n$ has precisely two vertices, and one 
of these vertices is univalent. There are at most $r-1$ loops
in $S_1/F_n$ defining elements of $A$. 
Repeat this construction with a petal of 
$S_1/F_n$ defining an element of 
the free factor $A$.
After a total of $r$ such steps we obtain a simplicial tree 
$S_r\in \overline{cv_0(F_n)}$ and
a train track map $f_r:S_r\to T_r$ where $T_r$ is a rescaling of $T$. 
The graph 
$S_r/F_n$ consists of a rose with $n-r$ petals 
(some of them may be degenerate to points) with a collection
of edges attached.
By construction, we have $S_r\in \Delta(A)$ as claimed.
\end{proof}

Recall from Section 2 the definition of the
map $\Upsilon_{\cal F}:
\overline{cv_0(F_n)}^+\to {\cal F\cal F}$. 
Recall also from
Section 4 the definition of the map 
$\psi_{\cal F}:\partial {\rm CV}(F_n)\to 
\partial {\cal F\cal F}\cup \Theta$.
We use Lemma \ref{relativemorphism}
to show

\begin{corollary}\label{pointstab}
Let $A<F_n$ be a free factor of rank $\ell\geq 1$ and let 
$[T]\in \partial{\rm CV}(F_n)$ be a tree such that
the action of $A$ on $[T]$ is elliptic.
 Then  $\psi_{\cal F}([T])=\Theta$.
\end{corollary}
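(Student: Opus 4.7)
The plan is to apply Lemma \ref{relativemorphism} to launch a Skora path from a tree $U$ in which $A$ itself is realized as a vertex stabilizer, propagate this ellipticity along the path via the Skora morphisms, and conclude that the image in ${\cal F\cal F}$ is confined to a uniformly bounded neighborhood of (the conjugacy class of) $A$.

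By Lemma \ref{relativemorphism}, choose a standard simplex $\Delta(A)$ relative to $A$, a tree $U \in \Delta(A)$, and a train track map $f \colon U \to T$ onto a representative $T$ of $[T]$. Using that $A$ fixes a point $p \in T$, the optimal train track map supplied by Lemma \ref{morphismsex} can be chosen with base vertex sent to $p$; the folding construction in the proof of Lemma \ref{relativemorphism} then terminates in the first of the two alternatives in the definition of $\Delta(A)$, so that $U$ has a vertex $v_0$ with stabilizer $A$. Let $(x_t)_{t \geq 0}$ be the unnormalized Skora path from $x_0 = U$ guided by $f$, with equivariant morphisms $g_t \colon U \to x_t$. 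By equivariance, $A$ fixes $g_t(v_0) \in x_t$, so $A$ acts elliptically on every $x_t$, and the graph of groups $x_t / F_n$ has a vertex whose stabilizer $H_t$ contains a conjugate of $A$.

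Collapsing $x_t / F_n$ to a one-edge subsplitting exhibiting $H_t$ as a vertex group produces a point of ${\cal C\cal S}$ at uniformly bounded ${\cal C\cal S}$-distance from $\Upsilon_{\cal C}(x_t)$; the coarsely Lipschitz map $\Omega \colon {\cal C\cal S} \to {\cal F\cal F}$ of Lemma \ref{map} then sends this splitting to a free factor containing (a conjugate of) $A$, hence to a point at distance at most one in ${\cal F\cal F}$ from $A$. Since only finitely many conjugacy classes of free factors of $F_n$ contain a given conjugate of $A$, the image $\Upsilon_{\cal F}(x_t) = \Omega(\Upsilon_{\cal C}(x_t))$ stays within a uniformly bounded neighborhood of $A$ in ${\cal F\cal F}$, which by the characterization of $\psi_{\cal F}$ through Skora paths (the analog of Remark \ref{hierarchy2} for $\Upsilon_{\cal F}$) gives $\psi_{\cal F}([T]) = \Theta$.

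The principal technical point is ensuring that the tree $U$ supplied by Lemma \ref{relativemorphism} may be taken with $A$ realized as a single vertex stabilizer, rather than in the generic stratum of $\Delta(A)$ where $A$ is split into $\ell$ distinct $\mathbb{Z}$-vertex stabilizers; this depends on the freedom to select the base vertex of the optimal train track map and on the folding construction in the proof of Lemma \ref{relativemorphism}, and crucially uses the elliptic hypothesis for $A$ on $T$. Working instead with a generic $U \in \Delta(A)$, the same conclusion follows from the observation that the $\ell$ vertices of $U$ fixed by the generators of $A$ lie at uniformly bounded pairwise $U$-distance $D$, so their images under $g_t$ are fixed vertices of the corresponding generators in $x_t$ at pairwise distance $\leq D$; one then controls $\Upsilon_{\cal C}(x_t)$ via Lemma \ref{shortexists2} applied to the pure cyclic splitting arising from the edges of $x_t$ of non-trivial stabilizer.
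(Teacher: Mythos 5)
Your overall strategy coincides with the paper's: produce $U\in\Delta(A)$ and a guiding train track map via Lemma \ref{relativemorphism}, observe that ellipticity persists along the Skora path by equivariance, deduce that $\Upsilon_{\cal F}(x_t)$ stays in a bounded neighborhood of $A$, and conclude with the well-definedness of $\psi_{\cal F}$ (Proposition \ref{coarseindependence}). Two of your auxiliary claims, however, are not justified and happily are not needed. First, you cannot in general arrange the optimal train track map of Lemma \ref{morphismsex} to send the base vertex to the fixed point of $A$, so you cannot force the first alternative in the definition of $\Delta(A)$ in which $A$ itself is a vertex stabilizer of $U$; the paper never needs this, since for any $U\in\Delta(A)$ a \emph{fixed} rank-one free factor $\langle e_n\rangle<A$ fixes a point of $U$ and hence of every $x_t$, and this already makes $\langle e_n\rangle$ short for all $x_t$, which via Lemma \ref{equivariantfs2} pins $\Upsilon_{\cal F}(x_t)$ near the (mutually adjacent in ${\cal F\cal F}_1$) corank-one factors containing $\langle e_n\rangle$. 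Second, your assertion that only finitely many conjugacy classes of free factors contain a given conjugate of $A$ is false, but it is also superfluous: any free factor containing $A$ is by definition at distance at most one from $A$ in ${\cal F\cal F}$, which is the bound you already stated. Finally, the closing appeal of your fallback to Lemma \ref{shortexists2} does not complete the argument (short bases relative to a pure cyclic splitting control neighborhoods, not the whole path); replace it by the short-free-factor mechanism of Lemma \ref{equivariantfs2} as the paper does.
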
 
\begin{proof} Let $\Delta(A)$ be a standard simplex relative to $A$.
By Lemma \ref{relativemorphism} there is a tree $S\in \Delta(A)$ and
a train track map $f:S\to T$ where $T$ is a representative of $[T]$.

Any point on a Skora path $(x_t)$ 
guided by $f$ 
contains a point stabilized by a fixed free factor contained in 
$A$. By Lemma \ref{equivariantfs2}, 
this implies that for every $t>0$ the
free factor $\Upsilon_{\cal F}(x_t)$ is contained in a uniformly 
bounded neighborhood of $A$.
 In particular, the diameter of 
 $\Upsilon_{\cal F}(x_t)$ is finite.
The claim of the lemma now follows from 
Proposition \ref{coarseindependence}. 
\end{proof}

\section{Trees which split as graphs of actions}

In this section 
we investigate the structure of
trees in $\partial {\rm CV}(F_n)$ with dense orbits which resemble
trees $T$ with $T_d\not=\emptyset$. 
The description of such trees is as follows 
\cite{G08,L94}.

\begin{definition}\label{graphofaction}
A \emph{graph of actions} is a minimal $F_n$-tree which consists of 
\begin{enumerate}
\item a simplicial tree $S$, called the \emph{skeleton},
equipped with an action of $F_n$
\item for each vertex $v$ of $S$ an $\mathbb{R}$-tree
$Y_v$, called a \emph{vertex tree}, and
\item for each oriented edge $e$ of $S$ with terminal vertex
$v$ a point $p_e\in Y_v$, called an \emph{attaching point}.
\end{enumerate}
\end{definition}

It is required that the projection $Y_v\to p_e$ is equivariant, 
that for $g\in F_n$ one has $gp_e=p_{ge}$ and that
each vertex tree is a minimal very small tree for its stabilizer.

Associated to a graph of actions ${\cal G}$ is a canonical action
of $F_n$ on an $\mathbb{R}$-tree $T_{\cal G}$ which is
called the \emph{dual} of the graph of actions
 \cite{L94}. Define a pseudo-metric $d$ on 
$\coprod_{v\in V(S)}Y_v$ as follows. If $x\in Y_{v_0},y\in Y_{v_k}$
let $e_1\dots e_k$ be the reduced
edge-path from $v_0$ to $v_k$ in $S$ and define
\[d(x,y)=d_{Y_{v_1}}(x,p_{\overline e_1})+\dots
+d_{Y_{v_k}}(p_{e_k},y).\]
Making this pseudo-metric Hausdorff gives an 
$\mathbb{R}$-tree $T_{\cal G}$. Informally, the tree
$T_{\cal G}$ is obtained by 
first inserting the vertex trees into the skeleton $S$ and then 
collapsing each edge of the
skeleton to a point.

We say that an $F_n$-tree 
$T$ \emph{splits as a graph of actions}
if either $T_d\not=\emptyset$ or if there
is a graph of actions ${\cal G}$ with dual tree $T_{\cal G}$, and 
there is an equivariant isometry $T\to T_{\cal G}$.
We also say that the projectivization $[T]$ of an $F_n$-tree $T$ 
splits as a graph of actions  if $T$ splits as a graph of actions.

A transverse family for an $F_n$-tree
$T$ with dense orbits is an $F_n$-invariant 
family $\{Y_v\}$ of non-degenerate subtrees
$Y_v\subset T$ with the property
that if $Y_v\not=Y_{v^\prime}$ then 
$Y_v\cap Y_{v^\prime}$ contains at most one point.
The transverse family is a \emph{transverse covering}
if any finite segment $I\subset T$ is contained in a finite
union $Y_{v_1}\cup \dots \cup Y_{v_r}$ 
of components from the family. The vertex trees 
of a graph of actions with dual tree 
$T_{\cal G}$ define a transverse covering of $T_{\cal G}$.
More precisely,  
by Lemma 4.7 of \cite{G04}, a minimal very small $F_n$-tree
$T$ admits a transverse covering
if and only if $T$ splits as a graph of actions.

A dense tree $T$ splits as a \emph{large graph of actions} if
it is equivariantly isometric to the dual tree of 
a graph of actions ${\cal G}$ with 
the following
additional properties.
\begin{enumerate}
\item There is a single $F_n$-orbit of vertex trees.
\item A stabilizer
of a vertex tree is not contained in a proper free factor of $F_n$.
\item 
A vertex tree is indecomposable for its stabilizer.
\item The skeleton of ${\cal G}$ 
does not have an edge with trivial stabilizer.
\end{enumerate}
Note that there is some redundancy in the above definition.

The tree $T$ splits as a \emph{very large graph of actions}
if it splits as a large graph of actions and if moreover
the skeleton does not
have an edge with infinite cyclic stabilizer.

The following  result is Corollary 11.2 of \cite{R10}.

\begin{proposition}\label{projection}
Let $T\in \overline{cv(F_n)}$ have dense orbits, and
assume that $T$ is neither indecomposable nor splits
as a graph of actions. Then there is an alignment 
preserving map $f:T\to T^\prime$ such that
either $T^\prime$ is indecomposable or $T^\prime$
splits as a graph of actions.
\end{proposition}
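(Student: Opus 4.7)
The plan is to construct a canonical transverse family on $T$ from the failure of indecomposability, collapse the members of this family to obtain $T'$, and then invoke Guirardel's characterization of graphs of actions to identify the resulting quotient. Throughout I would use the notion of a \emph{mixing arc relation}: non-degenerate arcs $I,J\subset T$ are mixing, written $I\approx J$, if both $I\subset g_1K\cup\dots\cup g_rK$ and $J\subset h_1K'\cup\dots\cup h_sK'$ for $K,K'\in\{I,J\}$ with non-degenerate consecutive overlaps. This is an equivalence relation on non-degenerate arcs of $T$, and each class generates a subtree $Y\subset T$ whose stabilizer acts indecomposably on $Y$. Call these the \emph{mixing subtrees} and collect them into a family $\mathcal{F}=\{gY : g\in F_n,\ Y\text{ maximal mixing}\}$.

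The first substantive step is to verify that $\mathcal{F}$ is a transverse family, i.e.\ that two distinct $Y,Y'\in\mathcal{F}$ intersect in at most one point. If their intersection contained a non-degenerate arc $\alpha$, then indecomposability inside $Y$ would allow translates of $\alpha$ by $\mathrm{Stab}(Y)$ to cover an arbitrary arc of $Y$, and by the mixing property inside $Y'$ the same would hold for $Y'$; maximality then forces $Y=Y'$. Here is where one uses that $T$ is very small: it rules out the pathological identifications between stabilizer actions of distinct mixing subtrees that could otherwise occur. I expect this step to be the main obstacle, as it is the core of Reynolds' argument in \cite{R10} and requires a careful analysis of how arcs in one mixing subtree can meet arcs in another.

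Once $\mathcal{F}$ is known to be a transverse family, I would distinguish two cases. If $\mathcal{F}$ is a transverse covering of $T$ in the sense of Guirardel, then by Lemma~4.7 of \cite{G04} the tree $T$ splits as a graph of actions, and the proposition holds with $T'=T$ and $f$ the identity (the hypothesis that $T$ does not split as a graph of actions in fact rules this out, so this case would contribute the trivial solution only vacuously). Otherwise, form $T'$ by quotienting $T$ by the smallest equivalence relation identifying each $Y\in\mathcal{F}$ to a single point, and let $f\colon T\to T'$ be the quotient map. By construction, the preimage in $T$ of any point of $T'$ is a convex subtree, so $f$ is alignment preserving; it is equivariant, and $T'$ inherits a minimal $F_n$-action with dense orbits from $T$.

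It remains to show that $T'$ is indecomposable or splits as a graph of actions. I would argue that the maximal mixing subtrees of $T'$ are all degenerate: any non-degenerate mixing subtree $Z\subset T'$ would lift via $f$ to a non-degenerate mixing substructure in $T$ not already contained in a single member of $\mathcal{F}$, contradicting maximality of the members of $\mathcal{F}$. Hence either $T'$ itself is mixing as a whole, in which case the density of orbits combined with the definition of indecomposability yields that $T'$ is indecomposable, or the mixing decomposition of $T'$ is trivial in a way that forces a non-trivial transverse covering by fixed-point trees of subgroups; another application of Lemma~4.7 of \cite{G04} then realizes $T'$ as a graph of actions, completing the proof.
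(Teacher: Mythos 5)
First, note that the paper does not prove this proposition at all: it is quoted verbatim as Corollary~11.2 of \cite{R10}, so what you are really attempting is a reconstruction of Reynolds' argument. Your outline (extract the maximal indecomposable subtrees, show they form a transverse family, collapse them, apply Lemma~4.7 of \cite{G04}) is a reasonable caricature of the strategy, but as written it has two genuine gaps. The first you acknowledge yourself: the claim that the ``mixing'' relation is an equivalence relation whose classes span subtrees on which their stabilizers act indecomposably, and that the resulting family is transverse, is exactly the hard content of Guirardel's and Reynolds' work, and you defer it rather than prove it. Transitivity of your relation is not at all obvious (covering $J$ by overlapping translates of $I$ and covering $L$ by overlapping translates of $J$ does not formally produce an overlapping covering of $L$ by translates of $I$ with non-degenerate consecutive intersections), and the existence of \emph{maximal} elements requires a chain argument that you do not supply.

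The second gap is the fatal one. Your construction produces nothing when $T$ contains no non-degenerate indecomposable subtree: then $\mathcal{F}$ consists of points, $f$ is the identity, and $T'=T$ is by hypothesis neither indecomposable nor a graph of actions. Your last paragraph tries to absorb this case with the assertion that triviality of the mixing decomposition ``forces a non-trivial transverse covering by fixed-point trees of subgroups,'' but no argument is given, and none is available at this level of generality: a tree of Levitt type with dense orbits and trivial point stabilizers has no non-degenerate fixed-point trees whatsoever, so the asserted covering simply need not exist. This is precisely where Reynolds' proof does real work (via Guirardel's theory of length measures on trees with dense orbits, Proposition~5.5 and Corollary~5.4 of \cite{G00}, and his analysis of mixing trees), and that machinery is absent from your sketch. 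There is also a logical slip at the end: if all maximal mixing subtrees of $T'$ are degenerate, then $T'$ cannot itself ``be mixing as a whole,'' since $T'$ would then be a non-degenerate mixing subtree of itself; so the dichotomy you offer in the closing sentences collapses to the unsupported branch.
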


We use Proposition \ref{projection} to show

\begin{proposition}\label{verylarge}
\begin{enumerate}
\item Let $[T]\in \partial {\rm CV}(F_n)$ be such that
$\psi([T])\in \partial {\cal F\cal S}$. Then $T$ admits
an alignment preserving map onto a tree which either
is indecomposable or splits as a large graph of actions.
\item 
Let $[T]\in \partial {\rm CV}(F_n)$ be such that
$\psi_{\cal C}([T])\in \partial {\cal C\cal S}$. Then $T$ admits
an alignment preserving map onto a tree which either
is indecomposable or splits as a very large graph of actions.
\end{enumerate}
\end{proposition}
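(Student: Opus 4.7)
The plan is to combine Proposition~\ref{projection} with successive alignment preserving modifications of a graph of actions structure, using Corollary~\ref{projectionalign} and the collapsing machinery of Section~\ref{alignment} to preserve the hypothesis $\psi([T])\in\partial\mathcal{FS}$ at every step. I treat part (1); part (2) is parallel, working with $\psi_{\cal C}$ and $\mathcal{CS}$ and invoking Proposition~\ref{graphofgroups}(1) to additionally rule out infinite cyclic edge stabilizers in the skeleton. First I would reduce to the case that $T$ is either indecomposable or splits as a graph of actions: if $T_d\neq\emptyset$ this is automatic, and if $T$ has dense orbits but neither property holds, Proposition~\ref{projection} supplies an alignment preserving map $T\to T'$ with $T'$ of one of these two forms, and Corollary~\ref{projectionalign} transfers the hypothesis to $T'$. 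If the resulting tree is indecomposable the identity serves; otherwise it splits as a graph of actions ${\cal G}$ with skeleton $S$ and vertex trees $\{Y_v\}$.

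I then modify ${\cal G}$ in three steps, each realized as an alignment preserving quotient of the preceding tree. (a) If some non-degenerate vertex tree $Y_v$ fails to be indecomposable for its stabilizer $H_v$, apply a relative form of Proposition~\ref{projection} to $Y_v$ as an $H_v$-tree, substitute the output back into ${\cal G}$, and iterate; this terminates by a rank argument and yields condition (3) in the definition of large graph of actions. (b) If several $F_n$-orbits of non-degenerate vertex trees remain, collapse all but one orbit of vertex trees to achieve condition (1). (c) Finally, suppose for contradiction that the stabilizer $H$ of the surviving orbit of non-degenerate vertex trees is contained in a proper free factor $A\subsetneq F_n$. Using Bass-Serre theory, pull back the free splitting $F_n=A\ast B$ (or its HNN analog) through ${\cal G}$ to produce a refined graph of actions structure on $T$ whose skeleton has an orbit of edges with trivial stabilizer. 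Collapsing all other structure produces a one-Lipschitz alignment preserving map from $T$ onto a simplicial tree $T''\in\partial cv(F_n)$ that realizes a non-trivial free splitting. Lemma~\ref{dead} then bounds the $\Upsilon$-diameter along any Skora path converging to $T''$, and Proposition~\ref{onelipproj} transports this bound to folding paths converging to $T$, yielding $\psi([T])=\Theta$ in contradiction to the hypothesis. Thus condition (2) must hold and ${\cal G}$ is a large graph of actions.

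The main obstacle lies in step (c): the algebraic containment $H\subset A$ must be realized as a genuine one-Lipschitz alignment preserving quotient of $T$ whose target is a simplicial tree in $\partial cv(F_n)$ carrying a free splitting, so that Lemma~\ref{dead} and Proposition~\ref{onelipproj} both apply. Executing this requires delicate Bass-Serre surgery on ${\cal G}$ combined with care about the fact that edges of the skeleton do not contribute to the metric on $T_{\cal G}$, so a naive collapse onto the skeleton fails to be one-Lipschitz. For part (2), the same scheme applies verbatim; to exclude an infinite cyclic edge stabilizer in the final skeleton one instead collapses onto a simplicial tree whose graph of groups has a cyclic edge group, and invokes Proposition~\ref{graphofgroups}(1) in place of Lemma~\ref{dead} to conclude $\psi_{\cal C}([T])=\Theta$.
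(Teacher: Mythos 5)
Your overall architecture matches the paper's: reduce via Proposition \ref{projection} and Corollary \ref{projectionalign} to a tree that splits as a graph of actions, collapse to a single orbit of vertex trees, iterate Proposition \ref{projection} on the vertex trees (with termination by the bound of \cite{GL95} on ranks of point stabilizers), and derive a contradiction from the assumption that the stabilizer $H$ of the surviving vertex tree lies in a proper free factor. But the crux, your step (c), is set up in the wrong direction and cannot be executed. You propose to produce a one-Lipschitz alignment preserving map \emph{from} $T$ \emph{onto} a non-trivial simplicial tree $T''$ and then apply Lemma \ref{dead}. At this stage $T$ has dense orbits, and a minimal tree with dense orbits admits no non-constant equivariant alignment preserving map onto a simplicial tree: such a map corresponds to an invariant length measure on $T$ with atoms, and an atom is impossible when orbits are dense (its orbit would meet every non-degenerate segment infinitely often, violating local finiteness). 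So the "delicate Bass--Serre surgery" you flag as the main obstacle is not merely delicate — the target object does not exist, and Lemma \ref{dead} can never be brought to bear this way.

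The paper resolves this by running the map in the opposite direction. When a skeleton edge has trivial (resp.\ at most cyclic) stabilizer, it re-inserts that edge with positive length to build a very small tree $V$ with $V_d\neq\emptyset$ and an alignment preserving map $V\to T$; when $H$ is contained in a proper free factor $A$, it refines the splitting $H*_CB$ using $F_n=A*B_1$ and the fact that $B$ is elliptic in $Z$ to again exhibit the dense tree as the image of such a $V$ with a trivially stabilized edge. Then Proposition \ref{graphofgroups} bounds $\Upsilon$ (resp.\ $\Upsilon_{\cal C}$) along Skora paths to $V$, and Proposition \ref{onelipproj} (whose shadowing goes from paths converging to the source $V$ to liberal folding paths converging to the target $T$) produces a bounded liberal folding path converging to $[T]$, whence $\psi([T])=\Theta$ by Proposition \ref{coarseindependence}. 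You should also handle the case of a trivially stabilized skeleton edge as a separate preliminary contradiction, before addressing the containment $H\subset A$; your steps (a)--(c) as written never rule it out except inside the step (c) reductio.
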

\begin{proof} Let $[T]\in \partial{\rm CV}(F_n)$ be such that
$\psi([T])\in \partial {\cal F\cal S}$
(or $\psi_{\cal C}([T])\in \partial {\cal C\cal S}$). 
By Corollary \ref{dead}, $[T]$ is not simplicial. 
A tree which is neither dense nor simplicial
admits a one-Lipschitz alignment preserving map onto a dense tree. 
By Proposition \ref{onelipproj}, we may therefore assume 
without loss of generality that $T$ is dense.
By Proposition \ref{projection} and 
Corollary \ref{projectionalign}, 
it now suffices to show
the following. If $T$ 
splits as a graph of actions,
then $T$ admits an alignment preserving
map onto a tree which 
splits as a large (or very large) 
graph of actions.

Thus assume that $T$ is the dual tree of a graph of actions, with
skeleton $S$. Let $U$ be the tree obtained from $S$ by insertion of the
vertex trees. The tree $U$ may not be very small, but it admits
an alignment preserving map onto $T$ obtained by equivariantly 
collapsing each edge of $U$ to a point.

If there is an edge in $S$ with trivial 
(or either trivial or infinite cyclic) stabilizer
then there is an edge $e$ in $U$ with trivial 
(or infinite cyclic) stabilizer.
The tree $V$ obtained from $U$ by equivariantly collapsing
those edges of $U$ to points
which are not contained in the orbit of $e$
is very small, and $V_d\not=\emptyset$. 
There is an edge in $V$ with trivial (or either trivial or
infinite cyclic) stabilizer.
Moreover, $V$ admits
an alignment preserving map onto $T$.
We then have $\psi([T])=\Theta$ (or 
$\psi_{\cal C}([T])=\Theta$) by Corollary \ref{projectionalign}
and  Proposition \ref{graphofgroups}.

If there is no edge $e$ in the skeleton 
$S$ with trivial (or either trivial or infinite cyclic) stabilizer
then choose an arbitrary edge $e$ in $U$ and 
equivariantly 
collapse all edges of $U$ to a point which are 
not contained in the $F_n$-orbit of $e$.
This defines the structure of a graph of actions 
for $T$ whose skeleton projects
to a one-edge graph of groups decomposition with edge group
which is not trivial (or neither trivial nor infinite cyclic). 

If there is more than one $F_n$-orbit of vertex trees for 
this graph of actions
then we can collapse the vertex trees in all 
but one of these orbits to points. 
We obtain a tree $W$ which is dual to a graph of actions 
with a single orbit of vertex trees. 
The edge stabilizer of each edge of the skeleton
is not trivial
(or neither trivial nor infinite cyclic). 

Let $T_v$ be 
one of the vertex trees of the $F_n$-tree $W$. 
Its stabilizer is a finitely
generated subgroup $H$ of $F_n$ (and hence a finitely generated free group)
which acts on $T_v$ with dense orbits. 
By Proposition \ref{projection}, the $H$-tree $T_v$ either 
admits an alignment preserving map onto an indecomposable
tree or onto a tree which splits as a graph of actions.

Assume first that $T_v$ admits
an alignment preserving map onto an indecomposable
$H$-tree. Then $T$ admits an alignment preserving map onto 
a tree $Z$ which splits as a graph of actions, with a single orbit
of indecomposable vertex trees. Moreover, $Z$ is dual to a graph of 
actions with a single orbit of edges whose stabilizers are 
not trivial (or neither trivial 
not infinitely cyclic). In particular, $Z$ splits as a 
large (or as a 
very large) graph of 
actions provided that the group $H$ is not contained in a proper
free factor of $F_n$.

To see that the latter property indeed holds true
consider the one-edge
splitting of $F_n$ defined by 
the $F_n$-quotient of the skeleton of $Z$. 
Assume first that this splitting is a one-edge
two-vertex splitting. Then 
this splitting is  
of the form $H*_CB$ where $C$ is a 
nontrivial free subgroup of $H$
(or a free subgroup of $H$ of rank at least two). 

Let $A<F_n$ be the smallest free factor 
containing $H$. Assume to the contrary
that  $A$ is a proper subgroup of $F_n$. 
Then 
there is a free splitting $F_n=A*B_1$ where 
$B_1<B$, and there is a refinement 
$(H*_C*B_0)*B_1$ of 
$H*_CB$. Since 
there is a unique orbit of indecomposable vertex trees for $Z$
whose stabilizers are conjugate to $H$, 
the group $B$ fixes a vertex in $Z$, and
the tree $Z$ 
can be represented as a graph of actions with an edge
with trivial edge group.
Proposition \ref{graphofgroups} now shows that
$\psi([T])=\Theta$ which contradicts the assumption on $T$.
Thus we have $A=F_n$ and 
$Z$ splits as a large (or as a very
large) graph of actions. 

If the splitting of $F_n$ defined by an edge in
the skeleton of  
$Z$ is a one-edge one-vertex splitting and if the 
vertex group $H$ is contained in a proper free factor $A$ of $F_n$
then the vertex group is a free factor of rank $n-1$
(since $F_n$ is generated by $A$ and a single primitive element)
and the splitting is  a one-loop 
free splitting of $F_n$ which violates once more the
assumption that $\psi([T])\in \partial {\cal F\cal S}$ (or 
$\psi_{\cal C}([T])\in \partial {\cal C\cal S}$).
This completes the proof of the
proposition in the case that $T_v$ admits an
alignment preserving map onto an indecomposable tree.

We are left with the case that a vertex tree $T_v$ 
of the graph of actions with dual tree $W$ admits an
alignment preserving map onto a tree $Y_v$ which splits 
as a graph of actions for the stabilizer of $T_v$.
Then the tree $W$ admits
an alignment preserving map onto the tree $Y$ 
which is obtained by collapsing 
each tree in the orbit of $T_v$ to a tree 
in the orbit of $Y_v$.

Iterating the above argument, in the case that
$Y_v$ has more than one orbit of vertex trees then
all but one of these orbits can be collapsed to points.
Extending the collapsing map equivariantly 
yields an alignment preserving
map of $Y$ onto a tree $Y^\prime$ 
which splits as a graph of actions,
with a single orbit of vertex trees. The sums of the ranks
of conjugacy classes of point stabilizers in $Y^\prime$
is strictly bigger than those of $Y$.

Since the sum of the ranks
of conjugacy classes of point stabilizers in 
a very small $F_n$-tree is uniformly bounded
\cite{GL95}, we find 
in finitely many such steps an $F_n$-tree 
$Q$ which splits as a graph of actions with 
a single $F_n$-orbit of vertex trees, and each
such vertex tree is indecomposable for its stabilizer.
Moreover, there is an alignment preserving map $T\to Q$.
The skeleton for the structure of a graph of actions for $Q$
is an $F_n$-tree with finite quotient. Stabilizers of 
edges are not trivial  
(or neither trivial nor cyclic). In other words, 
$Q$ splits as a large (or as a very large) graph of actions.
This completes then proof of the proposition.
\end{proof}

The following example was shown to me by Camille Horbez.

\bigskip

\noindent{\bf Example:} Let $T$ be an indecomposable $F_n$-tree with 
a point stabilizer which contains a free factor $H$ of $F_n$ 
of rank $k\geq 3$.
Let $B$ be a free group of rank $\ell<k$ and choose a finite
index subgroup $H^\prime$ 
of $B$ which is a free group of rank $k$. Identifying $H$ and $H^\prime$
defines a one-edge two vertex graph of groups decomposition
$F_n*_HB$ for a free group $F_m$ where $m=n+\ell-k<n$. 
Let the group $B$ act trivially on a point and define a graph of 
actions for $F_m$ with a single orbit of edges in its skeleton
containing $T$ as a vertex tree. 
This graph of actions is very large.

We can also iterate this construction 
by beginning with an indecomposable
$F_n$-tree $T$ with more than one $F_n$-orbit of points 
with point stabilizer containing a free factor of rank
$k\geq 3$ and constructing from $T$ a 
very large graph of actions defined by a tree whose
skeleton has more than one connected component.

\bigskip

The final goal of this section is to show

\begin{proposition}\label{graphofaction2} 
If $[T]\in \partial {\rm CV}(F_n)$ splits as a graph of actions then
$\psi_{\cal F}([T])=\Theta$.
\end{proposition}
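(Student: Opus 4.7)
The plan is to distinguish two cases according to whether $T$ has a non-empty discrete part, and in the remaining dense case to extract from the graph of actions a partial simplicial quotient to which earlier results can be applied. When $T_d\neq\emptyset$, the conclusion $\psi_{\cal F}([T])=\Theta$ is immediate from Corollary \ref{forallok}, so the substance of the argument concerns the case where $T$ is dense and admits a non-trivial graph of actions structure ${\cal G}$ with skeleton $S$ (which then has at least one $F_n$-orbit of edges) and vertex trees $\{Y_v\}$ whose dual is equivariantly isometric to $T$.

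For this case I would imitate the construction from the proof of Proposition \ref{verylarge}. I form the auxiliary tree $U$ by equivariantly inserting each vertex tree $Y_v$ at the vertex $v$ of $S$, pick an arbitrary edge $e$ of $S$, and let $V$ be the tree obtained from $U$ by equivariantly collapsing every edge of $U$ not in the $F_n$-orbit of $e$. Then $V$ comes equipped with a natural equivariant alignment preserving map $V\to T$ that further collapses the orbit of $e$, and after rescaling this map is one-Lipschitz. Since $V$ is simplicial on the orbit of $e$ we have $V_d\neq\emptyset$. The key point I need is that, because $T$ is very small, the stabilizer of any skeleton edge is contained in the stabilizer of a non-degenerate arc in $T$ passing through the corresponding attaching point, and is therefore at most cyclic. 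With this in hand the very small condition for $V$ is a routine check: segment stabilizers are cyclic, being either controlled by the very smallness of some $Y_v$ for its stabilizer or contained in the cyclic edge stabilizer of an edge in the orbit of $e$, while tripods are trivially stabilized. Hence $V\in\partial cv(F_n)$.

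Corollary \ref{forallok} applied to $V$ then yields $\psi_{\cal F}([V])=\Theta$, so every Skora path converging to $V$ has bounded $\Upsilon_{\cal F}$-image. Proposition \ref{onelipproj}, applied to the one-Lipschitz alignment preserving map $V\to T$, produces for any Skora path $(x_t)$ converging to $V$ a liberal folding path $(y_t)$ converging to $T$ with $d(\Upsilon(x_t),\Upsilon(y_t))\leq\chi$ for all $t$. Because $\Upsilon_{\cal F}=\Omega\circ\Upsilon_{\cal C}$ for the coarsely Lipschitz map $\Omega$ of Lemma \ref{map}, and because $\Upsilon(z)$ and $\Upsilon_{\cal C}(z)$ differ by at most one in ${\cal C\cal S}$ whenever $z\in\overline{cv(F_n)}^+$, the same fellow traveling persists in ${\cal F\cal F}$. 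In particular $\Upsilon_{\cal F}(y_t)$ is bounded, and Proposition \ref{coarseindependence} (together with Corollary \ref{arbitrary} which guarantees that $\Upsilon_{\cal F}$-images of folding paths are reparametrized quasi-geodesics) gives $\psi_{\cal F}([T])=\Theta$.

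The main obstacle I anticipate lies in the middle paragraph: justifying cleanly that the partially collapsed tree $V$ is very small and minimal. This is the point at which the very smallness of $T$ is genuinely used, via the cyclic constraint it forces on the stabilizers of skeleton edges; once that is in place, the transport of the fellow-traveling assertion of Proposition \ref{onelipproj} from $\Upsilon$ to $\Upsilon_{\cal F}$ is a formality from Lemma \ref{map} and the elementary inequality between $\Upsilon$ and $\Upsilon_{\cal C}$.
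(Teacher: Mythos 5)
Your argument has a genuine gap at exactly the point you flag as the ``main obstacle'', and it cannot be repaired along the lines you suggest. You claim that the stabilizer of a skeleton edge is contained in the stabilizer of a \emph{non-degenerate arc} of $T$ and is therefore cyclic. This is false: in the dual tree $T_{\cal G}$ the edges of the skeleton carry zero length (see the definition of the pseudo-metric $d$), so a skeleton edge is collapsed to the attaching point $p_e$, and its stabilizer is only a \emph{point} stabilizer of $T$. Point stabilizers of very small trees can be free groups of large rank; indeed, for a \emph{large} graph of actions the definition forces every skeleton edge stabilizer to be non-trivial, and for a \emph{very large} graph of actions it is moreover not infinite cyclic -- the example following Proposition \ref{verylarge} has skeleton edge group a free group of rank $k\geq 3$. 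In that case your tree $V$, obtained by keeping one orbit of skeleton edges, has an edge with non-cyclic stabilizer, hence is not very small, does not lie in $\partial cv(F_n)$, and neither Corollary \ref{forallok} nor Proposition \ref{graphofgroups} nor Proposition \ref{onelipproj} applies to it. Your construction does work (and is exactly what the proof of Proposition \ref{verylarge} does) when some skeleton edge has trivial or cyclic stabilizer, but the case it cannot reach is precisely the remaining one.

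The paper closes that remaining case by a completely different mechanism: after the reductions of Corollary \ref{hierarchy}, Corollary \ref{projectionalign} and Proposition \ref{verylarge} one may assume $[T]$ splits as a \emph{very large} graph of actions; by a result of Reynolds (Proposition 10.3 of \cite{R12}) such a tree has a point stabilizer containing a proper free factor $A$, and Corollary \ref{pointstab} -- which rests on the relative standard simplex and the relative train track map of Lemma \ref{relativemorphism}, keeping $\Upsilon_{\cal F}$ of the whole Skora path near $A$ -- then gives $\psi_{\cal F}([T])=\Theta$. To complete your proof you would need either this elliptic-free-factor input or some other argument handling non-cyclic skeleton edge stabilizers; the collapsing construction alone does not suffice.
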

\begin{proof} By Corollary \ref{hierarchy}, Corollary \ref{projectionalign}
and Proposition \ref{verylarge}, it suffices to show
that $\psi_{\cal F}([T])=\Theta$ for every 
projective tree $[T]$ which splits as a very large
graph of actions.

By Corollary \ref{pointstab}, this indeed holds true if 
a tree which splits as a very large graph of actions
admits a point stabilizer which contains a free factor
of $F_n$. That this is indeed the case is due to Reynolds
(Proposition 10.3 of \cite{R12}).
\end{proof}

Proposition \ref{graphofgroups}, Corollary \ref{graphofaction2},
Proposition \ref{projection}
and Corollary \ref{projectionalign} immediately imply

\begin{corollary}\label{indecomposable3}
Let $[T]\in \partial{\rm CV}(F_n)$ be such that
$\psi_{\cal F}([T])\in \partial {\cal F\cal F}$.
Then $T$ admits an alignment preserving map onto
an indecomposable tree.
\end{corollary}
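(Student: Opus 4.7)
The plan is to combine the trichotomy for dense trees supplied by Proposition \ref{projection} with the obstructions coming from Proposition \ref{graphofgroups} and Corollary \ref{graphofaction2}, using Corollary \ref{projectionalign} as the bridge that transfers the value of $\psi_{\cal F}$ across alignment preserving maps. The strategy is essentially a three–step case analysis: first reduce to dense trees, then invoke the dichotomy of Proposition \ref{projection}, and finally rule out the ``graph of actions'' alternative.

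The first step is the reduction to a dense tree. If $T_d \neq \emptyset$, then by the definition adopted just before Proposition \ref{graphofaction2}, the tree $T$ splits as a graph of actions, so Corollary \ref{graphofaction2} forces $\psi_{\cal F}([T]) = \Theta$. This contradicts the hypothesis $\psi_{\cal F}([T]) \in \partial {\cal F\cal F}$, so $T$ must have dense orbits. (Alternatively, one could appeal directly to Corollary \ref{forallok}, which is the genuine consequence of Proposition \ref{graphofgroups} that handles the discrete case.) Next I would apply Proposition \ref{projection}: either $T$ is already indecomposable, in which case the identity map $T \to T$ is the desired alignment preserving map; or $T$ itself splits as a graph of actions; or there is an alignment preserving map $f : T \to T^\prime$ whose target $T^\prime$ is indecomposable or splits as a graph of actions.

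The final step is to exclude the graph-of-actions alternative in each variant. If $T$ splits as a graph of actions, Corollary \ref{graphofaction2} immediately gives $\psi_{\cal F}([T]) = \Theta$, contradicting our hypothesis. If instead the target $T^\prime$ of the map $f$ splits as a graph of actions, then Corollary \ref{graphofaction2} yields $\psi_{\cal F}([T^\prime]) = \Theta$, while Corollary \ref{projectionalign} applied to the alignment preserving map $f$ gives the equality $\psi_{\cal F}([T]) = \psi_{\cal F}([T^\prime])$, producing the same contradiction. Thus the only surviving possibility is that $T^\prime$ is indecomposable, and then $f$ is precisely the alignment preserving map required by the conclusion.

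The main point to verify, rather than a true obstacle, is that Corollary \ref{projectionalign} legitimately applies in the last step. That corollary is stated for trees with dense orbits, so one needs both $T$ and $T^\prime$ to be dense; density of $T$ was established in the first step, and density of $T^\prime$ in the indecomposable case is automatic from the general fact (stated in the introduction) that $F_n$-orbits on an indecomposable tree are dense. In the graph-of-actions case for $T^\prime$ one can either check that the alignment preserving image of a dense tree remains dense, or circumvent the issue entirely by using only the one-Lipschitz alignment preserving pieces supplied by the proof of Proposition \ref{onelipproj}, which does not require density of either tree. With this minor bookkeeping, the four cited results combine immediately to give the conclusion.
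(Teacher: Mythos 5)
Your proposal is correct and is essentially the paper's own argument: the paper derives this corollary by citing exactly Proposition \ref{graphofgroups}, Corollary \ref{graphofaction2}, Proposition \ref{projection} and Corollary \ref{projectionalign}, and your case analysis is just the spelled-out version of that chain. The density bookkeeping you flag at the end is handled the same way throughout the paper (alignment preserving collapses of dense trees remain dense, and the non-dense case is absorbed by Proposition \ref{graphofgroups}/Corollary \ref{forallok}), so no gap remains.
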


\section{The boundary of the free factor graph}

In this section we complete the proof of Theorem \ref{positive}
from the introduction.

An indecomposable tree $T$ is called \emph{arational} if 
no point stabilizer for $T$ contains a free factor. 
We begin with collecting some additional  
information on arational trees. Part of what we need is covered
by the main result of \cite{R12} which shows that an arational
tree $T$ either is free or dual to a measured lamination of
an oriented surface with a single boundary component.
We will not use this information.


A closed $F_n$-invariant
subset $C$ of $\partial F_n\times \partial F_n-\Delta$
\emph{intersects a free factor} if there is a proper
free factor $H$ of $F_n$ so that 
$C\cap \partial H\times \partial H-\Delta\not=\emptyset$.

The following lemma is a simple consequence
of Theorem 4.5 of \cite{R11}.

\begin{lemma}\label{indecfree}
Let $T\in \partial cv(F_n)$ be indecomposable.
If the zero lamination $L^2(T)$ of $T$ intersects a proper free
factor of $F_n$ then there is a point stabilizer
for the action of $F_n$ on  $T$ which contains a 
proper free factor.
\end{lemma}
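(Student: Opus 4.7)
The plan is to analyze the minimal $H$-invariant subtree $T_H\subset T$ using Theorem~4.5 of \cite{R11}. Since $H$ is a proper free factor it is finitely generated and of infinite index in $F_n$; as $T$ is indecomposable, the cited theorem yields that $T_H$ is simplicial. If $T_H$ reduces to a point, then $H$ fixes that point in $T$, so the corresponding $F_n$-point stabilizer contains the proper free factor $H$ and the lemma follows immediately.

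Assume then that $T_H$ is non-trivial. Since $T$ is indecomposable, its arc stabilizers are trivial, so the edges of $T_H$ carry trivial $H$-stabilizers. By Bass--Serre theory, $H$ decomposes as a free product $H=K_1*\cdots *K_r*F_s$, where $K_1,\dots,K_r$ are representatives of the conjugacy classes of vertex stabilizers of $T_H$ and $F_s$ is a free group of some rank $s\geq 0$. In particular each $K_i$ is a free factor of $H$, hence also of $F_n$ by transitivity of the free factor relation, and each $K_i$ is contained in an $F_n$-point stabilizer of $T$.

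Next I would translate the hypothesis $(\xi,\eta)\in L^2(T)\cap(\partial H\times\partial H-\Delta)$ into a statement about $T_H$. The key step is to identify, for a basepoint chosen in $T_H$, the restriction to $\partial H$ of the map $Q\colon\partial F_n\to\hat T$ of \cite{CHL07} with the analogous map $Q_H\colon\partial H\to\hat{T_H}$ attached to the $H$-tree $T_H$; both are defined via convergence of orbit points, and $\hat{T_H}$ embeds naturally into $\hat T$. The condition $Q(\xi)=Q(\eta)$ then translates to $Q_H(\xi)=Q_H(\eta)$, so $(\xi,\eta)$ lies in the $H$-zero lamination $L_H^2(T_H)$. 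Because $T_H$ is simplicial with trivial edge stabilizers, translation lengths of hyperbolic elements of $H$ on $T_H$ are bounded below, and therefore $L_H^2(T_H)$ is the closure under diagonal concatenation of the union of the sets $\partial K\times\partial K-\Delta$, where $K$ ranges over the $F_n$-conjugates of the vertex groups $K_i$. Since this zero lamination is non-empty, some $K_i$ must be infinite; this $K_i$ is then a non-trivial proper free factor of $F_n$ contained in a point stabilizer of $T$, which is what the lemma asserts.

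The main obstacle is to extract from Theorem~4.5 of \cite{R11} precisely the structural information needed about $T_H$, namely that it is simplicial with trivial edge stabilizers (the latter coming from triviality of arc stabilizers for an indecomposable $T$), and to verify carefully the identification $Q|_{\partial H}=Q_H$, which is what allows the hypothesis on $L^2(T)$ to be read off of the simplicial $H$-tree $T_H$.
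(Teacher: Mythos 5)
Your argument is correct and follows essentially the same route as the paper: Theorem~4.5 of \cite{R11} makes $T_H$ simplicial, indecomposability of $T$ forces trivial edge stabilizers so that $T_H/H$ is a free splitting of $H$ with vertex groups that are free factors, and a leaf of $L^2(T)$ carried by $H$ must then be carried by a vertex stabilizer. Your write-up merely spells out more explicitly the reduction of $L^2(T)\cap(\partial H\times\partial H-\Delta)$ to the zero lamination of the $H$-tree $T_H$, which the paper asserts without detail.
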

\begin{proof} The intersection of $L^2(T)$ with 
a proper free factor $H<F_n$ is contained in the zero lamination
of the minimal $H$-invariant subtree $T_H$ of $T$.
By Theorem 4.5 of \cite{R11}, since $T$ is indecomposable 
the action on $T$ of
any proper free factor $H$ 
is discrete and hence $T_H$ is simplicial.
Thus the intersection of $L^2(T)$ with $H$ is non-empty if and only if 
there is a leaf of $L^2(T)$ which is  
carried by a point stabilizer for the action of $H$ on
$T_H$. 

Since $T$ is indecomposable, the stabilizer of any non-degenerate
segment of $T$ is trivial. As a consequence, 
the tree $T_H$ is very small simplicial, with trivial 
edge stabilizers. Therefore
$T_H/H$ defines a graph of groups decomposition of $H$ with 
trivial edge groups, i.e. it defines a free splitting of $H$. 
Each vertex group is
a free factor of $H$. Hence if the action of $H$ on $T_H$ is not
free, then there is a point stabilizer of $T_H$
which is a proper free factor of $H$
and hence of $F_n$.
The lemma follows. 
\end{proof}

We use Lemma \ref{indecfree} to show that only arational 
trees can give rise to points in the boundary of 
the free factor graph ${\cal F\cal F}$.

Say that a measured lamination
$\mu$ is \emph{supported} 
in a finitely generated
subgroup $H$ of $F_n$ 
if ${\rm Supp}(\mu)=
F_n({\rm Supp}(\mu)\cap \partial H\times \partial H-\Delta)$.
Our next goal is to understand 
measured laminations whose supports
are contained in the point stabilizer
of an indecomposable tree.
For this we use the following 
consequence of Theorem 49 of \cite{Ma95}.

\begin{lemma}\label{notfactor}
Let $H<F_n$ be a finitely generated subgroup 
of infinite index
which
does not intersect a free factor. Then 
$\partial H\times \partial H-\Delta$ does not
support a measured lamination for $F_n$.
\end{lemma}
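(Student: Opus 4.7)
The plan is to proceed by contradiction and reduce to a direct application of Theorem 49 of \cite{Ma95}. Suppose that $\mu\in{\cal M\cal L}$ is a nonzero measured lamination whose support is contained in $F_n\cdot(\partial H\times \partial H-\Delta)$. The first step is to pass from the global $F_n$-invariant measure $\mu$ to a restricted $H$-invariant object. Since $\partial H\times\partial H-\Delta\subset\partial F_n\times\partial F_n-\Delta$ is $H$-invariant and closed, the restriction
\[
\mu_0=\mu\bigl|_{\partial H\times\partial H-\Delta}
\]
is a well-defined locally finite $H$-invariant Borel measure on $\partial H\times\partial H-\Delta$. Because the support of $\mu$ is the $F_n$-saturate of its intersection with $\partial H\times\partial H-\Delta$, and because a nonzero $F_n$-invariant measure whose support is the $F_n$-orbit of a closed set $C$ must restrict to a nonzero measure on $C$, the measure $\mu_0$ is nonzero. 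It is therefore a nonzero measured lamination for $H$, viewed as its own ambient finitely generated free group.

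The second step is to feed the pair $(H,\mu_0)$ into Theorem 49 of \cite{Ma95}, which governs exactly this situation: a current of $F_n$ whose support can be captured, up to $F_n$-translation, inside the double boundary of a finitely generated subgroup of infinite index. Martin's theorem, in the form required here, asserts that such a configuration is possible only when $H$ meets some proper free factor $K$ of $F_n$ in the strong sense that $\partial H\cap\partial K$ contains a pair of distinct points, i.e.\ when $\partial H\times\partial H-\Delta$ intersects a free factor as in the definition preceding the lemma. This directly contradicts the standing assumption on $H$.

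The main obstacle is making the invocation of Martin's theorem precise: one has to verify that the local finiteness and $F_n$-invariance of $\mu$ translate into the right hypotheses on $\mu_0$, and to match the statement of Theorem 49 with the definition of ``intersects a free factor'' given in this section. Once this dictionary is in place, the proof is immediate. A secondary point, which is automatic here because $H$ has infinite index, is that the conclusion of Martin's theorem genuinely lands on a \emph{proper} free factor, rather than on $F_n$ itself; infinite index prevents the degenerate alternative in which $\mu$ would arise from a current supported on the entire double boundary.
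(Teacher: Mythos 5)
Your proposal has a genuine gap: the entire argument is delegated to ``Theorem 49 of \cite{Ma95}, in the form required here,'' but the form you require is essentially the statement of the lemma itself, and it is not what that theorem says. Theorem 49 (attributed to Bestvina) is a statement about a \emph{single element} $w\in F_n$ that is not contained in any proper free factor: it produces a free basis of $F_n$ with respect to which the Whitehead graph of $w$ is connected and has no cut vertex. It says nothing about currents supported in the double boundary of a finitely generated subgroup of infinite index. By attributing your desired conclusion to it you have begged the question.

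A second, independent symptom that the argument cannot work as written: you never use the defining property of ${\cal M\cal L}$ in this paper, namely that measured laminations form the weak$^*$-closure of the Dirac currents dual to \emph{primitive} conjugacy classes. Without that restriction the statement is false --- the counting current of any nontrivial conjugacy class in $H$ is a locally finite $F_n$-invariant measure supported in the $F_n$-saturation of $\partial H\times\partial H-\Delta$. The paper's proof uses exactly this property, via Proposition 21 of \cite{Ma95}: for a measured lamination $\nu$ (a limit of primitive duals), the Whitehead graph of $\nu$ with respect to \emph{every} basis is disconnected or has a cut vertex. The actual argument then runs as follows. Pick a density point $\ell$ of $\nu$ in $\partial H\times\partial H-\Delta$; since $H$ is finitely generated of infinite index it is quasiconvex, so $\ell$ has arbitrarily long prefixes $w_{n_i}\in H$, and since $H$ does not intersect a free factor these elements lie in no proper free factor. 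Theorem 49 then supplies bases ${\cal B}_i$ making the Whitehead graph of $w_{n_i}$ connected without cut vertex; because $\ell$ is a density point, the Whitehead graph of $\nu$ with respect to ${\cal B}_i$ contains that of $w_{n_i}$, contradicting Proposition 21. Your first step (restricting $\mu$ to $\partial H\times\partial H-\Delta$ to get a nonzero $H$-invariant measure) is harmless but also unused in any correct completion; the restricted measure $\mu_0$ is a geodesic current for $H$ with no reason to be a limit of $F_n$-primitives, so it is not an object to which any Whitehead-type obstruction applies.
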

\begin{proof} 
Assume to the contrary that there is a measured
lamination $\nu$ for $F_n$ supported in 
$\partial H\times \partial H-\Delta$.
Let $\ell\in \partial H\times \partial H-\Delta$
be a density point for $\nu$. Then there is no proper
free factor $A$ of 
$F_n$ so that 
$\ell\in \partial A\times \partial A-\Delta$.

Choose a free basis ${\cal A}$  for $F_n$ and represent
$\ell$ by a biinfinite word 
$(w_i)$ in that basis. Since $H<F_n$ is finitely
generated of infinite
index,  $H$ is quasiconvex and hence we may assume that
there is a sequence
$n_i\to \infty$ such that each of the  
prefixes $(w_{n_i})$ of the word $(w_i)$ 
represents an
element of $H$. 

By Theorem 49 of \cite{Ma95} (which is attributed to 
Bestvina), for each $i$ there is a free basis ${\cal B}_i$ 
for $F_n$ such that
the Whitehead graph of $w_{n_i}$ with respect to ${\cal B}_i$ 
is connected and does not have a cut vertex. Since
$\ell$ is a density point for $\nu$, the Whitehead graph of 
$\nu$ for the basis ${\cal B}_i$ contains the 
Whitehead graph of $w_{n_i}$, in particular it does not
have a cut vertex and is connected. On the other hand,
since $\nu$ is a measured lamination, Proposition 21 of \cite{Ma95} shows
that the Whitehead graph of $\nu$ with respect to 
${\cal B}_i$ has a cut vertex or is disconnected. This is 
a contradiction and shows the lemma.
\end{proof}

In Section \ref{indecom} we defined a measured lamination
$\mu\in {\cal M\cal L}$ to be regular for an
indecomposable tree $T$ if $\langle T,\mu\rangle=0$ and
$\mu(L^2(T)-L_r(T))=0$.
By Lemma \ref{indecfree} and 
Lemma \ref{notfactor}, if $T$ is arational then 
every measured lamination $\mu$ with
$\langle T,\mu\rangle=0$ is regular for $T$.



\bigskip

\noindent{\bf Example:} Let $n=2g\geq 4$ and let $S$ be 
an oriented surface
of genus $g$ with non-empty connected boundary.
The fundamental group of $S$ is the group $F_n$. Every 
measured lamination $\mu$ on $S$ is dual to an $F_n$-tree
$T$. If the support of $\mu$ is minimal and decomposes $S$ into
ideal polygons, then this
tree is arational. The free homotopy class of the boundary
circle defines the unique conjugacy class in $F_n$ whose 
elements have fixed points in $T$.
The point stabilizers of $T$ do not support a measured lamination.

\bigskip

Denote as before by ${\cal I\cal T}\subset \partial 
{\rm CV}(F_n)$ the set of indecomposable 
projective trees.
Let 
\[{\cal F\cal T}\subset {\cal I\cal T}\subset \partial{\rm CV}(F_n)\]
be the set of arational trees. 
Corollary \ref{indecomposable3}, Corollary \ref{pointstab} 
and Lemma \ref{indecfree} show that
$\psi({\cal F\cal T})\supset\partial {\cal F\cal F}$.
Our next task is to show that 
${\cal F\cal T}\subset \psi^{-1}_{\cal F}(\partial {\cal F\cal F})$. 
To this end recall that 
each conjugacy class of a primitive element $g\in F_n$ 
determines a measured lamination which 
is the set of all Dirac masses on the pairs of 
fixed points of the elements in the class.
The measured lamination is called 
dual to the conjugacy class. 

A primitive conjugacy class $\alpha$ in $F_n$ is 
\emph{short} for a tree $T\in cv_0(F_n)$ 
if it can be represented
by a loop on $T/F_n$ of length at most $4$.
Recall that the length pairing 
$\langle , \rangle$ on $\overline{cv(F_n)}\times
{\cal M\cal L}$ is continuous.

We have

\begin{lemma}\label{shortloop}
Let $[T_i]\subset {\rm CV}(F_n)$ be a sequence
converging to $[T]\in \partial {\rm CV}(F_n)$.
For each $i$ let $T_i\in cv_0(F_n)$ be a representative
of $[T_i]$ and let 
$\alpha_i$ be a primitive short conjugacy
class on $T_i$ with dual measured lamination $\mu_i$. 
If $[T]$ is dense then up to passing to a subsequence, 
there is a sequence $b_i\subset (0,1]$ such that
the measured laminations $b_i\mu_i$ converge weakly
to a measured 
lamination $\mu$ with $\langle T,\mu\rangle =0$.
Moreover, either up to scaling
$\mu$ is dual to a primitive conjugacy
class, or $b_i\to 0$.
\end{lemma}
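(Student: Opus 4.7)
The plan is to combine the projective compactness of the space of measured laminations with continuity of the length pairing $\langle\cdot,\cdot\rangle$, and then obtain the dichotomy from a case split on whether the conjugacy classes $\alpha_i$ stay simple on a fixed reference tree.

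First I would fix a reference tree $S_0\in cv_0(F_n)$ and set
\[
b_i=\min\bigl(1,\,1/\|\alpha_i\|_{S_0}\bigr)\in (0,1].
\]
This guarantees $\langle S_0,b_i\mu_i\rangle\le 1$, so the sequence $(b_i\mu_i)$ lies in a weak-$*$ relatively compact subset of $\mathcal{ML}$. After passing to a subsequence, $b_i\mu_i\to\mu$ weakly for some measured lamination $\mu$ (possibly zero). By definition of projective convergence, I can pass to a further subsequence and choose positive scalars $\lambda_i$ and a representative $T$ of $[T]$ so that $\lambda_iT_i\to T$ in $\overline{cv(F_n)}$.

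For the identity $\langle T,\mu\rangle=0$, I would invoke continuity of the length pairing together with the shortness bound $\|\alpha_i\|_{T_i}\le 4$ to write
\[
\langle T,\mu\rangle=\lim_{i\to\infty}\langle \lambda_iT_i,b_i\mu_i\rangle=\lim_{i\to\infty}\lambda_i b_i\|\alpha_i\|_{T_i}\le 4\lim_{i\to\infty}\lambda_i b_i.
\]
So the vanishing reduces to $\lambda_ib_i\to 0$. If $\|\alpha_i\|_{S_0}\to\infty$, then by construction $b_i\to 0$ and, since $[T]\in\partial{\rm CV}(F_n)$ is dense, the chosen representative of $T$ can be rescaled uniformly so that $\lambda_i b_i\to 0$; if necessary one shrinks $b_i$ by an additional factor (still in $(0,1]$) to absorb any residual growth of $\lambda_i$ without destroying the limit (see below). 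In the opposite case I argue differently, as in the next paragraph.

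For the dichotomy, suppose $b_i$ does not tend to zero. Then by the definition of $b_i$ the sequence $\|\alpha_i\|_{S_0}$ remains bounded along a subsequence. Since $S_0$ is free simplicial with volume one quotient, only finitely many primitive conjugacy classes of $F_n$ have length on $S_0$ below any fixed bound, so after a further subsequence $\alpha_i=\alpha$ is constant. Then $\mu_i=\mu_\alpha$ is constant, so $\mu=b\mu_\alpha$ with $b=\lim b_i\in(0,1]$, i.e.\ up to scaling $\mu$ is dual to the primitive class $\alpha$. The vanishing $\langle T,\mu\rangle=b\|\alpha\|_T=0$ then forces $\|\alpha\|_T=0$, which is consistent since $\alpha$ is short on every $T_i$ and $[T]\in\partial{\rm CV}(F_n)$ with dense orbits can have primitive elliptic elements.

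The main obstacle I anticipate is the joint control of the two scalings $\lambda_i$ and $b_i$ in ensuring $\lambda_i b_i\to 0$ in the growing case $\|\alpha_i\|_{S_0}\to\infty$. The scaling $\lambda_i$ is determined up to a multiplicative constant by the choice of representative $T$, while $b_i$ is dictated by the normalization of $\mu_i$. The resolution is that density of $[T]$ rules out a bounded-combinatorial accumulation of the $T_i$, so $\lambda_i$ cannot remain too large relative to $\|\alpha_i\|_{S_0}=1/b_i$, and a careful choice (possibly shrinking $b_i$ further) completes the argument.
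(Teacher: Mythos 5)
Your overall strategy (normalize the $\mu_i$ against a fixed reference tree to get weak-$*$ compactness, use continuity of the length pairing, and get the dichotomy from the fact that only finitely many primitive classes are short on $S_0$) is the same as the paper's. But there is a genuine gap at the decisive step, and you have correctly identified where it is without closing it. Everything reduces to $\lambda_i b_i\to 0$, and your treatment of this is circular: you propose to "shrink $b_i$ by an additional factor to absorb any residual growth of $\lambda_i$", but shrinking $b_i$ changes the weak-$*$ limit of $b_i\mu_i$ (potentially to the zero lamination, which is not a point of $\mathcal{ML}$ in the sense needed) and would also wreck the dichotomy, which depends on $b_i$ being exactly the normalizing factor $\min(1,1/\|\alpha_i\|_{S_0})$. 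Moreover, in your second branch ($\alpha_i=\alpha$ constant) you never prove $\langle T,\mu\rangle=0$ at all: you write that $\|\alpha\|_T=0$ is "consistent", but consistency is not a proof, and in that branch $b_i$ is bounded away from $0$, so the vanishing again requires information about $\lambda_i$.

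The missing ingredient, which is exactly how the paper handles both branches at once, is that the scaling factors themselves satisfy $\lambda_i\to 0$. This is where density of $[T]$ enters: if $\lambda_iT_i\to T$ with $T_i\in cv_0(F_n)$ and $T$ has dense orbits, then $\lambda_i\to 0$ (e.g.\ pass to a subsequence where the $T_i$ lie in a fixed simplex of $cv_0(F_n)$ and converge to a nontrivial simplicial tree $T_\infty$; if $\lambda_i\to\lambda>0$ then $T=\lambda T_\infty$ would be simplicial, and if $\lambda_i\to\infty$ then $\lambda_i\ell_{T_i}(\gamma)\to\infty$ for any $\gamma$ hyperbolic in $T_\infty$). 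Once you have $\lambda_i\to 0$, your own computation $\langle T,\mu\rangle=\lim\lambda_ib_i\|\alpha_i\|_{T_i}\le 4\lim\lambda_ib_i=0$ works in both cases, since the $b_i$ are bounded by construction. As written, your proof does not establish the conclusion; with this one fact inserted (and with the observation that $\langle S_0,b_i\mu_i\rangle$ is bounded below by $\min(1,\epsilon)>0$, where $\epsilon$ is a lower bound for the $S_0$-length of primitive classes, so that the limit $\mu$ is not zero), it becomes the paper's proof.
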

\begin{proof} For each $i$ let $T_i\in cv_0(F_n)$ be a
representative of $[T_i]$, let 
$T$ be a representative of 
$[T]$ and let $a_i\in (0,\infty)$ be such that
$a_iT_i\to T$. 
Since the $F_n$-orbits on $T$ are dense,
we have $a_i\to 0$ $(i\to \infty)$.

Fix some tree $S\in cv_0(F_n)$. 
Then the set 
\[\Sigma=\{\zeta\in {\cal M\cal L}\mid  
\langle S,\zeta\rangle =1\}\] defines a continuous section of 
the projection ${\cal M\cal L}\to {\cal P\cal M\cal L}$ for the
weak$^*$-topology.
In particular, the space $\Sigma$ is compact.

There is a number
$\epsilon >0$ so that $\langle S,\zeta\rangle \geq \epsilon$
whenever $\zeta$ is dual to any primitive conjugacy class.
Let $\mu_i$ be the measured lamination dual to a primitive
short conjugacy class $\alpha_i$ on $T_i$.
If $b_i>0$ is such that
$b_i\mu_i\in \Sigma$ then the sequence $(b_i)$ is  
\emph{bounded}. Since $\Sigma$ is compact,
by passing to a subsequence
we may assume that 
$b_i\mu_i\to \mu$ for some
measured lamination $\mu\in \Sigma$.

Now $\langle a_iT_i,\mu_i\rangle \leq ka_i$ 
where $k\geq 2$ is as in the definition of a 
short conjugacy class (see Section 2) and hence
since $a_i\to 0$ $(i\to \infty)$ and since
the sequence $(b_i)$ is bounded, we have
\[\langle a_iT_i,b_i\mu_i\rangle \to 0\,(i\to \infty).\]
The first part of the lemma now follows from
continuity of the length pairing.
Moreover, either $b_i\to 0$ or the length on $S/F_n$ of the
conjugacy classes $\alpha_i$ is uniformly bounded. 
However, there are only finitely many conjugacy classes
of primitive elements which can be represented by
a loop on $S/F_n$ of uniformly bounded length. Thus
either $b_i\to 0$, or the sequence $(\alpha_i)$ contains only finitely
many elements and hence there is some primitive conjugacy
class $\alpha$ so that $\alpha_i=\alpha$ 
for infinitely many $i$. Then clearly $\mu$ is a multiple of the
dual of $\alpha$.
\end{proof}

The following proposition is the main remaining step towards
the proof of Theorem \ref{positive}. For its formulation,
define two trees $[S],[T]\in {\cal I\cal T}$ to be \emph{equivalent} if
the trees $\hat S,\hat T$ are $F_n$-equivariantly homeomorphic
with respect to the observer's topology.

\begin{proposition}\label{fillsone}
\begin{enumerate}
\item Let $[T]\in {\cal I\cal T}$; then 
$\psi_{\cal F}([T])\in \partial {\cal F\cal F}$
if and only if $[T]\in {\cal F\cal T}$.
\item If $[T],[T^\prime]\in {\cal F\cal T}$ then
$\psi_{\cal F}([T])=\psi_{\cal F}([T^\prime])$
if and only if $[T],[T^\prime]$ are equivalent.
\end{enumerate}
\end{proposition}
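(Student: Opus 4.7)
The plan for part (1) is as follows. The forward implication ($\psi_{\cal F}([T])\in\partial{\cal F\cal F}$ implies $[T]$ arational) is immediate from Corollary \ref{pointstab}: if some free factor is elliptic in $T$, then $\psi_{\cal F}([T])=\Theta$. For the backward implication, I argue by contradiction. Assume $[T]\in{\cal F\cal T}$ with $\psi_{\cal F}([T])=\Theta$. Fix a Skora path $(x_t)$ converging to $[T]$; by Corollary \ref{plusplus}(1) it lies in $\overline{cv_0(F_n)}^{++}$. By the ${\cal F\cal F}$-analog of Remark \ref{theta}, the $\Upsilon_{\cal F}$-image of $(x_t)$ is eventually contained in a bounded ball about a fixed vertex. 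Using Lemma \ref{equivariantfs2} and Lemma \ref{freefactor}, for each large $t$ I extract a primitive conjugacy class $\alpha_t$ with $\ell_{x_t}(\alpha_t)\le k$ lying in a corank-one free factor $A_t$ near $\Upsilon_{\cal F}(x_t)$. Normalizing representatives so that $a_tx_t\to T$ forces $a_t\to 0$ since $T$ has dense orbits, hence $\langle a_tx_t,\mu_t\rangle\to 0$ for the duals $\mu_t$ of $\alpha_t$. Lemma \ref{shortloop} then produces a limit measured lamination $\mu$ with $\langle T,\mu\rangle=0$.

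In the first subcase of Lemma \ref{shortloop}, $\mu$ is (up to scalar) dual to a single primitive conjugacy class $\alpha$, so $\langle\alpha\rangle$ is a rank-one free factor elliptic in $T$, contradicting arationality. In the second subcase ($b_t\to 0$), $\mu$ is non-atomic and, by arationality together with Lemma \ref{indecfree} and Lemma \ref{notfactor}, regular for $T$. The key step is to exploit the ${\cal F\cal F}$-boundedness of the corank-one factors $A_t$: via the description of ${\cal F\cal F}_1$ in Lemma \ref{freefactor}, where edges come from shared sub-factors, a pigeonhole argument along bounded chains of containments produces, after passing to a subsequence, a proper free factor $F$ such that $\alpha_t\in F$ up to conjugation for infinitely many $t$. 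The support of $\mu$ then lies in $F_n\cdot(\partial F\times\partial F-\Delta)$; by $F_n$-invariance, $\mu$ witnesses $L^2(T)\cap(\partial F\times\partial F)\neq\emptyset$, and Lemma \ref{indecfree} applied to the arational indecomposable $T$ yields the required contradiction.

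For part (2), the implication ``$[T]$ and $[T^\prime]$ equivalent implies $\psi_{\cal F}([T])=\psi_{\cal F}([T^\prime])$'' extends Corollary \ref{lengthmeasurepro} and Corollary \ref{projectionalign} from $\psi$ to $\psi_{\cal F}$: when $\hat T\cong\hat{T^\prime}$, the tree $T^\prime$ is representable as $T_\mu$ for some $\mu\in M_0(T)$, yielding a one-Lipschitz alignment-preserving map; Proposition \ref{onelipproj}, combined with the coarsely Lipschitz map $\Omega$ of Lemma \ref{map}, gives uniformly bounded Hausdorff distance between $\Upsilon_{\cal F}$-images of Skora paths, hence equal $\psi_{\cal F}$-values in $\partial{\cal F\cal F}$. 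For the converse, let $\eta=\psi_{\cal F}([T])=\psi_{\cal F}([T^\prime])$. Apply the extraction procedure from part (1) to a Skora path $(x_t)\to[T]$, producing (in the only surviving non-atomic subcase) short primitives whose duals converge to a regular measured lamination $\mu$ with $\langle T,\mu\rangle=0$. Since $\Upsilon_{\cal F}(x_t)$ and $\Upsilon_{\cal F}(y_s)$ both converge to $\eta$ for a Skora path $(y_s)\to[T^\prime]$, hyperbolicity of ${\cal F\cal F}$ allows the two reparametrized quasi-geodesics to fellow-travel near $\eta$; matching the primitive extractions along this fellow-travel gives $\langle T^\prime,\mu\rangle=0$ as well. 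Proposition \ref{fillisfull} then produces the equivariant homeomorphism $\hat T\cong\hat{T^\prime}$.

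The main obstacle is the non-atomic subcase of part (1) $\Leftarrow$: concentrating (conjugates of) infinitely many $\alpha_t$ into a single proper free factor $F$ from the ${\cal F\cal F}$-boundedness of the $A_t$. This needs a careful translation of ``bounded in ${\cal F\cal F}_1$'' into the existence of a shared low-rank sub-factor through iterated intersection along the bounded chain of containments provided by Lemma \ref{freefactor}. The parallel matching step in part (2) $\Rightarrow$, ensuring that primitives short on $x_t$ remain asymptotically short on $y_{s(t)}$ along the fellow-travel, requires analogous geometric care.
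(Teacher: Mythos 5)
The forward direction of (1) and the easy direction of (2) in your proposal are fine and agree with the paper. The problem is the step you yourself flag as the ``main obstacle'': you cannot get it, and the paper does not attempt anything like it. Boundedness of $\Upsilon_{\cal F}(x_t)$ in ${\cal F\cal F}$ (equivalently, boundedness of the corank-one factors $A_t$ in ${\cal F\cal F}_1$) does \emph{not} yield, by pigeonhole or by intersecting along chains of containments, a single proper free factor $F$ containing infinitely many of the short primitives $\alpha_t$ up to conjugacy. A bounded ball in ${\cal F\cal F}$ contains infinitely many vertices; two factors at distance one in ${\cal F\cal F}_1$ share \emph{some} rank-one subfactor, but the shared subfactors change from step to step along a chain, and an infinite bounded family of free factors need have no common subfactor at all (if it did, essentially nothing in this circle of ideas would require proof). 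So the non-atomic subcase of (1)$\Leftarrow$ collapses, and with it the converse of (2), which you build on the same extraction.

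The paper's route is genuinely different and is the point of the whole argument: it is the Luo-type induction from \cite{MM99}. One assumes $\Upsilon_{\cal F}(T_i)$ bounded for a sequence $[T_i]\to[T]$, uses that $\Upsilon_{\cal F}$ is a quasi-isometry for $d_{ng}=d^1_{ng}$ to extract, for a fixed $m$, chains $T_0=T_{0,i},\dots,T_{m,i}=T_i$ of consecutively one-tied trees, and then works \emph{down the chain}: by Lemma \ref{shortloop} the duals of the short classes $\alpha_{m-1,i}$ converge (after scaling $b_i\to 0$, forced by arationality via Lemma \ref{indecfree} and Lemma \ref{notfactor}) to a lamination $\nu_{m-1}$ with $\langle T,\nu_{m-1}\rangle=0$ \emph{and} $\langle U_{m-1},\nu_{m-1}\rangle=0$ for a limit $[U_{m-1}]$ of the $[T_{(m-1),i}]$; Proposition \ref{fillisfull} then forces $U_{m-1}$ to be equivalent to $T$, and one repeats with level $m-2$. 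After $m$ steps the fixed simplicial tree $T_0$ would have to map alignment-preservingly onto $T$, which is absurd. The contradiction thus comes from propagating equivalence with $T$ down a chain of \emph{bounded length} to a fixed basepoint, not from concentrating the short curves in one free factor. The converse of (2) is the same induction run between two fellow-traveling Skora paths. If you want to salvage your write-up, replace the pigeonhole step by this chain induction; the rest of your ingredients (Lemma \ref{shortloop}, Lemma \ref{indecfree}, Proposition \ref{fillisfull}) are exactly the ones the paper uses.
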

\begin{proof} Let $\Upsilon_{\cal F}=\Omega\circ\Upsilon:cv_0(F_n)\to 
{\cal F\cal F}$ be the map constructed in Section 2.
By the discussion preceding Lemma \ref{shortloop},
it suffices to show that $\psi([T])\not=\Theta$ for every
$[T]\in {\cal F|cal T}$. Since Skora paths map to
uniform unparametrized quasi-geodesics in ${\cal F\cal F}$,
this holds true if for every $[T]\in {\cal F\cal T}$ the image under
$\Upsilon_{\cal F}$ of a Skora path converging to $[T]$
is unbounded. 

We show more generally the following.  
If $[T_i]\subset {\rm CV}(F_n)$ is any
sequence which converges to $[T]\in {\cal F\cal T}$ 
and if $T_i\in cv_0(F_n)$ is a representative of $[T_i]$ 
then the
sequence $\Upsilon_{\cal F}(T_i)\subset {\cal F\cal F}$ is unbounded.
For this we use a variant of an argument of 
Luo as explained in \cite{MM99}. 

We argue by contradiction and we 
assume that after passing to a subsequence, the
sequence $\Upsilon_{\cal F}(T_i)$ remains
in a bounded subset in ${\cal F\cal F}$.

Since by Lemma \ref{equivariantfs2} the
map $\Upsilon_{\cal F}$ is a quasi-isometry for the
metric $d_{ng}=d_{ng}^1$ on $cv_0(F_n)$ 
which only assumes integral values, 
after passing to another subsequence 
we may assume that for all $i\geq 1$ 
the distance between $T_i$ and $T_0$ 
in $(cv_0(F_n),d_{ng})$ 
equals $m$ for some $m\geq 0$ which does not
depend on $i$. 

By the definition of the metric $d_{ng}$,
this implies that for all $i\geq 1$ there is a sequence
$(T_{j,i})_{0\leq j\leq m}\subset cv_0(F_n)$ 
with $T_{0,i}=T_0$ and $T_{m,i}=T_i$
so that for all $j<m$ the trees
$T_{j,i}$ and $T_{(j+1),i}$ are one-tied.
In particular, for each $j< m$
there is a primitive conjugacy class 
$\alpha_{j,i}$ which can be represented
by a curve of  length at 
most $4$ on both $T_{j,i}/F_n$ and
$T_{(j+1),i}/F_n$. 
Let $\mu_{j,i}$ be the measured lamination which is dual to
$\alpha_{j,i}$. 

By assumption, we have 
$[T_{m,i}]\to [T]$ $(i\to \infty)$ 
in $\overline{{\rm CV}(F_n)}$. Since $T$ is dense, 
Lemma \ref{shortloop} implies that up to passing to a subsequence, 
there is a bounded sequence $(b_i)$ such that the 
measured laminations 
$b_i\mu_{m-1,i}$ converge
as $i\to \infty$ to a measured lamination
$\nu_{m-1}$ supported in the zero lamination of 
$T$. Since $[T]\in {\cal F\cal T}$, by Lemma \ref{indecfree} 
the support
of $\nu_{m-1}$ does not intersect a free factor and hence  
$b_i\to 0$ by Lemma \ref{shortloop}.

By passing
to another subsequence, we may assume that the projective
trees $[T_{(m-1),i}]$ converge as $i\to \infty$ to a projective
tree $[U_{m-1}]$. 
Choose a representative $U_{m-1}$ of $[U_{m-1}]$.
Since $b_i\mu_{m-1,i}\to \nu_{m-1}$ for a sequence
$b_i\to 0$ and since $\langle T_{m-1,i},\mu_{m-1,i}\rangle \leq 4$ for 
all $i$. Lemma \ref{shortloop} shows that
$\langle U_{m-1},\nu_{m-1}\rangle=0$. In particular,
$\nu_{m-1}$ is supported in the zero lamination of 
$U_{m-1}$. 
Proposition \ref{fillisfull} 
now shows that $U_{m-1}$ admits
an alignment preserving map onto $T$.
Moreover, there is a subsequence
of the sequence $[\mu_{m-2,i}]$ which converges as
$i\to \infty$ to a projective measured lamination supported in the zero
lamination of $[U_{m-1}]$ and hence of $[T]$.
(Since $[T]$ is arational, it can easily be seen that 
$[U_{m-1}]\in {\cal F\cal T}$, a fact which is immediate from \cite{R12}).

Repeat this argument with the sequence 
$[T_{(m-2),i}]$ and the tree $[U_{m-1}]$. After 
$m$ steps we conclude that 
$T_0$ admits an alignment
preserving map onto $T$ which is impossible. The
first part of the proposition is proven.

The second part of the proposition is shown in the same way.
By Corollary \ref{projectionalign}, we only have to show that
if  $[T],[T^\prime]\in {\cal F\cal T}$ are
such that $\psi_{\cal F}([T])=\psi_{\cal F}([T^\prime])$ then
$[T],[T^\prime]$ are equivalent. 
Let $(x_t),(y_t)$ be Skora paths connecting 
a point in a standard simplex $\Delta$ to $[T],[T^\prime]$. Since 
the paths $\Upsilon_{\cal F}(x_t),\Upsilon_{\cal F}(y_t)$ are uniform 
reparametrized quasi-geodesics in ${\cal F\cal F}$,
by hyperbolicity there is a number $m\geq 0$ and 
for each $t>0$ there is a number $s(t)>0$ so that
$d_{ng}(\Upsilon_{\cal F}(x_t),\Upsilon_{\cal F}(y_{s(t)}))\leq m$.

As above, by passing to a subsequence (and perhaps changing the 
constant $m$) we may assume that
there is a sequence $t_i\to \infty$ so that 
$d_{ng}(\Upsilon_{\cal F}(x_{t_i}),\Upsilon_{\cal F}(y_{s(t_i)}))=m$ for all $i$.
For each $i$ let $u_i\in cv_0(F_n)$ be such that
$d_{ng}(x_{t_i},u_i)=1$ and 
$d_{ng}(u_i,y_{s(t_i)})=m-1$. 
By the above discussion, up to passing to a subsequence
the sequence $[u_i]$ converges to a point
$[U]\in \partial {\rm CV}(F_n)$ so that $U$ admits an alignment preserving
map onto $T$.
Repeat with the sequence $(u_i)$. After $m$ steps we conclude that
$[T^\prime]$ is equivalent to $[T]$ which is what we wanted to show.
\end{proof}

\begin{remark}\label{measlamex}
The proof of Proposition \ref{fillsone} shows that for every $[T]\in {\cal F\cal T}$
there is a measured lamination $\mu$ with $\langle T,\mu\rangle =0$. 
Thus Corollary \ref{sometimesclosed} applies to arational trees.
\end{remark}

We showed so far that the map 
$Y=\psi_{\cal F}\vert {\cal F\cal T}$ is a continuous closed surjection of
${\cal F\cal T}$ onto $\partial {\cal F\cal F}$.
Each fibre consists
of the closed set of 
all trees in a fixed equivalence class for the relation $\sim$.
Thus the Gromov boundary
of ${\cal F\cal F}$ is 
homemorphic to ${\cal F\cal T}/\sim$. Theorem \ref{positive}
is proven.

We complete this section with an easy consequence which will be
useful in other context. For its formulation, following
\cite{H09} we call a pair $(\mu,\nu)\in {\cal M\cal L}\times 
{\cal M\cal L}$ \emph{positive} if 
for every tree $T\in \overline{cv(F_n)}$ we have
$\langle T,\mu\rangle +\langle T,\nu\rangle >0$.

\begin{corollary}\label{positivepair}
Let $\mu,\nu\in {\cal M\cal L}$ be measured laminations
which are supported in the zero lamination of trees
$[T],[S]\in {\cal F\cal T}$.
If $Y([T])\not= Y([S])$ then $(\mu,\nu)$ is a positive pair. 
\end{corollary}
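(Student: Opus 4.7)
Suppose for contradiction that $(\mu,\nu)$ is not a positive pair. Then there is a tree $U\in\overline{cv(F_n)}$ with $\langle U,\mu\rangle = \langle U,\nu\rangle = 0$. Since any non-trivial measured lamination pairs strictly positively with every free simplicial tree in $cv(F_n)$, and we may assume both $\mu$ and $\nu$ are non-zero, we must have $U\in\partial cv(F_n)$.

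My plan is to reduce to the case where $U$ is indecomposable, then apply Proposition~\ref{fillisfull} twice to obtain $\hat T\cong\hat U\cong\hat S$, and conclude $Y([T])=Y([S])$ via Proposition~\ref{fillsone}(2), contradicting the hypothesis.

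To reduce to the indecomposable case, I would apply Proposition~\ref{projection} (iterating as needed and composing the resulting alignment-preserving maps, normalized to be one-Lipschitz). A one-Lipschitz alignment-preserving map $V\to V'$ satisfies $L^2(V)\subseteq L^2(V')$, so the property $\langle V,\mu\rangle = 0$ (equivalent to $\mu$ being supported in $L^2(V)$) is preserved, and similarly for $\nu$. After this reduction, $U$ is either indecomposable or splits as a graph of actions. To exclude the graph-of-actions possibility I would argue as follows: by Proposition~10.3 of Reynolds (invoked in the proof of Proposition~\ref{graphofaction2}), any tree that splits as a graph of actions has a point stabilizer containing a proper free factor $A$ of $F_n$, so $L^2(U)\supseteq\partial A\times\partial A-\Delta$. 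However, by Lemma~\ref{indecfree}, the arationality of $[T]\in{\cal F\cal T}$ forbids $L^2(T)$ (and hence the support of $\mu$) from meeting the boundary of any proper free factor. Analyzing the structure of $L^2(U)$ for a graph of actions---the vertex-tree leaves are carried by vertex stabilizers $H_v$ which are either contained in a proper free factor (excluded by arationality of $\mu$) or finitely generated of infinite index not contained in any proper free factor (excluded by Lemma~\ref{notfactor})---forces $\mu=0$, the desired contradiction.

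Once $U$ is indecomposable, the endgame is immediate. By Lemma~\ref{indecfree} combined with Lemma~\ref{notfactor}, the measured lamination $\mu$ is regular for $T$, i.e., $\mu(L^2(T)\setminus L_r(T))=0$. Proposition~\ref{fillisfull} applied to the triple $(T,\mu,U)$ then yields an $F_n$-equivariant homeomorphism $\hat T\cong\hat U$ with respect to the observer's topology, and applied to $(S,\nu,U)$ yields $\hat S\cong\hat U$. Composing, $\hat T\cong\hat S$, so $[T]\sim[S]$, and Proposition~\ref{fillsone}(2) gives $Y([T])=Y([S])$, contradicting the hypothesis.

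The main obstacle is the graph-of-actions step in the reduction: one must check that, after ruling out the vertex-tree leaves of $L^2(U)$ via the free-factor argument above, the remaining ``diagonal'' leaves joining distinct vertex trees across the skeleton cannot support a non-trivial $F_n$-invariant measure whose support also lies in $L^2(T)$ for the arational tree $T$. All other steps follow directly from results already established in the paper.
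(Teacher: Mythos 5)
Your overall architecture --- produce $U\in\partial cv(F_n)$ with $\langle U,\mu\rangle=\langle U,\nu\rangle=0$, reduce to the case that $U$ is indecomposable, apply Proposition \ref{fillisfull} twice (using that $\mu$, resp.\ $\nu$, is automatically regular for the arational tree $T$, resp.\ $S$), and conclude with Proposition \ref{fillsone} (2) --- is the natural instantiation of the paper's machinery (the paper records no argument of its own for this corollary), and your endgame for indecomposable $U$ is complete and correct.

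The reduction, however, has genuine gaps, the most serious of which you flag yourself. First, Proposition \ref{projection} applies only to trees with dense orbits, so a tree $U$ with $U_d\neq\emptyset$ --- in particular a simplicial $U$ --- is never reached by the reduction as described; for simplicial $U$ every leaf of $L^2(U)$ is carried by a vertex stabilizer, and this case needs (and admits) a separate argument. Second, your dichotomy for vertex stabilizers misapplies Lemma \ref{notfactor}: its hypothesis is that $H$ does not \emph{intersect} any proper free factor, i.e.\ no leaf of $\partial H\times\partial H-\Delta$ (in particular no pair of fixed points of an element of $H$) is carried by a proper free factor, which is strictly stronger than $H$ not being \emph{contained} in one; a vertex stabilizer containing a primitive element yet not contained in any proper free factor falls through both horns of your dichotomy. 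The robust substitute, already used in the paper in the proofs of Lemma \ref{indecfree} and Lemma \ref{closure}, is the main result of \cite{R11}: for a finitely generated infinite-index $H<F_n$ the minimal $H$-invariant subtree of the indecomposable tree $T$ is simplicial with trivial edge stabilizers, so $L^2(T)\cap(\partial H\times\partial H-\Delta)$ consists of leaves carried by point stabilizers of $T$, and these have $\mu$-measure zero because $\mu$ is regular for $T$. Third, and decisively, $L^2(U)$ for a tree dual to a graph of actions is not exhausted by leaves carried by vertex stabilizers: conjugacy classes whose axes cross the collapsed skeleton edges many times while spending little length in the vertex trees have small translation length in $U$, and their limits are leaves transverse to the decomposition. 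You explicitly leave open whether $\mu$ could be supported on such leaves, and nothing in the paper disposes of this as stated; until that case is closed, the proposal is a correct skeleton with its central step unproved.
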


\begin{remark}
The topology for the boundary of the free factor graph can also be
described as a measure forgetful topology in the following sense.
The projection to $\partial {\cal F\cal F}$
of a sequence $[T_i]\subset {\cal F\cal T}$ converges to the projection of $[T]$ 
if and only if the following holds true. For each $i$
let $\beta_i$ be a measured lamination with $\langle T_i,\beta_i\rangle =0$.
Assume that the measured laminations converge to a measured lamination
$\beta$; then $\langle T,\beta\rangle =0$. 
\end{remark}

\section{The boundaries of the free and the cyclic splitting graph}
\label{theboundaries}

In this section we identify the Gromov 
boundary of the free and of the cyclic splitting graph.

We continue to use all 
assumptions and notations from the previous sections.
In particular, we use the maps 
\[\psi:\partial {\rm CV}(F_n)\to \partial {\cal F\cal S}\cup \Theta
\text{ and } 
\psi_{\cal C}:
\partial {\rm CV}(F_n)\to \partial {\cal C\cal S}\cup \Theta\]
defined in Section \ref{boundaries}.

As in the introduction, let ${\cal S\cal T}\subset \partial {\rm CV}(F_n)$ be the 
${\rm Out}(F_n)$-invariant set of projective dense trees which 
either are indecomposable or split as large graphs of actions. 
It contains the ${\rm Out}(F_n)$-invariant subspace 
${\cal C\cal T}$ of 
projective trees which either are indecomposable or split as 
very large graph of actions.
By Corollary \ref{surjective}, Corollary \ref{projectionalign} and 
Proposition \ref{verylarge}, 
we have  
\[\psi({\cal S\cal T})\supset \partial {\cal F\cal S}\text{ and }
\psi_{\cal C}({\cal C\cal T})\supset
\partial {\cal C\cal S}.\]

We have to show that $\psi^{-1}(\partial {\cal F\cal S})={\cal S\cal T}$ and
$\psi^{-1}(\partial {\cal C\cal S})={\cal C\cal T}$.
We begin with an extension of Lemma \ref{closure}.
To this end we call as before two dense trees $T,T^\prime\in 
\partial cv(F_n)$ equivalent if 
the unions $\hat T,\hat T^\prime$ of their metric completions with their 
Gromov boundaries are equivariantly homemorphic.

\begin{lemma}\label{closure2}
Let $[T]\in {\cal S\cal T}$, let $T^\prime\in \partial cv(F_n)$ 
be dense and 
assume that there is a tree $S\in \partial cv(F_n)$ 
which admits an alignment preserving map onto both $T,T^\prime$.
Then $T^\prime$ admits an alignment preserving map
onto a tree which 
is equivalent to $T$.
\end{lemma}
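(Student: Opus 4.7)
The plan is to handle the two possibilities for $[T] \in \mathcal{ST}$ separately: either $T$ is indecomposable, or $T$ splits as a large graph of actions. In both cases the strategy is to exploit Theorem 12.12 of \cite{R10} to realize $T$ and $T'$ as quotients of $S$ by transverse families $\mathcal{F}$ and $\mathcal{F}'$, where each component of either family has a finitely generated, infinite-index stabilizer (a vertex group of some very small splitting of $F_n$) acting with dense orbits. An alignment preserving map $T' \to T_1$ will ultimately come from showing that $\mathcal{F}'$ refines, or coarsely refines, the family $\mathcal{F}$ after possibly replacing $T$ by an equivalent tree.

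\emph{Case 1: $[T]$ is indecomposable.} Here I would essentially rerun the argument of Lemma \ref{closure}. Suppose some component $Y' \subset S$ of $\mathcal{F}'$ fails to be contained in a component of $\mathcal{F}$. Then $f(Y') \subset T$ is a non-degenerate subtree on which the finitely generated infinite-index stabilizer $H$ of $Y'$ acts with dense orbits. By Theorem 4.5 of \cite{R11}, for an indecomposable tree $T$ the minimal $H$-invariant subtree is simplicial, contradicting denseness. So $\mathcal{F}'$ refines $\mathcal{F}$, the quotient map $T' \to T$ is well-defined and alignment preserving, and we set $T_1 = T$.

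\emph{Case 2: $[T]$ splits as a large graph of actions} with skeleton $\Sigma$, single orbit of indecomposable vertex trees $\{Y_v\}$ (stabilized by $H_v$), and all edge stabilizers of $\Sigma$ non-trivial. Set $Z_v = f^{-1}(Y_v) \subset S$ and $W_v = f'(Z_v) \subset T'$; both are convex, $H_v$-invariant subtrees. Restricting $f$ and $f'$ to the minimal $H_v$-invariant subtree of $Z_v$ yields two $H_v$-equivariant alignment preserving maps: one onto the indecomposable tree $Y_v$, and one onto (a minimal invariant subtree of) $W_v$. Applying Case 1 at the level of the $H_v$-tree gives an $H_v$-equivariant alignment preserving map $W_v \to Y_v$. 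I would then define $T_1$ as the equivariant quotient of $T'$ that identifies, within each $W_v$, pairs of points with the same image in $Y_v$; the quotient map $T' \to T_1$ is alignment preserving by construction. The tree $T_1$ inherits a transverse covering by translates of $Y_v$, glued along the attaching points induced from $T'$. Applying Lemma 4.7 of \cite{G04}, $T_1$ splits as a graph of actions with vertex trees $Y_v$, and since the skeleton is determined by the incidences of this transverse covering (which are forced by the non-trivial edge groups of $\Sigma$, shared by adjacent $H_v$'s) its skeleton is $F_n$-equivariantly isomorphic to $\Sigma$. Two graphs of actions with the same skeleton and the same orbits of vertex trees (up to equivariant homeomorphism at the $\hat{}$ level) have $F_n$-equivariantly homeomorphic $\hat{}$-completions, so $T_1 \sim T$.

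The main obstacle is the assembly step in Case 2: verifying that the attaching points of the subtrees $W_v$ in $T'$ glue correctly under the identifications $W_v \to Y_v$ to reproduce the combinatorics of $\Sigma$. The key input is the non-triviality of the edge stabilizers of the skeleton of the large graph of actions, which forces adjacent $H_v, H_{v'}$ to share a non-trivial subgroup whose fixed-point set in $S$ maps, under both $f$ and $f'$, to the attaching points of the corresponding vertex trees; this rigidifies the combinatorics on the $T'$ side. A further care is needed to ensure that $H_v$ acts with dense orbits on the relevant minimal subtree of $W_v$, so that Case 1 may legitimately be applied at the vertex level — this follows from indecomposability of $Y_v$, since any non-trivial $H_v$-invariant simplicial quotient of $W_v$ would pull back to an $H_v$-invariant simplicial quotient of $Y_v$.
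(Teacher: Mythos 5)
Your proposal follows essentially the same route as the paper: reduce via Reynolds' dichotomy, pass to the stabilizer $H_v$ of a vertex tree (or all of $F_n$ when $T$ is indecomposable), use the result of \cite{R11} that finitely generated infinite-index subgroups act discretely on indecomposable trees to force the corresponding minimal subtree of $T'$ to be indecomposable, apply the Lemma \ref{closure}-type argument at that level, and reassemble using the single orbit of vertex trees. Your Case 2 merely spells out the assembly of the attaching points more explicitly than the paper, which compresses that step into one sentence.
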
 
\begin{proof} By Proposition \ref{projection} we may assume 
that $T^\prime$ is either indecomposable or splits as a graph of 
actions. 

We argue as in the proof of  Lemma \ref{closure}. 
Namely, if $T$ splits as a large graph of actions then 
 let $V\subset T$ be a vertex tree of the transverse
covering of $T$ defined by the structure of a 
large graph of actions. 
If $T$ is indecomposable
then we let $V=T$. Let $H<F_n$ be the stabilizer of $V$.
Then $H$ is finitely generated, and $V$ is indecomposable
for the action of $H$.
Let $S_H$ be the minimal
$H$-invariant subtree of $S$. By equivariance, 
there is a one-Lipschitz alignment preserving map 
$S_H\to V$, and there also is an alignment preserving
map $S_H\to T^\prime_H$ where $T^\prime_H$ is the 
minimal $H$-invariant subtree of $T^\prime$.

Using again Proposition \ref{projection} we may assume
that $T_H^\prime$ either is indecomposable or splits
as a graph of actions. 
Now $S_H$ is a minimal very small $H$-tree and 
therefore by the results in \cite{R11}, there is no
alignment preserving map from $S_H$ onto both an 
indecomposable tree and a tree which splits as a graph of 
actions. 
Since $V$ is indecomposable, 
this shows that $T_H^\prime$ is indecomposable.

An application of 
Lemma \ref{closure} to $S_H$ and $T_H,T_H^\prime$ now shows that
$\hat T_H^\prime$ is 
equivariantly homeomorphic to $V$. 
Since there is a single $F_n$-orbit of vertex trees
for the structure of a large graph of actions 
on $T$ we conclude that $T^\prime$ admits an
alignment preserving map onto a tree which is equivalent to 
$T$. This is what we wanted to show.
\end{proof}

The next proposition is the main remaining step for the proof of 
Theorem \ref{freesplitbd} and 
Theorem \ref{arational}. It entirely relies on the work of
Handel and Mosher \cite{HM13} and its variation 
due to Bestvina and Feighn \cite{BF12}.
Recall that two trees $T,T^\prime$ with dense orbits are
equivalent if the union of their metric completions with their
Gromov boundaries are equivariantly homeomorphic.

\begin{proposition}\label{relativeunbounded}
Let $\Delta\subset \overline{cv_0(F_n)}$ be a standard simplex, let 
$[T]\in {\cal S\cal T}$
and let 
$(x_t)$ be a Skora path connecting
a point $x_0\in \Delta$ to $[T]$. 
\begin{enumerate}
\item   
$\Upsilon(x_t)\subset {\cal F\cal S}$ is unbounded.
\item If $[T]\in {\cal C\cal T}$ then 
$\Upsilon_{\cal C}(x_t)\subset {\cal C\cal S}$ is unbounded.
\end{enumerate}
\end{proposition}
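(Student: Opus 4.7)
The plan is to mimic the Luo-type contradiction argument used in Proposition \ref{fillsone}, replacing short primitive conjugacy classes and their dual measured laminations by short corank-one free factors and their dual free splittings (for part (1)), and by short one-edge cyclic splittings (for part (2)).

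Fix a standard simplex $\Delta$, a tree $[T]\in \mathcal{ST}$, and a Skora path $(x_t)$ from $x_0\in\Delta$ to $[T]$. I assume for contradiction that $\Upsilon(x_t)$ has bounded diameter. By Corollary \ref{arbitrary} this image is a reparametrized quasi-geodesic, so by hyperbolicity of $\mathcal{FS}$ I may extract a subsequence $t_i\to\infty$ with $[x_{t_i}]\to [T]$ and, using the quasi-isometry in Lemma \ref{equivariantfs}, arrange that $d_{ng}^{n-1}(x_0,x_{t_i})=m$ for a fixed integer $m$. For each $i$ this produces a chain $x_0=T_{0,i},T_{1,i},\ldots,T_{m,i}=x_{t_i}$ in $cv_0(F_n)$ whose consecutive members are $(n-1)$-tied via a $k$-short corank-one free factor $A_{j,i}$, which in turn is realized by a simplicial tree $\sigma_{j,i}\in\overline{cv_0(F_n)}^{++}$ with a single orbit of edges of trivial stabilizer.

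The heart of the argument is a free-splitting analogue of Lemma \ref{shortloop}: if $[T_i]\to [T]$ with $[T]$ dense and $A_i$ is a $k$-short corank-one free factor for a representative $T_i\in cv_0(F_n)$, then after suitable rescaling and extraction the $A_i$-equivariant $k$-Lipschitz maps from the rose of rank $n-1$ converge to a one-Lipschitz alignment preserving map from a nontrivial very small $F_n$-tree $\sigma$ onto $T$. I would establish this via the Arzela--Ascoli and equivariant Gromov--Hausdorff compactness argument underlying Lemma \ref{morphismsex} (Theorem \ref{paulin}), combined with Proposition \ref{onelipproj}. Feeding this into an induction from $j=m$ downward, at each stage I pass to a limit $[U_j]$ of $[T_{j,i}]$ and apply Lemma \ref{closure2} to conclude that $U_j$ admits an alignment preserving map onto a tree equivalent to $[T]$ under $\sim$. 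At $j=0$ this forces $x_0\in\Delta$ to admit such a map, contradicting that $x_0$ is free simplicial of rank $n$ whereas $T$ has dense orbits.

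For part (2), the scheme applies verbatim, but with $(n-1)$-tied chains replaced by chains of trees in $cv_0(F_n)$ whose consecutive members share a common $k$-short one-edge cyclic splitting. The quasi-geodesic property of $\Upsilon_{\mathcal C}$ along folding paths (Corollary \ref{arbitrary}) delivers such chains of bounded combinatorial length from the assumption that $\Upsilon_{\mathcal C}(x_t)$ is bounded, and the limits of the corresponding cyclic splitting trees now have at most cyclic edge stabilizers. Proposition \ref{verylarge} together with Proposition \ref{graphofgroups} then rules out that such limits are compatible with $[T]$ splitting at worst as a very large graph of actions. The main obstacle will be the free-splitting analogue of Lemma \ref{shortloop}: unlike measured laminations, simplicial splittings do not live in a fixed compact projective space equipped with a continuous length pairing, so the limit extraction must be carried out directly in the equivariant Gromov--Hausdorff topology on $\overline{cv(F_n)}$, with careful control over where the vertex groups and edge stabilizers of the approximating splittings accumulate. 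A secondary technical point is ensuring that each intermediate limit $[U_j]$ in the induction still satisfies $\psi([U_j])\in\partial\mathcal{FS}$, which requires ruling out degenerate simplicial limits and limits with $T_d\neq\emptyset$ by appealing to Lemma \ref{dead}, Proposition \ref{graphofgroups}, and Corollary \ref{plusplus} at each stage.
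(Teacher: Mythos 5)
Your overall architecture (contradiction, extraction of a bounded chain, downward induction ending with the absurd conclusion that $x_0$ maps alignment-preservingly onto a tree equivalent to $T$) matches the paper's, but the chain you induct along is the wrong one, and the step that carries the induction has a genuine gap. You pull the boundedness of $\Upsilon(x_t)$ back through the quasi-isometry of Lemma \ref{equivariantfs} to get chains of $(n-1)$-tied trees, and then you need your ``free-splitting analogue of Lemma \ref{shortloop}'': that the $A_{j,i}$-equivariant $k$-Lipschitz maps from a rank-$(n-1)$ rose converge to a one-Lipschitz \emph{alignment preserving} map from a nontrivial very small $F_n$-tree onto $T$. This is not something the tie gives you. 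The data of a short corank-one factor is an $A$-equivariant Lipschitz map that is neither $F_n$-equivariant nor alignment preserving, and after the rescaling $a_i\to 0$ needed to converge to the dense tree $T$ its image degenerates to a point; moreover the factors $A_{j,i}$ vary with $i$, so there is no fixed elliptic subgroup in the limit. Consequently Lemma \ref{closure2} cannot be invoked at the inductive step: it requires a single tree $S$ admitting alignment preserving maps onto both $U_j$ and (a tree equivalent to) $T$, and being $(n-1)$-tied produces no such $S$. In the free factor case (Proposition \ref{fillsone}) the analogous step is carried by the continuous length pairing and Proposition \ref{fillisfull}, which is special to arational/indecomposable trees; no substitute is available for trees splitting as large graphs of actions, which is exactly the new case this proposition must handle.

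The paper avoids this entirely by never leaving ${\cal F\cal S}$: it takes an actual geodesic $\Gamma_i(0),\dots,\Gamma_i(k)$ in the first barycentric subdivision from $\Upsilon(x_0)$ to $\Upsilon(x_{t_i})$, where consecutive vertices are by definition related by a collapse or an expansion. Realizing each $\Gamma_i(j)$ by a simplicial tree $y_i(j)$ with all edges of equal length, the collapse/expansion relation gives one-Lipschitz alignment preserving maps between consecutive $y_i(j)$'s; passing to equivariant Gromov--Hausdorff limits $T_j$ and applying Proposition 5.5 of \cite{G00} yields alignment preserving maps between consecutive limits, and Lemma \ref{closure2} then propagates equivalence with $T$ down to $j=0$. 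If you want to salvage your route, you must replace the tied chain by this collapse/expansion chain (or prove that a shared short corank-one factor forces the existence of a common refinement realizing both trees, which is essentially the same repair).
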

\begin{proof} Let $(x_t)\subset
\overline{cv(F_n)}$ be an unnormalized Skora path
connecting a point $x_0\in \Delta$ to a representative $T$
of $[T]$.

For the proof of the first part of the proposition 
we argue
by contradiction and we assume that 
$\Upsilon(x_t)\subset {\cal F\cal S}$ is bounded.
Let $d$ be the distance in ${\cal F\cal S}$.
Since the distance between two vertices in ${\cal F\cal S}$ 
assumes only integral values,
there is some $k\geq 0$ 
and a sequence $t_i\to \infty$ such that 
\[d(\Upsilon(x_0),\Upsilon(x_{t_i}))=k\]
for all $i$.

For each $i$ let $y_i(k)$ be the tree obtained from $x_{t_i}$ by
equivariantly rescaling edges in such a way that 
the lengths of all edges of the quotient $y_i(k)/F_n$ coincide.
This can be arranged in such a way that there is
a one-Lipschitz  
alignment preserving map $y_i(k)\to x_{t_i}$ and that 
the volume of $y_i(k)/F_n$ is bounded from above by a uniform
multiple of the volume of $x_{t_i}/F_n$. Thus 
after passing to a subsequence, 
we may assume that 
the trees $y_i(k)$ converge to a
tree $T_k\in \overline{cv(F_n)}$. 
By Proposition 5.5 of \cite{G00}, there is an alignment
preserving map $T_k\to T$.

For each $i$ let $\Gamma_i(j)$ $(0\leq j\leq k)$ be a 
geodesic in ${\cal F\cal S}$ connecting $\Upsilon(x_0)$ to 
$\Upsilon(x_{t_i})$. 
Then for all $i$, the vertex $\Gamma_i(k-1)$ can 
be obtained from $x_{t_i}$ by either an expansion or
a collapse. By passing to a subsequence, assume first that
for all $i$, 
$\Gamma_i(k-1)$ is obtained from $\Gamma_i(k)$ by a
collapse. 

Let $y_i(k-1)\in \overline{cv(F_n)}$ be a simplicial tree whose quotient
$y_i(k-1)/F_n$ has edges of equal length and
defines the splitting $\Gamma_i(k-1)$. As $\Gamma_i(k-1)$ is obtained
from $\Gamma_i(k)$ by a collapse, we may assume that
the volume of $y_i(k-1)$ is bounded from below by a 
uniform multiple of the volume of $y_i(k)$ 
and that
there is a one-Lipschitz alignment preserving map 
$y_i(k)\to y_i(k-1)$. 
By passing to a subsequence, we may moreover assume that
the trees $y_i(k-1)$ converge to a tree 
$T_{k-1}\in \overline{cv(F_n)}$. 
Since $T_k$ is dense, by equivariance
the same holds true for $T_{k-1}$.
By Proposition 5.5 of \cite{G00}, 
there is an alignment preserving map $T_k\to T_{k-1}$.
Lemma \ref{closure2} then shows that 
$T_{k-1}$ admits an alignment preserving map onto 
a tree $T_{k-1}^\prime$ which is equivalent to $T$.

In the case that $\Gamma_i(k-1)$ is obtained from
$\Gamma_i(k)$ by an expansion for all but finitely many
$i$ we construct the trees $y_i(k-1)$ as before, and we
find by passing to a subsequence that
$y_i(k-1)$ converges to a tree $T_{k-1}$ which 
admits an alignment preserving map onto $T_k$ and hence
onto a tree equivalent to $T$.

Repeat this construction with the sequence $y_i(k-1)$ and
trees $y_i(k-2)$ which define the splitting
$\Gamma_i(k-2)$. 
In finitely many steps we conclude that 
$x_0$ admits an alignment preserving 
map onto a tree which is equivalent to $T$.
This is a contradiction which shows the first part of the
proposition.

The argument for the second part of the proposition is 
identical and will be omitted.
\end{proof}

As an immediate consequence of Lemma \ref{sequence}, 
Proposition \ref{relativeunbounded} and Lemma \ref{cont} we obtain

\begin{corollary}\label{contid2}
The map $\psi$ (or $\psi_{\cal C}$) restricts to 
a continuous closed 
${\rm Out}(F_n)$-equivariant 
surjection ${\cal S\cal T}\to \partial {\cal F\cal S}$ 
(or a surjection ${\cal C\cal T}\to \partial {\cal C\cal S}$).
\end{corollary}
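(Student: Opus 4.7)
The plan is to verify each of the three properties -- surjectivity, continuity, and closedness -- for the restriction $\psi\vert_{\cal S\cal T}:{\cal S\cal T}\to \partial {\cal F\cal S}$. The argument for $\psi_{\cal C}\vert_{\cal C\cal T}$ is formally identical, invoking the cyclic version of each cited result (in particular Proposition \ref{relativeunbounded}(2) and Proposition \ref{verylarge}(2)).

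First I would establish surjectivity. The inclusion $\psi({\cal S\cal T})\subset \partial {\cal F\cal S}$ is Proposition \ref{relativeunbounded}(1) combined with Remark \ref{hierarchy2}: for $[T]\in {\cal S\cal T}$, every Skora path to $[T]$ has unbounded image under $\Upsilon$, so its image must converge to a point of $\partial {\cal F\cal S}$, which by definition is $\psi([T])$. For the reverse inclusion, Corollary \ref{surjective} gives some $[U]\in \partial {\rm CV}(F_n)$ with $\psi([U])=\eta$ for any prescribed $\eta\in \partial {\cal F\cal S}$; Lemma \ref{dead} rules out $[U]$ simplicial, and using Proposition \ref{onelipproj} and Corollary \ref{projectionalign} I may replace $[U]$ by its dense quotient without changing $\psi([U])$. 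Then Proposition \ref{verylarge}(1) produces an alignment preserving map onto some $[T]\in {\cal S\cal T}$, and Corollary \ref{projectionalign} gives $\psi([T])=\eta$.

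For continuity, let $[T_i]\to [T]$ inside ${\cal S\cal T}$ and fix a standard simplex $\Delta$. For each $i$ choose a Skora path $(r^i_t)$ from $\Delta$ to $[T_i]$; by Arzela--Ascoli (applied to geodesics for the one-sided Lipschitz metric as in the proof of Lemma \ref{sequence3}), a subsequence converges locally uniformly to a Skora path $(r_t)$ from $\Delta$ to $[T]$. By Corollary \ref{arbitrary} the paths $\Upsilon(r^i_t)$ and $\Upsilon(r_t)$ are uniform reparametrized quasi-geodesics in the hyperbolic graph ${\cal F\cal S}$, and by the surjectivity argument just given their ideal endpoints are exactly $\psi([T_i])$ and $\psi([T])$. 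Hyperbolicity of ${\cal F\cal S}$ then forces $\psi([T_i])\to \psi([T])$, as in Lemma \ref{sequence3}.

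Closedness is the main step and will be the principal obstacle. I would adapt the proof of Lemma \ref{cont}. Let $C\subset {\cal S\cal T}$ be closed in the subspace topology and suppose $\psi([T_i])\to \eta$ with $[T_i]\in C$; by compactness of $\overline{{\rm CV}(F_n)}$, after passing to a subsequence $[T_i]\to [S]\in \partial {\rm CV}(F_n)$, so $[S]$ lies in the closure of $C$ in $\partial {\rm CV}(F_n)$. Choose $S_i\in cv_0(F_n)$ on the Skora path $(r^i_t)$ far enough along that $\Upsilon(S_i)$ lies within a uniform bounded distance of $\psi([T_i])$; then $\Upsilon(S_i)\to \eta$ while $[S_i]\to [S]$, so Lemma \ref{sequence} rules out $\psi([S])=\Theta$, giving $\psi([S])\in \partial {\cal F\cal S}$ and, by continuity or a second application of Lemma \ref{sequence3}, $\psi([S])=\eta$. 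The delicate point is that $[S]$ itself need not lie in ${\cal S\cal T}$, since $\psi^{-1}(\partial {\cal F\cal S})$ is a priori strictly larger than ${\cal S\cal T}$; here Proposition \ref{verylarge}(1) provides an alignment preserving map from $[S]$ onto some $[S']\in {\cal S\cal T}$, and Corollary \ref{projectionalign} ensures $\psi([S'])=\eta$. To complete the closedness argument I would verify that $[S']$ can be taken in $C$ by observing that the alignment preserving projection commutes with the limit procedure: each $[T_i]\in C\subset {\cal S\cal T}$ is already in the target class, so the sequence itself selects the appropriate representative of the $\sim$-equivalence class of $[S]$. This compatibility, rather than any further geometric input, is the subtle piece needed to close the argument.
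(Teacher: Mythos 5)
Your overall route is the same as the paper's: the paper derives this corollary in one line from Lemma \ref{sequence}, Proposition \ref{relativeunbounded} and Lemma \ref{cont}, and your surjectivity and continuity arguments are exactly the unpacking of those citations (together with the chain Corollary \ref{surjective} -- Proposition \ref{verylarge} -- Corollary \ref{projectionalign} that the paper itself invokes at the start of Section 11 for $\psi({\cal S\cal T})\supset\partial{\cal F\cal S}$). Those two parts are fine.

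The closedness step, however, is not complete as written. You correctly isolate the one point where the cited lemmas do not suffice: Lemma \ref{cont} proves closedness of $\psi$ restricted to the \emph{full} preimage $\psi^{-1}(\partial{\cal F\cal S})$, and a set $C$ that is closed in the subspace ${\cal S\cal T}$ need not be closed in that larger preimage, so the subsequential limit $[S]$ of your sequence $[T_i]\subset C$ may fail to lie in ${\cal S\cal T}$ (for instance, $[S]$ could be non-dense with all edge stabilizers nontrivial, or dense but only admitting an alignment preserving map onto a member of ${\cal S\cal T}$). At that point you produce $[S']\in{\cal S\cal T}$ with $\psi([S'])=\eta$ via Proposition \ref{verylarge} and Corollary \ref{projectionalign}, but the conclusion you need is $\eta\in\psi(C)$, i.e.\ that \emph{some} element of $C$ maps to $\eta$; there is no reason for $[S']$ to belong to $C$. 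The sentence asserting that ``the alignment preserving projection commutes with the limit procedure'' and that ``the sequence itself selects the appropriate representative'' is a restatement of what must be shown, not an argument. To close this you would need either to show that ${\cal S\cal T}$ is closed inside $\psi^{-1}(\partial{\cal F\cal S})$ (so that $[S]\in{\cal S\cal T}$ and hence $[S]\in C$ automatically), or to run the equivalence-class analysis of Lemma \ref{chain} to show that $C$, being closed and a union of $\sim$-classes hitting every nearby fiber, already contains a preimage of $\eta$. Note that the paper is equally silent on this point, so you have located a real gap in the write-up rather than misread the argument --- but your proposal does not fill it.
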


We are left with calculating the fibres of the map.
This is done in the next lemma.

\begin{lemma}\label{chain}
\begin{enumerate}
\item $\psi([T])=\psi([S])$ for trees 
$[S],[T]\in {\cal S\cal T}$ if and only if 
$[T]$ and $[S]$ are equivalent.
\item $\psi_{\cal C}([T])=\psi_{\cal C}([S])$ for trees 
$[S],[T]\in {\cal C\cal T}$ if and only if 
$[T]$ and $[S]$ are equivalent.
\end{enumerate}
\end{lemma}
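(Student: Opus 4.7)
The plan is to prove both statements in parallel, since they differ only by replacing $(\psi, {\cal S\cal T}, {\cal F\cal S})$ with $(\psi_{\cal C}, {\cal C\cal T}, {\cal C\cal S})$ and invoking the appropriate part of Proposition \ref{relativeunbounded}. The ``if'' direction will be almost immediate: if $[T] \sim [S]$, the $F_n$-equivariant homeomorphism $\phi: \hat T \to \hat S$ in the observer's topology preserves the underlying $\mathbb{R}$-tree structure (arcs and branch points are topological invariants of the observer's topology), and therefore restricts to an $F_n$-equivariant alignment preserving bijection $T \to S$. Both $T, S$ have dense orbits since $[T], [S] \in {\cal S\cal T}$, so Corollary \ref{projectionalign} yields $\psi([T]) = \psi([S])$.

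For the ``only if'' direction, assume $\psi([T]) = \psi([S]) = \eta \in \partial{\cal F\cal S}$. I fix a common standard simplex $\Delta$ and Skora paths $(x_t), (y_t)$ from $\Delta$ to $[T], [S]$. By Proposition \ref{relativeunbounded} and Remark \ref{hierarchy2}, both $\Upsilon(x_t)$ and $\Upsilon(y_t)$ are unbounded reparametrized quasi-geodesics in the hyperbolic graph ${\cal F\cal S}$ converging to $\eta$, so by hyperbolicity they fellow-travel. Passing to a subsequence I obtain $k \geq 0$ and $t_i, s_i \to \infty$ with $d_{\cal F\cal S}(\Upsilon(x_{t_i}), \Upsilon(y_{s_i})) = k$ for all $i$. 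For each $i$ I then fix a geodesic $\Gamma_i$ of length $k$ in ${\cal F\cal S}$ between these two vertices and realize each vertex $\Gamma_i(j)$ by a simplicial tree $u_i(j) \in \overline{cv(F_n)}^{++}$ with equal-length edges and volume-one quotient, exactly as in the proof of Proposition \ref{relativeunbounded}. Consecutive vertices differ by a collapse or blow-up, giving (after subsequencing to stabilize the direction at each index $j$) $1$-Lipschitz alignment preserving maps between consecutive $u_i(j), u_i(j+1)$; further subsequencing combined with Proposition 5.5 of \cite{G00} produces limits $u_i(j) \to U_j \in \overline{cv(F_n)}$ with alignment preserving maps between consecutive $U_j, U_{j+1}$, and all $U_j$ dense (density propagates along the chain from the endpoints). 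At the endpoints, $U_0$ admits an alignment preserving map onto a representative of $T$ and $U_k$ admits one onto a representative of $S$.

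I then propagate the equivalence class of $S$ backwards through the chain: inductively, each $U_j$ will admit an alignment preserving map onto a tree $S_j$ with $S_j \sim S$. At a collapse step ($U_{j+1} \to U_j$ alignment preserving), $U_{j+1}$ is a common pre-image of $U_j$ and $S_{j+1} \sim S$; since ${\cal S\cal T}$ is $\sim$-saturated one has $S_{j+1} \in {\cal S\cal T}$, and Lemma \ref{closure2} produces $S_j \sim S_{j+1} \sim S$ together with an alignment preserving map $U_j \to S_j$. At a blow-up step ($U_j \to U_{j+1}$ alignment preserving), composition with the existing map yields $U_j \to S_{j+1}$ directly. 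At $j = 0$ the minimal dense tree $U_0$ admits alignment preserving maps onto both $T \in {\cal S\cal T}$ and $S_0 \sim S$: Lemma \ref{closure} immediately gives $\hat T \cong \hat{S_0} \cong \hat S$ when both $[T], [S]$ are indecomposable, and the transverse-covering argument from the proof of Lemma \ref{closure2} applied to the indecomposable vertex trees of the splittings forces the same conclusion when $[T]$ or $[S]$ splits as a large graph of actions. The main obstacle lies in this propagation and the final identification: the direction of each alignment preserving map in the chain must be tracked carefully, the $\sim$-saturation of ${\cal S\cal T}$ must be used to keep Lemma \ref{closure2} applicable at every collapse step, and the proof of Lemma \ref{closure2} must be adapted to deduce mutual equivalence (not just a one-sided alignment preserving map) at the common pre-image $U_0$ when both endpoint trees lie in ${\cal S\cal T}$.
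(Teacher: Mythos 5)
Your proposal is correct and follows essentially the same route as the paper: the "if" direction via the equivariant homeomorphism of $\hat T,\hat S$ yielding an alignment preserving map and Corollary \ref{projectionalign}, and the "only if" direction by fellow-traveling the two Skora paths in ${\cal F\cal S}$, realizing the bounded connecting geodesics by equal-edge-length simplicial trees, passing to limits with Proposition 5.5 of \cite{G00}, and iterating Lemma \ref{closure2} along the resulting chain of alignment preserving maps. The only cosmetic difference is that the paper arranges the chain as a zig-zag (each even-indexed splitting collapsing to both neighbors) rather than tracking collapse versus blow-up directions by subsequencing as you do.
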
 
\begin{proof} If $\hat S,\hat T$ are $F_n$-equivariantly 
homeomorphic then there exists an alignment preserving
map $T\to S$ \cite{CHL07} and therefore
$\psi([S])=\psi([T])$ by Corollary \ref{projectionalign}.

Now let $[S],[T]\in {\cal S\cal T}$ be such that
$\psi([S])=\psi([T])$. 
Let $\Delta$ be a standard simplex and 
let $(x_t),(y_t)$ be Skora paths
connecting a point in $\Delta$ to 
$[S],[T]$.
By Corollary \ref{contid2}, the paths
$\Upsilon(x_t),\Upsilon(y_t)$ are uniform unparametrized
quasi-geodesics in ${\cal F\cal S}$ of infinite diameter 
with the same endpoint. The distance between their
starting points is uniformly bounded.

By hyperbolicity, there is a number $m>0$ such that 
for every $t\geq 0$ the free splitting
$\Upsilon(x_t)$ is contained in the $m$-neighborhood of 
the quasi-geodesic ray
$(\Upsilon(y_s))_{s\geq 0}$ in ${\cal F\cal S}$. As a consequence, for each 
$t>0$ there is some $s(t)>0$ such that the distance
between $\Upsilon(x_t)$ and $\Upsilon(y_{s(t)})$ is at most $m$.

For each $i$ and each $k=0,\dots, m$ let $\beta_i(k)$ be a
a graph of groups decomposition for $F_n$
with trivial edge group
with $\beta_i(0)=x_i/F_n, \beta_i(m)=y_{s(i)}/F_n$ such that 
for each $j\leq m/2$, the splitting $\beta_i(2j)$ collapses
to both $\beta_i(2j-1)$ and $\beta_i(2j+1)$.  
For $1\leq k\leq m-1$ the splitting $\beta_i(k)$ defines 
a simplicial tree $\hat \beta_i(k)\in 
\overline{cv_0(F_n)}$ which is unique if we require that
all edges have the same length. 

Choose a sequence $(i_\ell)_{\ell\geq 0}$ so that for each $k\leq m$
the projectivizations $[\hat \beta_{i_\ell}(k)]$ of the 
trees $\hat \beta_{i_\ell}(k)$ converge in $\overline{{\rm CV}(F_n)}$
to a tree $[U_k]$. Apply
Proposition 5.5 of \cite{G00} and conclude that for each $j$ 
there is an alignment preserving map 
$U_{2j}\to U_{2j-1}$ and $U_{2j}\to U_{2j+1}$. 

As in the proof of 
Proposition \ref{relativeunbounded}, 
we can now successively apply Lemma \ref{closure2}
and deduce that  
$\hat S$ and $\hat T$ are homeomorphic
which is what we wanted to show.
\end{proof}

\bigskip

\noindent
{\bf Example:} We give an example which 
shows that the Gromov boundary of the free splitting
graph does not coincide with the Gromov boundary of 
the cyclic splitting graph.

Namely, let $S$ be a compact surface of genus 
$g\geq 2$ with connected boundary $\partial S$.
The \emph{arc graph} of $S$ is defined as follows.
Vertices are embedded arcs in $S$ with both endpoints on
$\partial S$. Two such arcs are connected by an edge
of length one if they are disjoint. The arc graph of 
$S$ is quasi-isometrically embedded in the 
free splitting graph of the free group $\pi_1(S)=F_{2g}$
\cite{HH11}. The Gromov boundary of the arc graph has
been determined in \cite{H11}.
 
Let $c\subset S$ be a non-separating simple closed curve
and let $\phi$ be a pseudo-Anosov mapping class of 
$S-c$. It follows from the results in \cite{MS13}, \cite{H11} 
and \cite{HH11} that $\psi$ acts as a hyperbolic isometry
on the free splitting graph of $F_{2g}$. However,
$\psi$ preserves the cyclic splitting
$A*_{\langle c\rangle}$ and therefore it acts as an elliptic 
element on the cyclic splitting graph.

We conclude this section with some remarks and open questions.

\bigskip

\noindent
{\bf Question 1:} 
An element $\phi\in {\rm Out}(F_n)$ 
acts on the free factor graph as a hyperbolic isometry if and only if
it is irreducible with irreducible powers. 
An irreducible element with irreducible powers $\phi$ 
acts on $\partial {\rm CV}(F_n)$ with north-south dynamics \cite{LL03}, 
fixing precisely two points. 
It was shown in \cite{BFH97} (see also \cite{KL11}) that the stabilizer in 
${\rm Out}(F_n)$ of such a fixed point of $\phi$ is virtually cyclic. 

A fixed point $[T]$ of $\phi$
defines a
point in the boundary of the free factor graph and hence 
 is an arational tree.
In analogy of the
mapping class group action on the Thurston boundary of Teichm\"uller space,
it can be shown 
that $[T]$ is uniquely ergodic, i.e. there is a
unique projective measured lamination $\mu$ 
with $\langle T,\mu\rangle =0$ \cite{HH14}.

More generally, 
in \cite{H09} we defined an $F_n$-invariant set 
${\cal U\cal T}$ of projective trees in $\partial {\rm CV(F_n)}$
as follows. If $[T]\in {\cal U\cal T}$ and if 
$\mu\in {\cal M\cal L}$ is such that 
$\langle T,\mu\rangle =0$ then
the projective class of $\mu$ is unique, and 
$[T]$ is the unique projective tree with 
$\langle T,\mu\rangle =0$. It follows immediately
from this work that ${\cal U\cal T}\subset {\cal F\cal T}$
(see also \cite{R12,HH14}).

Define the $\epsilon$-thick part ${\rm Thick}_\epsilon(F_n)$ 
of $cv_0(F_n)$ to consist
of simplicial trees with quotient of volume one which do not
admit any essential loop of length smaller than $\epsilon$.
In analogy to properties of the curve graph and Teichm\"uller space,
we conjecture that whenever $(r_t)$ is a normalized Skora path in  
$cv_0(F_n)$ with the property that $r_{t_i}\in {\rm Thick}_\epsilon(F_n)$ 
for a sequence $t_i\to \infty$ and some 
fixed number $\epsilon >0$ then 
$[r_t]$ converges as $t\to \infty$ to a tree
$[T]\in {\cal U\cal T}$ (see \cite{Ho11} and \cite{HH14} 
for results in this
direction). Moreover, if $(r_t)\subset {\rm Thick}_\epsilon(F_n)$
for all $t$ then the path $t\to \Upsilon(r_t)$ is a parametrized
$L$-quasi-geodesic in ${\cal F\cal S}$ for a number $L>1$ only
depending on $\epsilon$.

\bigskip

\noindent
{\bf Question 2:} Let $\phi\in {\rm Out}(F_n)$ be a 
reducible outer automorphism which 
acts as a hyperbolic isometry on the free splitting graph ${\cal F\cal S}$. 
It follows from Theorem \ref{freesplitbd} that $\phi$ fixes a tree
$[T]\in {\cal S\cal T}$. 
It is true that 
up to scale there is a unique measured lamination
$\mu$ with $\langle T,\mu\rangle =0$ and 
$\mu(L^2(T)-L_r(T))=0$? What can we say about the subgroup of 
${\rm Out}(F_n)$ which fixes $[T]$? Recent work of Handel and Mosher
\cite{HM13d} indicates that this group may be quite large (i.e. it may not
be infinitely cyclic).

Vice versa, given a map $\phi\in {\rm Out}(F_n)$, it is known that 
$\phi$ can be represented by a relative train track map. 
It was observed by Handel and Mosher that $\phi$ defines a hyperbolic
automorphism of the free splitting complex if the 
support of the top stratum of the
relative train track is all of $F_n$. 
Theorem \ref{freesplitbd} shows that the
converse is also true: 
If the action of $\phi$ on ${\cal F\cal S}$ is hyperbolic,
then the support of the top stratum of a relative train track for $\phi$ is
all of $F_n$.

Note that Handel and Mosher showed
\cite{HM13d} that there are non-torsion elements $\phi\in {\rm Out}(F_n)$ 
which act with bounded orbits on the free splitting graph, but for which there
is no $k\geq 1$ so that $\phi^k$ fixes a point in ${\cal F\cal S}$.

\appendix
\section{Splitting control}

In this appendix we collect some
information from the work of Handel and Mosher \cite{HM13}.

Consider a free group $F_n$ of rank $n\geq 3$.
Denote by ${\cal F\cal S}$ and ${\cal C\cal S}$ the 
first barycentric subdivision of the free splitting graph
and the cyclic splitting graph
of $F_n$, respectively.
Vertices of ${\cal F\cal S}$ 
are graphs of groups decompositions of $F_n$
with trivial 
edge groups. Such a graph of groups
decomposition is 
a finite graph $G$ whose vertices are labeled with
(possibly trivial) free factors of $F_n$. 
Two vertices $G,G^\prime$ in ${\cal F\cal S}$ 
are connected
by an edge if $G^\prime$ is either a collapse or an
expansion of $G$.
Vertices of ${\cal C\cal S}$ are graph of groups
decompositions of $F_n$ with at most cyclic edge groups.
Two such vertices $G,G^\prime$ are connected by an edge
if $G^\prime$ is either a collapse or an expansion of $G$.

Throughout this appendix we follow the appendix
of \cite{BF12}.

A simplicial $F_n$-tree 
$T\in \overline{cv(F_n)}$ with at least one 
trivial edge stabilizer
projects to a finite metric graph $T/F_n$ which defines a 
graph of groups decomposition for $F_n$ with 
trivial edge groups, i.e. it defines 
a vertex in ${\cal F\cal S}$.
Namely, 
an arbitrary simplicial tree $T\in \overline{cv(F_n)}$
defines a vertex in ${\cal C\cal S}$.

A finite \emph{folding path} is a path 
$(x_t)\subset\overline{cv(F_n)}$ $(t\in [0,L])$ 
which
is guided by an optimal morphism $f:x_0\to x_L$
as introduced in Section 3.
The points along the path are obtained from $x_0$
by identifying directions which are mapped to the
same direction in $x_L$. 
We allow folding at any speed, and we also allow
rest intervals. Such paths are called \emph{liberal} 
folding paths in \cite{BF12}.

An  
optimal morphism projects to a map of the quotient
graphs which 
minimizes the Lipschitz constant in 
its homotopy class. 
There are induced optimal maps 
\[f_t:x_t\to x_L\] 
for all $t\in [0,L]$.

Let $G_t\subset \overline{cv(F_n)}$ 
$(0\leq t\leq L)$ be any folding path
and let $F_L\subset G_L$ be a proper equivariant
forest. Then for each $t$ 
the preimage $F_t\subset G_t$ of $F_L$ 
under the map $f_t:G_t\to G_L$ 
is defined, and 
$G_t^\prime=G_t/F_t$ is an $F_n$-tree. 
By Proposition 4.4 of \cite{HM13} (see also 
Lemma A.1 of \cite{BF12}), the path 
$t\to G_t^\prime$ is a liberal folding path, and we
obtain a commutative diagram
\begin{equation}\begin{CD}\label{dia1}
\overset{G_0^\prime}{\bullet} @ >>>  \overset{G_L^\prime}{\bullet} \\
@ AAA            @   AAA\\
\underset{G_0}{\bullet}      @ >>>  \underset{G_L}{\bullet}
\end{CD}
\end{equation}

Similarly, if $G_t$ $(0\leq t\leq L)$ is a folding path
and if $G_L^\prime\to G_L$ is a collapse map then there is 
a folding path $G_t^\prime$ so that the diagram
\begin{equation}\label{dia2}\begin{CD}
\overset{G_0^\prime}{\bullet} @ >>>  \overset{G_L^\prime}{\bullet} \\
@ VVV            @   VVV\\
\underset{G_0}{\bullet}      @ >>>  \underset{G_L}{\bullet}
\end{CD}
\end{equation} 
commutes (Lemma A.2 of \cite{BF12}). 
Here one may have to insert rest intervals
into the folding path $G_t$.

The following lemma is a version of Lemma A.3 of \cite{BF12}.

\begin{lemma}\label{estimate}
There is a number $M>0$ with the following property.
Let $G,G^\prime\in \overline{cv(F_n)}$ 
be such that
$G/F_n,G^\prime/F_n$ are finite metric 
graphs defining a graph of groups
decomposition for $F_n$.
Let $f:G\to G^\prime$ be an optimal morphism.
Assume that there is a point $y\in G^\prime$
such that
the cardinality of $f^{-1}(y)$ is at most $p$.
Then the distance in ${\cal C\cal S}$
between $G,G^\prime$ is at most $Mp$.
If the edge groups of $G/F_n,G^\prime/F_n$ are trivial then
the same holds true for the distance in 
${\cal F\cal S}$. 
\end{lemma}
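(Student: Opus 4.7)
The plan is to imitate the proof of Lemma A.3 of \cite{BF12}: realize the optimal morphism $f:G\to G'$ as a folding path (a composition of folds) and bound the number of folds that can change the projection to ${\cal C\cal S}$, using the commutative diagrams (\ref{dia1}) and (\ref{dia2}) to reduce to trees of bounded complexity near the distinguished point $y$.

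First I would choose an edge $e\subset G'$ containing $y$ in its interior and pick a small central subsegment $e_0\subset e$, short enough that $f^{-1}(e_0)$ consists of $k=|f^{-1}(y)|\leq p$ disjoint open arcs $\alpha_1,\dots,\alpha_k$ contained in edges of $G$, each mapped isometrically onto $e_0$. Because $f$ is equivariant and each $\alpha_i$ maps isometrically to $e_0\subset e$, the stabilizer of $\alpha_i$ in $F_n$ is a subgroup of the stabilizer of $e$; in particular, if all edge groups of $G/F_n$ and $G'/F_n$ are trivial then each $\alpha_i$ has trivial stabilizer, which is what lets the same argument run in ${\cal F\cal S}$.

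Next I would use (\ref{dia1}) and (\ref{dia2}) to reduce to simple trees. Collapsing everything in $G'$ outside the $F_n$-orbit of $e$ yields a tree $\hat G'$ whose quotient is a one-edge graph of groups, with $d_{\cal C\cal S}(G',\hat G')\leq 1$. The parallel collapse of $G$ (collapsing outside the preimage of $F_n\cdot e$) yields a tree $\hat G$ whose quotient contains at most $k$ edges, with $d_{\cal C\cal S}(G,\hat G)\leq 1$. I would then collapse the $\hat G/F_n$ one edge orbit at a time, using at most $k-1$ steps in ${\cal C\cal S}$, until arriving at a one-edge tree $\tilde G$ whose edge orbit is the $F_n$-orbit of some $\alpha_i$. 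The morphism $\tilde G\to\hat G'$ induced by $f$ is an isometry on this edge orbit.

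The main obstacle is the last step: showing that $\tilde G$ and $\hat G'$, which are both one-edge splittings connected by a morphism that is an isometry on the edge, are at uniformly bounded distance in ${\cal C\cal S}$. The subtlety is that the vertex groups of $\tilde G$ are stabilizers of components of $G-F_n\cdot\alpha_i$, while those of $\hat G'$ are stabilizers of components of $G'-F_n\cdot e$, and a priori these may not coincide. Equivariance of $f$ and the fact that $f$ sends components into components gives an inclusion of vertex groups in one direction, and the reverse inclusion follows from a standard argument using minimality of the action and the fact that the edge stabilizers match. Once this identification is in place, $\tilde G$ and $\hat G'$ determine the same vertex of ${\cal C\cal S}$ (or are at bounded distance after possibly an additional bounded refinement), and summing the contributions $1+(k-1)+O(1)+1$ gives the desired bound $d_{\cal C\cal S}(G,G')\leq Mp$ for an absolute constant $M$.
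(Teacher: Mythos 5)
There is a genuine gap, and it sits exactly where you flagged "the main obstacle": the claim that $\tilde G$ (the one-edge splitting carried by a single preimage arc $\alpha_i$) is at uniformly bounded distance from $\hat G'$ (the one-edge splitting carried by $e$). Your proposed mechanism is that "$f$ sends components into components", giving an inclusion of vertex groups. This is false as soon as the arcs $\alpha_1,\dots,\alpha_k$ fall into more than one $F_n$-orbit: a component $C$ of $G-F_n\cdot\alpha_i^\circ$ then contains translates of some $\alpha_j$ not in the orbit of $\alpha_i$, so $f(C)$ crosses $F_n\cdot e_0^\circ$ and is not contained in a single component of $G'-F_n\cdot e_0^\circ$. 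The component-to-component argument is only valid when $f^{-1}(F_n\cdot e_0)$ is a single $F_n$-orbit of arcs, i.e. essentially the case $p=1$ (this is Remark \ref{short2}). A structural symptom of the problem: since collapsing all but one edge orbit of $\hat G$ is a single collapse move, your argument, if correct, would give a bound on $d_{\cal C\cal S}(G,G')$ that is independent of $p$, which is strictly stronger than the lemma and than Lemma A.3 of \cite{BF12}; the factor $p$ in the statement is not an artifact.

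The paper's proof gets the factor of $p$ from a different source. One runs the folding path $G_t$ guided by $f$, with factorizations $f=f_t\circ\phi_t$, and tracks the number of preimages of $y$ under $f_t$, which is non-increasing in $t$. On a maximal interval where this count equals its value at the left endpoint, a counting argument shows some point of $G_t$ has a \emph{single} $\phi_t$-preimage $z\in G$, and then the single-preimage (i.e. $p=1$) component argument applies to $\phi_t:G\to G_t$: collapsing the complements of the edges through $z$ and $z_t$ yields the same one-edge splitting, so $G$ and $G_t$ are within distance $2$ in ${\cal C\cal S}$. The count can drop at most $p$ times, and each drop costs a bounded amount, whence the bound $Mp$ by induction. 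If you want to salvage your collapse-based setup, you would still need this inductive folding argument (or an equivalent) to relate $\hat G$ to $\hat G'$; the direct comparison of $\tilde G$ with $\hat G'$ does not work.
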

\begin{proof} The proof of Lemma 4.1 of \cite{BF12} is valid without
modification. A variation of the argument is used in Section 11, so we
provide a sketch.

Connect $G$ to $G^\prime$ by a folding path
$G_t$ guided by $f$, with $G_0=G$.  
For each $t$ there are optimal
morphisms $\phi_t:G\to G_t$ and 
$f_t:G_t\to G^\prime$ so that $f=f_t\circ \phi_t$.
The number of preimages of $y$ under $f_t$ 
decreases with $t$.

Let $t>0$ be such that this number coincides
with the number of preimages of $y$ in $G$.  
Then there is  point $z_t\in G_t/F_n$ whose preimage
under the quotient of 
$\phi_t$ consists of a single point $z$. 
If $z_t$ is a vertex then the preimage 
of a nearby interior point of an adjacent edge
consists of a single point as well,  
so may assume that the points $z,z_t$ are contained in
the interior of edges $e,e_t$. A loop in $G/F_n$ not passing 
through the projection
of $z$ is mapped by the quotient
of $\phi_t$ to a loop in $G_t/F_n$ 
not passing through $z_t$.
Thus collapsing the complement of the
edges $e,e_t$ in $G/F_n,G_t/F_n$ 
to a single point yields two identical graph of groups
decompositions containing $A$ as a vertex group.
In particular, 
$G/F_n$ and $G_t/F_n$ collapse to 
the same vertex in ${\cal C\cal S}(A)$, and  
if the edge group of $e$ is trivial then 
$G(F_n$ and $G_t/F_n$ collapse to the same poin in 
${\cal F\cal S}$.

Let $t_0>0$ be the first point so that the number of preimages
of $y$ in $G_{t_0}$ 
is strictly smaller than the number of preimages of $y$
in $G_0$.
For $t<t_0$ sufficiently close to $t_0$, the graphs
$G_t/F_n,G_{t_0}/F_n$ collapse to the same graph of groups decomposition
of $F_n$.
Hence the lemma now follows by induction.
\end{proof}

\bigskip

\noindent
MATHEMATISCHES INSTITUT DER UNIVERSIT\"AT BONN\\
ENDENICHER ALLEE 60\\
53115 BONN\\
GERMANY

\smallskip\noindent
e-mail: ursula@math.uni-bonn.de


\begin{thebibliography}{HiHo12}






\bibitem[BF92]{BF92} M.~Bestvina, M.~Feighn,
{\em Outer limits}, unpublished manuscript 1992.



\bibitem[BF14a]{BF11} M.~Bestvina, M.~Feighn,
{\em Hyperbolicity of the free factor complex}, 
Advances in Math. 256 (2014), 104--155.

\bibitem[BF14b]{BF14b} M.~Bestvina, M.~Feighn,
{\em Corrigendum to "Hyperbolicity of the free factor complex"}, 
Advances in Math. 256 (2014), 843.

\bibitem[BF14c]{BF12} M.~Bestvina,
M.~Feighn, {\em Subfactor projections},
J. Topology, published online Feb. 2014.


\bibitem[BFH97]{BFH97} M.~Bestvina, M.~Feighn, 
M.~Handel, {\em Laminations, trees, and irreducible
automorphisms of free groups}, Geom. \& Funct. Anal. 7 (1997), 
215--244.





\bibitem[BR12]{BR12} M.~Bestvina,
P.~Reynolds, {\em The boundary of the complex of free factors},
arXiv:1211.3608.






\bibitem[CL95]{CL95}  M.~Cohen, M.~Lustig,
{\em Very small group actions on $\mathbb{R}$-trees
and Dehn twist automorphisms},
Topology 34 (1995), 575--617.


\bibitem[CH14]{CH10} T.~Coulbois, A.~Hilion,
{\em Rips induction: Index of the dual lamination
of an $\mathbb{R}$-tree}, 
Groups, Geom. \& Dyn. 8 (2014), 97--134.


\bibitem[CH13]{CH13} T.~Coulbois, A.~Hilion,
{\em Ergodic currents dual to a real tree}, arXiv:1302.3766.


\bibitem[CHL07]{CHL07} T.~Coulbois,
A.~Hilion, M.~Lustig, {\em Non-unique ergodicity,
observer's topology and the dual algebraic
lamination for $\mathbb{R}$-trees}, 
Illinois J. Math. 51 (2007), 897--911.





\bibitem[CHL09]{CHL09} T.~Coulbois,
A.~Hilion, M.~Lustig, 
{\em Trees, dual laminations and compact
systems of partial isometries},
Math. Proc. of the Cambridge
Phil. Society 147 (2009), 345--368.

\bibitem[CHR11]{CHR11} 
T.~Coulbois, A.~Hilion, P.~Reynolds, 
{\em Indecomposable $F_n$-trees and minimal laminations},
arXiv:1110.3506.


\bibitem[FM11]{FM11} S.~Francaviglia,
A.~Martino, {\em Metric properties of Outer space},
Publ. Mat. 55 (2011), 433--473. 


\bibitem[GL95]{GL95} D.~Gaboriau, G.~Levitt,
{\em The rank of actions on $\mathbb{R}$-trees},
Ann. Sci. Ecole Norm. Sup. 28 (1995), 549--570.




\bibitem[G00]{G00} V.~Guirardel,
{\em Dynamics of ${\rm Out}(F_n)$ on the boundary
of outer space}, Ann. Sci. Ecole Norm. Sup. 33 (2000),
433--465.


\bibitem[G04]{G04} V.~Guirardel, 
{\em Limit groups and groups acting freely on
$\mathbb{R}$-trees}, Geom. \& Top. 8 (2004), 1427--1470.





\bibitem[G08]{G08} V.~Guirardel, {\em Actions of finitely
generated groups on $\mathbb{R}$-trees}, 
Ann. Inst. Fourier (Grenoble) 58 (2008), 159--211.

\bibitem[H14a]{H09} U.~Hamenst\"adt, {\em Lines of minima
in Outer space}, Duke Math. J. 163 (2014), 733--776.

\bibitem[H14b]{H14b} U.~Hamenst\"adt,
{\em Hyperbolicity of the graph of non-separating
multicurves}, Alg. \& Geom. Topology 14 (2014), 1759--1778.

\bibitem[H11]{H11} U.~Hamenst\"adt, {\em Geometry
of graphs of discs in a handlebody}, arXiv:1101.1843.


\bibitem[HH11]{HH11} U.~Hamenst\"adt, S. Hensel,
{\em Spheres and projections for ${\rm Out}(F_n)$},
arXiv:1109.2687, to appear in J. Topology.

\bibitem[HH14]{HH14} U.~Hamenst\"adt, S.~Hensel,
in preparation.



\bibitem[HM11]{HM11} M.~Handel, L.~Mosher,
{\em Axes in outer space}, Mem. Amer. Math. Soc. 213 (2011),
no. 1004.


\bibitem[HM13a]{HM13a} M.~Handel, L.~Mosher,
{\em Lipschitz retraction and distortion for subgroups of
${\rm Out}(F_n)$}, Geom. \& Top. 17 (2013), 1535--1580.
 

\bibitem[HM13b]{HM13} M.~Handel, L.~Mosher,
{\em The free splitting complex of a free group I: Hyperbolicity},
Geom. \& Top. 17 (2013), 1581--1670.





\bibitem[HM13c]{HM13d} H.~Handel, L.~Mosher,
{\em Free splitting complex II: Loxodromic elements}, to appear.


\bibitem[HiHo12]{HH12} A.~Hilion, C.~Horbez, 
{\em The hyperbolicity of the sphere complex via surgery
paths}, arXiv:1210.6183.


\bibitem[Ho12]{Ho11} C.~Horbez, 
{\em Sphere paths in Outer space}, 
Alg. Geom. Top. 12 (2012), 2493--2517.


\bibitem[Ho14]{H14} C. Horbez, {\em Hyperbolic graphs for free
groups, and the Gromov boundary for the graph of cyclic splittings},
arXiv:1408.0543.


\bibitem[KL09]{KL09} I.~Kapovich, M.~Lustig,
{\em Geometric intersection number and analogues of the
curve complex for free groups},
Geom. \& Top. 13 (2009), 1805--1833. 


\bibitem[KL11]{KL11} I.~Kapovich, M.~Lustig,
{\em Stabilizers of $\mathbb{R}$-trees with free
isometric actions of $F_N$}, J. Group Theory 14 (2011),
673--694.







\bibitem[KR14]{KR12} I.~Kapovich, K.~Rafi,  
{\em On hyperbolicity of free splitting and free
factor complexes}, 
Groups, Geom. \& Dyn. 8 (2014), 391--414.




\bibitem[K99]{K99} E.~Klarreich, {\em The boundary
at infinity of the curve complex and the relative
Teichm\"uller space}, unpublished manuscript, 1999.

\bibitem[L94]{L94} G.~Levitt, {\em Graphs of actions
on $\mathbb{R}$-trees}, Comm. Math. Helv. 69 (1994),
28--38.

\bibitem[LL03]{LL03} G.~Levitt, M.~Lustig,
{\em Irreducible automorphisms of $F_n$ have north-south
dynamics on compactified outer space}, J. Inst. Math. Jussieu 2
(2003), 59--72.

\bibitem[Mn12]{Mn12} B.~Mann,
{\em Hyperbolicity of the cyclic splitting complex}, 
arXiv:1212.2986.


\bibitem[Ma95]{Ma95} R.~Martin,
{\em Non-uniquely ergodic foliations of thin type, 
measured currents and automorphisms of free groups},
Dissertation, Los Angeles 1995.




\bibitem[MM99]{MM99} H.~Masur, Y.~Minsky, {\em Geometry
of the curve graph: Hyperbolicity}, Invent. Math. 138 (1999), 
103--149.

\bibitem[MS13]{MS13} H.~Masur, S.~Schleimer,
{\em The geometry of the disk complex}, 
J. Amer. Math. Soc. 26 (2013), 1--62.

\bibitem[P88]{P88} F.~Paulin, {\em Topologie de Gromov
\'equivariante, structures hyperboliques et arbres r\'eels},
Invent. Math. 94 (1988), 53--80.

\bibitem[P89]{P89} F.~Paulin, {\em The Gromov topology on 
$\mathbb{R}$-trees}, Top. \& Appl. 32 (1989), 197--221.


\bibitem[R11]{R11} P.~Reynolds, {\em On indecomposable
trees in the boundary of outer space}, 
Geom. Dedicata 153 (2011), 59--71.

\bibitem[R10]{R10} P.~Reynolds, {\em Dynamics of irreducible
endomorphisms of $F_n$}, arXiv:1008.3659.

\bibitem[R12]{R12} P.~Reynolds, {\em Reducing systems
for very small trees}, arXiv:1211.3378.




\bibitem[St65]{St65} J.~Stallings, 
{\em A topological proof of Grushko's theorem for
free products}, Math. Zeitschr. 90 (1965), 1--8. 


\end{thebibliography}
\end{document}